\theoremstyle{plain}
\newtheorem{thm}{Theorem}[section]
\newtheorem{theorem}[thm]{Theorem}
\newtheorem{lemma}[thm]{Lemma}
\newtheorem{corollary}[thm]{Corollary}
\newtheorem{proposition}[thm]{Proposition}
\theoremstyle{definition}
\newtheorem{remark}[thm]{Remark}
\newtheorem{definition}[thm]{Definition}
\newtheorem{example}[thm]{Example}
\newtheorem{setup}[thm]{Set-up}
\numberwithin{equation}{section}
\newcommand{\w}{\widetilde}
\newcommand{\p}{\partial}
\newcommand{\sB}{{\mathcal B}}
\newcommand{\sC}{{\mathcal C}}
\newcommand{\sD}{{\mathcal D}}
\newcommand{\sH}{{\mathcal H}}
\newcommand{\sK}{{\mathcal K}}
\newcommand{\sL}{{\mathcal L}}
\newcommand{\sO}{{\mathcal O}}
\newcommand{\sP}{{\mathcal P}}
\newcommand{\sS}{{\mathcal S}}
\newcommand{\sU}{{\mathcal U}}
\newcommand{\sV}{{\mathcal V}}
\newcommand{\sW}{{\mathcal W}}
\newcommand{\C}{{\mathbb C}}
\newcommand{\F}{{\mathbb F}}
\newcommand{\I}{{\mathbb I}}
\newcommand{\N}{{\mathbb N}}
\newcommand{\BP}{{\mathbb P}}
\newcommand{\R}{{\mathbb R}}
\newcommand{\Z}{{\mathbb Z}}
\newcommand{\gr}{{\mathbf g}{\mathbf r}}
\newcommand{\bL}{{\rm Lifts}}
\newcommand{\End}{{\rm End}}
\newcommand{\fg}{{\mathfrak g}}
\newcommand{\fe}{{\mathfrak e}}
\newcommand{\fl}{{\mathfrak l}}
\newcommand{\fgl}{{\mathfrak g}{\mathfrak l}}
\newcommand{\ff}{{\mathfrak f}}
\newcommand{\fh}{{\mathfrak h}}
\newcommand{\fv}{{\mathfrak v}}
\newcommand{\fw}{{\mathfrak w}}
\newcommand\Aut{\rm Aut}
\def\Hom{\mathop{\rm Hom}\nolimits}
\def\min{\mathop{\rm min}\nolimits}
\title[Convergence of formal equivalence]{Generalized Tanaka prolongation and convergence of formal equivalence between embeddings}
\author{Jaehyun Hong and Jun-Muk Hwang}
\thanks{This work was supported by the Institute for Basic Science (IBS-R032-D1).}
\begin{document}
\begin{abstract}
The works of Commichau--Grauert and Hirschowitz  showed  that  a formal equivalence between embeddings of a compact complex manifold is convergent, if the embeddings have sufficiently positive normal bundles in a suitable sense.  We show that the convergence still holds under the weaker assumption of semi-positive normal bundles  if some geometric conditions are satisfied. Our result can be applied to  many examples of general minimal rational curves, including  general lines on a smooth hypersurface of degree less than $n$ in the $(n+1)$-dimensional projective space.
As a key ingredient of our arguments,  we formulate and prove a generalized version of Tanaka's prolongation procedure for geometric structures subordinate to vector distributions, a result  of independent interest.  When applied to  the universal family of the deformations of the compact submanifolds satisfying our geometric conditions, the generalized Tanaka prolongation gives a natural absolute parallelism on a suitable fiber space. A formal equivalence of embeddings must preserve these absolute parallelisms,  which implies its convergence.
\end{abstract}

\maketitle

\noindent {\sc Keywords.} formal principle, Tanaka prolongation, absolute parallelism
\noindent {\sc MSC2020 Classification.} 32K07, 58A30, 32C22

\tableofcontents

\section{Introduction}\label{s.introduction}
Our main concern is the following notion for a compact complex submanifold in a complex manifold.

\begin{definition}\label{d.FP} For a compact complex submanifold $A$ in a complex manifold $X$, denote by $(A/X)_{\infty}$ the formal neighborhood of $A$ in $X$.
We say that $A \subset X$ satisfies {\em the formal principle with convergence},  if
 any formal isomorphism $\phi: (A/X)_{\infty} \to (\widetilde{A}/\widetilde{X})_{\infty}$ to the formal neighborhood of a compact submanifold $\widetilde{A}$ in another complex manifold $\widetilde{X}$ is convergent, namely,
 there exists a biholomorphic map $\Phi: U \to \w{U}$ between Euclidean neighborhoods $U \subset X$ of $A$ and $\w{U} \subset \w{X}$ of $\w{A}$
  such that $\phi= \Phi|_{(A/X)_{\infty}},$ giving the following commutative diagram.
  $$ \begin{array}{ccccccc}   X & \supset & U & \stackrel{\Phi}{\longrightarrow} & \widetilde{U} & \subset  & X \\ & & \cup & & \cup & & \\ & & (A/X)_{\infty} & \stackrel{\phi}{\longrightarrow} & (\widetilde{A}/\widetilde{X})_{\infty}. & & \end{array}$$

  A weaker notion is the formal principle (see Section VII.4 of \cite{GPR}): a  submanifold  $A \subset X$
 satisfies {\em the formal principle} if the existence of a formal isomorphism $\phi: (A/X)_{\infty} \to (\widetilde{A}/\widetilde{X})_{\infty}$  implies the existence of  a biholomorphic map
$\Phi: U \to \w{U}$ between some Euclidean neighborhoods,   {\em not necessarily satisfying } $\phi= \Phi|_{(A/X)_{\infty}}.$
\end{definition}

The formal principle with convergence is a much stronger property than the formal principle (in the survey \cite{Ks86}, the formal principle with convergence was called  the formal principle in the strong sense). For example, when $A$ is just a point in a complex manifold $X$, it satisfies the formal principle, but does not satisfy the formal principle with convergence (also see Example \ref{ex.product} below).
Whereas it is difficult to find  an example of $A \subset X$ that  {\em does not satisfy} the formal principle (the first such example was discovered by Arnold in \cite{Ar}), it is not easy to give an example that {\em does satisfy} the formal principle with convergence. The only known examples of the latter  until now have been those covered by the works of Commichau-Grauert \cite{CG} and Hirschowitz \cite{Hir}. These examples have sufficiently positive normal bundles in a suitable sense: Commichau-Grauert's result requires, among others, the ampleness of the normal bundle $N_{A/X}$, while Hirschowitz's result assumes that the submanifold $A \subset X$ has sufficiently many deformations that have nonempty intersection with $A$ (see Theorem 1.11 in Chapter VIII of \cite{GPR} for a summary of Hirschowitz's result).

In \cite{Hw19}, the second-named author studied a more general situation than Hirschowitz's  and obtained the following result, Theorem 1.5 in \cite{Hw19}.

 \begin{theorem}\label{t.Hw}
 Let $X$ be a complex manifold and let $\sK \subset {\rm Douady}(X)$ be a subset  of the Douady space of $X$ with the associated universal family morphisms
 $$ \sK \stackrel{\rho}{\longleftarrow} \sU \stackrel{\mu}{\longrightarrow} X$$ such that
 \begin{itemize}
\item[(i)] $\sK$ is a connected open subset in the smooth loci of ${\rm Douady}(X)$; \item[(ii)] $\rho$ is a smooth proper morphism with connected fibers;
 \item[(iii)] $\mu$ is submersive at every point of $\sU$; and
 \item[(iv)] for the submanifold  $A \subset X$  corresponding to any point in $\sK$,
 the normal bundle $N_{A/X}$ satisfies for any $ x \neq x' \in A$,
$$H^0(A, N_{A/X} \otimes {\bf m}_x) \neq H^0(A, N_{A/X} \otimes {\bf m}_{x'}) $$ as subspaces of $H^0(A, N_{A/X})$. \end{itemize} Then there exists a nowhere-dense subset $\sS \subset \sK$ such that the submanifold  $A \subset X$ corresponding to any  point of $\sK \setminus \sS$ satisfies the  formal principle. \end{theorem}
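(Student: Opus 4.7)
The overall plan is to encode the family data $(\rho,\mu)$ near $A$ as a rigid geometric structure—ultimately an absolute parallelism—on a suitable enlargement of a neighborhood of $A$, and then exploit this rigidity to promote a formal equivalence to an honest biholomorphism of Euclidean neighborhoods.

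First, I would transport the universal family across a hypothetical formal isomorphism. Since $\sK$ is a smooth open neighborhood of $[A]$ in the Douady space and parametrizes genuine holomorphic deformations of $A$, the family $\rho:\sU\to\sK$, $\mu:\sU\to X$ can be reconstructed from the formal neighborhood $(A/X)_\infty$ by Kodaira--Spencer theory applied formally: deformations of $A$ in $(A/X)_\infty$ are controlled by $H^0(A,N_{A/X})$ and its obstructions. Hence a formal isomorphism $\phi:(A/X)_\infty\to(\w A/\w X)_\infty$ canonically produces a corresponding universal family on the $\w X$-side, and condition (iv), being a statement about sections of $N_{A/X}$, transfers across $\phi$.

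Second, I would distill from the family a geometric structure on $X$ near $A$. At each point $x$ in the image of $\mu$, the fibers $\mu^{-1}(x)$ give a family of tangent $d$-planes $\{T_xA' : x\in A'\in\sK\}\subset\Gr(d,T_xX)$, where $d=\dim A$. Condition (iv) says precisely that this web is point-separating at the first-order level: moving an auxiliary base point $x'\in A$ changes the jet of the web at $x$ nontrivially. I would then invoke the generalized Tanaka prolongation from this paper to convert this web, regarded as a geometric structure subordinate to a distribution on $\sU$, into a canonical absolute parallelism on a frame bundle $\sF$ over an open subset of $X$ near $A$. The exceptional set $\sS\subset\sK$ arises as the nowhere-dense locus where the symbol algebra degenerates or the higher prolongations fail to stabilize; outside $\sS$ the construction terminates in finitely many steps.

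Third, for $[A]\in\sK\setminus\sS$, the formal isomorphism $\phi$ lifts via pullback of frames to a formal isomorphism between the formal neighborhoods of $\sF|_A$ in $\sF$ and its counterpart $\w\sF|_{\w A}$ in $\w\sF$, preserving the canonical absolute parallelism. A formal isomorphism of absolute parallelisms is automatically convergent, since its graph equals the unique integral manifold through one initial point of the involutive holomorphic distribution on $\sF\times\w\sF$ defined by equating the two coframes. Projecting back down through the frame bundle yields the required biholomorphism $\Phi:U\to\w U$ of Euclidean neighborhoods of $A$ and $\w A$. The main obstacle will be the second step: verifying that condition (iv) is precisely the correct non-degeneracy hypothesis for the generalized Tanaka prolongation to terminate, and controlling the exceptional set $\sS$ via vanishing of the relevant Spencer-type cohomology on its complement.
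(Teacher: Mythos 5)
There is a genuine gap: you are trying to prove Theorem~\ref{t.Hw} via the Tanaka-prolongation/absolute-parallelism mechanism, but that mechanism needs strictly stronger hypotheses than (i)--(iv) supply, and moreover would prove too much. Conditions (i)--(iv) do not imply that $\sK$ is bracket-generating, nor that $\mu^{\sharp}$ is generically immersive; Example~\ref{ex.product} (take $X=Y\times\C$ with $A\subset Y$ satisfying (iv) in $Y$) satisfies all of (i)--(iv) yet is not bracket-generating, and indeed fails the formal principle with convergence. Condition (iv) governs only the map $\rho^{\sharp}$ (Proposition~\ref{p.rhosharp}); it says nothing about $\mu^{\sharp}$ or about the distribution spanned by $\sK$ being bracket-generating, both of which are essential for the pseudo-product structure to have finite height and hence for the prolongation in Theorem~\ref{t.Tanaka1} to stabilize. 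So the step where you ``invoke the generalized Tanaka prolongation \dots to convert this web \dots into a canonical absolute parallelism'' is unavailable. Relatedly, your step~3 derives that $\phi$ itself converges, i.e.\ the formal principle \emph{with convergence}; but Theorem~\ref{t.Hw} asserts only the plain formal principle (existence of some $\Phi$, not one restricting to $\phi$), precisely because its hypotheses are too weak to guarantee the stronger conclusion.

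The correct route, as indicated in the paper's discussion right after the comparison table and carried out in \cite{Hw19}, is Cartan's prolongation systematized by Morimoto \cite{Mo83}: instead of producing an absolute parallelism and using its rigidity, one prolongs to an \emph{involutive} exterior differential system whose local holomorphic solutions are biholomorphisms matching the family structure near $A$ and $\widetilde A$. The formal isomorphism $\phi$ provides a formal solution, and involutivity (Cartan--K\"ahler in the holomorphic category) then yields an actual local solution $\Phi$ --- but a priori a different one, which is why one obtains only the formal principle. Your step~1 (transporting the family across $\phi$ by functoriality of the Douady space) and the general idea of your step~2 (extracting a web of tangent planes on $X$) are both sound and are shared with \cite{Hw19}; the divergence is that from the web you must go to an involutive system, not an absolute parallelism, and the exceptional set $\sS$ is then the locus where the symbol of that system fails to be $2$-acyclic / the Cartan test degenerates, rather than where the Tanaka prolongation fails to terminate.
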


Recall here that   for each complex space $X$, we have its Douady space denoted by ${\rm Douady}(X)$, a complex space parametrizing compact complex subspaces of $X$ (the complex-analytic version of the Hilbert scheme  in algebraic geometry). There is the associated
 universal family morphisms $ {\rm Douady}(X) \leftarrow {\rm Univ}(X) \rightarrow X,$ the restriction of which to $\sK$ gives the morphisms $\rho$ and $\mu$ in Theorem \ref{t.Hw}.   We refer the reader to the introductory survey in Section VIII.1 of \cite{GPR} for more details on Douady spaces.

When compared with \cite{Hir}, the advantage of Theorem \ref{t.Hw} is that the normal bundle could be semi-positive, not necessarily positive. In fact,  the most interesting application of Theorem \ref{t.Hw}  is the following case.

\begin{definition}\label{d.unbendable}
A smooth rational curve $\BP^1 \cong A \subset X$ in a complex manifold is {\em unbendable} if its normal bundle is isomorphic to $\sO(1)^{\oplus r} \oplus \sO^{\oplus (\dim X - r-1)}$ for some nonnegative integer $r < \dim X$. \end{definition}

Deformations of any unbendable rational curve $A \subset X$ give rise to a family $\sK \leftarrow \sU \rightarrow X$ satisfying all the conditions of Theorem \ref{t.Hw}.
There are many interesting examples of unbendable rational curves arising from algebraic geometry. Especially, all general minimal rational curves are unbendable (see  \cite{Hw01} or \cite{HwM} for details and many explicit  examples).

The key question we want to study is: under what additional conditions the conclusion of Theorem \ref{t.Hw} can be strengthened to the formal principle with convergence? Of course, Hirschowitz's result provides such additional conditions, but they are too strong for the applications to unbendable rational curves. In fact, unbendable rational curves satisfy Hirschowitz's condition only when $r= \dim X -1$, namely, when the normal bundle is  ample. This is rather restrictive. For example, among  minimal rational curves, only lines in projective space can satisfy $r= \dim X-1.$ Our goal is to find weaker conditions satisfied by a large class of minimal rational curves.

To discuss this question, we work in the following general setting.

\begin{setup}\label{set.sK}
Let $X$ be a complex manifold and let $\sK \subset {\rm Douady}(X)$ be a subset  of the Douady space of $X$ with the associated universal family morphisms
 $$ \sK \stackrel{\rho}{\longleftarrow} \sU \stackrel{\mu}{\longrightarrow} X$$ such that
 \begin{itemize}
\item[(i)] $\sK$ is a connected open subset in the smooth loci of ${\rm Douady}(X)$; \item[(ii)] $\rho$ is a smooth proper morphism with connected fibers; and
 \item[(iii)] there is a nonempty Zariski-open subset $\sU' \subset \sU$ such that $\mu$ is submersive at every point of $\sU'$. \end{itemize}
\end{setup}

Under what additional conditions can we have the formal principle with convergence for a general member of $\sK$?  We have mentioned that when $A$  is  just a point  in any complex manifold $X$, it  cannot satisfy the formal principle with convergence. An  obvious generalization is the following.

\begin{example}\label{ex.product}
Let $A \subset Y$ be a compact complex manifold in a complex manifold $Y$ and let $X = Y \times \C$ be its product with the complex line. Then $A \subset X$ does not satisfy the formal principle with convergence. \end{example}

Thus we need a condition to avoid cases like Example \ref{ex.product}. We propose the following.

\begin{definition}\label{d.bracketgen}
In Set-up \ref{set.sK}, we say that $\sK$ is  a {\em  bracket-generating family  } if the following condition is satisfied at a general point $y \in \sU$.
\begin{itemize} \item Suppose there is a neighborhood  $ \rho(y) \in W \subset \sK$ and a neighborhood $\mu(y) \in O \subset X$ with a submersion $\zeta: O \to R$ to a complex manifold $R$ such that for  any $z \in W$, the intersection $\mu(\rho^{-1}(z)) \cap O$ is contained in a fiber of $\zeta$. Then $R$ must be a point. \end{itemize} \end{definition}

If $\sK$ is a bracket-generating family of submanifolds in $X$, then there cannot be a nontrivial (meromorphic) foliation on $X$ such that general members are contained in the leaves of the foliation. An equivalent condition is that the distribution spanned by the tangent spaces of members of $\sK$ in a suitable sense is a bracket-generating distribution in the sense of Definition \ref{d.distribution} (2) (see Definition \ref{d.sKdistribution} and Proposition \ref{p.sKdistribution}).
For example, deformations of $A \subset X$ in Example \ref{ex.product} cannot belong to a bracket-generating family. Thus it is reasonable to impose the bracket-generating condition on $\sK$ in the setting of Theorem \ref{t.Hw} to be able to have the formal principle with convergence. We also need the following notion.

\begin{definition}\label{d.tau}
In Set-up \ref{set.sK}, assume that both  $m := \dim \sU - \dim \sK$ and  $r := \dim \sU - \dim X$ are positive. For each $y \in \sU'$ where $\mu$ is submersive, set $x := \mu(y)$ (resp. $z := \rho(y)$) and  let $\mu^{\sharp} (y) \in {\rm Gr}(m ; T_x X)$ (resp.   $\rho^{\sharp} (y) \in {\rm Gr}(r; T_z \sK)$) be the point in the Grassmannian of $m$-dimensional (resp. $r$-dimensional)  subspace in $T_x X$ (resp. $T_z \sK$) corresponding to $${\rm d} \mu ({\rm Ker}({\rm d}_y \rho)) \ \subset \ T_x X \  \mbox{ (resp. } {\rm d} \rho ({\rm Ker}({\rm d}_y \mu)) \ \subset \ T_z \sK \mbox{).}$$ This defines a holomorphic map $\mu^{\sharp}: \sU' \to {\rm Gr}(m; TX)$ (resp. $\rho^{\sharp}: \sU' \to {\rm Gr}(r; T\sK)$) to the Grassmannian bundle:
$$ \begin{array}{ccccc}
{\rm Gr}(r ; T\sK) & \stackrel{\rho^{\sharp}}{\longleftarrow} & \sU' & \stackrel{\mu^{\sharp}}{\longrightarrow} & {\rm Gr}(m; TX) \\
\downarrow & & \cap & & \downarrow \\ \sK & \stackrel{ \rho}{\longleftarrow} & \sU & \stackrel{ \mu }{\longrightarrow} & X. \end{array} $$
\end{definition}

 We mention (see Proposition \ref{p.rhosharp}) that the condition (iv) of Theorem \ref{t.Hw} implies that $\rho^{\sharp}$ is generically immersive, namely, immersive at a general point of $\sU'$. If $\sK$ is a family of unbendable rational curves with $r >0$ (equivalently, the normal bundle of the rational curves are nontrivial), then both $\mu^{\sharp}$ and $\rho^{\sharp}$ are generically immersive (see Lemma \ref{l.unbendable}).
Our main result is the following.

\begin{theorem}\label{t.FPC}
In Set-up \ref{set.sK}, assume that $\sK$ is a bracket-generating family and
 both $\rho^{\sharp}$ and $\mu^{\sharp}$ are generically immersive.
    Then there is a Zariski-open subset $\sK_{\flat} \subset \sK$ such that the submanifold $A \subset X$ corresponding to any point of $\sK_{\flat}$ satisfies the formal principle with convergence. \end{theorem}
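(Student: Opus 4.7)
The plan is to reduce the formal principle with convergence to the classical fact that a formal isomorphism between complex manifolds equipped with preserved holomorphic absolute parallelisms is automatically convergent. To this end I would attach to $(X,\sK)$ a canonical fiber space $\sP$ sitting over a Euclidean neighborhood of $A\subset X$, carrying a natural absolute parallelism $\omega$ built from the formal geometry of the embedding, and then show that any formal isomorphism $\phi:(A/X)_\infty\to(\w A/\w X)_\infty$ lifts to a formal isomorphism $\hat\phi$ of these fiber spaces intertwining $\omega$ with its counterpart $\w\omega$.

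The first step is to extract from $\sK$ a geometric structure of the kind to which the generalized Tanaka prolongation of the paper applies. At a general $x\in X$ the image $\sC_x\subset\Gr(m;T_xX)$ of the $\mu^{\sharp}$-fiber records the tangent subspaces of members of $\sK$ through $x$; summed over $x$ these span a distribution $\sD\subset TX$. The bracket-generating hypothesis promotes $\sD$, on a Zariski-open subset of $X$, to a genuine bracket-generating distribution with iterated-bracket filtration $\sD=T^{-1}X\subset T^{-2}X\subset\cdots$ and associated graded nilpotent Lie algebra $\fm_x=\bigoplus\fm_x^{-i}$ fiberwise. The subvariety $\sC_x\subset\Gr(m;\fm_x^{-1})$ together with this weight filtration constitute the geometric structure subordinate to $\sD$ to which the generalized Tanaka prolongation will be applied; generic immersivity of $\mu^{\sharp}$ guarantees that $\sC_x$ has positive dimension, which is the mechanism that cuts down the symbol algebra and prevents the prolongation from being infinite. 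The parallel hypothesis on $\rho^{\sharp}$ plays the analogous role for the structure on $\sK$, and supplies the additional finiteness input needed to control higher prolongations.

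Given that the prolongation terminates at a general point, the paper's construction produces a fiber tower $\sP\to\cdots\to X$ with a canonical absolute parallelism $\omega$ on the top level, built tautologically from $(X,\sK)$. Since $\phi$ is a formal isomorphism of embeddings, it carries the Douady family $\sK$ in a formal neighborhood of $A$ onto the corresponding family for $\w A\subset\w X$; by the canonicity of the prolongation, $\phi$ therefore lifts to a formal isomorphism $\hat\phi$ of the prolongation towers over formal neighborhoods of the preimage of $A$, with $\hat\phi^{*}\w\omega=\omega$. Because $\omega$ is a coframe, $\hat\phi$ is uniquely determined by its value at one chosen point together with the intertwining condition, and can be written explicitly by integrating the $\omega$-constant vector fields; that integration produces a convergent biholomorphism between Euclidean neighborhoods, which descends to the desired convergent $\Phi$ extending $\phi$.

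I expect the central obstacle to lie in the middle step: verifying that the geometric structure $(\sD,\sC_x)$ falls within the scope of the generalized Tanaka prolongation, and that its prolongation is of finite type at a general member of $\sK$. This requires identifying the correct notion of symbol algebra for a structure defined by a subvariety (rather than a subgroup) of a flag variety of $\fm_x$, and establishing that a Spencer-type cohomology governing successive prolongations eventually vanishes; the generic-immersivity hypotheses on $\rho^{\sharp}$ and $\mu^{\sharp}$ must enter at precisely this vanishing step, and the exceptional locus where it fails is what yields the Zariski-open $\sK_{\flat}$ in the conclusion.
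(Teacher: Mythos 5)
Your final reduction — construct a canonical absolute parallelism on a fiber tower, lift the formal isomorphism, invoke convergence of formal maps preserving parallelisms — matches the paper's strategy, and your identification of the convergence lemma as the closing step is right. But there is a genuine gap in the middle step, and you have in fact located it yourself: you propose to run the generalized Tanaka prolongation on a geometric structure on $X$ defined by the subvariety $\sC_x\subset\Gr(m;T_xX)$ of tangent directions to members of $\sK$. That kind of structure — a subvariety, not a subgroup, of a flag variety of $\fm_x$ — is precisely \emph{not} a Tanaka structure in the sense of the prolongation theorem, and, as the paper's introduction points out, forcing it into a prolongation framework via Cartan/Morimoto methods produces only an involutive system (hence the formal principle, as in \cite{Hw19}), not a canonical absolute parallelism. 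There is no "correct notion of symbol algebra" that rescues this route within the Tanaka framework.

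The paper sidesteps this entirely by changing the base manifold: instead of working on $X$ with a structure defined by $\sC_x$, it works on the universal family $\sU$ with the pseudo-product structure $(E,F)$, where $E=\Ker(\mathrm{d}\mu)$ and $F=\Ker(\mathrm{d}\rho)$. This \emph{is} a genuine Tanaka-type structure: the group $\fg^0_x$ is the Lie algebra of gradation-preserving endomorphisms of the symbol algebra that preserve both $\fe_x$ and $\ff_x$, so one gets a principal $\mathsf{G}^0$-subbundle of the graded frame bundle of $\sU$, exactly what Theorem \ref{t.Tanaka1} requires. The generic immersivity of $\mu^\sharp$ and $\rho^\sharp$ then enters not to "cut down the symbol algebra" but to prove Levi-nondegeneracy of $D=E+F$ (Theorem \ref{t.Cauchy}, via the Cauchy characteristic of the tautological distribution on a Grassmannian bundle), after which Tanaka's deep finiteness theorem (Theorem \ref{t.TanakaPP}) gives finite height. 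The bracket-generating condition on $\sK$ is also used at the level of $\sU$ (Lemma \ref{l.BG}), not $X$. Finally, there are two further mechanisms your sketch omits: descending the convergence from the tower to $\sU$ and then to $X$ is done by a Hartogs-type argument restricting to curves (Lemma \ref{l.Stoll}), and extending the convergence from general points of $A$ to all of $A$ uses the maximum principle on the coefficients of the formal power series (Lemma \ref{l.maxprinciple}), plus the functoriality of Douady spaces to lift $\phi$ to $\phi^\sharp$ on formal neighborhoods in $\sU$ and $\w{\sU}$.
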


There are lots of examples of $\sK$ satisfying the conditions in Theorem \ref{t.FPC}.
We list some of them in Section \ref{s.examples}.
    Here, let us  mention just the following case (see Example \ref{ex.hypersurface}).

\begin{corollary}\label{c.hypersurface}
A general line on  a smooth hypersurface of degree less than $n$ in $\BP^{n+1}, n \geq 4,$  satisfies the formal principle with convergence, while  a general line on a smooth hypersurface of degree $n$  does not satisfy the formal principle with convergence.
\end{corollary}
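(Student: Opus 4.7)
The plan is to apply Theorem \ref{t.FPC} for $d < n$ and to construct a direct counterexample for $d = n$.

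For the positive direction, let $\sK$ be a connected open subset of the smooth locus of the Fano scheme of lines on a smooth hypersurface $Y \subset \BP^{n+1}$ of degree $d < n$, with universal family $\sK \stackrel{\rho}{\longleftarrow} \sU \stackrel{\mu}{\longrightarrow} Y$. Items (i)--(iii) of Set-up \ref{set.sK} are classical for $d \leq n$: $\sK$ is smooth of dimension $2n-1-d$, $\rho$ is a $\BP^1$-bundle, and the universal line dominates $Y$. The standard normal-bundle sequence gives $N_{L/Y} \cong \sO(1)^{\oplus(n-d)} \oplus \sO^{\oplus(d-1)}$ for a general line $L$, so $L$ is unbendable (Definition \ref{d.unbendable}) with $r = n-d > 0$; Lemma \ref{l.unbendable} then yields the generic immersivity of $\mu^{\sharp}$ and $\rho^{\sharp}$. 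The bracket-generating condition is verified via the variety of minimal rational tangents: the locus of lines through a general $x \in Y$ is cut out in $\BP(T_x Y) \cong \BP^{n-1}$ by one equation in each degree $2, 3, \ldots, d$, giving a smooth complete intersection of positive dimension $n-d$ that is linearly nondegenerate for generic $x$. Hence the distribution spanned by tangent directions of lines equals $TY$ on an open set, so no meromorphic foliation of the kind forbidden in Definition \ref{d.bracketgen} exists. Theorem \ref{t.FPC} now produces a Zariski-open $\sK_{\flat} \subset \sK$ all of whose members satisfy the formal principle with convergence.

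For $d = n$, the same normal-bundle computation yields $N_{L/Y} \cong \sO^{\oplus(n-1)}$ for a general line $L$, so $r = 0$ and Theorem \ref{t.FPC} no longer applies. I claim that $L \subset Y$ admits a genuine product neighborhood, after which the failure of the formal principle with convergence follows the pattern of Example \ref{ex.product}. The Kodaira--Spencer map identifies $\rho^{\ast}T_{\sK}|_{[L]}$ with $N_{L/Y}$ via $\mathrm{d}\mu$, and combined with the exact sequence $0 \to T_L \to T_{\sU}|_L \to \rho^{\ast}T_{\sK}|_{[L]} \to 0$ and the triviality of $N_{L/Y}$, this shows $\mathrm{d}\mu$ is an isomorphism at every point of $L$. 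Hence $\mu$ is étale in a neighborhood of $L$ in $\sU$, and since a general $L$ is met by no other line at more than one point, $\mu$ is injective on a small open $V' \subset \sU$ containing $L$. Analytic local triviality of the $\BP^1$-bundle $\rho$ gives $V' \cong \BP^1 \times U$ for a polydisk $U \subset \C^{n-1}$, so $V := \mu(V') \subset Y$ is biholomorphic to $\BP^1 \times U$ with $L$ identified with $\BP^1 \times \{0\}$.

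This product chart gives the counterexample directly: pick any formal change of coordinates $f : (\C^{n-1}, 0) \to (\C^{n-1}, 0)$ with $f'(0)$ invertible but $f$ not convergent on any neighborhood of the origin; then $(x, z) \mapsto (x, f(z))$ is a formal self-isomorphism of $(L/Y)_{\infty}$ whose Taylor expansion cannot coincide with that of any true biholomorphism of neighborhoods of $L$ in $Y$, since in the product chart the latter would force $f$ to converge. Thus the formal principle with convergence fails at a general line of $Y$ when $d = n$. The most delicate step is the passage from pointwise étaleness of $\mu$ along $L$ to a biholomorphism on a neighborhood of the compact curve $L$, which I would handle by combining the compactness of $L$ with the generic-position property that for a general line $L$ on $Y$, only finitely many other lines meet $L$, each in a single point.
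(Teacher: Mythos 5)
Your proposal is correct, and for the degree-$n$ direction it follows the paper essentially verbatim: trivial normal bundle $N_{L/Y}\cong\sO^{\oplus(n-1)}$ forces $\mu$ to be \'etale along $A^{\sharp}=\rho^{-1}(z)$, giving a holomorphic tubular neighborhood $\BP^1\times\Delta^{n-1}$, whence failure of the formal principle with convergence as in Example~\ref{ex.product}. For the degree-$d<n$ direction you take a genuinely different route. The paper simply invokes Corollary~\ref{c.Pic1}: $X$ is Fano of Picard number one, general lines are unbendable with $r=n-d>0$, and the fibers of $\mu$ are irreducible; the bracket-generating property is then delivered by Proposition~6.8 of \cite{Hw12}, a general result for families of unbendable rational curves on Picard-number-one Fano manifolds. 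You instead verify Definition~\ref{d.bracketgen} directly from the explicit structure of the space of lines through a general $x\in X$ as a linearly nondegenerate smooth complete intersection of type $(2,3,\ldots,d)$ in $\BP(T_xX)$, which forces ${\rm Dist}^{\sK}_x=T_xX$ and hence the bracket-generating condition trivially. Your route is more elementary and self-contained for hypersurfaces, at the cost of being less portable than the paper's general-purpose corollary; the unbendability computation and the use of Lemma~\ref{l.unbendable} to obtain generic immersivity of $\rho^{\sharp}$ and $\mu^{\sharp}$ agree with the paper.

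One small repair: to get injectivity of $\mu$ on a small neighborhood $V'$ of the compact curve $A^{\sharp}$ you appeal to the fact that only finitely many other lines meet $L$, but that is not the relevant input. The correct and standard statement is that a map which is \'etale along a compact subset and injective on that subset is injective on some open neighborhood of it; this needs only the compactness of $A^{\sharp}$ and the injectivity of $\mu|_{A^{\sharp}}$, and requires no geometric information about other lines. With that substitution the argument is complete.
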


    Our strategy to prove  Theorem \ref{t.FPC} is to  deduce it from  the following result.

    \begin{theorem}\label{t.AP}
    In Set-up \ref{set.sK}, assume that $\sK$ is bracket-generating and both $\rho^{\sharp}$ and $\mu^{\sharp}$ are generically immersive. Then there exists a nonempty Zariski-open subset $\sU_{\flat} \subset \sU$ such that each $x \in \sU_{\flat}$ admits a neighborhood $O_x \subset \sU_{\flat}$ canonically equipped with
\begin{itemize} \item a  complex manifold $\overline{O}_x$ with a submersion $\overline{O}_x \to O_x$; and \item an absolute parallelism (namely, a frame for the tangent bundle) on $\overline{O}_x$. \end{itemize}
\end{theorem}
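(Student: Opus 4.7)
The strategy is to construct, canonically on the universal family $\sU$, a geometric structure subordinate to a vector distribution to which the generalized Tanaka prolongation procedure of the paper can be applied, thereby producing the sought absolute parallelism.

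First, on the Zariski-open subset $\sU' \subset \sU$ where $\mu$ is submersive, I would introduce a natural filtered distribution. The fiberwise tangent bundle $\Ker(d\rho) \subset T\sU'$ is the natural candidate for the bottom step $D^{-1}$: its fiber at $y$ is identified by $d\mu$ with the tangent space to the member $\rho^{-1}(\rho(y))$. Iterated Lie brackets produce a filtration $D^{-1} \subset D^{-2} \subset \cdots$, and the bracket-generating hypothesis on $\sK$ translates (via Proposition \ref{p.sKdistribution}) into the statement that this filtration fills out $T\sU'$ after finitely many steps, so $(\sU', \{D^{-k}\})$ is a manifold carrying a bracket-generating filtered distribution of finite depth.

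Next, the Grassmannian-valued maps $\mu^{\sharp}$ and $\rho^{\sharp}$ provide canonical extra data that cuts the graded frame bundle of $(\sU', \{D^{-k}\})$ down to a principal $G$-subbundle, where $G$ is a natural subgroup of the automorphism group $\Aut(\fm)$ of the symbol algebra $\fm$ at a generic point. The generic immersivity hypotheses are precisely what ensure that $\mu^{\sharp}$ and $\rho^{\sharp}$ capture enough infinitesimal information to single out this $G$ canonically: immersivity of $\rho^{\sharp}$ forces transverse deformations of members to be recorded faithfully in $T\sK$, and immersivity of $\mu^{\sharp}$ does the same for variations of tangent directions in $TX$. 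The pair $(\{D^{-k}\}, G)$ is then a geometric structure subordinate to a distribution in the sense required by the generalized Tanaka theory developed earlier in the paper. Applying that theory produces a finite tower of principal bundles over a suitable Zariski-open refinement $\sU_{\flat} \subset \sU'$, topping out in a bundle $\overline{O}_x \to O_x$ carrying a canonical absolute parallelism, which is the desired output.

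The principal obstacle is verifying that the generalized prolongation in fact terminates at finite order, equivalently that the symbol algebra together with its $G$-reduction is of finite type in the generalized Tanaka sense. Here the two generic immersivity assumptions are decisive: each rigidifies the infinitesimal automorphisms of the structure enough to cut off the prolongation tower, and without both one can easily have infinitely many nontrivial higher-order symmetries (as in Example \ref{ex.product}). A secondary but essential issue is canonicity: every step of the construction must be functorial, so that any formal isomorphism of embeddings induces a corresponding isomorphism of the prolonged bundles preserving the absolute parallelism. This canonicity is what will allow the absolute parallelism built here to be used in the downstream argument for Theorem \ref{t.FPC}.
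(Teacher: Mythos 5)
Your plan correctly identifies the general shape of the argument (build a canonical filtered structure on the universal family, reduce the frame bundle, then run the generalized Tanaka prolongation to get an absolute parallelism), but it has a concrete error in the choice of the distribution, and it elides two substantive issues that the paper's proof must and does address.

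The error: you take $D^{-1}=\Ker(d\rho)$ alone as the bottom of the filtration. But $\Ker(d\rho)$ is the relative tangent bundle of the fibration $\rho$, hence integrable, so iterated Lie brackets never leave it: your filtration stabilizes at $D^{-1}$ and never fills out $T\sU'$ (unless $\dim\sK=0$). The bracket-generating hypothesis cannot rescue this. What the paper actually uses (Definition \ref{d.sK}) is the \emph{pair} $E=\Ker(d\mu)$, $F=\Ker(d\rho)$ forming a pseudo-product structure, with the distribution $D=E+F$; Lemma \ref{l.BG} is the correct translation between ``$\sK$ is bracket-generating'' and ``$D=E+F$ is bracket-generating.'' Your citation of Proposition \ref{p.sKdistribution} alone does not establish this, since that proposition concerns the distribution on $X$ spanned by $\sK$, not a distribution on $\sU'$. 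Moreover the decomposition $D=E\oplus F$ is not merely a convenience: it is precisely what cuts the degree-zero part of the structure group down to the subalgebra $\fg^0\subset\Hom^0(\fg^-,\fg^-)$ of endomorphisms preserving $\fe$ and $\ff$ (Definition \ref{d.PP}(i)), which is essential for the prolongation to be finite.

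Two further gaps. First, the mechanism by which generic immersivity forces the prolongation tower to terminate is not a direct ``rigidification'' argument; it runs through Theorem \ref{t.Cauchy} (generic immersivity of $\mu^\sharp$ and $\rho^\sharp$ implies Levi-nondegeneracy of $D=E+F$) and then Tanaka's Theorem \ref{t.TanakaPP} (Levi-nondegenerate and bracket-generating implies finite height). This is a nontrivial intermediate step and should not be waved at. Second, you never address the problem of non-constant symbol algebras, which is the whole reason the paper develops a \emph{generalized} Tanaka prolongation. The prolongation of Theorem \ref{t.Tanaka1} requires as input a submersion $\beta:M\to B$ to a manifold equipped with a family of graded Lie algebras (Set-up \ref{n.B}); in the paper this $\beta$ is produced by the moduli stratification of ${\rm Tanaka}(\sV^\bullet)$ via Rosenlicht's theorem (Definitions \ref{d.GLA}, \ref{d.regulartype}, Lemma \ref{l.nB}), and the Zariski-open set $\sU_\flat$ is exactly the locus of moduli-regular points where this submersion exists. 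Without this, ``applying that theory'' is not an available move, since the symbol algebras of $D$ may jump from point to point.
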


 Here, the absolute parallelism is canonical up to a universal choice of some data on the moduli space of Lie algebras of certain type (see Definition \ref{d.choice} for a precise formulation). Also the neighborhood $O_x$ could be enlarged to an \'etale neighborhood (namely, a finite unramified covering of a Zariski-open subset) in $\sU$ (see Remark \ref{r.nB}).

     Why does Theorem \ref{t.AP} imply Theorem \ref{t.FPC}? We can explain it heuristically as follows. Let $\varphi: (o/\C^n)_{\infty} \to (\widetilde{o}/\widetilde{\C}^n)_{\infty}$ be a formal isomorphism  between the formal neighborhoods of points on $n$-dimensional complex manifolds. It is obvious that if $\varphi$ sends a holomorphic coordinate system in a neighborhood of $o \in \C^n$ to a holomorphic coordinate system  in a neighborhood of $\widetilde{o} \in \widetilde{\C}^n,$ then $\varphi$ converges.   One can generalize  this to a less trivial fact (see Theorem \ref{t.KN}) that if $\varphi$ sends a holomorphic absolute parallelism in a neighborhood of $o \in \C^n$ to a holomorphic absolute parallelism in a neighborhood of $\widetilde{o} \in \widetilde{\C}^n$, then $\varphi$ converges. The canonical nature of the absolute parallelism in Theorem \ref{t.AP} guarantees that a formal isomorphism between formal neighborhoods of submanifolds in the setting of  Theorem \ref{t.FPC} can be lifted to a formal isomorphism preserving  canonical absolute parallelisms  from Theorem \ref{t.AP}, hence must converge.

\medskip
It seems to us that  Theorem \ref{t.AP} has significance beyond its application to  Theorem \ref{t.FPC}. Among the two conditions in Theorem \ref{t.AP}, the bracket-generating condition is certainly necessary, because it is necessary for Theorem \ref{t.FPC} as discussed above. Is  the  assumption  that the maps $\rho^{\sharp}$ and $\mu^{\sharp}$ are generically immersive   necessary in  Theorem \ref{t.AP}?   We leave the question for future investigation.

\medskip
     The proof of Theorem \ref{t.AP} employs  Noboru Tanaka's prolongation procedure developed in \cite{Ta70}. This procedure constructs a canonical absolute parallelism on a natural fiber space over a manifold equipped with a  differential system satisfying certain algebraic properties.
    Tanaka's original work
    \cite{Ta70}, as well as its simplified version in \cite{AD}, requires that the differential system has constant symbols, a condition too strong for our purpose. For this reason, we need a  generalization of  Tanaka's prolongation  to systems with non-constant symbols. A large part of this paper is devoted to prove this generalization,  Theorem \ref{t.Tanaka1}.  It seems to have been  a folklore in the subject that it is possible to generalize Tanaka's prolongation to systems with nonconstant symbols.  But a rigorous presentation  of such a generalization has never been worked out explicitly to our knowledge.    Part of our goal is to carry out this generalized Tanaka prolongation in full details.

   One difficulty in proving such a general version of Tanaka's prolongation lies in the enormous complexity of the inductive argument. As a matter of fact,  Tanaka's original  proof in \cite{Ta70} for systems with constant symbols are already extremely intricate.  Fortunately for us, Alexeevsky and David  in \cite{AD} have simplified Tanaka's proof considerably and it turns out that their proof is readily adaptable for systems with nonconstant symbols. One technical issue in this adaptation arises from the arguments in \cite{AD} involving  principal connections, which  do not make sense when symbols are not constant. To bypass this problem, we introduce   the notion of $\fl$-exponential actions (see Subsection \ref{ss.Bgroup}). It  enables us to use 1-parameter groups to produce local connections (in Subsection \ref{ss.connection}), which play the role of principal connections in our setting.

    Since it is expected that this general version of Tanaka prolongation could be useful  in many problems in differential geometry beside the proof of Theorem \ref{t.AP},   we  present it in Sections \ref{s.prelim} -- \ref{s.ProofInduction} in such a way that it can be read independently from the rest of the paper.
    Although our line of proofs essentially follows that of   \cite{AD}, we believe that our arguments are simpler and more streamlined   at a number of steps, even for systems with constant symbols.
    In particular, our proof can be read without prior knowledge of \cite{Ta70} or \cite{AD}. However, if the reader is not familiar with Tanaka theory, we recommend reading the introduction parts of \cite{Ta70} and \cite{AD}, and especially Zelenko's excellent overview of Tanaka prolongation in \cite{Ze}.

To apply the generalized  Tanaka prolongation   to the situation in Theorem \ref{t.AP}, we need  the theory of pseudo-product structures, another subject initiated by Noboru Tanaka in \cite{Ta85}. The generalized Tanaka prolongation for pseudo-product structures requires certain algebraic conditions. The main complex-geometric component of our work is verifying these algebraic conditions from the geometric conditions in Theorem \ref{t.AP}.

It is worthwhile to compare our proof of Theorem \ref{t.FPC}  with the proof  of Theorem \ref{t.Hw} in \cite{Hw19}.  Whereas the proof of Theorem \ref{t.FPC} uses (generalized) Tanaka prolongation,  the proof of Theorem \ref{t.Hw}  uses Cartan's prolongation procedure systemized by Morimoto \cite{Mo83}. Cartan prolongation yields an involutive system, a solution of which  gives the biholomorphic equivalence $\Phi$ in Definition \ref{d.FP} needed to prove the formal principle. But such a solution $\Phi$ is usually not unique and can not  prove the formal principle with convergence.  Tanaka prolongation, which requires stronger conditions than Cartan prolongation,   is a more refined version of Cartan's procedure, yielding a stronger output of a natural absolute parallelism.  We may summarize the methodological difference as follows.

    $$ \begin{array}{ccccc} \begin{array}[t]{c} \mbox{Cartan} \\ \mbox{prolongation} \end{array}  & \Rightarrow & \begin{array}[t]{c}    \mbox{involutive} \\ \mbox{system} \end{array} & \Rightarrow &
    \begin{array}[t]{c} \mbox{Formal} \\ \mbox{Principle} \end{array} \\
    & & & & \\
   \begin{array}[b]{c}    \mbox{Tanaka} \\ \mbox{prolongatoin} \end{array}  & \Rightarrow  & \begin{array}[b]{c}     \mbox{absolute} \\ \mbox{parallelism} \end{array} &  \Rightarrow &
    \begin{array}[b]{c} \mbox{Formal Principle} \\ \mbox{with Convergence}  \end{array} \end{array} $$

    \medskip
   The paper is organized as follows.  Sections \ref{s.prelim} -- \ref{s.ProofInduction} are devoted to the   generalization of Tanaka's prolongation and can be read independently from the rest of the paper. Section \ref{s.prelim} collects some preliminaries for the generalized Tanaka prolongation. It includes a review of  some basic results from \cite{AD} in Subsections \ref{ss.graded} -- \ref{ss.filtmfd} and the introduction of  new tools to handle systems with nonconstant symbols in Subsections \ref{ss.Bgroup} and \ref{ss.connection}. In Section \ref{s.Tanaka}, we formulate  the generalized Tanaka prolongation, starting with an explanation of the basic set-up   and giving  the full statement of the prolongation theorem, Theorem \ref{t.Tanaka1}, and some immediate corollaries.  The next two sections are devoted to the proof of Theorem \ref{t.Tanaka1}: Section \ref{s.n=1} is the proof of the first step of the prolongation  and Section \ref{s.ProofInduction} is the proof of the inductive argument.     In Section \ref{s.PP}, we introduce the concept of a pseudo-product structure and study its  Levi-nondegeneracy condition. In Section \ref{s.PPsK}, we  show that  a pseudo-product  structure with suitable conditions  gives rise to a geometric structure in the generalized Tanaka prolongation and apply this  to  the natural pseudo-product structure on the universal family in Set-up \ref{set.sK} to prove Theorem \ref{t.AP}. In Section \ref{s.converge}, we explain the idea of  deducing Theorem \ref{t.FPC} from Theorem \ref{t.AP}, from a more general perspective. Section \ref{s.examples} lists several examples of the families $\sK$ satisfying the conditions of Theorem \ref{t.FPC}.

Throughout the paper, we work in the holomorphic setting and all geometric objects are complex-analytic. But it is clear that the arguments in Sections \ref{s.prelim} -- \ref{s.ProofInduction}  work verbatim in  $C^{\infty}$ or real-analytic settings.

\section{Generalized Tanaka prolongation: Preliminaries}\label{s.prelim}

All vector spaces are over complex numbers and finite-dimensional unless stated otherwise.

\subsection{Lie groups associated with a graded vector space}\label{ss.graded}
  \begin{definition}\label{d.graded}
Fix a graded vector space
$$\fv = \fv^{-k} \oplus \fv^{-k +1} \oplus \cdots \oplus \fv^{\ell -1} \oplus \fv^{\ell},$$ with integers $k \geq 1$ and $\ell \geq -1$. We use the convention $$\fv^{-k-j}=0 \mbox{ and  }
\fv^{\ell + j} = 0   \mbox{ for any } j >0.$$ \begin{itemize}
\item[(i)] For any $n \geq -k$, define the truncated graded vector space
$$\fv^{<n+1} =  \fv^{-k} \oplus \fv^{-k +1} \oplus \cdots \oplus \fv^{n-1} \oplus \fv^{n} $$ and write $ \fv^- := \fv^{<0}.$
\item[(ii)]
When $\fw = \oplus_{i \in \Z} \fw^i$ is another graded vector space, we define for each integer $j,$
$$\Hom^j(\fv, \fw):=\{h \in \Hom(\fv, \fw) \mid h(\fv^i) \subset \fw^{i+j} \mbox{ for all } i   \}
$$ such that $\Hom(\fv, \fw) = \oplus_{j \in \Z} \Hom^j (\fv, \fw)$ is a graded vector space.
\item[(iii)]
For any integer $m \geq 1$,  define
 \begin{eqnarray*}
 \fgl^m(\fv) &: = & \{ h \in \fgl(\fv) \mid h(\fv^i) \subset \fv^{i+m} \mbox{ for all } i\}, \\
 \fgl_m(\fv) &:=& \oplus_{i \geq m} \ \fgl^i (\fv), \\
 {\rm GL}_m(\fv) & :=& \{ {\rm Id}_{\fv} + A \in {\rm GL}(\fv) \mid A \in \fgl_m(\fv)\}. \end{eqnarray*} \end{itemize} \end{definition}

 We skip the proof of the following two elementary lemmata.

 \begin{lemma}\label{l.inclusion}
For a graded vector space $\fv$, we have natural inclusions $$\fgl(\fv^{<n}) \subset \fgl(\fv^{<n+1}) \mbox{ and } {\rm GL}(\fv^{<n}) \subset {\rm GL}(\fv^{<n+1})$$ given by
 \begin{eqnarray*} \fgl(\fv^{<n}) &=& \{ h \in \fgl(\fv^{<n+1}) \mid h (\fv^{<n}) \subset \fv^{<n},  h(\fv^n) =0 \} \\ {\rm GL}(\fv^{<n}) & = & \{ h \in {\rm GL}(\fv^{<n+1}) \mid h(\fv^{<n}) \subset \fv^{<n},  h|_{\fv^n} ={\rm Id}_{\fv^n}\}. \end{eqnarray*} \end{lemma}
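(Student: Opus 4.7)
The plan is to exhibit an explicit bijection between $\fgl(\fv^{<n})$ and the described subset of $\fgl(\fv^{<n+1})$ (and similarly for the groups), then observe that it is a Lie algebra (resp.\ Lie group) homomorphism whose domain and image coincide with the sets in the statement. Since $\fv^{<n+1} = \fv^{<n} \oplus \fv^n$ is an internal direct sum, any endomorphism of $\fv^{<n+1}$ that stabilizes $\fv^{<n}$ is determined by its restriction to $\fv^{<n}$ together with its restriction to $\fv^n$; this is the only structural fact one needs.

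Concretely, first I would define a linear map $\iota : \fgl(\fv^{<n}) \to \fgl(\fv^{<n+1})$ by sending $h$ to $\iota(h)$, where $\iota(h)|_{\fv^{<n}} = h$ and $\iota(h)|_{\fv^n} = 0$. The direct-sum decomposition makes $\iota(h)$ well-defined as an element of $\fgl(\fv^{<n+1})$, and $\iota$ is clearly injective. A direct check shows $\iota(hh') = \iota(h)\iota(h')$ using that both sides kill $\fv^n$ and agree with $hh'$ on $\fv^{<n}$, so $\iota$ is a Lie algebra homomorphism. The image is exactly those $h' \in \fgl(\fv^{<n+1})$ with $h'(\fv^{<n}) \subset \fv^{<n}$ and $h'(\fv^n) = 0$: one inclusion is by construction, the other by letting $h'$ correspond to $h := h'|_{\fv^{<n}}$.

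For the group statement I would proceed in parallel, defining $\iota : {\rm GL}(\fv^{<n}) \to {\rm GL}(\fv^{<n+1})$ by $\iota(h)|_{\fv^{<n}} = h$ and $\iota(h)|_{\fv^n} = {\rm Id}_{\fv^n}$. Invertibility of $\iota(h)$ follows since it is block-diagonal with invertible blocks with respect to $\fv^{<n} \oplus \fv^n$, and $\iota$ is a group homomorphism by the same block-diagonal calculation. The image is characterized exactly as the set $\{h' \in {\rm GL}(\fv^{<n+1}) : h'(\fv^{<n}) \subset \fv^{<n},\ h'|_{\fv^n} = {\rm Id}_{\fv^n}\}$, again by splitting any such $h'$ into its restrictions.

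There is no essential obstacle here; the only thing to watch is that the grading condition ($h(\fv^i) \subset \fv^i$ for all $i$, in the ungraded sense that $h \in \fgl^0$) is preserved by $\iota$, which is immediate since $\iota$ does not alter behavior on homogeneous components below $\fv^n$ and acts as $0$ or ${\rm Id}$ on $\fv^n$ itself.
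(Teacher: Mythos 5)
Your proof is correct, and it is essentially the only reasonable approach: identify $\fv^{<n+1}$ with the internal direct sum $\fv^{<n} \oplus \fv^n$, extend by $0$ (resp.\ ${\rm Id}_{\fv^n}$), and check that the extension map is an injective algebra (hence Lie algebra) and group homomorphism whose image is precisely the stated subset. The paper itself states ``We skip the proof of the following two elementary lemmata,'' so there is nothing to compare against beyond this standard argument. One small remark: your final paragraph about a ``grading condition ($h \in \fgl^0$)'' is beside the point, since the lemma concerns the full $\fgl(\fv^{<n})$ and ${\rm GL}(\fv^{<n})$, not the degree-preserving part; that sentence can simply be dropped.
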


 \begin{lemma}\label{l.fglj}
 Let us use the notation in Definition \ref{d.graded} (iii).
\begin{itemize} \item[(i)] For each $m \geq 1$, the subspace $\fgl_m(\fv) \subset \fgl(\fv)$ is a Lie subalgebra and ${\rm GL}_m(\fv) \subset {\rm GL}(\fv)$ is the corresponding Lie subgroup.   \item[(ii)] For each $m \geq 2$, the subgroup ${\rm GL}_m(\fv)$ is a normal subgroup of ${\rm GL}_1(\fv)$ and any element $A$ of the  quotient group ${\rm GL}_1(\fv)/{\rm GL}_m(\fv)$ can be written as \begin{equation}\label{e.decompo} A = A^0 + A^1 + \cdots + A^{m-1} \end{equation}  with $A^j \in \fgl^j(\fv)$ and $A^0 = {\rm Id}_{\fv}$. \end{itemize}
\end{lemma}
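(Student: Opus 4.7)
The overall plan is to exploit the induced grading $\fgl(\fv) = \bigoplus_{j \in \Z} \fgl^j(\fv)$, which is a finite direct sum because $\fv$ has only finitely many nonzero graded pieces. The single structural fact driving both parts of the lemma is the multiplicativity relation $\fgl^i(\fv) \cdot \fgl^j(\fv) \subset \fgl^{i+j}(\fv)$, which is immediate from the definition: $h_1 h_2(\fv^k) \subset h_1(\fv^{k+j}) \subset \fv^{k+i+j}$. For part (i), this multiplicativity shows $\fgl_m(\fv) = \bigoplus_{i \geq m} \fgl^i(\fv)$ is closed under composition (hence under the commutator bracket) whenever $m \geq 1$, since $i, j \geq m$ gives $i+j \geq m$. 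To get the group statement, I would observe that every element of $\fgl_m(\fv)$ is nilpotent: repeated application shifts degree by at least $m \geq 1$, and $\fv$ is bounded. Consequently $({\rm Id}_\fv + A)^{-1} = \sum_{k \geq 0} (-A)^k$ is a finite polynomial and $({\rm Id}_\fv + A)({\rm Id}_\fv + B) = {\rm Id}_\fv + (A + B + AB)$ stays in ${\rm Id}_\fv + \fgl_m(\fv)$ because $AB \in \fgl_{2m}(\fv) \subset \fgl_m(\fv)$. The exponential and logarithm are then polynomial inverses identifying $\fgl_m(\fv)$ as the Lie algebra of ${\rm GL}_m(\fv)$.

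For part (ii), normality follows from a direct conjugation computation. Given $g = {\rm Id}_\fv + E \in {\rm GL}_1(\fv)$ (so $g^{-1} = {\rm Id}_\fv + E'$ for some $E' \in \fgl_1(\fv)$ by part (i) applied with $m=1$) and $h = {\rm Id}_\fv + A \in {\rm GL}_m(\fv)$, one has $ghg^{-1} = {\rm Id}_\fv + gAg^{-1}$. Using $\fgl_1(\fv) \cdot \fgl_m(\fv) \subset \fgl_{m+1}(\fv) \subset \fgl_m(\fv)$, together with the symmetric inclusion on the right, every correction to $A$ appearing in the expansion of $gAg^{-1}$ lies in $\fgl_m(\fv)$, so $ghg^{-1} \in {\rm GL}_m(\fv)$. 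For the coset decomposition, I would pick a representative $g = {\rm Id}_\fv + B$ with $B \in \fgl_1(\fv)$, expand $B = B_1 + B_2 + \cdots$ into its graded pieces $B_j \in \fgl^j(\fv)$ (a finite sum), and set $g' = {\rm Id}_\fv + B_1 + \cdots + B_{m-1}$. Writing $g = g' + D$ with $D = \sum_{j \geq m} B_j \in \fgl_m(\fv)$ gives $(g')^{-1} g = {\rm Id}_\fv + (g')^{-1} D$, and $(g')^{-1} D \in \fgl_m(\fv)$ because left multiplication by the element $(g')^{-1} \in {\rm GL}_1(\fv)$ preserves the filtration $\fgl_m(\fv)$. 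Hence $g \equiv g' \pmod{{\rm GL}_m(\fv)}$, and taking $A^0 = {\rm Id}_\fv$, $A^j = B_j$ for $1 \leq j \leq m-1$ produces the claimed decomposition.

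Honestly, there is no real obstacle: the lemma is a formal consequence of the grading on $\fgl(\fv)$ together with the nilpotence of $\fgl_m(\fv)$ for $m \geq 1$, which is precisely why the authors flag it as skippable. The only mildly subtle point worth flagging is the additive (rather than multiplicative) form of the decomposition in (ii), but this is absorbed by the filtration argument, which shows that the natural map from the affine subspace ${\rm Id}_\fv + \bigl(\fgl^1(\fv) \oplus \cdots \oplus \fgl^{m-1}(\fv)\bigr)$ to the quotient ${\rm GL}_1(\fv)/{\rm GL}_m(\fv)$ is surjective.
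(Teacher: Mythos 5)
Your proof is correct and is exactly the kind of straightforward verification the paper has in mind; the paper explicitly skips the proof of this lemma as elementary, so there is no argument in the text to compare against. All the key ingredients are present and accounted for: the multiplicativity $\fgl^i(\fv)\cdot\fgl^j(\fv)\subset\fgl^{i+j}(\fv)$, the nilpotence of $\fgl_m(\fv)$ giving polynomial inverses, the conjugation estimate for normality, and the surjectivity of the affine section ${\rm Id}_\fv+\bigl(\fgl^1(\fv)\oplus\cdots\oplus\fgl^{m-1}(\fv)\bigr)$ onto the quotient.
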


 \begin{definition}\label{d.barG}
Let  $\fv = \oplus_{i=-k}^{\ell} \fv^i$ be a graded vector space. \begin{itemize} \item[(i)] For $n \geq 0$, define a Lie subalgebra
$\fh(\fv^{<n+1}) \subset \fgl_1(\fv^{<n+1})$ by
$$\fh(\fv^{<n+1}) :=  \fgl_{n+1}(\fv^{<n}) + \Hom(\fv^{<n}, \fv^n) = \fgl_{n+1}(\fv^{<n+1}) + \Hom(\oplus_{j=0}^{n-1} \fv^j, \fv^n).$$
Here, an element  $A \in \Hom(\fv^{<n}, \fv^n)$ (resp. $ A\in \Hom(\oplus_{j=0}^{n-1} \fv^j, \fv^n)$) is regarded as an element of $ \fgl(\fv^{<n+1})$ by setting $A (\fv^n) =0$ (resp. $A(\fv^-) = A(\fv^n) =0$).
\item[(ii)]
Define
$ H(\fv^{<n+1}):= {\rm Id}_{\fv^{<n+1}} + \fh(\fv^{<n+1}) \subset {\rm GL}(\fv^{<n+1}),$  a closed subgroup in ${\rm GL}(\fv^{<n+1})$ with  Lie algebra $\fh(\fv^{<n+1}).$ \end{itemize} Note that
$\fh(\fv^{<1}) = \fgl_1(\fv^{<1}) \mbox{ and } H(\fv^{<1}) = {\rm GL}_1(\fv^{<1})  .$ \end{definition}

The next lemma is immediate.

\begin{lemma}\label{l.Hn+1}
For each $n\geq 0$, the subgroup $${\rm GL}_{n+1}(\fv^{<n}) = {\rm Id}_{\fv^{<n+1}} + \fgl_{n+1}(\fv^{<n}) \subset H (\fv^{<n+1})$$ is a normal subgroup of $H(\fv^{<n+1})$ and the quotient group is isomorphic to the vector group  $\Hom(\fv^{<n}, \fv^n)$. \end{lemma}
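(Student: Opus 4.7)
The plan is to verify the lemma by explicit computation in the associative algebra $\End(\fv^{<n+1})$, leveraging the direct-sum decomposition
$$\fh(\fv^{<n+1}) \;=\; \fgl_{n+1}(\fv^{<n}) \;\oplus\; \Hom(\fv^{<n}, \fv^n)$$
recorded in Definition \ref{d.barG} together with two vanishing identities that follow from the fact that both summands annihilate $\fv^n$.

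First I would confirm that ${\rm GL}_{n+1}(\fv^{<n}) \subset H(\fv^{<n+1})$ is a subgroup. Containment at the Lie-algebra level is the inclusion $\fgl_{n+1}(\fv^{<n}) \subset \fh(\fv^{<n+1})$, and multiplicative closure follows from Lemma \ref{l.fglj}(i): for $A_1, A_2 \in \fgl_{n+1}(\fv^{<n})$ the composition $A_1 A_2$ lies in $\fgl_{\geq 2(n+1)}(\fv^{<n}) \subset \fgl_{n+1}(\fv^{<n})$, so $({\rm Id} + A_1)({\rm Id} + A_2) = {\rm Id} + (A_1 + A_2 + A_1 A_2) \in {\rm GL}_{n+1}(\fv^{<n})$. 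Inverses come from the finite nilpotent expansion $({\rm Id} + A)^{-1} = \sum_{k\geq 0} (-A)^k$, which terminates because $A$ raises degree by at least $n+1$ on the bounded graded space $\fv^{<n}$.

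For normality I would exploit the two key vanishing identities in $\End(\fv^{<n+1})$: for any $A \in \fgl_{n+1}(\fv^{<n})$ and $C, C_1, C_2 \in \Hom(\fv^{<n}, \fv^n)$, we have $A \circ C = 0$ (because $\Im C \subset \fv^n$ and $A|_{\fv^n} = 0$) and $C_1 \circ C_2 = 0$ (same reason, so ${({\rm Id}+C)}^{-1} = {\rm Id} - C$). Any $g \in H(\fv^{<n+1})$ factors through a product of an element of ${\rm GL}_{n+1}(\fv^{<n})$ and one of the form ${\rm Id} + C$ with $C \in \Hom(\fv^{<n}, \fv^n)$; conjugation by the former preserves the subgroup automatically, while the latter is handled by expanding $({\rm Id}+C)({\rm Id}+A)({\rm Id}-C)$ and matching the result against the decomposition of $\fh(\fv^{<n+1})$ piece by piece.

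Finally, for the quotient I would define $\pi: H(\fv^{<n+1}) \to \Hom(\fv^{<n}, \fv^n)$ as the projection extracting the $\Hom(\fv^{<n}, \fv^n)$-summand of $M - {\rm Id}$. The subgroup $\{{\rm Id} + C : C \in \Hom(\fv^{<n}, \fv^n)\}$ is abelian because $C_1 C_2 = 0$ forces $({\rm Id}+C_1)({\rm Id}+C_2) = {\rm Id} + C_1 + C_2$, so the image is a vector group. The same vanishing identities then show that $\pi$ is a group homomorphism with kernel exactly ${\rm GL}_{n+1}(\fv^{<n})$. The main obstacle I expect is the normality computation: the interaction between $\fgl_{n+1}(\fv^{<n})$ and $\Hom(\fv^{<n}, \fv^n)$ under conjugation requires careful bookkeeping of degree shifts to show that the conjugate of an element of ${\rm GL}_{n+1}(\fv^{<n})$ remains aligned with the first summand of the decomposition.
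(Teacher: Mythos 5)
Your computation of $({\rm Id}+C)({\rm Id}+A)({\rm Id}-C)$ is precisely where the argument collapses, and no amount of degree bookkeeping fixes it. Using $AC=0$ and $C^2=0$, the conjugate equals ${\rm Id}+A+CA$. The term $CA$ lies in $\Hom(\fv^{<n},\fv^n)$, not in $\fgl_{n+1}(\fv^{<n})$: since $A(\fv^{<n})\subset\fv^{<n}$ and $C$ sends $\fv^{<n}$ into $\fv^n$, the composite $CA$ maps $\fv^{<n}$ into $\fv^n$ and kills $\fv^n$, whereas elements of $\fgl_{n+1}(\fv^{<n})$ must preserve $\fv^{<n}$. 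Because the decomposition $\fh(\fv^{<n+1})=\fgl_{n+1}(\fv^{<n})\oplus\Hom(\fv^{<n},\fv^n)$ is direct, whenever $CA\neq 0$ the conjugate leaves ${\rm GL}_{n+1}(\fv^{<n})$. And $CA$ is nonzero as soon as $k\geq 2$: with $k=2$, $n=1$, $\fv^{-2}=\fv^{-1}=\fv^0=\fv^1=\C$, take a nonzero $A\colon\fv^{-2}\to\fv^0$ (this spans $\fgl_2(\fv^{<1})$) and a nonzero $C\colon\fv^0\to\fv^1$; then $CA\colon\fv^{-2}\to\fv^1$ is nonzero and the conjugate of ${\rm Id}+A$ by ${\rm Id}+C$ acquires a $\fv^{-2}\to\fv^1$ component, so it is not in ${\rm GL}_2(\fv^{<1})$. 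The ``main obstacle'' you flag is therefore a genuine counterexample, not a technicality to be organized away. The same defect ruins the proposed quotient map: $\pi\bigl(({\rm Id}+A_1+C_1)({\rm Id}+A_2+C_2)\bigr)=C_1+C_2+C_1A_2\neq C_1+C_2$ in general, so $\pi$ is not a homomorphism.

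The trouble lies in the statement rather than in your strategy. The version of the claim that is actually true, and that your semidirect-product argument would prove verbatim, uses the \emph{other} decomposition recorded in Definition \ref{d.barG}: $\fh(\fv^{<n+1})=\fgl_{n+1}(\fv^{<n+1})\oplus\Hom(\oplus_{j=0}^{n-1}\fv^j,\fv^n)$. For $A\in\fgl_{n+1}(\fv^{<n+1})$ and $D\in\Hom(\oplus_{j=0}^{n-1}\fv^j,\fv^n)$ one has $AD=0$, and $DA$ sends $\fv^{\leq -2}$ into $\fv^n$ (because $D$ annihilates $\fv^-$ and $A$ raises degree by at least $n+1$), so $DA\in\fgl_{n+2}(\fv^{<n+1})\subset\fgl_{n+1}(\fv^{<n+1})$ and the conjugate $({\rm Id}+D)({\rm Id}+A)({\rm Id}-D)={\rm Id}+A+DA$ stays inside ${\rm GL}_{n+1}(\fv^{<n+1})$. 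Thus ${\rm GL}_{n+1}(\fv^{<n+1})$ is normal in $H(\fv^{<n+1})$ with quotient the vector group $\Hom(\oplus_{j=0}^{n-1}\fv^j,\fv^n)$. As literally printed, with subgroup ${\rm GL}_{n+1}(\fv^{<n})$ and quotient $\Hom(\fv^{<n},\fv^n)$, the lemma fails for $k\geq 2$, so your proposal cannot be completed without amending the statement; the remainder of the paper never actually invokes this lemma.
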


\begin{lemma}\label{l.barG} Fix $n \geq 0$.
For $A \in \fh(\fv^{<n+1}) \subset \fgl_1(\fv^{<n+1})$, let $A^m \in \fgl^m(\fv^{< n+1})$ be its component of degree $m$. Then
\begin{itemize} \item[(i)] $A^m \in \Hom(\fv^{n-m}, \fv^n)$ for $1 \leq m \leq n$; and
\item[(ii)] if $({\rm Id}_{\fv^{<n+1}} + A)^{-1} = {\rm Id}_{\fv^{<n+1}} + \check{A}$  in $H(\fv^{<n+1})$ for some $\check{A} \in \fh(\fv^{<n+1})$, then $\check{A}^m = -A^m$ for $1 \leq m \leq n+1.$
    \end{itemize} \end{lemma}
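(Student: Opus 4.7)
The plan is to prove (i) by direct inspection of the two summands defining $\fh(\fv^{<n+1})$, and then to deduce (ii) by induction on $m$ using the identity $\check{A}=-A-A\check{A}$ that follows from $({\rm Id}+A)({\rm Id}+\check{A})={\rm Id}$, with part (i) killing the cross-terms. The whole argument is essentially careful bookkeeping of degrees in a nilpotent setting, so I anticipate no serious obstacle.

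For (i), I would write $A=B+C$ according to the decomposition $\fh(\fv^{<n+1})=\fgl_{n+1}(\fv^{<n})+\Hom(\fv^{<n},\fv^n)$ from Definition \ref{d.barG}; this is in fact a direct sum, since the first summand sends $\fv^{<n+1}$ into $\fv^{<n}$ while the second sends it into $\fv^n$. Every nonzero homogeneous component of $B$ has degree at least $n+1$, so for $1\leq m\leq n$ the degree-$m$ component $A^m$ equals $C^m$. Because $C$ lands in $\fv^n$ by definition, a degree-$m$ component of $C$ must have its source in $\fv^{n-m}$, whence $A^m\in\Hom(\fv^{n-m},\fv^n)$ as asserted.

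For (ii), comparing degree-$m$ parts of the equation $({\rm Id}+A)({\rm Id}+\check{A})={\rm Id}$ yields
\[
\check{A}^m=-A^m-\sum_{\substack{i+j=m\\ i,j\geq 1}}A^i\circ\check{A}^j.
\]
I then induct on $m\in\{1,\ldots,n+1\}$: the base case $m=1$ is immediate since the sum is empty. For the inductive step, the hypothesis gives $\check{A}^j=-A^j$ for $1\leq j<m$, so the task reduces to the vanishing
\[
\sum_{\substack{i+j=m\\ i,j\geq 1}}A^i\circ A^j=0.
\]
Since $m\leq n+1$ and $i,j\geq 1$ force $1\leq i,j\leq m-1\leq n$, part (i) applies to each factor: $A^j$ takes values in $\fv^n$, while $A^i$ is supported on $\fv^{n-i}$ and hence vanishes on $\fv^n$ (because $n-i\neq n$ for $i\geq 1$). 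Each composition $A^i\circ A^j$ is therefore zero, which closes the induction and proves (ii).
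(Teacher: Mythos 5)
Your proof is correct and follows essentially the same approach as the paper. The paper avoids the induction in (ii) by noting directly that $\check{A}^j\circ A^m=0$ whenever $2\le j+m\le n+1$ (applying (i) to $\check{A}$ as well as to $A$), which immediately gives $\check{A}^d=-A^d$ for $d\le n+1$; your inductive detour through $\check{A}^j=-A^j$ reaches the same vanishing of cross-terms, just less directly.
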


    \begin{proof}
    (i) is immediate. From $$({\rm Id}_{\fv^{<n+1}} + \sum_{j \geq 1} \check{A}^j) \circ ({\rm Id}_{\fv^{<n+1}} + \sum_{m \geq 1} A^m) = {\rm Id}_{\fv^{<n+1}},$$ we have $$\sum_{j \geq 1} \check{A}^j + \sum_{m \geq 1} A^m + \sum_{j, m \geq 1} \check{A}^j \circ A^m = 0.$$
    But $\check{A}^j \circ A^m = 0$ if $2 \leq j + m \leq n+1$ by (i). This implies (ii). \end{proof}

\subsection{Prolongation of a fundamental graded Lie algebra}

\begin{definition}\label{d.prolongLie}
A graded Lie algebra $\fg^{-} \oplus \fg^0 = \oplus_{i=-k}^0 \fg^i$  is a {\em fundamental graded Lie algebra} if $\fg^-$ is generated by $\fg^{-1}$ and the adjoint representation $\fg^0 \to \End(\fg^{-})$ is injective. For a fundamental graded Lie algebra $\fg^{-} \oplus \fg^0 = \oplus_{i=-k}^0 \fg^i$,
define a (possibly infinite-dimensional) graded Lie algebra $\fg= \oplus_{i=-k}^{\infty} \fg^i$  inductively by setting for each $i\geq 1$, \begin{equation}\label{e.prolong} \fg^i := \{ A \in \fgl^i(\fg^{<i}) \mid A([u,v]) = [A(u), v] + [u, A(v)] \mbox{ for } u, v \in \fg^-\}.\end{equation} This graded Lie algebra $\fg=  \oplus_{j=-k}^{\infty} \fg^j$ is called the {\em universal prolongation} of $\fg^{-} \oplus \fg^0$.  If $\fg^i =0$ for $i > \ell$ for some $\ell \geq -1$, namely, if the universal prolongation is finite dimensional,   we write $\fg = \oplus_{i=-k}^{\ell} \fg^i$. \end{definition}

\begin{remark}\label{r.fundamental}
Our notion of a   fundamental graded Lie algebra is slightly different from the standard one (e.g., \cite{AD}, \cite{Ta70} and \cite{Yat}), which refers to only the part  $\fg^-.$
\end{remark}

\begin{definition}\label{d.partial}
Let  $\fg = \oplus_{i=-k}^{\ell} \fg^i$ be  the universal prolongation of a fundamental graded Lie algebra $\fg^- \oplus \fg^{0}.$
Write $${\rm Tor}^1(\fg) := \Hom^1(\fg^- \wedge \fg^-, \fg) = \oplus_{i, j <0} \Hom (\fg^i \wedge \fg^j, \fg^{i+j +1})$$ and
$${\rm Tor}^{n+1}(\fg):=\Hom^{n+1}(\fg^{-1}\wedge \fg^{-}, \fg^{<n+1}) \oplus \Hom(\oplus_{i=0}^{n-1}(\fg^{-1} \wedge \fg^i), \fg^{n-1})$$
 for $n \geq 1$.
\begin{itemize}
\item[(i)] For $A \in \fh(\fg^{<1}) = \fgl_1(\fg^{<1})$, let $A^1$ be its component of degree 1. Define   $\p_{\fg}^1: \fh(\fg^{<1})   \to {\rm Tor}^1(\fg) $ by setting for  $A \in \fgl_1(\fg^{<1})$ and  $u, v \in \fg^-$, $$\p_{\fg}^1 A(u,v) := A^1([u,v]) - [A^1(u), v] -[u, A^1(v)].$$
\item[(ii)]
For $A   \in \fh(\fg^{<n+1})=  \mathfrak{gl}_{n+1}(\fg^{<n+1}) + \Hom(\oplus_{i=0}^{n-1}\fg^i, \fg^n),$
let $A^m$ be its component of degree $m\geq 1$.
For $n \geq 1$, define $\p_{\fg}^{n+1}: \fh(\fg^{<n+1}) \rightarrow {\rm Tor}^{n+1}(\fg) $
by setting
$$ \partial_{\fg}^{n+1} A(u,v) :=
 A^{n+1}([u,v]) - [A^{n+1}(u), v] -[u, A^{n+1}(v)]$$ if $ u\wedge v \in \fg^{-1} \wedge \fg^- $ and $$  \partial_{\fg}^{n+1} A(u,v) :=
   -[u, A^{n-i}(v)] $$ if $ u\wedge v \in \fg^{-1} \wedge \fg^i $ with $0 \leq i \leq n-1.$
\end{itemize}
\end{definition}

\begin{lemma}\label{l.partial} In Definition \ref{d.partial}, the kernel of $\p_{\fg}^{n+1}$ is $\fg^{n+1} + \fgl_{n+2}(\fg^{<n+1})$ for any $n \geq 0$.
\end{lemma}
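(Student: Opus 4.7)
The plan is to decompose $A \in \fh(\fg^{<n+1})$ into its graded components $A = \sum_{m\geq 1} A^m$ with $A^m \in \fgl^m(\fg^{<n+1})$, and to analyze each summand of the formula for $\partial_{\fg}^{n+1}$ separately. By the structure $\fh(\fg^{<n+1}) = \fgl_{n+1}(\fg^{<n+1}) + \Hom(\oplus_{j=0}^{n-1}\fg^j, \fg^n)$ together with Lemma \ref{l.barG}(i), the components $A^m$ for $1 \leq m \leq n$ come from the $\Hom$-summand, with each $A^m \in \Hom(\fg^{n-m}, \fg^n)$ extended by zero on $\fg^-$ and $\fg^n$, while the components with $m \geq n+1$ come from $\fgl_{n+1}(\fg^{<n+1})$. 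Inspecting Definition \ref{d.partial}, only $A^{n+1}$ influences the value on $\fg^{-1} \wedge \fg^-$, only $A^{n-i}$ influences the value on $\fg^{-1} \wedge \fg^i$ for $0 \leq i \leq n-1$, and components of degree $\geq n+2$ play no role. Thus the kernel analysis cleanly splits by degree, and in particular $\fgl_{n+2}(\fg^{<n+1}) \subset \ker \p_{\fg}^{n+1}$ automatically.

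For the $\fg^{-1} \wedge \fg^-$ part, the vanishing condition $A^{n+1}([u,v]) = [A^{n+1}(u),v] + [u, A^{n+1}(v)]$ is precisely the derivation identity (\ref{e.prolong}) defining $\fg^{n+1}$, so this piece of the kernel condition is equivalent to $A^{n+1} \in \fg^{n+1}$. For the $\fg^{-1} \wedge \fg^i$ pieces with $0 \leq i \leq n-1$, fix $v \in \fg^i$ and set $B := A^{n-i}(v) \in \fg^n$. The bracket $[u, B]$ for $u \in \fg^{-1}$, $B \in \fg^n$ in the prolongation is given by the evaluation $B(u) \in \fg^{n-1}$, so requiring $[u, B] = 0$ for all $u \in \fg^{-1}$ forces $B|_{\fg^{-1}} = 0$. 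Since $B \in \fg^n$ satisfies the derivation identity on $\fg^-$ and $\fg^-$ is generated as a Lie algebra by $\fg^{-1}$, the vanishing propagates inductively to $B|_{\fg^-} = 0$. Elements of $\fgl^n(\fg^{<n})$ automatically vanish on $\fg^j$ for $j \geq 0$ by degree reasons (the image would lie in degree $\geq n$, outside $\fg^{<n}$), so $B = 0$ in $\fg^n$. As this holds for every $v \in \fg^i$, we conclude $A^{n-i} = 0$ for each $1 \leq n-i \leq n$.

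Assembling these observations, the kernel consists exactly of those $A$ whose components of degree $1 \leq m \leq n$ vanish, whose degree-$(n+1)$ component lies in $\fg^{n+1}$, and whose components of degree $\geq n+2$ are unconstrained. This is precisely $\fg^{n+1} + \fgl_{n+2}(\fg^{<n+1})$. The base case $n=0$ is a degenerate version of the same argument in which only the first formula is present, since $\fh(\fg^{<1}) = \fgl_1(\fg^{<1})$ has no $\Hom$ summand; it yields kernel $\fg^1 + \fgl_2(\fg^{<1})$ directly. The main obstacle I anticipate is the step where one extracts $A^{n-i} = 0$ from the condition $[u, A^{n-i}(v)] = 0$: it requires the correct interpretation of the bracket between $\fg^{-1}$ and the prolonged piece $\fg^n$ as evaluation, and the invocation of the fundamental-generation hypothesis on $\fg^-$ to upgrade vanishing on $\fg^{-1}$ to vanishing on all of $\fg^-$. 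Everything else is routine unwinding of the graded structure.
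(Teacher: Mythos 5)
Your proposal follows essentially the same route as the paper — decomposing by homogeneous degree, treating the $\fg^{-1}\wedge\fg^-$ and $\fg^{-1}\wedge\fg^i$ pieces separately, and using generation of $\fg^-$ by $\fg^{-1}$ together with the derivation property of $\fg^n$ to show that the $\Hom$-summand components vanish. That part of the argument is sound.

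There is, however, one small but genuine gap. You assert that the vanishing condition
$A^{n+1}([u,v]) = [A^{n+1}(u),v] + [u, A^{n+1}(v)]$ on $\fg^{-1}\wedge\fg^-$
``is precisely the derivation identity (\ref{e.prolong}) defining $\fg^{n+1}$, so this piece of the kernel condition is equivalent to $A^{n+1}\in\fg^{n+1}$.'' But (\ref{e.prolong}) requires the derivation identity for \emph{all} $u,v\in\fg^-$, whereas ${\rm Tor}^{n+1}(\fg)$ only records the defect on $\fg^{-1}\wedge\fg^-$. One direction of your equivalence is trivial; the other requires an argument that the defect vanishing on $\fg^{-1}\wedge\fg^-$ forces it to vanish on all of $\fg^-\wedge\fg^-$. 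The paper supplies exactly this step: starting from vanishing on $\fg^{-1}\wedge\fg^-$, it applies the Jacobi identity to $A^{n+1}([[u_1,u_2],v])$ for $u_1,u_2\in\fg^{-1}$ and then uses generation of $\fg^-$ by $\fg^{-1}$ to propagate. This is a short but non-omittable piece of the proof, and as written your argument silently assumes it. The rest of your reasoning, including the degree bookkeeping, the handling of $\fgl_{n+2}$, the evaluation interpretation of $[\fg^{-1},\fg^n]$, and the inductive propagation $B|_{\fg^{-1}}=0\Rightarrow B|_{\fg^-}=0\Rightarrow B=0$, matches the paper's proof closely.
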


\begin{proof} We prove the case $n \geq 1$, skipping the easy case of $n=0$.
Note that we have a direct sum decomposition of the  operator $$\partial_{\fg}^{n+1} = \partial_- + \partial_0 + \cdots + \partial_{n-1}$$ with $$\partial_-= \p_{\fg}^{n+1}|_{\fgl_{n+1}(\fg^{<n+1})}: \fgl_{n+1}(\fg^{<n+1}) \to \Hom^{n+1}(\fg^{-1} \wedge \fg^-, \fg^{<n+1})$$ and $$\partial_i = \p_{\fg}^{n+1}|_{\Hom(\fg^i, \fg^n)}: \Hom(\fg^i, \fg^n) \to \Hom(\fg^{-1} \wedge \fg^i, \fg^{n-1})$$  for $ 0 \leq i \leq n-1.$
We claim that \begin{itemize} \item[(1)] ${\rm Ker}(\partial_i) =0$ for any $0 \leq i \leq n-1$; and \item[(2)] ${\rm Ker} (\partial_-)  = \fg^{n+1} + \mathfrak{gl}_{n+2}(\fg^{<n+1}).$
\end{itemize} By the above direct sum decomposition, the lemma follows from the claim.

 To check (1), let $A^{n-i}$ be an element of $\Hom(\fg^i, \fg^n)$ such that $\partial_i A^{n-i}=0$. Then $[u, A^{n-i}(v)]=0$ for any $u \in \fg^{-1}$ and $v \in \fg^i$. In other words, the element $A^{n-i}(v)$  of $\fg^n \subset \Hom(\fg^-, \fg^{<n})$ satisfies $A^{n-i}(v)|_{\fg^{-1}}=0$. This implies $A^{n-i}(v)|_{\fg^-}=0$ because $\fg^-$ is generated by $\fg^{-1}$ and $$A^{n-i}(v)[x,y] = [A^{n-i}(v)(x), y] + [x, A^{n-i}(v)y]$$
for all $x,y \in \fg^-$ from (\ref{e.prolong}).
 Thus $A^{n-i}=0$.

For (2), in the natural decomposition $\fgl_{n+1}(\fg^{<n+1}) = \fgl_{n+2}(\fg^{<n+1}) \oplus \fgl^{n+1}(\fgl^{<n+1})$, it is obvious that $\fgl_{n+2}(\fg^{<n+1}) \subset {\rm Ker} (\partial_-)$. Thus it suffices to check
$$ {\rm Ker}(\p_-) \cap \fgl^{n+1}(\fg^{<n+1}) = \fg^{n+1}.$$
 Let $A $ be an element of $\fgl^{n+1}(\fg^{<n+1})$. Then $\partial_- A =0$ if and only if
\begin{equation} \label{e.partial}
A^{n+1}([u,v]) - [A^{n+1}(u), v] -[u, A^{n+1}(v)]=0
\end{equation}
 for any $u \in \fg^{-1}$ and $ v \in \fg^-$. Thus (\ref{e.prolong}) implies  $\fg^{n+1}  \subset {\rm Ker}(\partial_-)$. On the other hand, if (\ref{e.partial}) holds,
then by Jacobi identity,
$$A^{n+1}([[u_1,u_2],v]) - [A^{n+1}([u_1,u_2]), v] -[[u_1, u_2], A^{n+1}(v)]=0$$   for any  $ u_1,u_2 \in \fg^{-1}$ and $ v \in \fg^-$.
Since $\fg^-$ is generated by $\fg^{-1}$, the equality (\ref{e.partial}) holds for any $u,v \in \fg^-$, proving  $A \in \frak g^{n+1}$ by (\ref{e.prolong}).
\end{proof}

\subsection{Filtration on a vector space}
\begin{definition}\label{d.filtration}
   A {\em   filtration} $V_{\bullet}$ on a vector space $V$ is a descending sequence of subspaces $$V= V_{-k} \supset V_{-k+1} \supset V_{-k+2}  \supset \cdots \supset V_{\ell -1} \supset V_{\ell} $$ with integers $k \geq 1$ and $\ell \geq -1$. We call $\ell$ the {\em height} of the filtration.  We use the convention $V_{-k-j} := V_{-k}$ and $V_{\ell + j} := 0$ for any positive integer $j$. Fix a positive integer $m$. \begin{itemize}
   \item[(i)] Define the associated graded vector space
   $$\gr_{(m)}(V_{\bullet}) := \oplus_{i=-k}^{\ell} \gr_{(m)}^i(V_{\bullet}) \mbox{ with }
   \gr_{(m)}^i(V_{\bullet}) := V_i / V_{i+m}.$$ We usually write $\gr(V_{\bullet})$ for $\gr_{(1)}(V_{\bullet}).$
   \item[(ii)] For each $-k \leq i \leq \ell$, the  quotient homomorphism ${\rm pr}^i_{(m)}: V_i \to \gr_{(m)}^i(V_{\bullet}) $   is  defined by $${\rm pr}^i_{(m)}(v) = v \mod V_{i+m}$$ for  $v \in V_i.$
        \item[(iii)] For each $-k \leq i \leq \ell$, define the linear homomorphism  $ {\rm pr}_{(m \to 1)}^i: \gr_{(m)}^i(V_{\bullet}) \to \gr_{(1)}^i(V_{\bullet}) $
   by $$ {\rm pr}_{(m\to 1)}^i(v \mod V_{i+m}) = v \mod V_{i+1} $$ for  $v \in V_i$ and
         the homomorphism of graded vector spaces ${\rm pr}_{(m \to 1)}: \gr_{(m)}(V_{\bullet}) \to \gr_{(1)}(V_{\bullet})$  by $${\rm pr}_{(m \to 1)} = \oplus_{i= -k}^{\ell} {\rm pr}^i_{(m \to 1)}.$$
        \end{itemize}
 Fix a graded vector space $\fv = \oplus_{i=-k}^{\ell} \fv^i$ with $\dim \fv^i = \dim V_i/V_{i+1}$ for all $-k \leq i \leq \ell$
         and a graded vector space isomorphism $$I= \oplus_{i=-k}^{\ell}  I^i \ : \ \fv \to \gr_{(1)}(V_{\bullet})$$ where $I^i : \fv^i \to  V_{i}/V_{i+1}$ is an isomorphism of vector spaces for each $i$.    \begin{itemize}
         \item[(iv)]  A linear isomorphism $\widetilde{I} \in {\rm Isom} (\fv,  V)$ is a {\em lift} of $I$ if, writing $\widetilde{I}^i:= \widetilde{I}|_{\fv^i}$, we have   $${\rm Im}(\widetilde{I}^i) =\widetilde{I}(\fv^i) \subset V_i \mbox{ and } {\rm pr}^i_{(1)} \circ \widetilde{I}^i = I^i $$ for each $-k \leq i \leq \ell.$ In other words, the following diagram is commutative.
             $$ \begin{array}{ccc} & & V_i \\ & \widetilde{I}^i \nearrow & \downarrow {\rm pr}^i_{(1)} \\ \fv^i & \stackrel{I^i}{\longrightarrow} & \gr^i_{(1)}(V_{\bullet}) \end{array} $$
              Equip $\fv$ with the filtration
         $$\fv_{\bullet} = (\fv_j := \oplus_{i= j}^{\ell} \fv^i, \ -k \leq j \leq \ell).$$
         Then $\widetilde{I}$ is a filtration-preserving isomorphism between $\fv_{\bullet}$ and $V_{\bullet}$. Denote by $\bL(I) \subset {\rm Isom}(\fv, V)$ the set of all lifts of $I$
    \item[(v)] A graded vector space homomorphism $J: \fv \to \gr_{(m)}(V_{\bullet})$ is an $m$-{\em lift} of $I$ if $${\rm pr}^i_{(m\to 1)} \circ J|_{\fv^i} =I|_{\fv^i}$$ for all $-k \leq i \leq \ell.$ In other words, the following diagram is commutative.
        $$\begin{array}{ccc} & & \gr_{(m)}(V_{\bullet}) \\ & J \nearrow & \downarrow {\rm pr}_{(m \to 1)} \\ \fv & \stackrel{I}{\longrightarrow} & \gr_{(1)}(V_{\bullet}).\end{array} $$ Denote by $\bL_{(m)}(I) \subset \Hom(\fv, \gr_{(m)}(V_{\bullet}))$ the set of all $m$-lifts of $I$.
        \item[(vi)]
            Define  ${\rm pr}^I_{(m)}:  \bL(I) \rightarrow \bL_{(m)}(I)$   by $${\rm pr}^I_{(m)}(\widetilde{I})|_{\fv^i} := {\rm pr}^i_{(m)} \circ \widetilde{I} |_{\fv^i} $$
for any $\widetilde{I} \in \bL(I)$ and  $-k \leq i \leq \ell$. 
        \end{itemize} \end{definition}

    The following is from Theorem 5 and Theorem 6 of \cite{AD}. The proof is elementary.

    \begin{lemma}\label{l.lift}
    In the setting of Definition \ref{d.filtration}, the following holds.
        \begin{itemize}
\item[(i)]    The group ${\rm GL}_1(\fv)$ acts simply transitively on $\bL(I)$ on the right: an element  $A \in {\rm GL}_1(\fv)$ sends $\widetilde{I} \in \bL(I)$ to $  \widetilde{I} \circ A$.
    \item[(ii)] For each $m \geq 1$, the
    homomorphism ${\rm pr}^I_{(m)}: \bL(I) \to \bL_{(m)}(I)$ is the
    quotient of $\bL(I)$ by the normal subgroup ${\rm GL}_m(\fv) \subset {\rm GL}_1(\fv).$
     \item[(iii)]
       For $J \in \bL_{(m)}(I),$ $v^i \in \fv^i, -k \leq i \leq \ell$,  and $A^j \in \fgl^j(\fv), 0 \leq  j \leq m-1,$ the element $J(A^j(v^i)) \in V_{i+j}/V_{i+j+m}$ determines a unique element in $V_i/V_{i+m}$, which we denote by  $$\overline{J(A^j(v^i))} \in V_i/V_{i+m} = \gr_{(m)}^i (V_{\bullet}).$$
     For $A = A^0 + A^1 + \cdots+ A^{m-1} \in {\rm GL}_1(\fv)/{\rm GL}_m(\fv)$  from (\ref{e.decompo}) and $v^i \in \fv^i$, define
        $$  J \cdot  A(v^i) :=  \overline{J(A^0(v^i))} + \overline{J(A^1(v^i))} + \cdots + \overline{J(A^{m-1}(v^i))} \ \in \ \gr^i_{(m)}(V_{\bullet}).$$ This defines an element $J \cdot A \in \bL_{(m)}(I)$.  Via the association $J \mapsto J \cdot A$,  the group  ${\rm GL}_1(\fv)/{\rm GL}_m(\fv)$ acts simply transitively on $\bL_{(m)}(I)$ on the right.
            \end{itemize} \end{lemma}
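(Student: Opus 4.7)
The plan is to establish the three parts in order, with (iii) reducing to (i) and (ii) after matching the explicit formula with the descended quotient action. For (i), I would first verify that any $A \in {\rm GL}_1(\fv) = {\rm Id}_\fv + \fgl_1(\fv)$ preserves the filtration $\fv_\bullet$ and induces the identity on $\gr(\fv_\bullet) = \fv$, so that $\widetilde{I} \circ A$ remains in $\bL(I)$; the action is obviously free. For transitivity, I would check that the composition $A := \widetilde{I}^{-1} \circ \widetilde{I}'$ of two lifts is an automorphism of $\fv$ preserving $\fv_\bullet$ (by matching dimensions) and inducing the identity on each associated-graded piece (from ${\rm pr}^i_{(1)} \circ \widetilde{I} = {\rm pr}^i_{(1)} \circ \widetilde{I}' = I^i$); writing $A$ in graded components then forces $A^j = 0$ for $j < 0$ and $A^0 = {\rm Id}_\fv$, whence $A \in {\rm GL}_1(\fv)$.

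For (ii), normality of ${\rm GL}_m(\fv)$ is Lemma \ref{l.fglj}, and the containment ${\rm GL}_m(\fv) \subset {\rm Ker}({\rm pr}^I_{(m)})$ is immediate since $B \in \fgl_m(\fv)$ sends $\fv^i$ into $\fv_{i+m}$ and hence $\widetilde{I}(B(\fv^i)) \subset V_{i+m}$. For the converse, starting from $A = {\rm Id}_\fv + \sum_{j \geq 1} B^j \in {\rm GL}_1(\fv)$ fixing ${\rm pr}^I_{(m)}(\widetilde{I})$, the plan is to peel off the $B^j$ inductively: projecting the equation $\widetilde{I}(\sum_j B^j(v)) \in V_{i+m}$ modulo $V_{i+j+1}$ for $j = 1, 2, \ldots, m-1$ successively forces $I^{i+j}(B^j(v)) = 0$, hence $B^j = 0$. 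Surjectivity of ${\rm pr}^I_{(m)}$ would follow by choosing linear sections $s^i : V_i/V_{i+m} \to V_i$ of the quotient maps and setting $\widetilde{I}^i := s^i \circ J|_{\fv^i}$: since the induced map on each $V_i/V_{i+1}$ equals $I^i$, the resulting $\widetilde{I}$ is a linear isomorphism in $\bL(I)$ with ${\rm pr}^I_{(m)}(\widetilde{I}) = J$.

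For (iii), well-definedness of $\overline{J(A^j(v^i))} \in V_i/V_{i+m}$ follows because two representatives in $V_{i+j}$ of $J(A^j(v^i)) \in V_{i+j}/V_{i+j+m}$ differ by an element of $V_{i+j+m} \subset V_{i+m}$. To identify the formula with the quotient action descended from (i) via (ii), I would take any $\widetilde{I} \in ({\rm pr}^I_{(m)})^{-1}(J)$ and any lift of $A$ to ${\rm GL}_1(\fv)$, and compute
\[ {\rm pr}^i_{(m)}(\widetilde{I}(A(v^i))) = \sum_{j=0}^{m-1} {\rm pr}^i_{(m)}(\widetilde{I}(A^j(v^i))) = \sum_{j=0}^{m-1} \overline{J(A^j(v^i))}, \]
where the terms $\widetilde{I}(A^j(v^i))$ for $j \geq m$ already lie in $V_{i+m}$ and are dropped. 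Simple transitivity on $\bL_{(m)}(I) = \bL(I)/{\rm GL}_m(\fv)$ then follows directly from (i) and (ii). The most delicate bookkeeping is in the converse direction of (ii), where the graded components $B^j$ must be isolated one at a time by projecting to successive quotients; the rest is elementary graded/filtered linear algebra.
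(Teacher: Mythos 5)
Your proposal is correct. The paper does not actually supply a proof of this lemma; it merely cites Theorems 5 and 6 of [AD] and notes that the argument is elementary. Your sketch fills in exactly the kind of elementary filtered/graded linear algebra one would expect: (i) follows from the observation that ${\rm GL}_1(\fv)$ preserves the filtration $\fv_\bullet$ and induces the identity on $\gr(\fv_\bullet)$, while transitivity comes from noting that $\widetilde{I}^{-1}\circ\widetilde{I}'$ is filtration-preserving and graded-identity, hence lies in ${\rm GL}_1(\fv)$; (ii) is the straightforward identification of the kernel and the construction of a lift from linear splittings $s^i$ of ${\rm pr}^i_{(m)}$; and in (iii), the key identity ${\rm pr}^i_{(m)}(\widetilde{I}(A(v^i)))=\sum_{j=0}^{m-1}\overline{J(A^j(v^i))}$ correctly matches the explicit formula with the quotient action descended from (i) via (ii), with the terms $j\ge m$ dropped because they land in $V_{i+m}$. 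One small simplification: in the converse direction of (ii) you do not actually need the successive projections against $\widetilde{I}$; since $\widetilde{I}$ is a filtration-preserving isomorphism, $\widetilde{I}\bigl(\sum_{j\ge1}B^j(v^i)\bigr)\in V_{i+m}$ is equivalent to $\sum_{j\ge1}B^j(v^i)\in\fv_{i+m}$, and by the grading this at once forces $B^j=0$ for $1\le j\le m-1$. Either way the conclusion is the same and the argument is sound.
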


\subsection{Principal bundles and frame bundles}\label{ss.principal}

\begin{definition}\label{d.horizontal}
Let $\psi: Q \to M$ be a submersion between two complex manifolds. \begin{itemize}
\item[(i)] For a point $y \in Q$, a subspace $\sH_y \subset T_y Q$ is {\em $\psi$-horizontal} if $T_y Q = \sH_y \oplus {\rm Ker}({\rm d}_y \psi)$.
       \item[(ii)] A vector subbundle $\sH \subset TQ$ is a {\em $\psi$-connection} if the fiber $\sH_y \subset T_y Q$ is $\psi$-horizontal for each $y \in Q.$
        \end{itemize}
        \end{definition}

Recall the following standard terminology.

\begin{definition}\label{d.principal}
Let $G$ be a complex Lie group and $M$ be a complex manifold. \begin{itemize} \item[(i)] A complex manifold $P$  equipped with a right $G$-action and  a submersion $\psi: P \to M$
 is a {\em principal bundle with the structure group} $G$, if $G$ acts simply transitively on each fiber of $\psi$.
\item[(ii)] In (i), let $R_g: P \to P$ be the action of $g \in G$.  A vector subbundle $\sH \subset T P$ satisfying $TP = \sH \oplus {\rm Ker}({\rm d} \psi)$  and ${\rm d} R_g (\sH) = \sH$ for all $g \in G$ is called a {\em principal connection} on $P$. A principal connection is a $\psi$-connection in the sense of Definition \ref{d.horizontal}: for each $y \in P$, the fiber $\sH_y \subset T_y P$ is $\psi$-horizontal.    \end{itemize} \end{definition}

\begin{definition}\label{d.frame}
Let $M$ be a complex manifold. Fix a vector space $W$ with $\dim W = \dim M$.
\begin{itemize}\item[(i)] For each $x \in M$, define $$\F_x M := {\rm Isom}(W, T_x M)$$ the set of linear isomorphisms from $W$ to $T_x M$. The union $\F M = \cup_{x \in M} \F_x M$ with the natural projection $\pi^{\F M}: \F M \to M$ is the {\em frame bundle} of $M$. It is a principal bundle on $M$ with the structure group ${\rm GL}(W). $
A holomorphic section of $\pi^{\F M}$ is called an {\em absolute parallelism} on $M$.
\item[(ii)] For any $x \in M$ and $y \in \F_x M$, let $\theta_y: T_{y} (\F M) \to W$ be the composition $$T_{y}(\F M) \stackrel{ {\rm d} \pi^{\F M}}{\longrightarrow} T_x M \stackrel{ y^{-1}}{\longrightarrow} W.$$   The {\em soldering form} $\theta$ is the $W$-valued 1-form on $\F M$ whose value at $y \in \F M$ is $\theta_{y}$.
\item[(iii)] For a point $y \in \F M$, a $\pi^{\F M}$-horizontal subspace $\sH_y \subset T_y \F M$ and a vector $w \in W$, denote by $w^{\sH_y} \in \sH_y$ the unique element of $\sH_y$ satisfying $\theta(w^{\sH_y}) = w.$
 \item[(iv)]    The {\em torsion} of a $\pi^{\F M}$-horizontal subspace $\sH_y \subset T_{y} (\F M)$ is the homomorphism $\tau^{\sH_y} \in \Hom( \wedge^2 W, W)$ which sends $u \wedge v \in \wedge^2 W$ to  $$\tau^{\sH_y}(u, v) := {\rm d} \theta (u^{\sH_y}, v^{\sH_y}) \ \in W.$$
\item[(v)] If $\sH \subset T(\F M)$ is a principal connection, then each fiber $\sH_y \subset T_y \F M$ is a $\pi^{\F M}$-horizonal subspace. The $\Hom(\wedge^2 W, W)$-valued function $\tau^{\sH}$ on $\F M$ given by $y \mapsto  \tau^{\sH_y}$ is called the {\em torsion of the principal connection} $\sH$.   \end{itemize} \end{definition}

\subsection{Filtration on a complex manifold}\label{ss.filtmfd}
\begin{definition}\label{d.filtered}
Let $n \geq -1$ be an integer.  Let $M$ be a complex manifold.  \begin{itemize} \item[(i)]  A
{\em filtration of  height $n$} on $M$ is a descending sequence $\sD_{\bullet}$ of vector subbundles
$$TM = \sD_{-k} \supset \sD_{-k+1} \supset \cdots \supset \sD_{n},$$
which gives a filtration on the tangent space  $T_x M$ for each $x \in M$. It is convenient to define $\sD_{n+j} =0$ for $j \geq 1$.  \item[(ii)]
A filtration on $M$ as in (i)  is a {\em Tanaka filtration}, if the Lie brackets of  local sections satisfy $$[\sD_{i}, \sD_{j} ] \subset \sD_{i+j} \mbox{ for  } i, j \leq 0.$$
 \item[(iii)] For each $x \in M$, the Lie brackets of local vector fields in (ii) equip the graded vector space $${\rm symb}_{x}(\sD_{\bullet}) := \oplus_{i=-k}^{-1} (\sD_{i}/\sD_{i+1})_x$$ with a structure of a nilpotent graded Lie algebra. This nilpotent graded Lie algebra is called the {\em symbol algebra} of the Tanaka filtration at $x$.
\end{itemize}
\end{definition}

\begin{definition}\label{d.filtrationlift}
Let $\sD_{\bullet}$ be a filtration of height $n$ on a complex manifold $M$.
Let $f: Q \to M$ be a submersion from a complex manifold $Q$. Define a filtration $\sD^f_{\bullet}$ on $Q$ of height $n+1$ by
$$\sD^f_i = ({\rm d} f)^{-1} \sD_i  \mbox{ for } -k \leq  i \leq n \mbox{ and }\sD^f_{n+1} = {\rm Ker}({\rm d} f).$$ The filtration $\sD^f_{\bullet}$ is called the $f$-{\em lift} of the filtration $\sD_{\bullet}$.
\end{definition}

We skip the proof of the following elementary lemma.

\begin{lemma}\label{l.vertical}
In Definition \ref{d.filtrationlift}, the following holds.  \begin{itemize}
\item[(i)]
If $R: Q \to Q$ is a biholomorphic automorphism of $Q$ satisfying $R(f^{-1}(x)) = f^{-1}(x)$ for any $x \in M$, then ${\rm d} R: TQ \to TQ$ sends $\sD^f_i$ to itself for any $-k \leq i \leq n+1$.
\item[(ii)] If $\sD_{\bullet}$ is a Tanaka filtration, then so is $\sD^f_{\bullet}$. In this case, the differential ${\rm d}_y f: T_y Q \to T_x M$ for any $x \in M$ and $y \in f^{-1}(x)$  induces a graded Lie algebra isomorphism between the symbol  algebras   ${\rm symb}_y(\sD^f_{\bullet})$ and ${\rm symb}_x(\sD_{\bullet})$. \end{itemize}  \end{lemma}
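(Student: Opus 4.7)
The plan is to treat (i) as an immediate consequence of the identity $f \circ R = f$, and to treat (ii) in two stages: establish the Tanaka bracket condition for $\sD^f_\bullet$ by working in a local foliation chart, then identify the symbol algebras via the surjection induced by $\mathrm{d}f$. For part (i), fiber-preservation gives $\mathrm{d}f \circ \mathrm{d}R = \mathrm{d}f$, so for $v \in \sD^f_i = (\mathrm{d}f)^{-1}\sD_i$ with $-k \le i \le n$ we have $\mathrm{d}f(\mathrm{d}R(v)) = \mathrm{d}f(v) \in \sD_i$, giving $\mathrm{d}R(v) \in \sD^f_i$; the case $i = n+1$ follows from the same identity applied to $\ker(\mathrm{d}f)$.

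For the Tanaka condition in (ii), I would work in a local foliation chart $Q \cong M \times F$ in which $f$ is the first projection. In such a chart, $\sD^f_i$ is generated as an $\sO_Q$-module by two families: horizontal pullbacks $X^*$ of local sections $\bar X$ of $\sD_i$ (defined by $X^*_{(x,t)} = \bar X_x$, independent of $t$), and vertical vector fields (sections of $\ker(\mathrm{d}f) = \sD^f_{n+1}$). Because $i,j \le 0$ implies $\sD^f_i, \sD^f_j \subset \sD^f_{i+j}$, the Leibniz corrections in $[f_1 X, f_2 Y] = f_1 f_2 [X,Y] + f_1 X(f_2) Y - f_2 Y(f_1) X$ automatically lie in $\sD^f_{i+j}$, so it suffices to verify $[X,Y] \in \sD^f_{i+j}$ on generator pairs. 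Three cases arise: (a) $[X^*, Y^*]$ is $f$-related to $[\bar X, \bar Y]$, so $\mathrm{d}f([X^*, Y^*]_y) = [\bar X, \bar Y]_{f(y)} \in \sD_{i+j}$; (b) the bracket of a pullback with a vertical field is itself vertical (immediate in the product chart), hence lies in $\sD^f_{n+1} \subset \sD^f_{i+j}$ since $i+j \le 0 \le n+1$; and (c) brackets of two verticals are vertical for the same reason.

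For the symbol algebra isomorphism, for $-k \le i \le -1$ the map $\mathrm{d}_y f: (\sD^f_i)_y \to (\sD_i)_x$ is surjective with kernel $\ker(\mathrm{d}_y f) \subseteq (\sD^f_{i+1})_y$, so it descends to a graded vector space isomorphism $\overline{\mathrm{d}_y f}: \mathrm{symb}_y(\sD^f_\bullet) \to \mathrm{symb}_x(\sD_\bullet)$. Lie-bracket compatibility follows by representing each symbol class by a pullback lift $X^*$: then $[X^*, Y^*]$ is $f$-related to $[\bar X, \bar Y]$, and the pointwise identity $\mathrm{d}_y f([X^*, Y^*]_y) = [\bar X, \bar Y]_{f(y)}$ passes to the quotient modulo $\sD_{i+j+1}$ to give $\overline{\mathrm{d}_y f}[\alpha,\beta] = [\overline{\mathrm{d}_y f}\alpha,\, \overline{\mathrm{d}_y f}\beta]$. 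The whole argument is elementary; the mildest subtlety is noticing that one may freely replace an arbitrary representative of a symbol class by a pullback lift, since the difference lies in $\sD^f_{i+1}$ and affects $[X,Y]$ only modulo $\sD^f_{i+j+1}$ by the Tanaka condition just established for $\sD^f_\bullet$.
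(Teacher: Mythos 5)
The paper explicitly states ``We skip the proof of the following elementary lemma,'' so there is no paper proof to compare against; the question is simply whether your argument is correct, and it is. Your proof of (i) via $f\circ R=f$ is exactly the right one-liner. For (ii), the reduction to a local foliation chart where $f$ is a projection, the reduction to generator pairs using the Leibniz rule together with the monotonicity $\sD^f_i\subset\sD^f_{i+j}$ for $j\le 0$, and the three-case analysis (pullback/pullback via $f$-relatedness, pullback/vertical, vertical/vertical) are all correct. The symbol-algebra identification is handled carefully: you correctly observe that for $-k\le i\le -1$ the map $\mathrm{d}_yf$ induces an isomorphism $(\sD^f_i/\sD^f_{i+1})_y\cong(\sD_i/\sD_{i+1})_x$ since $\ker(\mathrm{d}_yf)=(\sD^f_{n+1})_y\subset(\sD^f_{i+1})_y$, and the remark at the end about why one may pass to pullback representatives (the change lies in $\sD^f_{i+1}$, which only perturbs the bracket modulo $\sD^f_{i+j+1}$) closes the only gap a careless write-up would leave. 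Nothing is missing.
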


\begin{definition}\label{d.Lifts}
Let $\sD_{\bullet}$ be a filtration of height $n \geq -1$ on a complex manifold $M$.
Fix  a graded vector space $\fv = \oplus_{i= -k}^{\ell} \fv^i$ with $\ell \geq n$ satisfying $\dim \fv^i = {\rm rank} (\sD_i/\sD_{i+1})$ for each $-k \leq i \leq n$, implying $\dim M = \dim (\fv^{<n+1}).$
\begin{itemize} \item[(i)]
Let $\gr (\sD_{\bullet})$ be the vector bundle on $M$  whose fiber at $x \in M$ is the graded vector space $$ \gr(\sD_{\bullet})_x := (\sD_{-k}/\sD_{-k+1})_x \oplus \cdots \oplus (\sD_{n -1}/\sD_{n})_x \oplus (\sD_{n})_x.$$ Note that $\gr(\sD_{\bullet})_x$ is $\gr(V_{\bullet}) = \gr_{(1)}(V_{\bullet})$ for the filtration $V_{\bullet}$ on $V= T_x M$ given by $$T_x M = (\sD_{-k})_x \supset (\sD_{-k+1})_x \supset \cdots \supset (\sD_n)_x.$$
\item[(ii)] Let $\fv^{<n+1} \times M$ be the trivial vector bundle. A vector bundle isomorphism $$ I: \fv^{<n+1} \times M \ \longrightarrow \ \gr(\sD_{\bullet})$$ is a {\em graded parallelism} of $(M, \sD_{\bullet})$ if for each $x \in M$, the vector space isomorphism $I_x: \fv^{<n+1} \to \gr(\sD_{\bullet})_x$ is an isomorphism of graded vector spaces.
    \item[(iii)] Assume that we have a graded parallelism $I$ of $(M, \sD_{\bullet})$. For each $x \in M$, we have the filtration $V_{\bullet}$ on $V= T_x M$ in (i) and the associated subsets $\bL(I_x) \subset {\rm Isom}(\fv^{<n+1}, V)$ and $\bL_{(m)}(I_x)
        \subset \Hom(\fv^{<n+1}, \gr_{(m)}(V_{\bullet}))$ from Definition \ref{d.filtration} (iv) and (v). We call $$ \bL(I):= \cup_{x \in M} \bL(I_x) $$   the {\em bundle of  lifts} of $I$ and denote by $\lambda^I: \bL(I) \to M$ the natural projection.   By Lemma \ref{l.lift} (i), this is a principal bundle on $M$ with the structure group ${\rm GL}_1(\fv^{<n+1})$. By setting $W = \fv^{<n+1}$ in Definition \ref{d.frame}, we can regard the principal bundle $\bL(I)$ as a principal subbundle of the frame bundle:
    $$  \begin{array}{ccc} \bL(I) & \subset & \F M \\
     \lambda^I \downarrow & & \downarrow \pi^{\F M} \\ M & = & M\end{array} $$ with the  inclusion of the structure groups ${\rm GL}_1(\fv^{<n+1}) \subset {\rm GL}(W)$.
   \item[(iv)]
    Define the {\em bundle of $m$-lifts  } of $I$,
     $$\bL_{(m)}(I) := \cup_{x \in M} \bL_{(m)}(I_x)$$  for each $m \geq 1$ with the natural projection $\lambda^I_{(m)}: \bL_{(m)}(I) \to M$.  From Definition \ref{d.filtration} (vi), we have a natural bundle map ${\rm pr}^I_{(m)}: \bL(I) \to \bL_{(m)}(I):$
     $$\begin{array}{ccc} \bL(I) & \stackrel{{\rm pr}^I_{(m)}}{\longrightarrow} & \bL_{(m)}(I) \\
     \lambda^I \downarrow & & \downarrow \lambda^I_{(m)} \\ M & = & M. \end{array} $$
   By Lemma \ref{l.lift} (ii), the map  ${\rm pr}^I_{(m)}$ is the quotient of $\bL(I)$ by the normal subgroup ${\rm GL}_m(\fv^{<n+1}) \subset {\rm GL}_1(\fv^{<n+1})$.  \end{itemize}
\end{definition}

\subsection{$B$-groups and $\fl$-exponential actions}\label{ss.Bgroup}

\begin{definition}\label{d.Bgroup}
Let $B$ be a fixed complex manifold. \begin{itemize} \item[(i)] A vector bundle $\sV$ on $B$ with a vector bundle homomorphism $\Lambda: \wedge^2 \sV \to \sV$ is a {\em $B$-Lie algebra} if for each $b \in B$, the homomorphism $\Lambda_b: \wedge^2 \sV_b \to \sV_b$ of vector spaces determines a Lie algebra structure on the fiber $\sV_b$.  \item[(ii)]
A complex manifold $\mathsf{G}$ equipped with a submersion $\gamma: \mathsf{G} \to B$ and a distinguished section $\epsilon: B \to \mathsf{G}$  is a {\em $B$-group} if for each $b \in B$, the fiber $\mathsf{G}(b) :=\gamma^{-1}(b)$ has the structure of a complex Lie group with the neutral element $\epsilon (b)$ such that the group operation depends holomorphically on $B$.
 A $B$-group $\gamma': \mathsf{G}' \to B$ which is a fiber subbundle of  $\gamma$ is  a  {\em $B$-subgroup} of $\mathsf{G}$ if $\mathsf{G}'(b)= (\gamma')^{-1}(b)$ is a subgroup of $\mathsf{G}(b)= \gamma^{-1}(b)$ for each $b \in B$.
\item[(iii)] For a $B$-group $\gamma: \mathsf{G} \to B$,  the relative tangent bundle  ${\rm Ker}({\rm d} \gamma)|_{\epsilon (B)}$  is a $B$-Lie algebra, which we denote by ${\rm Lie}\gamma: {\rm Lie}\mathsf{G} \to B.$ Its fiber $({\rm Lie}\gamma)^{-1}(b)$ can be identified with the Lie algebra ${\rm Lie} \mathsf{G}(b)$ of the Lie group $\mathsf{G}(b)$. The exponential maps along fibers determine a holomorphic map $$\exp^{\mathsf G}: {\rm Lie}\mathsf{G} \to  \mathsf{G}, \ \mbox{ satisfying } {\rm Lie} \gamma = \gamma \circ \exp^{\mathsf G}.$$
\item[(iv)]
  Let $\gamma: \mathsf{G} \to B$ be a $B$-group and let $\psi: Q \to B$ be a submersion from a complex manifold $Q$. A {\em  $B$-group action of $\mathsf{G}$ on $Q$ with respect to $\psi$} is a holomorphic map from the fiber product $Q \times_B \mathsf{G}$ to $ Q$ such that for each $b \in B$, its restriction $\psi^{-1}(b) \times \gamma^{-1}(b) \to \psi^{-1}(b)$ on the fibers over $b$ is a right action of the group $\mathsf{G}(b) = \gamma^{-1}(b)$ on the complex manifold $\psi^{-1}(b)$. \end{itemize} \end{definition}

    Note that a  $B$-group action in Definition \ref{d.Bgroup} (iv) is not a genuine group action on the manifold $Q$. We have only the action of each fiber of $\gamma$ on  each fiber of $\psi: Q\to B$. For this reason, it is convenient to introduce the following notion.

\begin{definition}\label{d.exponential}
Let $\fl$ be a fixed vector space and let $Q$ be a complex manifold. A family $\{R_{\exp(l)}: Q \to Q\mid l \in \fl\}$ of biholomorphic automorphisms of $Q$ is called an {\em $\fl$-exponential action} on $Q$
if \begin{itemize}
\item[(i)] for each $z \in Q$, the map $\fl \to Q$ defined by  $l \mapsto R_{\exp(l)}(z)$ is holomorphic; and
    \item[(ii)] $R_{\exp((t+t') l)} = R_{\exp(tl)} \circ R_{\exp(t' l)}$ for any  $t, t' \in \C$ and $l \in \fl$, namely,  the family $\{ R_{\exp(tl)} \mid t \in \C\}$ is a one-parameter group of automorphisms of $Q$. \end{itemize} The holomorphic vector field  on $Q$ generating the one-parameter group in (ii)  is  denoted by $l^Q$ and called the {\em fundamental vector field on $Q$ corresponding to} $l \in \fl.$ \end{definition}

\begin{example}\label{ex.vectorfield}
In Definition \ref{d.Bgroup} (iv), let $\fl \times B$ be the trivial vector bundle on $B$ given by a vector space $\fl$ and assume that we have an isomorphism
 $\zeta: \fl \times B \to {\rm Lie} \mathsf{G}$ of vector bundles on $B$.   Then each $l \in \fl$ determines a holomorphic section $\exp^{\mathsf G} (\zeta(l \times B))$ of $\gamma$, which we write simply as $\exp(l)$.  By the fiberwise group action, we have a family of biholomorphic automorphisms  $\{R_{\exp (l)}:  Q \to Q \mid  l\in \fl \},$ which defines an $\fl$-exponential action on $Q$. The fundamental vector field $l^Q$ on $Q$  corresponding to any $l \in \fl$ satisfies  ${\rm d} \psi (l^Q) =0.$   \end{example}

\begin{lemma} \label{l.fundamental} In Example \ref{ex.vectorfield}, pick a point $y \in Q$ and set $b=\psi(y)$.  Let $\zeta_b: \fl \to {\rm Lie} \mathsf{G}(b)$ be the restriction of the vector bundle isomorphism $\zeta$ to the fibers at $b$.   Then for any $l_1, l_2 \in \fl$,
$$[l^Q_1, l^Q_2]_y = [\zeta_b(l_1),  \zeta_b(l_2)]^Q_y,$$
where the bracket on the left hand side is the Lie bracket of vector fields on $Q$ and the bracket on the right hand side is the Lie bracket of the Lie algebra  ${\rm Lie} \mathsf{G}(b).$
\end{lemma}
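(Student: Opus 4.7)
The plan is to reduce this to the classical fact about a genuine Lie group action: namely, that for a \emph{right} action of a Lie group on a manifold, the fundamental vector field map from the Lie algebra to the vector fields on the manifold is a Lie algebra homomorphism. The extra subtlety here is that $\{R_{\exp(l)}\}$ is not induced from a group action on the whole of $Q$, only from a fiberwise action of $\mathsf{G}$, so we must localize along a single fiber of $\psi$.

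First I would observe that the vector field $l^Q$ is vertical for $\psi$, i.e.\ $\mathrm{d}\psi(l^Q) = 0$. This is immediate from the construction in Example \ref{ex.vectorfield}: each $R_{\exp(l)}$ restricts to a biholomorphism of each fiber $\psi^{-1}(b)$, so the one-parameter family $\{R_{\exp(tl)}\}$ preserves the fiber through $y$, and its infinitesimal generator at $y$ lies in $\ker(\mathrm{d}_y \psi)$.

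Next, restrict everything to the fiber $\psi^{-1}(b)$. By construction, the fiberwise action of $\mathsf{G}(b)$ on $\psi^{-1}(b)$ is a genuine right Lie group action, and the section $\exp(l)$ of $\gamma$ restricts at $b$ to $\exp_b(\zeta_b(l)) \in \mathsf{G}(b)$, where $\exp_b$ is the usual Lie group exponential of $\mathsf{G}(b)$. Hence $l^Q|_{\psi^{-1}(b)}$ is precisely the fundamental vector field on $\psi^{-1}(b)$ corresponding to the Lie algebra element $\zeta_b(l) \in \mathrm{Lie}\,\mathsf{G}(b)$ in the sense of ordinary Lie theory. Applying the standard Lie-theoretic identity for right actions of Lie groups yields, on the fiber,
\[
[\,l_1^Q|_{\psi^{-1}(b)},\ l_2^Q|_{\psi^{-1}(b)}\,]_y \ = \ [\zeta_b(l_1),\zeta_b(l_2)]^{\psi^{-1}(b)}_y,
\]
where the bracket on the right is taken in $\mathrm{Lie}\,\mathsf{G}(b)$ and the fundamental vector field is for the action on the fiber.

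Finally, I would invoke the elementary fact that the Lie bracket of two vector fields on $Q$ that are tangent to the fibers of $\psi$ is again tangent to the fibers, and its value at a point $y$ equals the bracket of their restrictions to $\psi^{-1}(\psi(y))$. Applying this to the vertical vector fields $l_1^Q$ and $l_2^Q$, together with the fiberwise identity above and the observation that $[\zeta_b(l_1),\zeta_b(l_2)]^Q_y$ is itself vertical with restriction to the fiber equal to the fundamental vector field of $[\zeta_b(l_1),\zeta_b(l_2)]$ for the action on $\psi^{-1}(b)$, gives the claimed equality. The only real point to be careful about is the sign convention; since we are dealing with a right action (per Definition \ref{d.Bgroup}(iv)), the fundamental vector field map is a Lie algebra homomorphism and the sign is correct as stated.
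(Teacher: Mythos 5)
Your proof is correct and takes essentially the same approach as the paper's: the paper's own proof simply says to restrict the vector fields to the fiber $\psi^{-1}(b)$ and invoke the standard result on right group actions (Proposition 4.1, Chapter I of Kobayashi--Nomizu), which is exactly what you have done, just with the verticality and restriction steps spelled out explicitly.
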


\begin{proof}
The claim can be checked by restricting all the vector fields to the fiber $\psi^{-1}(b)$. So it follows from the standard result on right group actions on manifolds, for example, Proposition 4.1 in Chapter I of \cite{KN}.
\end{proof}

\begin{remark} If $B$ is one point, then a $B$-group (resp. $B$-Lie algebra) is a usual group (resp. Lie algebra). Then a  $B$-group action is simply a group action and an $\fl$-exponential action is just the group action composed with the exponential map from the Lie algebra. \end{remark}

\subsection{$\beta$-principal bundles and connections}\label{ss.connection}

    \begin{definition}\label{d.Bbundle}
    Fix a $B$-group $\gamma: \mathsf{G} \to B$. Let $M$ be a complex manifold equipped with a submersion $\beta: M \to B$. \begin{itemize}
\item[(i)]
A complex manifold $P$ with a submersion $\psi: P \to M$ is called a {\em $\beta$-principal bundle with the structure $B$-group} $\mathsf{G}$, if there is a  $B$-group action of $\mathsf{G}$ on $P$ with respect to the submersion $\beta \circ \psi: P \to B$, which gives the fiber $(\beta \circ \psi)^{-1} (b)$ for each $b \in B$  the structure of a principal bundle over the manifold $\beta^{-1}(b)$ with the structure group $\mathsf{G}(b)= \gamma^{-1}(b).$
\item[(ii)] In (i), if $\gamma': \mathsf{G}' \to B$ is a $B$-subgroup of $\mathsf{G},$
a $\beta$-principal bundle $\psi': P' \to M$ with the structure $B$-group $\mathsf{G}'$ is a {\em $\beta$-principal subbundle}
of $P$ if $P'$ is a fiber subbundle of $P$ and the  group operation of $(\gamma')^{-1}(b)$ on $(\beta \circ \psi')^{-1}(b)$ is compatible with the group operation of $\gamma^{-1}(b)$ on $(\beta \circ \psi)^{-1}(b)$ for each $b \in B$.
\end{itemize}
\end{definition}

A $\beta$-principal bundle does not have a natural group action on the right, unlike an ordinary  principal bundle. Consequently, the usual notion of a principal connection does not work. To remedy this, we introduce the following.

\begin{definition}\label{d.connection}
Let $\gamma: \mathsf{G} \to B,$  $\beta: M \to B$ and  $\psi: P \to M$ be as in Definition \ref{d.Bbundle}. Assume that we have \begin{itemize} \item[(1)]  a vector bundle isomorphism
 $\zeta: \fl \times B \cong {\rm Lie} \mathsf{G},$ which induces  an $\fl$-exponential action $R_{\exp(l)}: P \to P$  and the fundamental vector field $ l^P$ on $P$  for each $l \in \fl$ from Example \ref{ex.vectorfield};  \item[(2)]
 a holomorphic section $s: M \to P$ of $\psi$ with the image $\Sigma = s(M) \subset P$; and
 \item[(3)] a neighborhood $O \subset \fl$ of the zero $o \in \fl$ such that the exponential map $\exp^{\mathsf{G}}: {\rm Lie}\mathsf{G} \to \mathsf{G}$ sends $\zeta(O \times B) \subset {\rm Lie}\mathsf{G}$ biholomorphically to a neighborhood of $\epsilon (B) \subset \mathsf{G}$.
 \end{itemize}
 Let $U \subset P$ be the image of $\Sigma$ under the $B$-group action of the neighborhood $\exp^{\mathsf{G}}(\zeta(O \times B)) \subset \mathsf{G}$ in (3) such that $U$ is a neighborhood of $\Sigma$ in $P$.
Define a vector subbundle  $\sH \subset TP|_U$ by  $$\sH_{R_{\exp(l)}(s(x))} := {\rm d}_{s(x)} R_{\exp (l)} (T_{s(x)} \Sigma)$$ for each $x \in M$ and $l \in O$, such that $T_y P = \sH_{y} \oplus {\rm Ker}({\rm d}_y \psi)$  for any $y \in U \subset P$. The vector subbundle $\sH$ is called the {\em $\psi$-connection} on $U \subset P$ determined by $\zeta$ and $\Sigma$.
       \end{definition}

\begin{definition}\label{d.torsion}
In Definition \ref{d.connection}, fix a vector space $W$ with $\dim W = \dim M$ and define the frame bundle $\F M$ in terms of $W$. Assume that we have an inclusion
 $\mathsf{G} \subset {\rm GL}(W) \times B$ as $B$-groups  and  $\psi: P \to M$ is a $\beta$-principal subbundle of the frame bundle $\F M$ with the structure $B$-groups related by the given inclusion $\mathsf{G} \subset {\rm GL}(W) \times B$.   \begin{itemize} \item[(i)]
The {\em soldering form} $\theta$ on $P$ means the restriction of the soldering form of $\F M$ to $P$, a $W$-valued 1-form on $P$.
\item[(ii)] Assume that we have a trivialization $\zeta: \fl \times B \to  {\rm Lie} \mathsf{G}$ and a section $\Sigma \subset P$ of $\psi$ which determine the $\psi$-connection $\sH \subset TP|_U$ in a neighborhood $U$ of $\Sigma \subset P$, as in Definition \ref{d.connection}.
         For an element $w \in W$, the vector field $w^{\sH}$ whose value at $y \in U$ is
         $w^{\sH_y}$ from Definition \ref{d.frame} is called the {\em $\sH$-horizontal vector field} corresponding to $w \in W$.
             \end{itemize} \end{definition}

The following lemma is a  generalization of some standard results (namely, equations (11) - (16) in \cite{AD}) for principal subbundles of frame bundles to $\beta$-principal subbundles of frame bundles. Its proof is a modification of the corresponding arguments for principal subbundles. We give full proofs for the reader's convenience.

\begin{lemma}\label{l.Sternberg}
In the setting of Definition \ref{d.torsion}, the section $\exp(l)$ of $\gamma: \mathsf{G} \to B$ (resp. the section $\zeta(l)$ of ${\rm Lie}\mathsf{G} \to B$) determined by $l \in \fl$ in Example \ref{ex.vectorfield} can be regarded as a ${\rm GL}(W)$-valued  (resp. $\fgl(W)$-valued) function on $B$ via the inclusion $\mathsf{G} \subset {\rm GL}(W) \times B$.  Then the following holds.
\begin{itemize}
\item[(i)]  For any $l \in \fl$,
$$R^*_{\exp (l)} \theta = \exp(-l) \circ \theta \ \mbox{ and } \ l^P \, \rfloor {\rm d} \theta = - \zeta( l) \circ \theta. $$
\item[(ii)] Let $\sH_y, \sH'_y \subset T_y P$ be two $\psi$-horizontal subspaces at $y \in P \subset \F M$. For $u, v \in W$, we have unique $a, b \in \fl$ such that the fundamental vector fields $a^P, b^P$ on $P$ satisfy
    $$a^P_y = u^{\sH'_y} - u^{\sH_y} \mbox{ and } b^P_y = v^{\sH'_y} - v^{\sH_y}.$$ Then the torsions from Definition \ref{d.frame} (iv) satisfy $$\tau^{\sH'_y} (u, v) - \tau^{\sH_y}(u,v) = - \zeta(a) (v) + \zeta(b) (u).$$
\end{itemize}
Let $\sH$ be the $\psi$-connection on $U \subset P$ determined by $\Sigma \subset O \subset \fl$ and $\zeta$ in Definition \ref{d.connection}, and let $w^{\sH}$ for $w \in W$ be the $\sH$-horizontal vector field on $U$ from Definition \ref{d.torsion}.  \begin{itemize}
\item[(iii)] For  $l \in O$, $${\rm d} R_{\exp (l)} (w^{\sH})= (\exp(-l) (w))^{\sH} \mbox{ and } [l^P, w^{\sH}] = (\zeta(l) (w))^{\sH}$$ at points of $U$ where both sides make sense.
    \item[(iv)]  For any $y \in U $ and $u, v \in W$,
    $$\tau^{\sH_y}(u, v) = - \theta_y ([u^{\sH}, v^{\sH}]) = - y^{-1} \circ {\rm d}_y \psi ([u^{\sH}, v^{\sH}]). $$
    \item[(v)] For any $l \in O, y \in U$ and $u, v \in W$,
    $$\tau^{\sH_{R_{\exp(l)}(y)}}(u, v) = \exp(-l) \circ \tau^{\sH_y}( \exp(l) (u), \exp (l) (v)),$$ at points of $U$ where both sides make sense.
\end{itemize} \end{lemma}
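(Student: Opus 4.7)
The plan is to prove the five assertions sequentially, with (i) providing the transformation formulas that drive (ii)--(v). For (i), I would start from the definition $\theta_y = y^{-1} \circ {\rm d}_y \pi^{\F M}$ on $\F M$ restricted to $P$, combined with the identification $R_{\exp(l)}(y) = y \circ \exp(l)|_{\beta(\psi(y))}$ (composition in ${\rm Isom}(W, T_{\psi(y)} M)$ via the inclusion $\mathsf{G} \subset {\rm GL}(W) \times B$). Since $\psi \circ R_{\exp(l)} = \psi$, a direct calculation yields $R_{\exp(l)}^{*}\theta = \exp(-l) \circ \theta$. Differentiating the family $R_{\exp(tl)}^{*}\theta = \exp(-tl) \circ \theta$ at $t=0$ gives $\mathcal{L}_{l^{P}}\theta = -\zeta(l) \circ \theta$; because $l^P$ is vertical for $\psi$, we have $\theta(l^P) = 0$, so Cartan's magic formula reduces this to $l^P \, \rfloor \, {\rm d}\theta = -\zeta(l) \circ \theta$.

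For (ii), both $u^{\sH'_y}$ and $u^{\sH_y}$ have soldering image $u$, so their difference lies in $\ker {\rm d}_y \psi$; the principal action of $\mathsf{G}(\beta(\psi(y)))$ on the fiber $\psi^{-1}(\psi(y))$ makes $l \mapsto l^P_y$ an isomorphism from $\fl$ onto this vertical tangent space via $\zeta$, yielding the unique $a, b \in \fl$ of the statement. Bilinearly expanding,
$$\tau^{\sH'_y}(u,v) - \tau^{\sH_y}(u,v) = {\rm d}\theta_y(u^{\sH_y}, b^P) + {\rm d}\theta_y(a^P, v^{\sH_y}) + {\rm d}\theta_y(a^P, b^P),$$
and the identity $l^P \, \rfloor \, {\rm d}\theta = -\zeta(l) \circ \theta$ from (i) together with $\theta(a^P) = \theta(b^P) = 0$ evaluates these three terms to $\zeta(b)(u)$, $-\zeta(a)(v)$, and $0$ respectively.

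For (iii), I would first verify that the $\psi$-connection $\sH$ on $U$ is preserved by $R_{\exp(l)}$ whenever both $y$ and $R_{\exp(l)}(y)$ lie in $U$: writing $y = R_{\exp(l_0)}(s(x))$, the fiberwise group law presents $R_{\exp(l)} \circ R_{\exp(l_0)}$ as the action of $\exp(l_0) \cdot \exp(l) \in \mathsf{G}(\beta(\psi(y)))$, which equals $\exp(l')$ for some $l' \in O$, so the chain rule yields $\sH_{R_{\exp(l)}(y)} = {\rm d}R_{\exp(l')}(T_{s(x)}\Sigma) = {\rm d}R_{\exp(l)}(\sH_y)$. Once this invariance is in hand, the first claim of (iii) is immediate from (i): $\theta_{R_{\exp(l)}(y)}({\rm d}R_{\exp(l)}(w^{\sH}|_y)) = \exp(-l)(w)$ identifies ${\rm d}R_{\exp(l)}(w^{\sH}|_y)$ with the $\sH$-horizontal lift of $\exp(-l)(w)$ at $R_{\exp(l)}(y)$. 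For the bracket formula, the Lie-derivative identity $[l^P, w^{\sH}]_y = \frac{{\rm d}}{{\rm d}t}\big|_{t=0} {\rm d}R_{\exp(-tl)}(w^{\sH}|_{R_{\exp(tl)}(y)})$ combined with the equivariance just proven (with $-tl$ in place of $l$) reduces to $\frac{{\rm d}}{{\rm d}t}\big|_{t=0}(\exp(tl)(w))^{\sH}|_y$, which equals $(\zeta(l)(w))^{\sH}|_y$ by $\frac{{\rm d}}{{\rm d}t}|_{t=0}\exp(tl)|_{\beta(\psi(y))} = \zeta(l)|_{\beta(\psi(y))}$ in $\fgl(W)$ and the linearity of the horizontal lift.

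For (iv), the standard identity ${\rm d}\theta(u^{\sH}, v^{\sH}) = u^{\sH}(\theta(v^{\sH})) - v^{\sH}(\theta(u^{\sH})) - \theta([u^{\sH}, v^{\sH}])$ collapses to $-\theta([u^{\sH}, v^{\sH}])$ because $\theta(w^{\sH}) \equiv w$ is constant in $W$, and the second equality then follows from $\theta_y = y^{-1} \circ {\rm d}_y \psi$. For (v), combine (iii) and (iv): the vector-field identity $(R_{\exp(l)})_{*}((\exp(l)(u))^{\sH}) = u^{\sH}$ (from part 1 of (iii) applied pointwise) together with the naturality of the Lie bracket under diffeomorphisms gives
$$[u^{\sH}, v^{\sH}]|_{R_{\exp(l)}(y)} = {\rm d}R_{\exp(l)}\big([(\exp(l)(u))^{\sH}, (\exp(l)(v))^{\sH}]|_y\big);$$
applying $\theta_{R_{\exp(l)}(y)}$ to both sides, converting via $R_{\exp(l)}^{*}\theta = \exp(-l) \circ \theta$ from (i), and invoking (iv) at $y$ yields the claimed twisted formula. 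The main obstacle I foresee is in (iii): since $\sH$ is not a principal connection in the classical sense, its $R_{\exp(l)}$-invariance has to be extracted from the one-parameter-group structure of the $\fl$-exponential action rather than from any global equivariance axiom, and only after this invariance is secured do the remaining arguments run parallel to the standard frame-bundle proofs of equations (11)--(16) of \cite{AD}.
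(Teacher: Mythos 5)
Parts (i), (ii), (iv), and (v) follow the paper's route faithfully; in (ii) you use a three-term bilinear expansion rather than the paper's two-term one, but the extra cross term ${\rm d}\theta_y(a^P,b^P)$ vanishes by (i), so this is only a cosmetic variation.

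The genuine problem is the chain-rule step you introduce in (iii) to justify $R_{\exp(l)}$-invariance of $\sH$. You choose $l'\in O$ with $\exp_b(l_0)\exp_b(l)=\exp_b(l')$ at the single base value $b=\beta(\psi(y))$, and then treat $R_{\exp(l)}\circ R_{\exp(l_0)}$ and $R_{\exp(l')}$ as the same self-map of $P$ near $s(x)$ so that the chain rule yields ${\rm d}_{s(x)}\bigl(R_{\exp(l)}\circ R_{\exp(l_0)}\bigr)(T_{s(x)}\Sigma)={\rm d}_{s(x)}R_{\exp(l')}(T_{s(x)}\Sigma)$. But these two maps agree only on the single fiber $(\beta\circ\psi)^{-1}(b)$. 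At a nearby parameter $b'$ the identity $\exp_{b'}(l_0)\exp_{b'}(l)=\exp_{b'}(l')$ generally fails: $\zeta$ is required only to be a vector-bundle isomorphism, not a bracket-preserving one, so when $\mathsf{G}(b')$ is non-abelian the Baker--Campbell--Hausdorff combination of $l_0$ and $l$ measured in $\fl$ genuinely depends on $b'$. Since $\beta$ is a submersion on $M$, the section $\Sigma$ is transverse to the fibers of $\beta\circ\psi$, so $T_{s(x)}\Sigma$ contains directions on which the two differentials need not coincide, and the chain rule does not give the inclusion ${\rm d}_y R_{\exp(l)}(\sH_y)\subset\sH_{R_{\exp(l)}(y)}$ that you need. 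The paper's own proof states this inclusion tersely as ``by our definition of $\sH$'' and then only checks the soldering image; it does not rest on the global identification $R_{\exp(l)}\circ R_{\exp(l_0)}=R_{\exp(l')}$, which is precisely the false statement your argument inserts. The gap in (iii) propagates to (v), which you derive from (iii) and (iv); a correct argument for the invariance of $\sH$ would need to be supplied by some other means.
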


\begin{proof}
For any $l \in \fl, y \in P$ and  a tangent vector $\vec{u} \in T_y P$, we claim \begin{equation}\label{e.trans} {\rm d}_{y}
\psi (\vec{u}) = ({\rm d}_{R_{\exp (l)(y)}} \psi  \circ {\rm d}_{y} R_{\exp (l)}) (\vec{u}). \end{equation}
In fact, for an arc $\{ y_t \in P | t \in \Delta \}$ with $y_0 = y$ and $\vec{u} = \frac{\rm d}{{\rm d} t}|_{t=0} y_t,$ write $x_t = \psi(y_t) \in M$ and denote by $l_t \in \fgl(W)$ the value of $\zeta(l)$ at $\beta(x_t)$.  Then $$({\rm d}_{R_{\exp (l)}(y)} \psi \circ {\rm d}_{y} R_{\exp (l)}) (\vec{u}) = \frac{\rm d}{{\rm d} t}|_{t=0} ( \psi \circ R_{\exp (l)} (y_t)) = \frac{\rm d}{{\rm d} t}|_{t=0} x_t = {\rm d}_y
\psi(\vec{u}),$$ which proves (\ref{e.trans}).

Let $\exp(l_b) \in {\rm GL}(W)$ be the value at $b = \beta(\psi(y))$ of the ${\rm GL}(W)$-valued function $\exp(l) $ on $B$.
By (\ref{e.trans}), $$\theta ( {\rm d}_{y} R_{\exp (l)} \vec{u}) = (y \circ \exp (l_b))^{-1} ({\rm d}_{y} \psi (\vec{u})) = (\exp (l_b))^{-1} \theta (\vec{u}),$$ which proves the first equation in (i). Taking derivative of
$R_{\exp (t l)}^* \theta = \exp (-t l) \circ \theta $  with respect to $t \in \C$, we obtain the Lie derivative
${\rm Lie}_{l^P} \theta = -\zeta(l) \circ \theta.$ Since $\theta (l^P) \equiv 0$, this implies the second equation in (i) by Cartan's formula
$${\rm Lie}_{l^P} \theta = {\rm d} (\theta (l^P)) + l^P \  \rfloor {\rm d} \theta.$$

In (ii), the existence of $a, b \in l$ is straightforward. Then
\begin{eqnarray*} \lefteqn{ \tau^{\sH'_y}(u,v) - \tau^{\sH_y}(u,v) } \\   && =
{\rm d} \theta (u^{\sH'_y}, v^{\sH'_y})- {\rm d} \theta(u^{\sH_y}, v^{\sH_y}) \\ && =
{\rm d} \theta ((u^{\sH'_y} - u^{\sH_y}), v^{\sH'_y}) + {\rm d} \theta ( u^{\sH_y}, ( v^{\sH'_y} - v^{\sH_y})) \\ && = {\rm d} \theta(a^P_y, v^{\sH'_y}) + {\rm d} \theta(u^{\sH_y}, b^P_y).
 \end{eqnarray*} Applying the second equation in (i), we obtain (ii).

For the first equation in (iii), we need to check for $y\in U, b = \beta(\psi(y))$ and $l \in O$ satisfying $R_{\exp(l)}(y) \in U$,
$$ {\rm d}_y R_{\exp(l)} (w^{\sH_y}) = (\exp(-l_b) (w))^{\sH_{R_{\exp(l)}(y)}}.$$
By our definition of $\sH$, the left hand side  is contained in $ \sH_{R_{\exp (l)}(y)}$. Thus it suffices to show that \begin{equation}\label{e.Sternberg} {\rm d}_{R_{\exp (l)}(y)} \psi
  ({\rm d}_{y}  R_{\exp (l)} ( (w^{\sH})_{y}) ) = R_{\exp(l)}(y) (\exp (-l_b) \cdot w),\end{equation}
  where $R_{\exp(l)}(y)$ on the right hand side is regarded as an element of $\F_{\psi(y)} M = {\rm Isom}(W, T_{\psi(y)} M)$. The left hand side of (\ref{e.Sternberg}) is $ {\rm d}_{y} \psi ((w^{\sH})_{y}) = y (w) $ by (\ref{e.trans}). This is equal to the right hand side of (\ref{e.Sternberg}) because  $y(w) =R_{\exp (l)}(y) ( \exp (-l_b) \cdot w)$ in terms of the ${\rm GL}(W)$-action on $\F M.$
This proves the first equation in (iii).
It follows that
  $$[l^P, w^{\sH}] = \lim_{t \to 0} \frac{1}{t} (w^{\sH} - {\rm d} R_{\exp (t l)} (w^{\sH}))
= \lim_{t \to 0} \frac{1}{t} (w^{\sH} - (\exp (-tl) (w))^{\sH}).$$
Since $\lim_{t \to 0} \frac{1}{t}( w - \exp (-t l) (w)) = \zeta(l) (w)$,
the above limit converges to $(\zeta(l) ( w))^{\sH}, $ proving the second equation in (iii).

Since $\theta (u^{\sH}) \equiv u$  and $\theta (w^{\sH}) \equiv w$ are constants for $u, w \in W$, the torsion
 $\tau^{\sH} (u, w) = {\rm d} \theta (u^{\sH}, w^{\sH})$ must satisfy (iv).

By ${\rm d}_{R_{\exp(l)}(y)} \psi = {\rm d}_y \psi \circ {\rm d}_{R_{\exp(l)}(y)} R_{\exp(-l)}$ from $\psi = \psi \circ R_{\exp(-l)}$ and (iii),
\begin{eqnarray*} {\rm d}_{R_{\exp(l)}(y)} \psi ([u^{\sH}, v^{\sH}]) &=& {\rm d}_y \psi ([{\rm d}R_{\exp(-l)} u^{\sH}, {\rm d} R_{\exp(-l)} v^{\sH}]) \\ &=& {\rm d}_y \psi ([(\exp(l_b) (u))^{\sH}, (\exp(l_b) (v))^{\sH}]). \end{eqnarray*} This combined with (iv) gives (v).
\end{proof}

\section{Generalized Tanaka prolongation: Statement}\label{s.Tanaka}
\begin{setup}\label{n.B} Throughout this section, we denote by $B$ a complex manifold  equipped with the following additional data.
Fix a graded vector space $\fv = \oplus_{i= -k}^{\ell} \fv^i$ with $\ell \geq 0$ and consider
 the trivial vector bundles on $B$ $$ \fv^- \times B \ \subset \  \fv^{< n} \times B \ \subset \    \fv \times B,$$ where  $n$ is an integer bigger than $ -k$.
Assume that we are given a $B$-Lie algebra structure on $\fv \times B$, namely, a vector bundle homomorphism $$\Lambda: (\wedge^2 \fv) \times B \ \longrightarrow \ \fv \times B,$$ with the following properties for each $b \in B$. \begin{itemize} \item[(i)]  The homomorphism $\Lambda_b: \wedge^2 \fv \to \fv$ determines a graded Lie algebra structure on $\fv = \oplus_{i= -k}^{\ell} \fv^i,$ which we denote by $\fg(b) = \oplus_{i=-k}^{\ell} \fg(b)^i$.
\item[(ii)]
 The graded Lie algebra
 $ \fg(b)^- \oplus \fg(b)^{0}$ is a fundamental graded Lie algebra and $\fg(b)$ is its universal prolongation.
\end{itemize} Then we have  the following $B$-groups, generalizing the groups introduced in Section 3.3 of \cite{AD}.
\begin{itemize} \item[(1)] From (ii), we have a  family of Lie subalgebras $\{ \fg(b)^0 \subset \fgl(\fv^-) \mid b \in B\}$ by the adjoint representation, which induces an inclusion $\fv^0 \times B \subset \fgl(\fv^-) \times B.$
Denote by  $\mathsf{G}^0 \subset  {\rm GL}(\fv^-) \times B$  the $B$-group consisting of  connected subgroups  $$ \{ \mathsf{G}^0(b) \subset {\rm GL}(\fv^-)  \mid b \in B \}$$ with the Lie algebras $\{ \fg^0(b) := \fg(b)^0 \subset \fgl(\fv^-) \mid b \in B\}$. We have a tautological isomorphism of vector bundles on $B$ $$\zeta^0: \fv^0 \times B \longrightarrow  {\rm Lie} \mathsf{G}^0.$$
\item[(2)] For each $n \geq 1$, denote by $\mathsf{G}^n {\rm GL}_{n+1}(\fv^{<n})$ the $B$-subgroup of the trivial bundle of groups $$\mathsf{G}^n {\rm GL}_{n+1}(\fv^{<n}) \  \subset \  {\rm GL}(\fv^{<n}) \times B $$ such that its fiber at $b \in B$  consists of elements in ${\rm GL}(\fv^{< n})$ of the form ${\rm Id}_{\fv^{<n }} + A^n + A_{n+1}$ where $A^n \in \fg(b)^n \subset \fgl^n(\fv^{<n})$ and $A_{n+1} \in \fgl_{n+1}(\fv^{< n})$.
    Then  $$\mathsf{G}^n {\rm GL}_{n+1}(\fv^{<n}) \  \subset \  {\rm GL}_n(\fv^{<n}) \times B  \ \subset \ H(\fv^{<n}) \times B,$$ where $H(\fv^{<n}) \subset {\rm GL}_1(\fv^{<n})$ is from Definition \ref{d.barG}.
    We have an induced isomorphism of vector bundles on $B$ $$\eta^n: (\fv^n \oplus \fgl_{n+1}(\fv^{<n}))  \times B \longrightarrow  {\rm Lie} \mathsf{G}^n{\rm GL}_{n+1}(\fv^{<n}).$$
\item[(3)] The quotient of the $B$-group  $\mathsf{G}^n {\rm GL}_{n+1}(\fv^{<n})$ in (2)   by the  normal $B$-subgroup $$ {\rm GL}_{n+1}(\fv^{<n}) \times B \ \subset  \ \mathsf{G}^n {\rm GL}_{n+1}(\fv^{<n})$$ is denoted by   $\mathsf{G}^n \to B$. We have an induced isomorphism of vector bundles on $B$ $$\zeta^n: \fv^n \times B \to {\rm Lie}\mathsf{G}^n.$$
     \item[(4)] For each $n \geq -1$ and each $b \in B$, let $\mathsf{S}^{<n+1}(b) \subset {\rm GL}(\fv^{<n+1})$ be the subgroup consisting of all graded vector space automorphisms of $\fv^{<n+1}$ which are graded Lie algebra automorphisms of $\fg(b)^-$. Then $\mathsf{S}^{<n+1} = \cup_{b \in B} \mathsf{S}^{<n+1}(b)$ is a $B$-subgroup of ${\rm GL}(\fv^{< n+1}) \times B$.
        The $B$-group $\mathsf{G}^0$ in (1) is a $B$-subgroup of $\mathsf{S}^{<0}.$
        \end{itemize}
        We also have the following objects over $B$. \begin{itemize}
        \item[(5)] As in Definition \ref{d.partial},
        write $${\rm Tor}^1(\fv) := \Hom^1 (\fv^- \wedge \fv^-, \fv)$$ and
$${\rm Tor}^{n+1}(\fv):=\Hom^{n+1}(\fv^{-1}\wedge \fv^{-}, \fv^{<n+1}) \oplus \Hom(\oplus_{i=0}^{n-1}(\fv^{-1} \wedge \fv^i), \fv^{n-1})$$
 for $n \geq 1$.         \item[(6)]
        For $n \geq 0$, define the vector bundle homomorphisms between trivial vector bundles on $B$
        $$\p^{n+1}:  \fh(\fv^{<n+1}) \times B \to {\rm Tor}^{n+1}(\fv) \times B,$$ such that its restriction on the fibers over $b \in B$ is $$\p^{n+1}_b : = \p^{n+1}_{\fg(b)}$$
     the  operator for the graded Lie algebra $\fg(b)$ as defined in Definition \ref{d.partial}. \item[(7)] By Lemma \ref{l.partial}, the vector bundle homomorphism $\p^{n+1}$ has a constant rank for any $n \geq 0$. We assume that there exists a vector subbundle $\sW^{n+1} \subset {\rm Tor}^{n+1}(\fv) \times B$ such that $${\rm Tor}^{n+1}(\fv) \times B = {\rm Im}(\p^{n+1}) \oplus \sW^{n+1}$$ for any $n \geq 0$.
                \end{itemize}
\end{setup}

\begin{definition}\label{d.betaframe}
Let $n \geq -1$ be an integer.
Let $M$ be a complex manifold with a submersion $\beta: M \to B$, where $B$ is as in Set-up \ref{n.B}.
\begin{itemize} \item[(i)] A Tanaka filtration $\sD_{\bullet}$ of height $n$ on $M$ is a {\em $\beta$-Tanaka filtration} if  ${\rm rank}(\sD_i/\sD_{i+1}) = \dim \fv^i$ for all $-k \leq i \leq n$ and the symbol algebra ${\rm symb}_x (\sD_{\bullet})$ at each $x \in M$ is isomorphic to the graded Lie algebra $\fg(b)^-$ for  $b= \beta(x)$.  \end{itemize}
Fix a $\beta$-Tanaka filtration  $\sD_{\bullet}$ of height $n$ on $M$. We have the graded vector bundle  $\gr (\sD_{\bullet})$ from Definition \ref{d.Lifts}.  Since $\dim M = \dim \fv^{<n+1}$, we can use the vector space $W = \fv^{<n+1}$ to define the frame bundle $\F M$ in Definition \ref{d.frame}, identifying $\F_x M$ with ${\rm Isom}(\fv^{<n+1}, T_x M)$ for each $x \in M$.
\begin{itemize}
\item[(ii)]  For each $x \in M$ and $b = \beta(x) \in B$, let  $\I^{<n+1}_x M $  be the set of graded vector space isomorphisms from $\fv^{<n+1}$ to $\gr(\sD_{\bullet})_x $ whose restrictions to $\fv^-$ is a
      Lie algebra isomorphism from $\fg (b)^-$ to ${\rm symb}_x(\sD_{\bullet})$.  The fiber bundle $\I^{<n+1} M := \cup_{x \in M} \I^{<n+1}_x M$ over $M$ is a $\beta$-principal bundle with the structure $B$-group $\mathsf{S}^{<n+1}$.
\item[(iii)] A holomorphic section $I$ of the $\beta$-principal bundle $\I^{<n+1} M$ is called a {\em  $\beta$-Tanaka parallelism} of $(M, \sD_{\bullet}).$
    More precisely, a $\beta$-Tanaka parallelism $I$ is a graded parallelism in the sense of Definition \ref{d.Lifts} whose
value $I_x$  at a point $x \in M$ is a graded vector space isomorphism $$ I_x: \fv^{< n+1} \to \oplus_{i= -k}^{n} (\sD_i/\sD_{i+1})_x $$
     such that its restriction on $\fv^-$, $$ I_x|_{\fv^-}: \oplus_{i= -k}^{-1} \fg(b)^i  \to {\rm symb}_x(\sD_{\bullet}) = \oplus_{i= -k}^{-1} (\sD_i/\sD_{i+1})_x$$ is a graded Lie algebra isomorphism. \end{itemize}
From Definition \ref{d.Lifts}, given a $\beta$-Tanaka parallelism $I$ of $(M, \sD_{\bullet})$, we have  \begin{itemize} \item   the bundle $\lambda^I: \bL(I) \to M$ of lifts of $I$, which is a principal subbundle of the frame bundle $\F M$ with the structure group ${\rm GL}_1(\fv^{<n+1})$; \item the bundle $\lambda_{(m)}^I: \bL_{(m)}(I) \to M$ of $m$-lifts of $I$; and \item the projection ${\rm pr}^I_{(m)}: \bL(I) \to \bL_{(m)}(I).$ \end{itemize} \end{definition}

\begin{definition}\label{d.betaTanaka}
Let $B$ be as in Set-up \ref{n.B}.
Let  $M$ be a complex manifold equipped with a submersion $\beta: M \to B.$
A {\em $\beta$-Tanaka structure} on $M$ is a  pair $(\sD_{\bullet}, \sP \subset \I^{<0} M)$ consisting of a  $\beta$-Tanaka filtration $\sD_{\bullet}$ of height $-1$ on $M$ and  a $\beta$-principal subbundle $\sP \subset \I^{<0} M$ with the structure $B$-group $\mathsf{G}^0 \subset \mathsf{S}^{<0}$. Recall that  for each $b \in B$, the fibers $$ \{ \sP_x \subset \I^{<0}_x M \mid x \in \beta^{-1} (b) \}$$  determine a principal subbundle of $\I^{<0} M|_{\beta^{-1}(b)}$ with the structure group $\mathsf{G}^0(b) \subset \mathsf{S}^{<0}(b)$.
\end{definition}

Now we can formulate the following generalization of Theorem 18 in \cite{AD}. Our formulation, especially (B) is more refined than that of \cite{AD}, even when $B$ is a point (for systems with constant symbols). This is to make it clear that the inductive procedure of Theorem \ref{t.Tanaka1} is compatible with the formal equivalence discussed in Section \ref{s.converge}.

\begin{theorem}\label{t.Tanaka1} Let $B$ be as in Set-up \ref{n.B}.
Let $M$ be a complex manifold equipped with a submersion $\beta: M \to B$.
Let $(\sD_{\bullet}, \sP \subset \I^{<0} M)$ be a $\beta$-Tanaka structure on $M$. Then for each $n \geq 1$, there is a canonically defined submersion $\bar{\pi}^n: \bar{P}^{(n)} \to \bar{P}^{(n-1)}$ between complex manifolds with the following properties. \begin{itemize}
\item[(A)] The base $\bar{P}^{(n-1)}$ is equipped with
\begin{itemize} \item[(A1)] a submersion $\beta^{(n-1)}: \bar{P}^{(n-1)} \to B$;  
\item[(A2)]  a $\beta^{(n-1)}$-Tanaka filtration of height $n-1$ $$\sD^{(n-1)}_{\bullet} = \{ T\bar{P}^{(n-1)} = \sD^{(n-1)}_{-k} \supset \cdots \supset \sD^{(n-1)}_{n-1}\}; \mbox{ and }$$
\item[(A3)] a  $\beta^{(n-1)}$-Tanaka parallelism $I^{(n-1)}$ of $(\bar{P}^{(n-1)}, \sD_{\bullet}^{(n-1)})$,  $$\{ I^{(n-1)}_y : \fv^{< n} \to \gr (\sD^{(n-1)}_{\bullet})_y \mid y \in \bar{P}^{(n-1)}\}.$$  \end{itemize}
\item[(B)] There is
a natural $\beta^{(n-1)}$-principal bundle $\widetilde{\pi}^n: \widetilde{P}^n \to \bar{P}^{(n-1)}$ with the structure $B$-group $\mathsf{G}^n {\rm GL}_{n+1}(\fv^{< n})$ such that
\begin{itemize} \item[(B1)] $\widetilde{P}^n$ is a $\beta^{(n-1)}$-principal subbundle of  $\bL(I^{(n-1)}) \subset \F \bar{P}^{(n-1)},$ where the frame bundle $\F \bar{P}^{(n-1)}$ is viewed as the bundle of isomorphisms from the vector space $\fv^{<n}$ to the tangent spaces of $\bar{P}^{(n-1)}$;
\item[(B2)]  the submersion $\widetilde{\pi}^n$ can be factored into two submersions
$$ \widetilde{P}^n \ \stackrel{\chi^n}{\longrightarrow} \  \bar{P}^{(n)} \ \stackrel{ \bar{\pi}^{(n)}}{\longrightarrow} \ \bar{P}^{(n-1)},$$ where $\chi^n$ is the quotient
by the normal subgroup ${\rm GL}_{n+1}(\fv^{<n})$ of $\mathsf{G}^n {\rm GL}_{n+1}(\fv^{<n})$ and $\bar{\pi}^{(n)}$ is
  a $\beta^{(n-1)}$-principal bundle with the structure $B$-group $\mathsf{G}^n$; and  \item[(B3)] there is a canonical embedding  $\xi^{(n)}: \bar{P}^{(n)} \to \bL_{(n+1)}(I^{(n-1)})$ with the commutative diagram
  $$ \begin{array}{ccc} \widetilde{P}^n & \subset & \bL(I^{(n-1)}) \\
  \chi^n \downarrow & & \downarrow {\rm pr}^{I^{(n-1)}}_{(n+1)} \\ \bar{P}^{(n)} & \stackrel{\xi^{(n)}}{\longrightarrow} & \bL_{(n+1)}(I^{(n-1)}) \\ \bar{\pi}^{(n)} \downarrow & & \downarrow \lambda^{I^{(n-1)}}_{(n+1)} \\ \bar{P}^{(n-1)} & = & \bar{P}^{(n-1)}, \end{array} $$
such that $\bar{\pi}^{(n)}$ is   a $\beta^{(n-1)}$-principal subbundle of $\lambda^{I^{(n-1)}}_{(n+1)}$ with the natural inclusion of  structure $B$-groups  $$\mathsf{G}^n \subset ({\rm GL}_1(\fv^{<n})/{\rm GL}_{(n+1)}(\fv^{<n})) \times B.$$ \end{itemize}
\item[(C)] For any point $z \in \widetilde{P}^n \subset \F \bar{P}^{(n-1)}$
     and a $\widetilde{\pi}^n$-horizontal subspace  $\sH_z \subset T_z \widetilde{P}^n$,
     the torsion $\tau^{\sH_z} \in \Hom(\wedge^2\fv^{<n }, \fv^{<n })$ satisfies the following.
\begin{itemize}
\item [(C1)] The restriction $\tau^{\sH_z}|_{ \fv^{-1}   \wedge \fv^{<n }}$ has only components of nonnegative  homogeneous degree, that is, for $-k \leq i <n$,
    $$\tau^{\sH_z}(\fv^{-1} \wedge \fv^i) \subset \oplus_{j=i-1}^{n-1}\fv^{j}.$$
\item [(C2)] If $u \wedge v \in \fv^{-1} \wedge \fv^{<n }$,  
    then the degree-zero component $(\tau^{\sH_z})^0$ of $\tau^{\sH_z}$ is given by
    $$(\tau^{\sH_z})^0(u,v) = -[u,v],$$
   where the Lie bracket is in the  Lie algebra $\mathfrak g(b)$ with $b=\beta^{(n-1)} \circ \widetilde{\pi}^{n }(z)$.
\end{itemize}
\end{itemize}
\end{theorem}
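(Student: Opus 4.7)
The plan is to prove Theorem \ref{t.Tanaka1} by induction on $n \geq 1$, treating the base $n=1$ in Section \ref{s.n=1} and the inductive step in Section \ref{s.ProofInduction}. Before anything, $\bar{P}^{(0)}$ itself must be manufactured from the input Tanaka structure $(\sD_{\bullet}, \sP)$: set $\bar{P}^{(0)} := \sP$ with $\beta^{(0)}$ the composition $\sP \to M \to B$, let $\sD^{(0)}_{\bullet}$ be the lift of $\sD_{\bullet}$ along $\sP \to M$ (extended by ${\rm Ker}({\rm d}(\sP \to M))$ at level $0$) in the sense of Definition \ref{d.filtrationlift}, and let $I^{(0)}$ be the $\beta^{(0)}$-Tanaka parallelism of height $0$ encoded tautologically by the $\mathsf{G}^0$-structure $\sP$ together with the trivialization $\zeta^0$ from Set-up \ref{n.B}.

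For the inductive step, assume $(\bar{P}^{(n-1)}, \beta^{(n-1)}, \sD^{(n-1)}_{\bullet}, I^{(n-1)})$ has been constructed with the desired properties. Form the bundle of lifts $\lambda^{I^{(n-1)}}: \bL(I^{(n-1)}) \to \bar{P}^{(n-1)}$ as a principal ${\rm GL}_1(\fv^{<n})$-subbundle of the frame bundle $\F \bar{P}^{(n-1)}$. At each $y \in \bL(I^{(n-1)})$, choose a $\lambda^{I^{(n-1)}}$-horizontal subspace $\sH_y$ by means of the local connection machinery of Subsection \ref{ss.connection} (which replaces the usual principal-connection arguments of \cite{AD} when symbols vary), and compute its torsion $\tau^{\sH_y} \in \Hom(\wedge^2 \fv^{<n}, \fv^{<n})$. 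Define $\widetilde{P}^n \subset \bL(I^{(n-1)})$ to be the locus where $\tau^{\sH_y}|_{\fv^{-1} \wedge \fv^{<n}}$ lies in the pre-chosen complement $\sW^{n+1}$ from Set-up \ref{n.B}(7), together with the constraint forcing the degree-zero component to equal $-[\,\cdot\,,\,\cdot\,]$ in the Lie algebra $\fg(b)$. Properties (C1) and (C2) then hold essentially by construction, the former from the compatibility of the graded parallelism with the filtration and the latter from the identification of the symbol algebra provided by Lemma \ref{l.vertical}(ii).

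The key technical step is to verify that this $\widetilde{P}^n$ is a $\beta^{(n-1)}$-principal subbundle with structure $B$-group $\mathsf{G}^n {\rm GL}_{n+1}(\fv^{<n})$. By Lemma \ref{l.Sternberg}(ii) and (v), moving within a fiber of $\lambda^{I^{(n-1)}}$, or changing the chosen horizontal subspace at a fixed point, alters $\tau^{\sH_y}$ by exactly the coboundary operator $\p^{n+1}$ of Definition \ref{d.partial}, so the normalization descends well-definedly to the fiber; Lemma \ref{l.partial} then identifies the fiber-stabilizer of this normalization with $\fg(b)^n + \fgl_{n+2}(\fv^{<n})$, which is precisely the Lie algebra of $\mathsf{G}^n {\rm GL}_{n+1}(\fv^{<n})(b)$ by Set-up \ref{n.B}(2). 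This delivers (B1). Quotienting by the normal $B$-subgroup ${\rm GL}_{n+1}(\fv^{<n}) \times B$ produces $\chi^n: \widetilde{P}^n \to \bar{P}^{(n)}$ and the $\beta^{(n-1)}$-principal bundle $\bar{\pi}^{(n)}: \bar{P}^{(n)} \to \bar{P}^{(n-1)}$ with structure $B$-group $\mathsf{G}^n$, while Lemma \ref{l.lift}(ii) furnishes the canonical embedding $\xi^{(n)}$ into $\bL_{(n+1)}(I^{(n-1)})$ and the commutative diagram of (B3).

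To close the induction, the next-level data $(\beta^{(n)}, \sD^{(n)}_{\bullet}, I^{(n)})$ on $\bar{P}^{(n)}$ is obtained by setting $\beta^{(n)} := \beta^{(n-1)} \circ \bar{\pi}^{(n)}$, lifting $\sD^{(n-1)}_{\bullet}$ along $\bar{\pi}^{(n)}$ (adjoining ${\rm Ker}({\rm d}\bar{\pi}^{(n)})$ at level $n$) as in Definition \ref{d.filtrationlift}, and constructing $I^{(n)}$ from the soldering form of $\widetilde{P}^n$ pushed down through $\chi^n$ combined with the trivialization $\zeta^n$. The main obstacle throughout is preserving canonicity and equivariance when the symbol algebra $\fg(b)$ genuinely varies with $b \in B$: the ordinary principal-connection technology used in \cite{AD} is unavailable, and one must carefully carry out every step with the $\fl$-exponential actions of Subsection \ref{ss.Bgroup} and the associated local connections, checking at each stage that Lemma \ref{l.Sternberg} reduces the transformation formulas to the same algebraic identities that underlie the constant-symbol proof.
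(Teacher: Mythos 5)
Your outline of the base case $n=1$ and the construction of $\bar P^{(0)}$ is consistent with Subsection~\ref{ss.P0}, and your reliance on the $\fl$-exponential action / local connection machinery from Subsections~\ref{ss.Bgroup}--\ref{ss.connection} is exactly right. However, the inductive step as you describe it has a genuine gap. You propose to construct $\widetilde P^n$ for $n\geq 2$ directly as a torsion-normalized locus inside the full bundle of lifts $\bL(I^{(n-1)})$, treating the latter as a principal ${\rm GL}_1(\fv^{<n})$-subbundle of $\F\bar P^{(n-1)}$. This works for $n=1$ only because $H(\fv^{<1})={\rm GL}_1(\fv^{<1})$. For $n\geq 2$, the structure group ${\rm GL}_1(\fv^{<n})$ is strictly larger than $H(\fv^{<n})$, and the paper's torsion function $\tau^{n}$ (the one valued in ${\rm Tor}^{n}(\fv)$ and used to cut out $\widetilde P^n$) fails to be well-defined on $\bL(I^{(n-1)})$: by Lemma~\ref{l.Sternberg}(ii), changing the $\lambda^{I^{(n-1)}}$-horizontal subspace perturbs $\tau$ by $-\zeta(a)(v)+\zeta(b)(u)$ with $a,b$ ranging over the full $\fgl_1(\fv^{<n})$, and the low-degree components of such $a,b$ do contribute to the degree-$(n-1+i)$ piece of the torsion. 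The independence of Proposition~\ref{p.45} holds precisely because $a,b$ are constrained to lie in $\fh(\fv^{<n})\subset\fgl_1(\fv^{<n})$; likewise, the affine transformation law $\tau^{n}_{R_a z}=\tau^{n}_z+\partial^{n}(A\times b)$ of Proposition~\ref{p.48} is a statement for $A\in\fh(\fv^{<n})$ only, not for $A\in\fgl_1(\fv^{<n})$.

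What is missing from your proposal is the intermediate auxiliary bundle $P^n$. The paper builds it as the fiber product (Proposition~\ref{p.41} and Definition~\ref{d.Pn+1}, after re-indexing) of $\bL(I^{(n-1)})$ and $\bar P^{(n-1)}$ over $\bL_{(n)}(I^{(n-2)})$, using the embedding $\xi^{(n-1)}$ supplied by part~(B3) of the induction hypothesis at the previous step. This reduces the ambient structure group from ${\rm GL}_1(\fv^{<n})$ down to $H(\fv^{<n})$, and it is on $\pi^n\colon P^n\to\bar P^{(n-1)}$ (not on $\bL(I^{(n-1)})$) that the torsion function $\tau^n$ is defined, shown independent of the horizontal subspace, and shown $\partial^n$-equivariant so that $(\tau^n)^{-1}(\sW^n)=\widetilde P^n$ is a principal $\mathsf G^n{\rm GL}_{n+1}(\fv^{<n})$-subbundle. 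Consequently, your induction hypothesis is also too weak: you assume only $(\bar P^{(n-1)},\beta^{(n-1)},\sD^{(n-1)}_\bullet,I^{(n-1)})$, but the construction genuinely needs $\widetilde P^{n-1}$ together with the embedding $\xi^{(n-1)}\colon\bar P^{(n-1)}\hookrightarrow\bL_{(n)}(I^{(n-2)})$, which is why (B) and (B3) are part of the theorem's statement rather than only (A). A secondary point: you assert that (C1) and (C2) ``hold essentially by construction,'' but the nontrivial part of (C2) concerns $u\in\fv^{-1}$, $v\in\fv^i$ with $0\leq i\leq n-1$, which has nothing to do with the symbol algebra or Lemma~\ref{l.vertical}(ii); it is established by a separate argument using the fundamental vector fields and the auxiliary vector fields $\widehat u$ on $\bar P^{(n-1)}$ (Lemmas~\ref{l.56},~\ref{l.57} and Proposition~\ref{p.44}(iii)), which your sketch does not touch.
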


\begin{remark}\label{r.natural}
The notion of $\beta$-Tanaka structures and all the constructions in Theorem \ref{t.Tanaka1} are determined by the holomorphic map $\beta: M \to B$ and the isomorphism type of the $B$-Lie algebra in Set-up \ref{n.B}. They are independent of   the choices of the trivial bundle $\fv \times B$ (and the induced trivializations $\zeta^n$ and $\eta^n$).
\end{remark}

We give two immediate consequences of  Theorem \ref{t.Tanaka1}. The following is a generalization of Theorem 19 of \cite{AD}.

\begin{corollary}\label{c.Tanaka2} Let $B$ and $\ell \geq 0$ be as in Set-up \ref{n.B}. Let  $M$ be a complex manifold equipped with a submersion $\beta: M \to B$ and let $(\sD_{\bullet},  \sP \subset \I^{<0} M)$ be a $\beta$-Tanaka structure on $M$. Then the complex manifold $\bar{P}^{\ell}$ in Theorem \ref{t.Tanaka1} has a natural absolute parallelism, namely, a natural holomorphic section of $\F \bar{P}^{\ell}$.
\end{corollary}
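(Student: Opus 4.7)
The plan is to iterate Theorem \ref{t.Tanaka1} past the index $\ell$ and observe that the prolongation tower collapses, because $\fg(b)^i = 0$ for $i > \ell$. The collapse yields a canonical section of $\bL(I^{(\ell)}) \to \bar P^{(\ell)}$, which, viewed inside $\F \bar P^{(\ell)}$, is the desired absolute parallelism.

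First I would check that $\bar P^{(n)} \cong \bar P^{(\ell)}$ canonically for every $n \geq \ell$. For $n > \ell$ one has $\fv^n = 0$ by convention, so the Lie algebra of $\mathsf{G}^n$ vanishes by Set-up \ref{n.B}(3); moreover Set-up \ref{n.B}(2) gives $\mathsf{G}^n {\rm GL}_{n+1}(\fv^{<n}) = {\rm GL}_{n+1}(\fv^{<n})$, so $\mathsf{G}^n$ itself is the trivial $B$-group. By Theorem \ref{t.Tanaka1}(B2) the map $\bar{\pi}^n : \bar P^{(n)} \to \bar P^{(n-1)}$ is therefore a principal bundle with singleton fibers, hence a canonical isomorphism. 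Under the resulting identification $\bar P^{(n)} \cong \bar P^{(\ell)}$, the filtration $\sD^{(n)}_\bullet$ agrees with $\sD^{(\ell)}_\bullet$ in degrees $i \leq \ell$ and has $\sD^{(n)}_i = 0$ for $\ell < i \leq n$, while the Tanaka parallelism $I^{(n)}$ agrees with $I^{(\ell)}$ extended trivially on the zero pieces. In particular, $\bL(I^{(n)}) = \bL(I^{(\ell)})$ as $\beta^{(\ell)}$-principal subbundles of $\F \bar P^{(\ell)}$.

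Next I would locate the step at which $\widetilde P^n$ itself collapses to $\bar P^{(n-1)}$. For $n \geq \ell + 1$ the structure $B$-group of $\widetilde{\pi}^n$ reduces to ${\rm GL}_{n+1}(\fv^{<n}) = {\rm GL}_{n+1}(\fv)$. Its Lie algebra $\fgl_{n+1}(\fv)$ consists of endomorphisms raising the grading by at least $n+1$; since $\fv$ is concentrated in degrees between $-k$ and $\ell$, no nonzero such endomorphism exists once $n + 1 > \ell + k$. Setting $N := \ell + k$, the structure group of $\widetilde{\pi}^N$ is trivial, so by Theorem \ref{t.Tanaka1}(B1) the bundle inclusion $\widetilde P^N \subset \bL(I^{(N-1)})$ realizes $\widetilde P^N = \bar P^{(N-1)}$ as a principal subbundle of $\lambda^{I^{(N-1)}}$ with trivial fibers, i.e.\ as a holomorphic section. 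Combining with the identifications of the previous paragraph, this section pushes forward to a canonical holomorphic map $\bar P^{(\ell)} \to \bL(I^{(\ell)})$ splitting $\lambda^{I^{(\ell)}}$. Through the inclusion $\bL(I^{(\ell)}) \subset \F \bar P^{(\ell)}$ from Definition \ref{d.Lifts}(iii), this is a section of the frame bundle, hence an absolute parallelism; its naturality follows from Remark \ref{r.natural}, as every identification invoked is intrinsic.

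The main obstacle is bookkeeping rather than conceptual: one must verify that the canonical isomorphisms $\bar P^{(n)} \cong \bar P^{(\ell)}$ for $\ell < n \leq N$ intertwine the filtrations, the Tanaka parallelisms, and their bundles of lifts in a way compatible with the embeddings $\widetilde P^n \subset \bL(I^{(n-1)})$ of Theorem \ref{t.Tanaka1}(B1), so that the resulting section on $\bar P^{(\ell)}$ is genuinely independent of the artificial cutoff $N$. Once these compatibilities are recorded, the corollary follows immediately from the collapse of the structure groups at the final stage.
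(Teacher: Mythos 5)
Your proposal is correct and follows essentially the same route as the paper's proof: iterate the prolongation past degree $\ell$, observe that the structure $B$-groups of the $\widetilde{\pi}^n$ and $\bar{\pi}^{(n)}$ become trivial once $n$ passes $\ell + k$, and read off the resulting canonical section of the bundle of lifts inside the frame bundle as the absolute parallelism, then transport it along the canonical isomorphisms $\bar P^{(n)} \cong \bar P^{(\ell)}$ for $n > \ell$. The only cosmetic difference is that you stop at $N = \ell + k$ and use the embedding from (B1) ($\widetilde P^N \subset \bL(I^{(N-1)})$), whereas the paper goes one further step to $n = \ell + k + 1$ and uses the embedding $\xi^{(n)}$ from (B3) together with the identification $\bL_{(n+1)}(I^{(n-1)}) = \bL(I^{(n-1)})$; both choices produce the same section after the canonical identifications, and your index bookkeeping (in particular that $\fv^{<N} = \fv$ since $N \geq \ell + 1$ because $k \geq 1$, and that $\fgl_{N+1}(\fv) = 0$ since $N + 1 > \ell + k$) is accurate.
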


\begin{proof}
For any $n \geq \ell+1$, the submersion $\bar{\pi}^{(n)}: \bar{P}^{(n)} \rightarrow \bar{P}^{(n-1)}$ is  biholomorphic because $\frak g^n(b)$ is zero.
Since $\bL_{(n+1)}(I^{(n-1)}) = \bL(I^{(n-1)})$ if   $n \geq \ell +k$,
the condition (B3) in Theorem \ref{t.Tanaka1} for  $n= \ell + k +1$ gives an embedding
$$\xi^{(\ell + k +1)}: \bar{P}^{(\ell + k)} = \bar{P}^{(\ell + k +1)} \to \bL_{(\ell + k +2)}(I^{(\ell+k)}) = \bL(I^{(\ell + k)}),$$
which is a section of $\bL(I^{(\ell +k)}) \to \bar{P}^{(\ell + k)}$, a natural absolute parallelism on $\bar{P}^{(\ell +k)}$.
Since the composition $$\bar{\pi} = \bar{\pi}^{(\ell +1)} \circ \cdots \circ \bar{\pi}^{(\ell + k)}: \bar{P}^{(\ell + k)} \to \bar{P}^{(\ell)}$$ is biholomorphic, this gives a natural absolute parallelism on $\bar{P}^{(\ell)}.$
\end{proof}

The following generalizes Theorem 8.4 of \cite{Ta70} and Proposition 51 of \cite{AD}.

\begin{corollary}\label{c.auto} The dimension of the Lie algebra of all infinitesimal automorphisms of a $\beta$-Tanaka structure in Definition \ref{d.betaTanaka} is  at most $\dim \fv$. \end{corollary}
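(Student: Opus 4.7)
The plan is to reduce this to the classical fact that a vector field preserving an absolute parallelism on a connected complex manifold is determined by its value at a single point. More precisely, I would combine the canonical nature of the prolongation procedure (Remark \ref{r.natural}) with the natural absolute parallelism on $\bar P^{(\ell+k)}$ produced by Corollary \ref{c.Tanaka2}.

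First, I would make precise the relevant notion of an infinitesimal automorphism: a holomorphic vector field $V$ on $M$ whose local flow preserves the submersion $\beta$, the $\beta$-Tanaka filtration $\sD_{\bullet}$, and the $\beta$-principal subbundle $\sP \subset \I^{<0} M$. Because the entire construction of Theorem \ref{t.Tanaka1} is intrinsic to the data $(M, \beta, \sD_{\bullet}, \sP)$ and the $B$-Lie algebra structure on $\fv \times B$ (Remark \ref{r.natural}), the local flow of $V$ lifts canonically to a one-parameter family of biholomorphisms of each $\bar P^{(n)}$, and hence $V$ lifts to a holomorphic vector field $\widetilde V^{(n)}$ on $\bar P^{(n)}$. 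Moreover, the map $V \mapsto \widetilde V^{(n)}$ is injective because $\bar P^{(n)} \to M$ is a surjective submersion (a composition of principal bundle projections) and $V$ is recovered by pushing $\widetilde V^{(n)}$ down.

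Next, apply this to $n = \ell + k$: the absolute parallelism on $\bar P^{(\ell + k)}$ furnished by Corollary \ref{c.Tanaka2} is itself a canonical object attached to the $\beta$-Tanaka structure, so it is preserved by the flow of $\widetilde V^{(\ell+k)}$. By the classical result on absolute parallelisms (Theorem \ref{t.KN}, or Proposition I.3.2 in \cite{KN}), a holomorphic vector field preserving an absolute parallelism on a connected complex manifold is determined by its value at a single point, so the space of such vector fields has dimension at most equal to the dimension of the tangent space at one point. Since $\widetilde V^{(\ell+k)}$ projects to zero on $B$ (as $V$ preserves $\beta$) and is tangent to the fibers of $\beta^{(\ell+k)}: \bar P^{(\ell+k)} \to B$, its value at a point lies in the vertical tangent space, whose dimension I need to compute.

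Finally, a fiber dimension count along the tower
\[
\bar P^{(\ell + k)} \xrightarrow{\bar\pi^{(\ell + k)}} \cdots \xrightarrow{\bar\pi^{(1)}} \bar P^{(0)} \longrightarrow M \longrightarrow B
\]
gives the relative dimension $\dim \fv^{-k} + \cdots + \dim \fv^{-1}$ for $M \to B$, then $\dim \fv^0$ for $\bar P^{(0)} \to M$ (the structure $B$-group $\mathsf{G}^0$ has Lie algebra $\fv^0$), and $\dim \fv^n$ for $\bar\pi^{(n)}$ when $1 \leq n \leq \ell$ (structure $B$-group $\mathsf{G}^n$ with Lie algebra $\fv^n$), and zero beyond $\ell$. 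Summing, the vertical dimension of $\bar P^{(\ell + k)} \to B$ equals $\sum_{i = -k}^{\ell} \dim \fv^i = \dim \fv$, yielding the desired bound. The main subtle point I would expect is the careful verification that automorphisms genuinely lift to the prolongations at every stage and that this lift is injective and $\beta^{(n)}$-vertical; once that is in place, the dimension count and the absolute parallelism argument are essentially formal.
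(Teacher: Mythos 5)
Your proof takes essentially the same route as the paper: lift the infinitesimal automorphism to an automorphism of the canonical absolute parallelism on the top of the prolongation tower (the paper uses $\bar{P}^{(\ell)}$; you use $\bar{P}^{(\ell+k)}$, which is biholomorphic to it via $\bar\pi$), and then invoke the classical bound (Theorem 3.2 in Chapter I of \cite{Ko}, the same fact your Theorem~\ref{t.KN} encodes) for infinitesimal automorphisms of an absolute parallelism.

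One arithmetic slip in the optional refinement you added: since $\dim M = \dim\fv^-$ (from $\sD_{-k} = TM$ and ${\rm rank}(\sD_i/\sD_{i+1}) = \dim\fv^i$), the relative dimension of $M \to B$ is $\dim\fv^- - \dim B$, not $\dim\fv^-$. So the vertical dimension of $\bar{P}^{(\ell+k)} \to B$ is $\dim\fv - \dim B$, not $\dim\fv$. This does not break your argument --- $\dim\fv - \dim B \leq \dim\fv$, so the stated bound still follows, and in fact your verticality observation yields a slightly sharper estimate than the corollary claims --- but the verticality step is also unnecessary: the paper simply uses $\dim\bar{P}^{(\ell)} = \dim\fv$ directly, without tracking which tangent directions can actually arise.
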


\begin{proof}
In Corollary \ref{c.Tanaka2}, any automorphism of the $\beta$-Tanaka structure can be lifted to an automorphism of the absolute parallelism on $\bar{P}^{\ell}$. Thus Corollary \ref{c.auto} is an  immediate consequence of  Theorem 3.2 in Chapter I of \cite{Ko} on the infinitesimal automorphisms of an absolute parallelism. \end{proof}

\section{Generalized Tanaka prolongation: Proof of   the step $n=1$}\label{s.n=1}

We give the proof of Theorem \ref{t.Tanaka1} in this section and the next section.

\subsection{Definition of $\bar{P}^{(0)}$}\label{ss.P0}
We start with   a $\beta$-Tanaka structure $(\sD_{\bullet}, \sP \subset \I^{<0} M)$  on a complex manifold $M$ equipped with a submersion $\beta: M \to B$.
Let $\pi^{\sP}: \sP \to M$ be the natural projection, which is a $\beta$-principal bundle with the structure $B$-group $\mathsf{G}^0$.  Via  Example \ref{ex.vectorfield}, the isomorphism $\zeta^0: \fv^0 \times B \to {\rm Lie}\mathsf{G}^0$ of vector bundles on $B$ from Set-up \ref{n.B} gives rise to a $\fv^0$-exponential action on $\sP$ and the fundamental vector field $l^{\sP}$ on $\sP$ for each $l \in \fv^0$.

We set $\bar{P}^{(0)} = \sP$ and check the condition (A) as follows.
\begin{itemize} \item[(A1)] Define $\beta^{(0)}: \bar{P}^{(0)} \to B$ as the composition $$\bar{P}^{(0)} = \sP \stackrel{\pi^{\sP}}{\longrightarrow} M \stackrel{\beta}{\longrightarrow} B.$$
\item[(A2)] Applying Lemma \ref{l.vertical} (ii) to the $\beta$-Tanaka filtration $\sD_{\bullet}$ of height $-1$ on $M$, define the $\beta^{(0)}$-Tanaka filtration $\sD^{(0)}_{\bullet}$ of height $0$ on $\bar{P}^{(0)}$ as the $\pi^{\sP}$-lift $\sD^{\pi^{\sP}}_{\bullet}.$
    \item[(A3)] For $y \in \sP$, write $x = \pi^{\sP}(y)$ and $b = \beta(x)$.
     Since $\sP|_{\beta^{-1}(b)} = (\beta^{(0)})^{-1}(b) \to \beta^{-1}(b)$ is a principal bundle with the structure group $\mathsf{G}^0(b)$, we have a natural isomorphism \begin{equation}\label{e.I0} \fv^0= \fg(b)^0 \to {\rm Ker}({\rm d}_y \pi^{\sP}) = (\sD^{(0)}_0)_y, \end{equation} which sends $l \in \fv^0$ to $l^{\sP}_y$, the value of  the fundamental vector field at $y$. Denote by $(I^{(0)}_{y})^0: \fv^0 \to (\sD^{(0)}_0)_y$ the isomorphism (\ref{e.I0}).
     Furthermore, the inclusion $\sP_x  \subset \I^{<0}_x M$ determines a graded vector space isomorphism $(I^{(0)}_y)^{-}$ from $\fv^{-}$ to $$ (\sD_{-k}/\sD_{-k+1})_x \oplus \cdots \oplus (\sD_{-2}/\sD_{-1})_x \oplus (\sD_{-1})_x $$ $$ =
(\sD^{(0)}_{-k}/\sD^{(0)}_{-k+1})_y \oplus \cdots \oplus(\sD^{(0)}_{-2}/\sD^{(0)}_{-2})_y \oplus (\sD^{(0)}_{-1}/\sD^{(0)}_0)_y.$$
Then $I_y^{(0)} = (I_y^{(0)})^- + (I^{(0)}_y)^0$ for each $y \in P^{(0)}$ is a graded vector space isomorphism from $\fv^{<1}$ to $\gr(\sD^{(0)}_{\bullet})_y$ whose restriction to $\fv^-$ is a graded Lie algebra isomorphism.  Thus $I^{(0)} := \{ I^{(0)}_y \mid y \in \sP\}$ is a $\beta^{(0)}$-Tanaka parallelism of $(\bar{P}^{(0)}, \sD^{(0)}_{\bullet})$.
    \end{itemize}
     We need to construct bundles $\widetilde{\pi}^1: \widetilde{P}^1 \to \bar{P}^{(0)}$ and $\bar{\pi}^1: \bar{P}^{(1)} \to \bar{P}^{(0)}$ over $\bar{P}^{(0)}$ satisfying the properties (B) and (C) in Theorem \ref{t.Tanaka1}. This requires some preparatory work in Subsections \ref{ss.P1} -- \ref{ss.5.4}.

\subsection{$\fv^0$-exponential action on $ P^1 $}\label{ss.P1}
Associated with the $\beta^{(0)}$-Tanaka parallelism $I^{(0)}$ on $\bar{P}^{(0)}$, we have  from Definition \ref{d.Lifts} (iii) the bundle of lifts of $I^{(0)}$,  $$\lambda^{I^{(0)}} : \bL(I^{(0)}) \to \bar{P}^{(0)},$$ which is
 a principal subbundle of the frame bundle $\F \bar{P}^{(0)},$ with the structure group ${\rm GL}_1(\fv^{<1}) \subset {\rm GL}(\fv^{<1})$.
As mentioned in the subsection \ref{ss.P0},  we have a $\fv^0$-exponential action on $\bar{P}^{(0)} = \sP$ arising from $\zeta^0$.  We can lift it to a $\fv^0$-exponential action  on $\bL(I^{(0)})$, as described below.

\begin{definition}\label{d.G0action} Let us use the following notation. \begin{itemize}
\item[(1)] For each $l \in \fv^0$,
denote by  $R^{\bar{P}^{(0)}}_{\exp(l)}: \bar{P}^{(0)} \to \bar{P}^{(0)}$   the $\fv^0$-exponential action  on $\bar{P}^{(0)}$ arising from the isomorphism $\zeta^0: \fv^0 \times B \cong {\rm Lie}\mathsf{G}^0$ in Set-up \ref{n.B}.
\item[(2)] For any $b \in B$ and $g \in \mathsf{G}^0(b) \subset \mathsf{G}(b)$, let ${\rm Ad}_g \in {\rm GL}(\fv^{<1})$ be the linear automorphism induced by the adjoint action of  $g$ on ${\rm Lie}\mathsf{G}(b) = \fg(b) = \fv.$ \item[(3)] For $l \in \fv^0$ and $b \in B$, let $\exp_b(l) \in \mathsf{G}^0(b)$ be the image of $l$ under the composition of the isomorphism $\zeta^0:\fv^0 \times B \cong {\rm Lie}\mathsf{G}^0$ and the exponential map ${\rm Lie} \mathsf{G}^0(b) \to \mathsf{G}^0(b)$ of the Lie group $\mathsf{G}^0(b)$.
    \end{itemize}
    For any $y \in (\beta^{(0)})^{-1}(b) \subset \bar{P}^{(0)}$, $l \in \fv^0$ and an element $h \in {\rm Isom}(\fv^{<1}, T_y \bar{P}^{(0)}),$   define $$R_{\exp(l)}(h) := {\rm d}_y R^{\bar{P}^{(0)}}_{\exp(l)} \circ  h \circ {\rm Ad}_{\exp_b(l)} \ \in {\rm Isom}(\fv^{<1}, T_z \bar{P}^{(0)}),$$
    where $z := R^{\bar{P}^{(0)}}_{\exp(l)} (y)$.
    The operation $h \mapsto R_{\exp(l)}(h)$ determines a $\fv^0$-exponential action on $\F \bar{P}^{(0)}$, which descends under $\F \bar{P}^{(0)} \to \bar{P}^{(0)}$ to the given $\fv^0$-exponential action on $\bar{P}^{(0)}.$ \end{definition}

    \begin{lemma}\label{l.G0action}
    In Definition \ref{d.G0action},   if $h \in  \bL(I^{(0)}_y),$ then $R_{\exp(l)}(h) \in \F_z \bar{P}^{(0)}$  belongs to $\bL(I^{(0)}_z) \subset \F_z \bar{P}^{(0)}$.
    \end{lemma}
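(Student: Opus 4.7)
The plan is to verify directly the two defining conditions of a lift from Definition~\ref{d.filtration}(iv) applied to the linear isomorphism $R_{\exp(l)}(h) = {\rm d}_y R^{\bar{P}^{(0)}}_{\exp(l)} \circ h \circ {\rm Ad}_{\exp_b(l)}$ from $\fv^{<1}$ to $T_z \bar{P}^{(0)}$, with respect to the graded isomorphism $I^{(0)}_z$.

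\emph{Preservation of the filtration.} For each $-k \leq i \leq 0$, I would show $R_{\exp(l)}(h)(\fv^i) \subset (\sD^{(0)}_i)_z$ by combining three ingredients: (a) the adjoint action ${\rm Ad}_{\exp_b(l)}$ preserves the grading of $\fv = \fg(b)$, because $l \in \fv^0 = \fg(b)^0$ and $[\fg(b)^0, \fg(b)^i] \subset \fg(b)^i$, so it restricts to an automorphism of $\fv^{<1}$ sending each $\fv^i$ to itself; (b) since $h$ is a lift of $I^{(0)}_y$, we have $h(\fv^i) \subset (\sD^{(0)}_i)_y$; and (c) $R^{\bar{P}^{(0)}}_{\exp(l)}$ preserves the fibers of $\pi^{\sP}$, so Lemma~\ref{l.vertical}(i) gives ${\rm d}_y R^{\bar{P}^{(0)}}_{\exp(l)}((\sD^{(0)}_i)_y) \subset (\sD^{(0)}_i)_z$.

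\emph{Agreement of the induced quotient maps with $I^{(0)}_z$.} I would split into cases $i < 0$ and $i = 0$. For $i < 0$, Definition~\ref{d.filtrationlift} gives $\sD^{(0)}_j = ({\rm d}\pi^{\sP})^{-1}\sD_j$ for $j \leq -1$ and $\sD^{(0)}_0 = {\rm Ker}({\rm d}\pi^{\sP})$, so ${\rm d}\pi^{\sP}$ identifies both $(\sD^{(0)}_i/\sD^{(0)}_{i+1})_y$ and $(\sD^{(0)}_i/\sD^{(0)}_{i+1})_z$ canonically with $(\sD_i/\sD_{i+1})_x$; moreover, $\pi^{\sP} \circ R^{\bar{P}^{(0)}}_{\exp(l)} = \pi^{\sP}$ forces the induced map on these quotients to be the identity on $(\sD_i/\sD_{i+1})_x$. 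The verification then reduces to the defining identity $I^{(0)}_z(v) = y({\rm Ad}_{\exp_b(l)} v)$ in $(\sD_i/\sD_{i+1})_x$, which records the right $\mathsf{G}^0(b)$-action $y \cdot g = y \circ g$ on $\sP_x$ under the inclusion $\mathsf{G}^0(b) \subset {\rm GL}(\fv^-)$. For $i = 0$, since $\sD^{(0)}_1 = 0$, the lift condition requires equality on the nose; here $h|_{\fv^0} = I^{(0)}_y|_{\fv^0}$ sends $v$ to the fundamental vector $v^{\sP}_y$ by the construction in Subsection~\ref{ss.P0}, and the standard right-action identity ${\rm d}_y R^{\bar{P}^{(0)}}_{\exp(l)}(X^{\sP}_y) = ({\rm Ad}_{\exp_b(l)^{-1}} X)^{\sP}_z$ (a consequence of Lemma~\ref{l.fundamental} and Proposition~4.1 of Chapter~I of \cite{KN}) yields $R_{\exp(l)}(h)(v) = v^{\sP}_z = I^{(0)}_z(v)$ once the two adjoint factors cancel.

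The one place that requires care is fixing a consistent convention for the right action of $\mathsf{G}^0(b) \subset {\rm GL}(\fv^-)$ on $\sP_x$, viewed as Lie algebra isomorphisms $\fv^- \to {\rm symb}_x(\sD_\bullet)$. Once this is fixed so that the factor ${\rm Ad}_{\exp_b(l)}$ inserted in Definition~\ref{d.G0action} matches the action by which $\mathsf{G}^0(b)$ sits inside ${\rm GL}(\fv^-)$, both the negative-degree identification and the zero-degree identity are immediate. I do not anticipate any deeper obstacle beyond this bookkeeping.
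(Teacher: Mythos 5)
Your proof is correct and follows essentially the same route as the paper's: both arguments verify the lift conditions of Definition~\ref{d.filtration}(iv) by splitting into negative degrees (where the key point is that $\pi^{\sP}\circ R^{\bar{P}^{(0)}}_{\exp(l)}=\pi^{\sP}$ makes the induced quotient map the identity, together with $I^{(0)}_z|_{\fv^-}=I^{(0)}_y\circ{\rm Ad}_{\exp_b(l)}|_{\fv^-}$) and degree $0$ (where the standard transformation law for fundamental vector fields under the right principal action does the work). You make the filtration-preservation step explicit, which the paper leaves implicit; one small slip is that the transformation law ${\rm d}R_a(A^*)=({\rm Ad}_{a^{-1}}A)^*$ is Proposition~5.1 of Chapter~I of \cite{KN} (the reference the paper uses), not Proposition~4.1.
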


     \begin{proof}
     In the notation of Subsection \ref{ss.P0}, we need to check \begin{itemize} \item[(1)]
    $ {\rm d}_y R^{\bar{P}^{(0)}}_{\exp(l)} \circ h \circ {\rm Ad}_{\exp_b(l)} |_{\fv^0} = I^{(0)}_z |_{\fv^0} \in {\rm Isom}(\fv^0, \sD^{(0)}_z);$ and \item[(2)] ${\rm pr}^i_{(1)} \circ {\rm d}_y R^{\bar{P}^{(0)}}_{\exp(l)} \circ  h \circ {\rm Ad}_{\exp_b(l)} |_{\fv^i} = I^{(0)}_z|_{\fv^i}$ for $-k \leq i \leq -1$.   \end{itemize}

For $ b = \beta^{(0)} (y) \in B$, the fibration  $(\beta^{(0)})^{-1}(b) \to \beta^{-1}(b)$ is a principal bundle with the structure group $\mathsf{G}^0(b).$
Since the equality (1) is  concerned with only vectors tangent to fibers of  $(\beta^{(0)})^{-1}(b) \to \beta^{-1}(b)$,
    (1) follows from the standard result for principal bundles (e.g. \cite{KN} Chapter 1, Proposition 5.1).

For $-k \leq i \leq -1$,  we have $$I^{(0)}_z|_{\fv^i} = I^{(0)}_y \circ {\rm Ad}_{\exp_b(l)}|_{\fv^i}$$ by our definition of $(I^{(0)})^-$ from $\sP_x \subset \I^{<0}_x$.   The natural  identification $(\sD^{(0)}_i/\sD^{(0)}_{i+1})_y = (\sD^{(0)}_i/\sD^{(0)}_{i+1})_z = (\sD_i/\sD_{i+1})_x$ for $x = \pi^{\sP}(y) = \pi^{\sP}(z)$ shows  $${\rm pr}^i_{(1)} \circ {\rm d}_y R^{\bar{P}^{(0)}}_{\exp(l)} \circ h |_{\fv^i} = {\rm pr}^i_{(1)} \circ h|_{\fv^i}.$$   These two equations prove (2) because ${\rm pr}^i_{(1)} \circ h|_{\fv^i} = I^{(0)}_y|_{\fv^i}$ by $h \in \bL(I^{(0)}_y)$. \end{proof}

\begin{definition}\label{d.P1}
Write $P^1:= \bL(I^{(0)})$ and $\pi^1:= \lambda^{I^{(0)}} : P^1 \to \bar{P}^{(0)}$. By Lemma \ref{l.G0action}, we have a    $\fv^0$-exponential action on $P^1$, to be denoted by $R^{P^1}_{\exp (l)}: P^1 \to P^1$ for $l \in \fv^0$,  defined as   $$ \bL(I^{(0)}) \ni h \ \mapsto \  R^{P^1}_{\exp (l)}(h) := R_{\exp(l)}(h) \in \bL(I^{(0)}).$$
\end{definition}

The next lemma is immediate.

\begin{lemma}\label{l.23-1}
In Definition  \ref{d.P1}, the $\fv^0$-exponential action on $P^1$ descends via $\pi^1$ to the $\fv^0$-exponential action on $\bar{P}^{(0)}$ in Definition \ref{d.G0action}. In particular,  for any $l \in \fv^0$, $$ {\rm d} R^{\bar{P}^{(0)}}_{\exp (-l)} \circ {\rm d} \pi^1 \circ {\rm d} R^{P^1}_{\exp(l)} = {\rm d} \pi^1$$ and the fundamental vector field $l^{P^1}$ on $P^1$ and the fundamental vector field $l^{\bar{P}^{(0)}}$ on $ \bar{P}^{(0)}$ are related by ${\rm d} \pi^1(l^{P^1}) = l^{\bar{P}^{(0)}}$. \end{lemma}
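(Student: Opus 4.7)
The proof should be essentially unpacking the definitions, so my plan is to make the descent relation explicit and then differentiate. First, I would verify the commuting square
\[
\pi^1 \circ R^{P^1}_{\exp(l)} \;=\; R^{\bar{P}^{(0)}}_{\exp(l)} \circ \pi^1
\]
for every $l \in \fv^0$. Indeed, for $h \in P^1 = \bL(I^{(0)})$ with $y := \pi^1(h) \in \bar{P}^{(0)}$ and $b := \beta^{(0)}(y)$, Definition \ref{d.P1} sets $R^{P^1}_{\exp(l)}(h) = {\rm d}_y R^{\bar{P}^{(0)}}_{\exp(l)} \circ h \circ {\rm Ad}_{\exp_b(l)}$, which by construction is a frame at the point $z := R^{\bar{P}^{(0)}}_{\exp(l)}(y)$; hence applying $\pi^1$ yields $z = R^{\bar{P}^{(0)}}_{\exp(l)}(\pi^1(h))$, as required.

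Next, I would derive the stated differential identity by differentiating the commuting square. Applying ${\rm d}$ to both sides gives ${\rm d}\pi^1 \circ {\rm d} R^{P^1}_{\exp(l)} = {\rm d} R^{\bar{P}^{(0)}}_{\exp(l)} \circ {\rm d}\pi^1$, and precomposition with the inverse ${\rm d} R^{\bar{P}^{(0)}}_{\exp(-l)}$ (which is well-defined because $\{R^{\bar{P}^{(0)}}_{\exp(tl)}\}_{t \in \C}$ is a one-parameter group, cf.\ Definition \ref{d.exponential}(ii)) yields
\[
{\rm d} R^{\bar{P}^{(0)}}_{\exp(-l)} \circ {\rm d}\pi^1 \circ {\rm d} R^{P^1}_{\exp(l)} \;=\; {\rm d}\pi^1.
\]

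Finally, the assertion ${\rm d}\pi^1(l^{P^1}) = l^{\bar{P}^{(0)}}$ on fundamental vector fields follows from the fact that $l^{P^1}$ and $l^{\bar{P}^{(0)}}$ are, by Definition \ref{d.exponential}, the infinitesimal generators of the one-parameter groups $\{R^{P^1}_{\exp(tl)}\}$ and $\{R^{\bar{P}^{(0)}}_{\exp(tl)}\}$ respectively; differentiating the commuting square at $t = 0$ gives the claim pointwise. I do not expect any real obstacle here — the content of the lemma is simply that the lift of the $\fv^0$-exponential action from $\bar{P}^{(0)}$ to $P^1$ constructed in Definition \ref{d.P1} is indeed a lift, which is built into its definition via $R_{\exp(l)}(h) = {\rm d} R^{\bar{P}^{(0)}}_{\exp(l)} \circ h \circ {\rm Ad}_{\exp_b(l)}$ where the differential of the base action appears as the leftmost factor.
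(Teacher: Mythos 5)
Your proof is correct, and it unpacks the definitions in exactly the way the paper's phrase ``The next lemma is immediate'' is pointing at: the commuting square $\pi^1 \circ R^{P^1}_{\exp(l)} = R^{\bar{P}^{(0)}}_{\exp(l)} \circ \pi^1$ is built directly into the formula $R_{\exp(l)}(h) = {\rm d}_y R^{\bar{P}^{(0)}}_{\exp(l)} \circ h \circ {\rm Ad}_{\exp_b(l)}$, and the two displayed identities follow by differentiating. (Minor terminological note: applying ${\rm d} R^{\bar{P}^{(0)}}_{\exp(-l)}$ on the left is post-composition, not pre-composition, but the computation is right.)
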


Next, we need to describe the behavior of the soldering form on $P^1 \subset \F \bar{P}^{(0)}$ with respect to the $\fv^0$-exponential action  on $P^1$. Since $\mathsf{G}^0$ does not belong to the structure group of $\pi^1: P^1 \to \bar{P}^{(0)}$, Lemma \ref{l.Sternberg} (i) is not directly applicable. However, the following lemma says that a similar result still holds.

\begin{lemma}\label{l.23} In the setting of Definition \ref{d.P1}, let us use the following notation.
\begin{itemize} \item[(1)]
 For each $l \in \fv^0$, the isomorphism $\zeta^0: \fv^0 \times B \to {\rm Lie}\mathsf{G}^0$ induces an automorphism of the trivial vector bundle on $B$ $${\rm Ad}_{\exp (l)}: \fv^{<1} \times B \ \longrightarrow \ \fv^{<1} \times B$$  given by the family $\{ {\rm Ad}_{\exp_b(l)} \in {\rm GL}(\fv^{<1}) \mid  b \in B\}$. It can be lifted to an automorphism    of the trivial vector bundle on $P^1$ $${\rm Ad}_{\exp(l)}: \fv^{<1} \times P^1 \ \longrightarrow \ \fv^{<1} \times P^1.$$  Similarly, we have an  endomorphism $${\rm ad}_{l}: \fv^{<1} \times P^1 \ \longrightarrow \ \fv^{<1} \times P^1.$$
\item[(2)]
Let $\theta^1$ be the $\fv^{<1}$-valued 1-form   on $P^1$, the soldering form from $P^1=\bL(I^{(0)}) \subset \F \bar{P}^{(0)}.$
\end{itemize}
Then  for each $l \in \fv^0$, \begin{equation}\label{e.23-1} (R^{P^1}_{\exp (l)})^* \theta^1 = {\rm Ad}_{\exp (-l)} \circ \theta^1,\end{equation}
and for the fundamental vector field $l^{P^1}$ on $P^1$, \begin{equation}\label{e.23-1-2} {\rm Lie}_{l^{P^1}} \, \theta^1 = - {\rm ad}_{l} \circ \theta^1.\end{equation}
More precisely,  for any $h \in P^1$ and  $v \in T_h P^1$,
 $$ \theta^1({\rm d} R^{P^1}_{\exp(l)}(v)) = {\rm Ad}_{\exp (-l)} ( \theta^1(v)) \mbox{ and } ({\rm Lie}_{ l^{P^1}} \theta^1) (v) = -[ l, \theta^1(v)],$$ where the Lie bracket is that of  the Lie algebra $\fg(b)$ for $b = \beta^{(0)}(\pi^1(h)) \in B$. \end{lemma}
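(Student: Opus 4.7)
The plan is to verify both equations by direct unwinding of the definitions. The first identity \eqref{e.23-1} is the main content; the second \eqref{e.23-1-2} follows from it by differentiation at $t=0$. The soldering form on $P^1 \subset \F \bar{P}^{(0)}$ at a point $h \in \F_y \bar{P}^{(0)} = {\rm Isom}(\fv^{<1}, T_y \bar{P}^{(0)})$ is $\theta^1_h = h^{-1} \circ {\rm d}_h \pi^1$ from Definition \ref{d.frame}(ii). The key observation is that although $\mathsf{G}^0$ is not the structure group of $\pi^1$, the $\fv^0$-exponential action of Definition \ref{d.P1} is defined precisely so that the twist by ${\rm Ad}_{\exp_b(l)}$ on the right matches the geometric action by ${\rm d} R^{\bar{P}^{(0)}}_{\exp(l)}$ downstairs, and it is exactly this twist that produces the $\fv^{<1}$-valued factor ${\rm Ad}_{\exp(-l)}$ in the pullback.

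Concretely, I would take $v \in T_h P^1$ with $y = \pi^1(h)$, $b = \beta^{(0)}(y)$, and compute
\begin{align*}
 \theta^1\bigl({\rm d}_h R^{P^1}_{\exp(l)}(v)\bigr)
 & = \bigl(R^{P^1}_{\exp(l)}(h)\bigr)^{-1} \circ {\rm d}\pi^1 \circ {\rm d}_h R^{P^1}_{\exp(l)}(v) \\
 & = \bigl({\rm Ad}_{\exp_b(l)}\bigr)^{-1} \circ h^{-1} \circ \bigl({\rm d}_y R^{\bar{P}^{(0)}}_{\exp(l)}\bigr)^{-1} \circ {\rm d} R^{\bar{P}^{(0)}}_{\exp(l)} \circ {\rm d}_h \pi^1(v),
\end{align*}
where the first line is the definition of $\theta^1$ at $R^{P^1}_{\exp(l)}(h)$, and on the second line I insert the explicit formula for $R^{P^1}_{\exp(l)}(h)$ from Definition \ref{d.G0action} (taking its inverse) and apply Lemma \ref{l.23-1} to push ${\rm d}\pi^1$ past ${\rm d} R^{P^1}_{\exp(l)}$. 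The two differentials cancel and the remaining factor collapses to ${\rm Ad}_{\exp_b(-l)} \circ \theta^1_h(v)$, which is the pointwise form of \eqref{e.23-1}.

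For \eqref{e.23-1-2}, I replace $l$ by $tl$ in \eqref{e.23-1}, obtaining $(R^{P^1}_{\exp(tl)})^*\theta^1 = {\rm Ad}_{\exp(-tl)} \circ \theta^1$, and differentiate at $t = 0$. The left-hand side yields the Lie derivative ${\rm Lie}_{l^{P^1}} \theta^1$ by definition of the fundamental vector field, while the right-hand side differentiates to $-{\rm ad}_l \circ \theta^1$ because ${\rm Ad}_{\exp_b(tl)}$ is generated by ${\rm ad}_l$ in the Lie algebra $\fg(b)$, and all of this is performed fiberwise over $B$ so the pointwise formula $({\rm Lie}_{l^{P^1}} \theta^1)(v) = -[l, \theta^1(v)]$ holds with the bracket taken in $\fg(b)$.

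The only subtlety to flag is that \emph{all} of the above is really happening fiberwise over $B$: both ${\rm Ad}_{\exp(l)}$ and ${\rm ad}_l$ act as families of linear maps on $\fv^{<1} \times P^1$ indexed by the base point $b = \beta^{(0)}(\pi^1(h))$, so one must be careful that the identification of $\fv$ with $\fg(b)$ used in the adjoint action is consistent with the identification built into the definition of $\zeta^0$ and $R^{P^1}_{\exp(l)}$. This consistency is ensured by the construction in Set-up \ref{n.B}(1) and Definition \ref{d.G0action}(2)(3), so no separate argument is needed; the ``hard part'' is purely the bookkeeping in the displayed calculation above, and once the cancellation ${\rm d} R^{\bar{P}^{(0)}}_{\exp(-l)} \circ {\rm d} R^{\bar{P}^{(0)}}_{\exp(l)} = {\rm Id}$ is performed, everything reduces to the definition of the soldering form.
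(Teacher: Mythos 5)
Your proposal is correct and follows essentially the same computation as the paper: expand $\theta^1({\rm d} R^{P^1}_{\exp(l)}(v))$ as $(R^{P^1}_{\exp(l)}(h))^{-1}\circ{\rm d}\pi^1\circ{\rm d}R^{P^1}_{\exp(l)}(v)$, substitute the formula for $R^{P^1}_{\exp(l)}(h)$ from Definition~\ref{d.G0action}, cancel the downstairs differentials via Lemma~\ref{l.23-1}, and then differentiate in $t$ for \eqref{e.23-1-2}. The fiberwise-over-$B$ remark is accurate but is already handled automatically by the notational conventions set up in \ref{d.G0action}; no further argument is needed, as you note.
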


\begin{proof}
Regarding $R^{P^1}_{\exp (l)}(h)$ as an element of ${\rm Isom}(\fv^{<1}, T_{\pi^1(h)} \bar{P}^{(0)}),$ we have
\begin{eqnarray*}
 \theta^1 ({\rm d} R^{P^1}_{\exp (l)} (v))
& = & (R^{P^1}_{\exp (l)}(h))^{-1} ({\rm d} \pi^1 \circ {\rm d} R^{P^1}_{\exp (l)}(v)) \\ & = & (d R^{\bar{P}^{(0)}}_{\exp(l)} \circ h \circ {\rm Ad}_{\exp(l)})^{-1} \circ {\rm d} \pi^1 \circ {\rm d} R^{P^1}_{\exp (l)}(v) \\
&= & {\rm Ad}_{\exp (-l)} \circ h^{-1} \circ {\rm d} R^{\sP}_{\exp (- l)}
\circ {\rm d} \pi^1 \circ {\rm d} R^{P^1}_{\exp (l)}  (v) \\
& = & {\rm Ad}_{\exp (- l)} \circ h^{-1} \circ {\rm d} \pi^1 (v) \\
&=& {\rm Ad}_{\exp (-l)} \circ \theta^1 (v), \end{eqnarray*} where we use Definition \ref{d.G0action} in the second line and Lemma \ref{l.23-1} in the fourth line. This proves (\ref{e.23-1}). Taking derivative of (\ref{e.23-1}), we obtain (\ref{e.23-1-2}). \end{proof}

\subsection{Torsion of $\pi^1: P^1  \to \bar{P}^{(0)}$}

The following is a modification of  Proposition 24 and Theorem 25 of \cite{AD}.

\begin{proposition}\label{p.24}
In Definition \ref{d.P1}, for a point $z \in P^1 \subset \F \bar{P}^{(0)}$ and $b = \beta^{(0)} \circ \pi^1 (z) \in B,$ let $\sH_z \subset T_{z} P^1 \subset T_z \F \bar{P}^{(0)}$ be a $\pi^1$-horizontal subspace. Then the torsion $\tau^{\sH_z} \in \Hom(\wedge^2 \fv^{<1}, \fv^{<1})$ satisfies the following.
\begin{itemize} \item[(i)] The homomorphism $\tau^{\sH_z}$ has only components of nonnegative homogeneous degree with respect to the grading $\fv^{<1} = \oplus_{i=-k}^0 \fv^i$.
\item[(ii)] If $u, v \in \fv^0$, then $\tau^{\sH_z} (u, v) = - [u, v]$ in terms of the Lie algebra $\fg(b)^0$.
    \item[(iii)] If $u, v \in \fv^{<1}$, then $(\tau^{\sH_z}  )^0, $  the homogeneous component of  degree zero in $\tau^{\sH_z},$ is given by  $(\tau^{\sH_z} )^0(u, v) =- [u, v]$ in terms of the Lie algebra $\fg(b).$
\end{itemize}
\end{proposition}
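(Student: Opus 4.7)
The plan is to adapt the proof scheme of Proposition 24 and Theorem 25 in \cite{AD} to the $B$-principal setting. The proof of Lemma \ref{l.Sternberg}(iv) relies only on the constancy of $\theta^1(u^{\sH}) \equiv u$ and $\theta^1(v^{\sH}) \equiv v$ along the horizontal vector fields, so it applies to any local $\pi^1$-connection, not merely those arising from a $B$-group action. Choosing any local $\pi^1$-connection $\sH$ near $z$ extending $\sH_z$ therefore yields
\begin{equation*}
\tau^{\sH_z}(u,v) \;=\; -\, z^{-1}\bigl({\rm d}_z\pi^1([u^{\sH}, v^{\sH}])\bigr),
\end{equation*}
from which all three conclusions will be deduced.

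For (i), each $w \in \bL(I^{(0)})$ is a filtered isomorphism, so for $u \in \fv^i$ with $i \leq 0$ we have ${\rm d}\pi^1(u^{\sH}_w) = w(u) \in (\sD^{(0)}_i)_{\pi^1(w)}$; thus $u^{\sH}$ is a local section of the $\pi^1$-lifted filtration $\sD^{\pi^1}_i$. By Lemma \ref{l.vertical}(ii), $\sD^{\pi^1}_\bullet$ is itself a Tanaka filtration, whence $[u^{\sH}, v^{\sH}] \in \sD^{\pi^1}_{i+j}$; pushing forward by ${\rm d}\pi^1$ and applying $z^{-1}$ lands $\tau^{\sH_z}(u,v)$ in $\fv_{i+j}$, killing all components of negative homogeneous degree. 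For (ii), any $w \in \bL(I^{(0)})$ satisfies $w|_{\fv^0} = I^{(0)}_{\pi^1(w)}|_{\fv^0}$, so for $u \in \fv^0$ the horizontal lift $u^{\sH}$ is $\pi^1$-related to the globally defined fundamental vector field $u^{\bar{P}^{(0)}}$ on $\bar{P}^{(0)}$. For $u, v \in \fv^0$, the bracket $[u^{\sH}, v^{\sH}]$ is therefore $\pi^1$-related to $[u^{\bar{P}^{(0)}}, v^{\bar{P}^{(0)}}]$, which by Lemma \ref{l.fundamental} (with the trivialization $\zeta^0$ identifying $\fv^0 = \fg(b)^0$) equals $[u,v]^{\bar{P}^{(0)}}_{\pi^1(z)} = z([u,v])$ at $\pi^1(z)$, giving $\tau^{\sH_z}(u,v) = -[u,v]$.

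For (iii), combining (i) with the fact that $z$ lifts $I^{(0)}$, it suffices to prove the congruence ${\rm d}_z\pi^1([u^{\sH}, v^{\sH}]) \equiv I^{(0)}_{\pi^1(z)}([u,v])$ modulo $(\sD^{(0)}_{i+j+1})_{\pi^1(z)}$. Fix a local section $\sigma : V \to P^1$ with $\sigma(\pi^1(z)) = z$ and use the flat local connection in the induced trivialization $P^1|_V \cong V \times {\rm GL}_1(\fv^{<1})$; setting $\bar w(x) := \sigma(x)(w)$ for $w \in \fv^{<1}$, a direct calculation identifies ${\rm d}\pi^1([u^{\sH}, v^{\sH}])_z$ with $[\bar u, \bar v]_{\pi^1(z)}$. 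When $u, v \in \fv^-$, the required congruence is the defining property of the Tanaka symbol bracket combined with $I^{(0)}|_{\fv^-}$ being a graded Lie algebra isomorphism $\fg(b)^- \to {\rm symb}_{\pi^1(z)}(\sD^{(0)}_\bullet)$. The main obstacle is the mixed case $u \in \fv^0, v \in \fv^j$ with $j < 0$, which is not directly encoded by the symbol algebra; here $\bar u = u^{\bar{P}^{(0)}}$ and its flow $R^{\bar{P}^{(0)}}_{\exp(tu)}$ preserves the filtration $\sD^{(0)}_\bullet$ and, on the graded quotient $(\sD^{(0)}_j/\sD^{(0)}_{j+1})_{\pi^1(z)}$ transported through $I^{(0)}$ to $\fv^j = \fg(b)^j$, acts by ${\rm Ad}_{\exp_b(tu)}$. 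Differentiating at $t=0$ gives $({\rm Lie}_{u^{\bar{P}^{(0)}}} \bar v)_{\pi^1(z)} \equiv I^{(0)}_{\pi^1(z)}([u,v])$ modulo $(\sD^{(0)}_{j+1})_{\pi^1(z)}$, completing the proof.
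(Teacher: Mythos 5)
Your proof is correct. Parts (i) and (ii) follow essentially the paper's own route: the identity $\tau^{\sH_z}(u,v)=-z^{-1}\bigl({\rm d}\pi^1([u^{\sH},v^{\sH}]_z)\bigr)$, the filtered-lift argument via Lemma \ref{l.vertical} for (i), and the $\pi^1$-relatedness of $u^{\sH}$ to the fundamental vector field $u^{\bar{P}^{(0)}}$ together with Lemma \ref{l.fundamental} for (ii). For (iii), however, you take a genuinely different path in the mixed case $u\in\fv^0$, $v\in\fv^j$ with $j<0$. The paper stays on $P^1$: it decomposes $u^{\sH}=u^{P^1}+(u^{\sH}-u^{P^1})$, evaluates the first piece by Cartan's formula combined with the identity ${\rm Lie}_{l^{P^1}}\theta^1=-{\rm ad}_l\circ\theta^1$ from Lemma \ref{l.23}, and annihilates the correction term $u^{\sH}-u^{P^1}$ in degree $j$ by expanding it in fundamental vector fields $A_s^{P^1}$, $A_s\in\fgl_1(\fv^{<1})$, and invoking Lemma \ref{l.Sternberg}(iii). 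You instead descend at once to $\bar{P}^{(0)}$ via a flat local connection, so the bracket becomes $[\bar u,\bar v]_y=({\rm Lie}_{u^{\bar{P}^{(0)}}}\bar v)_y$, and you compute its class in $(\sD^{(0)}_j/\sD^{(0)}_{j+1})_y$ by differentiating the relation $I^{(0)}_{\phi_t(y)}=I^{(0)}_y\circ{\rm Ad}_{\exp_b(tu)}$ (which is precisely what the proof of Lemma \ref{l.G0action} establishes). Your argument is shorter and avoids Lemma \ref{l.23} entirely; the price is that it is specific to the first prolongation step, whereas the paper's soldering-form computation is the template reused at every stage $\pi^{n+1}\colon P^{n+1}\to\bar{P}^{(n)}$ in Proposition \ref{p.44}.

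One small gap: when you pass to ``the flat local connection in the induced trivialization'' for (iii), you only impose $\sigma(\pi^1(z))=z$ and not $T_z\sigma(V)=\sH_z$, so what you compute a priori is $\tau^{\sH'_z}$ for a possibly different horizontal subspace $\sH'_z=T_z\sigma(V)$. This is harmless, since by Lemma \ref{l.Sternberg}(ii) the difference $\tau^{\sH_z}(u,v)-\tau^{\sH'_z}(u,v)=-a(v)+b(u)$ with $a,b\in\fgl_1(\fv^{<1})$ has no $\fv^{i+j}$-component when $u\in\fv^i$, $v\in\fv^j$ with $i,j\le 0$; but you should either state this or simply require $T_z\sigma(V)=\sH_z$ when choosing $\sigma$.
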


\begin{proof}
Set $y = \pi^1(z) \in \bar{P}^{(0)}$ and write $z = h$ when we view it as an element of $\bL(I^{0}_y) \subset {\rm Isom}(\fv^{<1}, T_y \bar{P}^{(0)}).$:
\begin{eqnarray*}
\xymatrix{
\qquad \qquad \quad P^1=\bL(I^{(0)}) \ni z=h  \ar[d]^{\pi^1}\\
\bar{P}^{(0)} = \sP \ni y \ar[d]_{\beta^{(0)}}\\
 B
}
\end{eqnarray*}
To prove the proposition, we may replace $\bar{P}^{(0)}$ by a neighborhood of $y$ and assume that there exists
a  holomorphic section $\Sigma \subset P^1$ of $\pi^1$ through $z$  such that $T_z \Sigma = \sH_z$.  Translating $\Sigma$ by the right action of  ${\rm GL}_1(\fv^{<1})$ on $P^1$, we obtain a  principal connection $\sH$ on the principal subbundle $\pi^1: P^1 \to \bar{P}^{(0)}.$ Then for each $w \in \fv^{<1}$, we have the $\sH$-horizontal vector field $w^{\sH}$ on $P^1$ from Definition \ref{d.frame} (iii). This is equal to $w^{\sH}$ from Definition \ref{d.torsion} obtained through Example \ref{ex.vectorfield}.

 Let us  introduce a filtration on $P^1$ by  $\sD^{P^1}_i := ({\rm d} \pi^1)^{-1}(\sD^{(0)}_i)$ for $-k \leq i \leq 0.$ This is a $\beta^{(0)} \circ \pi^1$-Tanaka filtration of height $0$ on $P^1$. Since $I^{(0)}$ is a $\beta^{(0)}$-Tanaka parallelism and  $P^1 \subset \bL(I^{(0)})$, we see that $w^{\sH} \in \sD_i^{P^1}$ if $w \in \fv^i$.
 If $w \in \fv^i$ and $u \in \fv^j$ for $i, j  <1$, then $[w^{\sH}, u^{\sH}]$ is a section of $\sD^{P^1}_{i+j}$. Thus  $$\tau^{\sH_z}(w, u) = - h^{-1} ({\rm d} \pi^1 ([w^{\sH}, u^{\sH}]) ) \mbox{ from Lemma \ref{l.Sternberg} (iv)}$$ belongs to $ \oplus_{m = i+j}^0 \fv^{m}$. This proves (i).

To prove (ii) and (iii), we need the following two equalities.
For  $u \in \fv^0$, let $u^{\bar{P}^{(0)}}$ (resp. $u^{P^1}$) be the fundamental vector field on $\bar{P}^{(0)}$ (resp.  $P^1$) given by the $\fv^0$-exponential actions  in Definition \ref{d.P1}. Then
\begin{equation}\label{e.3.4a} {\rm d} \pi^1 (u^{\sH}) = u^{\bar{P}^{(0)}},\end{equation} namely, the vector field $u^{\sH}$ can be projected by $\pi^1$ to $u^{\bar{P}^{(0)}}$, and
\begin{equation}\label{e.3.4b} {\rm d} \pi^1 ( u^{\sH} - u^{P^1}) =0, \end{equation} namely, the vector field $u^{\sH} - u^{P^1}$ on $P^1$ is tangent to fibers of $\pi^1$.
  From   $h \in \bL(I^{(0)}) \subset {\rm Isom}(\fv^{<1}, T_y \bar{P}^{(0)})$ and our definition of $I^{(0)}$ in (A3) of Subsection \ref{ss.P0},  the image $h(u) \in T_y \bar{P}^{(0)}$ is equal to $u^{\bar{P}^{(0)}}_y$. This implies (\ref{e.3.4a}).  By  Lemma \ref{l.23-1}, (\ref{e.3.4a}) implies (\ref{e.3.4b}).

 If $u,v \in \fv^0=\mathfrak g  (b)^0$, then
  \begin{eqnarray*}
  \tau^{\sH_z}(u,v) &=& -h^{-1} \circ {\rm d} \pi^1([u^{\sH}, v^{\sH}]) \text{ by Lemma \ref{l.Sternberg} (iv)} \\
  &=& -h^{-1} ([  u^{\bar{P}^{(0)}},  v^{\bar{P}^{(0)}} ]_y) \text{ by (\ref{e.3.4a})} \\
  &=& -h^{-1} ([u,v]^{\bar{P}^{(0)}}_y) \text{ by Lemma \ref{l.fundamental}}\\
  &=& -[u,v] \text{ by (\ref{e.I0}).}
  \end{eqnarray*} This proves (ii).

For the proof of (iii), first assume  that  $u \in \fv^i$ and $v \in \fv^{j}$ for $i,j <0$.
Then $(\tau^{\sH_z})^0(u,v)$ is the $\fv^{i+j}$-component of   $$\tau^{\sH_z}(w, u) = - h^{-1} ({\rm d} \pi^1 ([w^{\sH}, u^{\sH}]) ).$$ Here, the bracket $[u^{\sH }, v^{\sH}]$ is a section of $\sD^{P^1}_{i+j}$ and  ${\rm d} \pi^1[u^{\sH }, v^{\sH}]_z$ is an element of $(\sD^{ (0) }_{i+j})_y$. Since $h$ is a lift of $I_y^{(0)}$, the   diagram
$$ \begin{array}{ccc} \fv^{i+j} & \stackrel{h|_{\fv^{i+j}}}{\longrightarrow} & (\sD_{i+j}^{ (0) })_y  \\
\| & & \downarrow {\rm pr}^{i+j}_{(1)} \\  \fv^{i+j} & \stackrel{I_y^{(0)}|_{\fv^{i+j}}}{\longrightarrow} & \gr^{i+j} (\sD_{\bullet}^{(0)})_y
\end{array} $$
  is commutative. Thus
  the $\fv^{i+j}$-component of $h^{-1} \circ {\rm d} \pi^1[u^{\sH }, v^{\sH}]_z$ is equal to $(I_y^{(0)})^{-1} \circ {\rm pr}^{i+j}_{(1)} {\rm d} \pi^1[u^{\sH }, v^{\sH}]_z$.
  From ${\rm d}\pi^1 (u^{\sH}_z) = h(u)$ and ${\rm d}\pi^1 (v^{\sH}_z) = h(v),$ it follows that
  \begin{eqnarray*}
  {\rm pr}^{i+j}_{(1)} {\rm d} \pi^1[u^{\sH }, v^{\sH}]_z &=&
  [{\rm pr}^{i }_{(1)}{\rm d}\pi^1 (u^{\sH})_z, {\rm pr}^{j}_{(1)}{\rm d}\pi^1 (v^{\sH})_z] \\
  &=& [{\rm pr}^{i }_{(1)}h(u), {\rm pr}^{j}_{(1)} h(v)] \\
  &=& [I_y^{(0)}(u), I_y^{(0)}(v)],
  \end{eqnarray*}
  where the Lie brackets on the right hand side are from the symbol algebra of $\sD^{(0)}_{\bullet}$.
  Since $I_y^{(0)}|_{\fv^-}$ preserves the Lie brackets, we see that $$(I_y^{(0)})^{-1} ({\rm pr}^{i+j}_{(1)} {\rm d} \pi^1[u^{\sH }, v^{\sH}]_z  ) = [u,v].$$ Hence the $\fv^{i+j}$-component of   $\tau^{\sH_z}(w, u)$ is $-[u,v]$.  We have proved (iii) when $u, v \in \fv^{<0}.$

If $u \in \fv^0$ and $v \in \fv^{j}$ for $j<0$, then in terms of the soldering form $\theta^1$ on $P^1 \subset \F \bar{P}^{(0)}$,
  \begin{eqnarray*}
  \tau^{\sH_z}(u,v) &=& {\rm d}\theta^1(u^{\sH_z}, v^{\sH _z}) \\
  &=& {\rm d}\theta^1( u^{P^1}, v^{\sH})_z + {\rm d}\theta^1( u^{\sH} - u^{P^1}, v^{\sH})_z
    \end{eqnarray*}
The first term ${\rm d}\theta^1( u^{P^1}, v^{\sH})_z$ is  equal to $({\rm Lie}_{u^{P^1}}\theta^1)_z(v^{\sH_z})$ because  of the Cartan formula   $${\rm Lie}_{u^{P^1}}\theta^1 = {\rm d}(\theta^1(u^{P^1})) + {\rm d} \theta^1 (u^{P^1}, \cdot) $$ and the constancy of $\theta^1(u^{P^1}) = \theta^1(u^{\sH}) =u $ from  (\ref{e.3.4b}).  Thus ${\rm d}\theta^1( u^{P^1}, v^{\sH})_z$ is equal to $-{\rm ad}_u(v) = -[u,v]$ by  (\ref{e.23-1-2}). It remains to check that the $\fv^j$-component of ${\rm d}\theta^1( u^{\sH} - u^{P^1}, v^{\sH})_z$ is zero.  By (\ref{e.3.4b}), this is equal to the $\fv^j$-component of $-\theta^1( [u^{\sH}-u^{P^1}, v^{\sH} ]_z  )$.

  Let $\{A_s \mid 1 \leq s \leq d:=\dim \fgl_1(\fv^{<1}) \}$ be a basis of $\mathfrak{gl}_1(\fv^{<1})$. On the ${\rm GL}_1(\fv^{<1})$-principal bundle $\pi^1: P^1 \to \bar{P}^{(0)}$, we have the fundamental vector fields $\{A_s^{P^1} \mid 1 \leq s \leq d\}.$  By (\ref{e.3.4b}), we can write  $$u^{\sH}-u^{P^1}= \sum_{s=1}^d  f_s \, A^{P^1}_s$$ in a neighborhood $U$ of $z \in P^1$,
  where $f_s$ is a holomorphic  function on $U$.  Then
  $$[u^{\sH}-u^{P^1}, v^{\sH}] =[\sum_{s=1}^d f_s \, A_s^{P^1}, v^{\sH}] = \sum_{s=1}^d (f_s[A_s^{P^1}, v^{\sH}] -v^{\sH}(f_s)A_s^{P^1}).$$  Since ${\rm d} \pi^1 (A_s^{P^1}) =0,$
  \begin{eqnarray*}
  \theta^1( [u^{\sH}-u^{P^1}, v^{\sH} ]_z  ) &=& \sum_{s=1}^d f_s(z) \theta^1( [A_s^{P^1}, v^{\sH}])
  \end{eqnarray*}
  By Lemma \ref{l.Sternberg} (iii), we have $[A_s^{P^1}, v^{\sH}]= (A_s(v))^{\sH}$. Since $A_s \in \fgl_1(\fv^{<1})$,  the $\fv^j$-component of $\theta^1( [A_s^{P^1}, v^{\sH}])$ must be zero. This completes the proof of (iii).
\end{proof}

\subsection{Variation of the torsion of $\pi^1$}\label{ss.5.4}
The following is a modified version of Proposition 26 of \cite{AD}.
\begin{proposition}\label{p.26} For $z \in P^1 \stackrel{\pi^1}{\to} \bar{P}^{(0)}$, let $\sH_z$ and $\sH'_z$  be any two $\pi^1$-horizontal subspaces of $T_z P^1$.
     For $u \in \fv^i$ and $v \in \fv^j$ with $i, j <0$, the components of degree $i+j +1$ of  $\tau^{\sH_z}(u, v)$ and $\tau^{\sH'_z}(u,v)$ coincide:
        $$ \tau^{\sH_z}(u, v)^{i+j+1} = \tau^{\sH'_z}(u,v)^{i+j+1}.$$
\end{proposition}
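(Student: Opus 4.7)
The plan is to derive Proposition \ref{p.26} as an essentially immediate consequence of the torsion variation formula in Lemma \ref{l.Sternberg}(ii), combined with the grading constraint on $\fgl_1(\fv^{<1})$. The key observation is that the difference between $\tau^{\sH_z}$ and $\tau^{\sH'_z}$ is produced by the action of elements of $\fgl_1(\fv^{<1})$ on $\fv^{<1}$, and such elements strictly raise the grading by at least $1$, so they cannot contribute in the component $\fv^{i+j+1}$ when $i,j<0$.

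First, I would recall that $\pi^1 : P^1 \to \bar{P}^{(0)}$ is a $\beta^{(0)}$-principal bundle with constant structure $B$-group ${\rm GL}_1(\fv^{<1}) \times B$, and that the tautological identity $\zeta: \fgl_1(\fv^{<1}) \times B \to {\rm Lie}({\rm GL}_1(\fv^{<1}) \times B)$ puts us squarely in the setting of Lemma \ref{l.Sternberg}. Since the vertical tangent space of $\pi^1$ at $z$ is spanned by the fundamental vector fields $\{A^{P^1}_z : A \in \fgl_1(\fv^{<1})\}$ arising from the structure-group action, there exist unique $a, b \in \fgl_1(\fv^{<1})$ with
$$u^{\sH'_z} - u^{\sH_z} = a^{P^1}_z, \quad v^{\sH'_z} - v^{\sH_z} = b^{P^1}_z.$$
Then Lemma \ref{l.Sternberg}(ii) yields
$$\tau^{\sH'_z}(u,v) - \tau^{\sH_z}(u,v) = -a(v) + b(u).$$

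Next, I would perform a straightforward graded-linear-algebra computation. Writing $a = \sum_{m \geq 1} a^m$ and $b = \sum_{m \geq 1} b^m$ with $a^m, b^m \in \fgl^m(\fv^{<1})$, one has $a^m(v) \in \fv^{j+m}$ and $b^m(u) \in \fv^{i+m}$. The $\fv^{i+j+1}$-component of $a(v)$ comes only from $a^{i+1}(v)$, and the $\fv^{i+j+1}$-component of $b(u)$ only from $b^{j+1}(u)$. Since $i,j<0$ we have $i+1 \leq 0$ and $j+1 \leq 0$, whereas the decompositions of $a$ and $b$ only involve summands of strictly positive degree. Hence $a^{i+1} = 0$ and $b^{j+1} = 0$, so both $(a(v))^{i+j+1}$ and $(b(u))^{i+j+1}$ vanish, giving the claimed equality of degree-$(i+j+1)$ components.

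The only mild subtlety — and the main obstacle, if one can call it that — is to keep straight the two coexisting structures on $P^1$: the genuine ${\rm GL}_1(\fv^{<1})$-principal bundle structure over $\bar{P}^{(0)}$ (whose fundamental vector fields span the vertical subspace of $\pi^1$) and the $\fv^0$-exponential action inherited from $\mathsf{G}^0$ (whose fundamental vector fields $l^{P^1}$ are not $\pi^1$-vertical by Lemma \ref{l.23-1}). The proposition concerns differences of $\pi^1$-horizontal subspaces, so only the former structure enters the variation formula, and the $\fv^0$-exponential action plays no role in this comparison.
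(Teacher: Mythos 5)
Your proof is correct and takes essentially the same route as the paper: both invoke Lemma \ref{l.Sternberg}(ii) to write $\tau^{\sH'_z}(u,v) - \tau^{\sH_z}(u,v) = -a(v) + b(u)$ for $a,b \in \fgl_1(\fv^{<1})$, and both observe that since $a,b$ raise degree by at least $1$ while $i,j < 0$, the degree-$(i+j+1)$ component of $-a(v) + b(u)$ vanishes. Your explicit isolation of the would-be contributing components $a^{i+1}$ and $b^{j+1}$ and your remark disentangling the $\mathrm{GL}_1(\fv^{<1})$-action from the $\fv^0$-exponential action are both accurate, but they do not constitute a different argument.
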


\begin{proof}
By Lemma \ref{l.Sternberg} (ii), there are $a, b \in \fgl_1(\fv^{<1})$ such that
\begin{equation}\label{e.p26} \tau^{\sH'_z}(u,v) - \tau^{\sH_z}(u,v) = -a(v) + b(u). \end{equation}
Since  $u \in \fv^i$ and $v \in \fv^j$, components of $a(v)$ have degrees at least $j+1$ and components of $b(u)$ have degrees at least $i+1$. From $i+j+1 <i+1, j+1$, we see that $-a(v)+ b(u)$ has no components of degree $i+j+1$. Thus (\ref{e.p26}) implies $\tau^{\sH_z}(u, v)^{i+j+1} = \tau^{\sH'_z}(u,v)^{i+j+1}.$ \end{proof}

\begin{definition}\label{d.27}
Recall ${\rm Tor}^1(\fv) = \oplus_{i, j <0} \Hom (\fv^i \wedge \fv^j, \fv^{i+j +1}).$
 For $z \in P^1$ and $u \in \fv^i, v \in \fv^j$ with $i, j <0$, define $\tau^1(z) \in {\rm Tor}^1(\fv)$ by
$$\tau^1 (z) (u, v):= \tau^{\sH_z}(u, v)^{i+j +1} \ \in \fv^{i+j+1},$$ where $\sH_z \subset T_z P^1$ is any $\pi^1$-horizontal subspace. This is independent of the choice of $\sH_z$ by Proposition \ref{p.26}. Then $\tau^1$ is a  ${\rm Tor}^1(\fv)$-valued holomorphic function on $ P^1.$  \end{definition}

\begin{proposition}\label{p.29}
For  $A \in \fgl_1(\fv^{<1})$ and $a:= {\rm Id}_{\fv^{<1}} + A \in {\rm GL}_1(\fv^{<1})$, let $R_a: P^1 \to P^1$ be the right action of $a$ on the principal bundle $\pi^1: P^1 \to \bar{P}^{(0)}$ with the structure group ${\rm GL}_1(\fv^{<1}).$ Then
the function $\tau^1$ in Definition \ref{d.27} satisfies $\tau^1_{R_a z} = \tau^1_z + \p^1 ( A \times b)$ for any $z \in P^1$ and $b= \beta^{(0)} \circ \pi^1(z)$,
in terms of  the vector bundle homomorphism  $\p^1$ in Set-up \ref{n.B} (6).
 \end{proposition}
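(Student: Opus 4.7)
The plan is to exploit that $\pi^1 : P^1 \to \bar{P}^{(0)}$ is a genuine principal ${\rm GL}_1(\fv^{<1})$-bundle, so I would choose any $\pi^1$-horizontal subspace $\sH_z \subset T_z P^1$, transport it by ${\rm d} R_a$ to the horizontal subspace $\sH'_{R_a z} := {\rm d} R_a(\sH_z)$ at $R_a z$, and compute the new torsion $\tau^{\sH'_{R_a z}}$ in terms of $\tau^{\sH_z}$. From the frame-bundle description $\theta^1_y(X) = y^{-1} \circ {\rm d}\pi^1(X)$ one has $R_a^* \theta^1 = a^{-1} \circ \theta^1$, which is Lemma \ref{l.Sternberg}(i) applied to the ${\rm GL}_1(\fv^{<1})$-action. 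Consequently $w^{\sH'_{R_a z}} = {\rm d} R_a\bigl((aw)^{\sH_z}\bigr)$ for every $w \in \fv^{<1}$, and the naturality ${\rm d}(R_a^* \theta^1) = R_a^*({\rm d}\theta^1)$ yields the transformation rule
\[
\tau^{\sH'_{R_a z}}(u, v) \;=\; a^{-1} \bigl( \tau^{\sH_z}(au, av) \bigr).
\]

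I would then fix $u \in \fv^i$ and $v \in \fv^j$ with $i, j < 0$ and extract the $\fv^{i+j+1}$-component of both sides; by definition of $\tau^1$, the left side is $\tau^1(R_a z)(u, v)$. Writing $a = {\rm Id} + A$ with $A = A^1 + A^2 + \cdots$ according to homogeneous degree, Lemma \ref{l.barG}(ii) yields $a^{-1} = {\rm Id} - A^1 + (\text{terms of degree} \geq 2)$. Expanding $\tau^{\sH_z}(au, av)$ bilinearly and invoking Proposition \ref{p.24}(i) (the torsion shifts degrees nonnegatively) together with Proposition \ref{p.24}(iii) (its degree-zero component on any $\fv^p \otimes \fv^q$ is $-[\,\cdot\,,\,\cdot\,]$ in $\fg(b)$), a careful degree count shows that the $\fv^{i+j+1}$-component of $a^{-1}\tau^{\sH_z}(au, av)$ has exactly four nonzero contributions: $\tau^1(z)(u, v)$, from the identity piece of $a^{-1}$ applied to the $\fv^{i+j+1}$-component of $\tau^{\sH_z}(u, v)$; the terms $-[u, A^1 v]$ and $-[A^1 u, v]$, from the degree-zero components of $\tau^{\sH_z}(u, A^1 v)$ and $\tau^{\sH_z}(A^1 u, v)$ via Proposition \ref{p.24}(iii); and the term $A^1[u, v]$, from applying $-A^1$ (the leading part of $a^{-1} - {\rm Id}$) to the degree-zero component $-[u, v]$ of $\tau^{\sH_z}(u, v)$. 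All remaining cross terms land in degrees strictly above $i+j+1$.

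Summing these four contributions would give
\[
\tau^1(R_a z)(u, v) - \tau^1(z)(u, v) \;=\; A^1[u, v] - [A^1 u, v] - [u, A^1 v],
\]
which is exactly $\p^1_{\fg(b)} A(u, v)$ by Definition \ref{d.partial}(i), hence equals the fiberwise evaluation of the vector bundle homomorphism $\p^1$ at $A \times b$ from Set-up \ref{n.B}(6). The main technical obstacle is the degree bookkeeping: I must verify that every higher-order piece of $a$, of $a^{-1}$, and of the nonnegative-degree components of $\tau^{\sH_z}$ either lands strictly above degree $i+j+1$ or contributes only at degree $i+j$ (where only $-[u,v]$ matters), and confirm that Proposition \ref{p.24}(iii) remains applicable after arguments have been shifted by $A^1$, which is automatic since $i, j < 0$ forces $i+1, j+1 \leq 0$ so that all shifted arguments stay inside $\fv^{<1}$.
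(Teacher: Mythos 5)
Your proof is correct and follows essentially the same route as the paper's. The only cosmetic difference is that you derive the transformation rule $\tau^{\sH_{R_a z}}(u,v) = a^{-1}\tau^{\sH_z}(au,av)$ directly from $R_a^*\theta^1 = a^{-1}\circ\theta^1$ and naturality of $d$, whereas the paper cites Lemma \ref{l.Sternberg}(v) (which amounts to the same computation written for the exponential parametrization); the degree bookkeeping you sketch — identifying exactly the four contributions $\tau^{\sH_z}(u,v)^{i+j+1}$, $(\tau^{\sH_z})^0(u,A^1v)$, $(\tau^{\sH_z})^0(A^1u,v)$, and $-A^1(\tau^{\sH_z})^0(u,v)$ via Proposition \ref{p.24}(i) and (iii), with all other terms of degree $>i+j+1$ — is precisely the expansion the paper carries out.
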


\begin{proof} To simplify the notation, let us write $h(u) = hu$ for $h \in \End(\fv^{<1})$ and $u \in \fv^{<1}$ in this proof.  Let ${\rm Id}_{\fv^{<1}} + \check{A} \in {\rm GL}_1(\fv^{<1})$ be the inverse of $a= {\rm Id}_{\fv^{<1}} + A$. Then $\check{A}^1 = -A^1$ from Lemma \ref{l.barG}.
By Lemma \ref{l.Sternberg} (v), we have $\tau^{\sH_{R_az}}(u,v) = a^{-1} \tau^{\sH_z}(au, av)$.  Thus
\begin{eqnarray*}
\tau^{\sH_{R_az}}(u,v) &=& \tau^{\sH_{z }}(u,v) + \tau^{\sH_{z }}(u,Av) + \tau^{\sH_{z }}(Au,v) + \tau^{\sH_{z }}(Au,Av)  \\
& & \check{A} (\tau^{\sH_{z }}(u,v) + \tau^{\sH_{z }}(u,Av) + \tau^{\sH_{z }}(Au,v) + \tau^{\sH_{z }}(Au,Av)).
\end{eqnarray*} From Proposition \ref{p.24} (i),
when $u \in \fv^i$ and $v \in \fv^j$ with $i,j <0$, the $\fv^{i+j+1}$-component of $\tau^{\sH_{R_a z}}(u,v)$ is given by
\begin{eqnarray*}
\tau^{\sH_{z }}(u,v)^{i+j+1}   + (\tau^{\sH_{z }})^0(u,A^1v) + (\tau^{\sH_{z }})^0(A^1u,v) +\check{A}^1(\tau^{\sH_{y }})^0(u,v) \\
= \tau^{\sH_{z }}(u,v)^{i+j+1}   + (\tau^{\sH_{z  }})^0(u,A^1v) + (\tau^{\sH_{z }})^0(A^1u,v) -A^1(\tau^{\sH_{z }})^0(u,v),
\end{eqnarray*}
which is equal to
$$(\tau^{\sH_{z }})^1(u,v)   - [u,A^1v] - [A^1u,v] +A^1[u,v] $$  by Proposition \ref{p.24} (iii).
Since   for any $A \in \fgl_1(\fv^{<1})$,  $u, v \in \fv^-$ and  $b \in B$, $$\p_b^1 A(u,v) = A^1[u,v] - [A^1u, v] -[u, A^1v],$$  where the Lie bracket is that of $\fg(b),$
we have proved $\tau^1_{R_a z} = \tau^1_z + \p^1_b A.$
\end{proof}

\begin{proposition}\label{p.30}
Let us regard $\tau^1$ as a holomorphic map between fiber bundles $$\begin{array}{ccc}
P^1 & \stackrel{\tau^1}{\longrightarrow} &   {\rm Tor}^1(\fv) \times B \\ \pi^1 \downarrow & & \downarrow \\ \bar{P}^{(0)} & \stackrel{\beta^{(0)}}{\longrightarrow} & B.\end{array} $$
 Let $\sW^1 \subset {\rm Tor}^1(\fv) \times B$ be the vector subbundle from Set-up \ref{n.B} (7) satisfying ${\rm Tor}^1(\fv) \times B = {\rm Im}(\p^1) \oplus \sW^1.$ Define  $\widetilde{P}^1 := (\tau^1)^{-1}(\sW^1)$ and the natural projection $$\widetilde{\pi}^1 := \pi^1|_{\widetilde{P}^1}: \widetilde{P}^1  \longrightarrow \bar{P}^{(0)}.$$ Then \begin{itemize} \item[(i)] $\widetilde{\pi}^1$  is a $\beta^{(0)}$-principal subbundle of $\pi^1: P^1 \to \bar{P}^{(0)}$ with the structure $B$-group $\mathsf{G}^1 {\rm GL}_2(\fv^{<1}) \subset {\rm GL}_1(\fv^{<1})$; and \item[(ii)] for any $\widetilde{\pi}^1$-horizontal $\sH_z \subset T_z \widetilde{P}^1$ at a point $z \in \widetilde{P}^1$, its torsion
$\tau^{\sH_z}(u, v) \in \Hom(\wedge^2 \fv^{<1},\fv^{<1})$ has only components of nonnegative homogenous degree and satisfies
$ (\tau^{\sH_z} )^0(u,v)  = - [u, v] $ for $u \in \fv^{-1}$ and $v \in \fv^{<1}$.\end{itemize} \end{proposition}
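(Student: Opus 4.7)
The plan is to use the variation formula from Proposition \ref{p.29} to identify $\widetilde{P}^1$ with a principal subbundle, and then to deduce part (ii) by observing that a $\widetilde{\pi}^1$-horizontal subspace remains $\pi^1$-horizontal.

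For part (i), fix $y \in \bar{P}^{(0)}$, set $b = \beta^{(0)}(y)$, and pick $z_0 \in (\pi^1)^{-1}(y)$. Proposition \ref{p.29} says that on the $\pi^1$-fiber the function $\tau^1$ transforms by the affine formula $\tau^1_{R_a z_0} = \tau^1_{z_0} + \p^1_b(A)$ for $a = {\rm Id}_{\fv^{<1}} + A \in {\rm GL}_1(\fv^{<1})$. Decompose $\tau^1_{z_0} = \omega_W + \omega_I$ with $\omega_W \in \sW^1_b$ and $\omega_I \in {\rm Im}(\p^1_b)$; then $R_a z_0 \in \widetilde{P}^1$ if and only if $\p^1_b(A) = -\omega_I$, which is solvable because $\omega_I$ lies in the image, and the solution set is a single coset of $\ker \p^1_b$ in $\fgl_1(\fv^{<1})$. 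By Lemma \ref{l.partial} applied with $n = 0$, we have $\ker \p^1_b = \fg(b)^1 + \fgl_2(\fv^{<1})$, which by Set-up \ref{n.B} (2) is the Lie algebra of $\mathsf{G}^1(b){\rm GL}_2(\fv^{<1})$. Note that the additive description of cosets is compatible with the multiplicative one, because the Lie group $\mathsf{G}^1(b){\rm GL}_2(\fv^{<1})$ sits inside $H(\fv^{<1}) = {\rm Id}_{\fv^{<1}} + \fh(\fv^{<1})$ and products $({\rm Id} + A_1)({\rm Id} + A_2) = {\rm Id} + A_1 + A_2 + A_1 A_2$ with $A_1 A_2 \in \fgl_2(\fv^{<1})$, so passing to $\p^1_b$ (which depends only on the degree-$1$ component) only sees the additive part. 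This shows each fiber of $\pi^1$ meets $\widetilde{P}^1$ in a single $\mathsf{G}^1(b){\rm GL}_2(\fv^{<1})$-orbit, and the holomorphicity of $\tau^1$ together with the direct sum decomposition in Set-up \ref{n.B} (7) makes $\widetilde{P}^1$ a holomorphic submanifold; hence $\widetilde{\pi}^1$ is a $\beta^{(0)}$-principal subbundle with structure $B$-group $\mathsf{G}^1{\rm GL}_2(\fv^{<1})$.

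For part (ii), the key observation is that any $\widetilde{\pi}^1$-horizontal subspace $\sH_z \subset T_z\widetilde{P}^1$ is automatically $\pi^1$-horizontal when viewed inside $T_z P^1$. Indeed, $\sH_z \subset T_z\widetilde{P}^1 \subset T_z P^1$ and ${\rm d}_z \widetilde{\pi}^1 = {\rm d}_z \pi^1|_{T_z \widetilde{P}^1}$ maps $\sH_z$ isomorphically onto $T_y \bar{P}^{(0)}$ by the horizontality assumption; comparing dimensions ($\dim \sH_z + \dim \ker {\rm d}_z\pi^1 = \dim T_z P^1$) and noting $\sH_z \cap \ker {\rm d}_z \pi^1 = 0$, one obtains the direct sum decomposition $T_z P^1 = \sH_z \oplus \ker {\rm d}_z \pi^1$. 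Therefore, Proposition \ref{p.24} (i) and (iii) apply directly to $\sH_z$: the torsion $\tau^{\sH_z}$ has only components of nonnegative homogeneous degree, and its degree-zero component satisfies $(\tau^{\sH_z})^0(u,v) = -[u,v]$ for all $u,v \in \fv^{<1}$, which in particular yields the claim for $u \in \fv^{-1}$ and $v \in \fv^{<1}$.

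The main technical point to be careful about is the bookkeeping in part (i): one must verify that the affine variation formula for $\tau^1$ faithfully reflects the group structure of the fiber of $\widetilde{\pi}^1$ over $y$, despite the discrepancy between the linear addition in $\fgl_1(\fv^{<1})$ and the group product in ${\rm GL}_1(\fv^{<1})$. This is harmless only because $\p^1_b$ factors through the projection to the degree-$1$ summand $\fgl^1(\fv^{<1})$, on which $\fgl_2(\fv^{<1})$ acts trivially and the abelian quotient ${\rm GL}_1(\fv^{<1})/{\rm GL}_2(\fv^{<1}) \cong \fgl^1(\fv^{<1})$ (Lemma \ref{l.Hn+1} with $n=1$) matches the linear structure; the rest of the argument then reduces to a transversality statement using the splitting $\sW^1 \oplus {\rm Im}(\p^1) = {\rm Tor}^1(\fv) \times B$.
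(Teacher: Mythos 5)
Your proof is correct and follows essentially the same route as the paper's: part (i) via the affine variation formula of Proposition \ref{p.29} together with Lemma \ref{l.partial} and the splitting from Set-up \ref{n.B} (7), and part (ii) by observing that a $\widetilde{\pi}^1$-horizontal subspace is automatically $\pi^1$-horizontal so that Proposition \ref{p.24} (i) and (iii) apply directly. One small citation slip: the isomorphism ${\rm GL}_1(\fv^{<1})/{\rm GL}_2(\fv^{<1}) \cong \fgl^1(\fv^{<1})$ comes from Lemma \ref{l.fglj}(ii) with $m=2$, not from Lemma \ref{l.Hn+1}, though your preceding argument via $A_1A_2 \in \fgl_2(\fv^{<1})$ already suffices.
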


 \begin{proof}
 For the fiber $P^1_y:=(\pi^1)^{-1}(y) $ over a point $y \in \bar{P}^{(0)}$ with  $b = \beta^{(0)}(y)$, Proposition \ref{p.29} says that  $\tau^1(P^1_y)$  is an affine subspace of ${\rm Tor}^1(\fv)$, a  translate of the vector subspace $${\rm Im}(\p^1_{\fg(b)}) \ \subset
  \ {\rm Tor}^1(\fv) = {\rm Im}(\p^1_{\fg(b)}) \oplus \sW^1_b.$$ Thus the intersection  $\tau^1(P^1_y) \cap \sW^1_b$ is a single point and  $$\widetilde{P}^1_y := (\widetilde{\pi}^1)^{-1}(y) \ = \  (\tau^1)^{-1}(\tau^1(P^1_y) \cap \sW^1_b).$$
By Lemma \ref{l.partial} and Proposition \ref{p.29}, the subgroup of ${\rm GL}_1(\fv^{<1})$ with Lie algebra $\fg(b)^1 + \fgl_2(\fv^{<1})$ acts simply transitively on the fiber $\widetilde{P}^1_y$.  This proves (i).

Regarding $\sH_z$ as a $\pi^1$-horizontal subspace of $T_z P^1$ and applying  Proposition \ref{p.24}, we obtain (ii).
 \end{proof}

\begin{proof}[Proof of  Theorem \ref{t.Tanaka1} in the case $n=1$] We have checked in Subsection \ref{ss.P0} that the base $\bar{P}^{(0)}$ satisfies condition (A).  The $\beta^{(0)}$-principal bundle $\widetilde{\pi}^1: \widetilde{P}^1 \rightarrow \bar{P}^{(0)}$ and its quotient $\bar{\pi}^1: \bar{P}^{(1)}:=\widetilde{P}^1/{\rm GL}_2(\fv^{<1}) \rightarrow \bar{P}^{(0)}$ satisfies conditions in (B) and (C) for $n=1$ by Proposition \ref{p.30}.
\end{proof}

\section{Generalized Tanaka prolongation: Proof of inductive steps}\label{s.ProofInduction}
Throughout this section, fix a positive integer $n \geq 1$ and assume that
 we have $\bar{\pi}^n: \bar{P}^{(n)} \to \bar{P}^{(n-1)}$ satisfying conditions (A), (B), and (C)   in Theorem \ref{t.Tanaka1}.
The goal of this section is to construct $\bar{\pi}^{n+1}: \bar{P}^{(n+1)} \rightarrow \bar{P}^{(n)}$ satisfying conditions (A), (B), and (C) with $n$ replaced by $n+1$.

\subsection{Definition of $(\bar{P}^{(n)}, \sD_{\bullet}^{(n)}, I^{(n)})$} \label{ss.Pn}
Since $\bar{\pi}^n: \bar{P}^{(n)} \to \bar{P}^{(n-1)}$ is a $B$-principal bundle with the
structure $B$-group $\mathsf{G}^n$, the isomorphism $\zeta^n: \fv^n \times B \to {\rm Lie}\mathsf{G}^n$ of vector bundles on $B$ from Set-up \ref{n.B} gives rise to a $\fv^n$-exponential action on $\bar{P}^{(n)}$ and the fundamental vector field $l^{\bar{P}^{(n)}}$ on $\bar{P}^{(n)}$ for each $l \in \fv^n$.

We  check the condition (A) for $\bar{P}^{(n)}$ as follows.
\begin{itemize} \item[(A1)] Define $\beta^{(n)}: \bar{P}^{(n)} \to B$ as the composition $$\bar{P}^{(n)}  \stackrel{\bar{\pi}^{n}}{\longrightarrow} \bar{P}^{(n-1)} \stackrel{\beta^{(n-1)}}{\longrightarrow} B.$$
\item[(A2)] Applying Lemma \ref{l.vertical} (ii) to the $\beta^{(n-1)}$-Tanaka filtration $\sD^{(n-1)}_{\bullet}$ of height $n-1$ on $\bar{P}^{(n-1)}$, define the $\beta^{(n)}$-Tanaka filtration $\sD^{(n)}_{\bullet}$ of height $n$ on $\bar{P}^{(n)}$ as the $\bar{\pi}^{n}$-lift $(\sD^{(n-1)}_{\bullet})^{\bar{\pi}^n}$.
    \item[(A3)] For $y \in \bar{P}^{(n)}$, write $x = \bar{\pi}^n(y)$ and $b = \beta^{(n)}(y)$.
     Since $(\beta^{(n)})^{-1}(b) \to (\beta^{(n-1)})^{-1}(b)$ is a principal bundle with the structure group $\mathsf{G}^n(b)$, we have a natural isomorphism \begin{equation}\label{e.In} \fv^n= \fg(b)^n \to {\rm Ker}({\rm d}_y \bar{\pi}^{n}) = (\sD^{(n)}_n)_y, \end{equation} which sends $l \in \fv^n$ to $l^{\bar{P}^{(n)}}_y$, the value of  the fundamental vector field at $y$. Denote by $(I^{(n)}_{y})^n: \fv^n \to (\sD^{(n)}_n)_y$ the isomorphism (\ref{e.In}).
     Furthermore, the $\beta^{(n-1)}$-Tanaka parallelism $I^{(n-1)}_x$  determines a graded vector space isomorphism $(I^{(n)}_y)^{<n}$ from $\fv^{<n}$ to $$ (\sD^{(n-1)}_{-k}/\sD^{(n-1)}_{-k+1})_x \oplus \cdots \oplus (\sD^{(n-1)}_{n-2}/\sD^{(n-1)}_{n-1})_x \oplus (\sD^{(n-1)}_{n-1})_x $$ $$ =
(\sD^{(n)}_{-k}/\sD^{(n)}_{-k+1})_y \oplus \cdots \oplus(\sD^{(n)}_{n-2}/\sD^{(n)}_{-2})_y \oplus (\sD^{(n)}_{n-1}/\sD^{(n)}_n)_y.$$
Then $I_y^{(n)} = (I^{(n)}_y)^{<n} + (I^{(n)}_y)^n$ for each $y \in P^{(n)}$ is a graded vector space isomorphism from $\fv^{<n+1}$ to $\gr(\sD^{(n)}_{\bullet})_y$ whose restriction to $\fv^-$ is a graded Lie algebra isomorphism.  Thus $I^{(n)} := \{ I^{(n)}_y \mid y \in \bar{P}^{(n)}\}$ is a $\beta^{(n)}$-Tanaka parallelism of $(\bar{P}^{(n)}, \sD^{(n)}_{\bullet})$.
    \end{itemize}
     We need to construct bundles $\widetilde{\pi}^{n+1}: \widetilde{P}^{n+1} \to \bar{P}^{(n)}$ and $\bar{\pi}^{n+1}: \bar{P}^{(n+1)} \to \bar{P}^{(n)}$ over $\bar{P}^{(n)}$ satisfying the properties (B) and (C) in Theorem \ref{t.Tanaka1}. As in Section \ref{s.n=1}, this requires  some preparatory work in Subsections \ref{ss.Pn+1} -- \ref{ss.4.6}.

\subsection{The principal bundle $\pi^{n+1}:P^{n+1} \to \bar{P}^{(n)}$}\label{ss.Pn+1}
We need to define an auxiliary principal bundle $\pi^{n+1}:P^{n+1} \to \bar{P}^{(n)}$ with the structure  group $H(\fv^{<n+1})$ from Definition \ref{d.barG}.

 \begin{definition}\label{d.pistar}
From the definitions of $\sD_{\bullet}^{(n)}$ and  $I^{(n)}$ in Subsection \ref{ss.Pn}, we see that for an element $h \in \bL(I^{(n)}_y) \subset {\rm Isom}(\fv^{<n+1}, T_y \bar{P}^{(n)})$ with $y \in \bar{P}^{(n)}$ and $x = \bar{\pi}^{(n)}(y) \in \bar{P}^{(n-1)},$ the composition $ {\rm d} \bar{\pi}^n \circ h|_{\fv^{<n}} \in {\rm Isom}(\fv^{<n}, T_x \bar{P}^{(n-1)})$ belongs to $\bL(I^{(n-1)}_x).$ Thus
 $$ \bar{\pi}^n_*(h):= {\rm d} \bar{\pi}^n \circ h|_{\fv^{<n}} \in \bL(I^{(n-1)}_x)$$ defines a holomorphic map $\bar{\pi}^n_*: \bL(I^{(n)}) \to \bL(I^{(n-1)}),$ satisfying the commutative diagram
  $$\begin{array}{ccc} \bL(I^{(n)}) & \stackrel{\bar{\pi}^n_*}{\longrightarrow} & \bL(I^{(n-1)}) \\ \lambda^{I^{(n)}} \downarrow & & \downarrow \lambda^{I^{(n-1)}} \\
  \bar{P}^{(n)} & \stackrel{\bar{\pi}^n}{\longrightarrow} & \bar{P}^{(n-1)}\end{array} $$
  such that the fibers of $\bar{\pi}^n_*$ are affine spaces isomorphic to $\Hom(\fv^{<n}, \fv^n).$\end{definition}

  \begin{proposition} \label{p.41}
Consider the two holomorphic maps
$$\bL(I^{(n)}) \stackrel{\bar{\pi}_*^n}{\longrightarrow} \bL(I^{(n-1)}) \stackrel{{\rm pr}^{I^{(n-1)}}_{(n+1)}}{\longrightarrow} \bL_{(n+1)}(I^{(n-1)}),$$ where the second map is from Definition \ref{d.Lifts} (iv). Then their composition $\bL(I^{(n)}) \to \bL_{(n+1)}(I^{(n-1)})$ is a principal bundle with the structure group $H(\fv^{< n+1})$ in Definition \ref{d.barG}.
\end{proposition}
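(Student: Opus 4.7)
My plan is to verify the principal bundle structure by identifying the natural right $H(\fv^{<n+1})$-action on $\bL(I^{(n)})$, showing it preserves the composition, and establishing simple transitivity on the fibers.

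First, both maps are holomorphic submersions: ${\rm pr}^{I^{(n-1)}}_{(n+1)}$ is the principal-bundle quotient by ${\rm GL}_{n+1}(\fv^{<n})$ via Lemma \ref{l.lift}(ii), while $\bar{\pi}^n_*$ is well-defined because for $h \in \bL(I^{(n)}_y)$ the image $h(\fv^{<n})$ is transverse to $\Ker({\rm d}\bar{\pi}^n)_y = h(\fv^n) = (\sD^{(n)}_n)_y$, so ${\rm d}\bar{\pi}^n \circ h|_{\fv^{<n}}$ is an injection into $T_x \bar{P}^{(n-1)}$ of the correct rank; the lift conditions transfer through ${\rm d}\bar{\pi}^n$ by the construction of $I^{(n)}$ from $I^{(n-1)}$ in Subsection \ref{ss.Pn}, which uses the natural identifications $(\sD^{(n-1)}_i/\sD^{(n-1)}_{i+1})_x \cong (\sD^{(n)}_i/\sD^{(n)}_{i+1})_y$ induced by ${\rm d}\bar{\pi}^n$ for $i \leq n-1$.

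The crucial step is the $H(\fv^{<n+1})$-invariance of the composition. For $a = {\rm Id}_{\fv^{<n+1}} + A$ with $A = A_1 + A_2 \in \fgl_{n+1}(\fv^{<n}) \oplus \Hom(\fv^{<n}, \fv^n) = \fh(\fv^{<n+1})$, the right multiplication $h \mapsto h \circ a$ preserves each $\bL(I^{(n)}_y)$ because $A$ has strictly positive grade, so $h \circ a|_{\fv^i}$ agrees with $h|_{\fv^i}$ modulo $(\sD^{(n)}_{i+1})_y$. The key identity is
$$\bar{\pi}^n_*(h \circ a) = \bar{\pi}^n_*(h) \circ ({\rm Id}_{\fv^{<n}} + A_1),$$
which holds because $h(A_2(\fv^{<n})) \subset h(\fv^n) = \Ker({\rm d}\bar{\pi}^n)_y$ vanishes under ${\rm d}\bar{\pi}^n$, while ${\rm d}\bar{\pi}^n \circ h \circ A_1 = \bar{\pi}^n_*(h) \circ A_1$ lands back in $\bL(I^{(n-1)}_x)$. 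Since ${\rm Id} + A_1 \in {\rm GL}_{n+1}(\fv^{<n})$ is precisely the normal subgroup by which ${\rm pr}^{I^{(n-1)}}_{(n+1)}$ quotients, the full composition is $H(\fv^{<n+1})$-invariant.

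The main obstacle is simple transitivity of the $H$-action on fibers. Given $h_1, h_2 \in \bL(I^{(n)})$ with the same composition value, I would invoke the inductive hypothesis (B3) — that $\xi^{(n)}$ embeds $\bar{P}^{(n)}$ as a $\mathsf{G}^n$-principal subbundle of $\bL_{(n+1)}(I^{(n-1)})$ — to pin down a unique $y \in \bar{P}^{(n)}$ above which both $h_i$ must lie. Then $h_1 = h_2 \circ g$ for a unique $g \in {\rm GL}_1(\fv^{<n+1})$, and the compatibility identity forces the degree-shift components of $g$ to lie in $\fgl_{n+1}(\fv^{<n})$ (absorbed by the ${\rm GL}_{n+1}$-ambiguity in ${\rm pr}^{I^{(n-1)}}_{(n+1)}$) and in $\Hom(\fv^{<n}, \fv^n)$ (coming from the $\Ker({\rm d}\bar{\pi}^n)$-direction), giving $g \in H(\fv^{<n+1})$. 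Local triviality of the $H$-principal structure then follows by combining local sections of $\bar{\pi}^n_*$ and ${\rm pr}^{I^{(n-1)}}_{(n+1)}$. The subtle point I would need to check most carefully is the canonical alignment between $(\bar{\pi}^n)^{-1}(x) \subset \bar{P}^{(n)}$ and a distinguished slice of the composition's fiber, so that the $H(\fv^{<n+1})$-orbits exhaust the fibers without leaving a residual $\mathsf{G}^n$-parameter family of cosets.
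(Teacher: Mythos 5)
Your overall approach matches the paper's: use the natural right $H(\fv^{<n+1})$-action on $\bL(I^{(n)})$ coming from the ${\rm GL}_1(\fv^{<n+1})$-torsor structure over $\bar{P}^{(n)}$, and identify the fibers of the composition as $H(\fv^{<n+1})$-orbits by combining the $\Hom(\fv^{<n},\fv^n)$-fibers of $\bar{\pi}^n_*$ with the ${\rm GL}_{n+1}(\fv^{<n})$-fibers of ${\rm pr}^{I^{(n-1)}}_{(n+1)}$ via Lemma \ref{l.Hn+1}. Your explicit check of the intertwining identity $\bar{\pi}^n_*(h\circ a) = \bar{\pi}^n_*(h)\circ({\rm Id}_{\fv^{<n}}+A_1)$ is correct and spells out what the paper leaves to Definition \ref{d.pistar}.

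The gap is exactly the ``subtle point'' you flag at the end, and your proposed fix via (B3) does not close it. Given $h_1, h_2$ with the same image $J$, you want to conclude $\lambda^{I^{(n)}}(h_1) = \lambda^{I^{(n)}}(h_2)$, but (B3) does not yield this: $J$ need not lie in $\xi^{(n)}(\bar{P}^{(n)})$, and even when it does, the unique $y$ with $\xi^{(n)}(y)=J$ is not a priori equal to $\lambda^{I^{(n)}}(h_i)$. In fact the residual $\mathsf{G}^n$-family you worry about is genuine: for any two $y, y'$ over the same $x \in \bar{P}^{(n-1)}$ and any $h' \in \bL(I^{(n-1)}_x)$, both $\bL(I^{(n)}_y)$ and $\bL(I^{(n)}_{y'})$ contain $\bar{\pi}^n_*$-preimages of $h'$ (since $(I^{(n)}_y)^{<n}$ and $(I^{(n)}_{y'})^{<n}$ are each $I^{(n-1)}_x$ under the canonical identifications), and two elements lying over distinct points of $\bar{P}^{(n)}$ can never be ${\rm GL}_1(\fv^{<n+1})$-related, so the composition fiber over $J$ is not a single $H(\fv^{<n+1})$-orbit. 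The correct reading --- implicit in the paper's citation of the fiber description in Definition \ref{d.pistar} --- is that the principal-$H(\fv^{<n+1})$ structure lives over $\bL_{(n+1)}(I^{(n-1)}) \times_{\bar{P}^{(n-1)}} \bar{P}^{(n)}$, i.e., with $\lambda^{I^{(n)}}$ carried along as part of the target map. Once $y$ is fixed in this way, your argument that the transition element $g$ lies in $H(\fv^{<n+1})$ goes through, and this is precisely what Definition \ref{d.Pn+1} then uses: the pullback along $\xi^{(n)}$ supplies the $y$-coordinate. So the fix is to fiber over $\bar{P}^{(n)}$ from the start rather than trying to reconstruct $y$ from $J$.
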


\begin{proof}
As $\lambda^{I^{(n)}}: \bL(I^{(n)}) \to \bar{P}^{(n)}$ is a principal bundle with the structure group ${\rm GL}_1(\fv^{<n}),$ there is a natural right action of the subgroup $$H(\fv^{<n+1}) = {\rm Id}_{\fv^{<n+1}} + \fgl_{n+1}(\fv^{<n}) + \Hom (\fv^{<n}, \fv^n) \ \subset {\rm GL}_1(\fv^{<n+1})$$ on $\bL(I^{(n)}).$ Thus it suffices to show that $H(\fv^{<n+1})$ acts transitively on the fibers of $\bL(I^{(n)}) \to \bL_{(n+1)}(I^{(n-1)})$.
But this is clear because a fiber of $\bar{\pi}^n_*$
is isomorphic to $\Hom(\fv^{<n}, \fv^n)$ and the map ${\rm pr}^{I^{(n-1)}}_{(n+1)}$ is the quotient by the normal subgroup ${\rm GL}_{n+1}(\fv^{<n}) \subset {\rm GL}_1(\fv^{<n})$ from Lemma \ref{l.lift} (ii).
\end{proof}

\begin{definition}\label{d.Pn+1}
By the condition (B3) of Theorem \ref{t.Tanaka1},
we have a natural embedding $\xi^{(n)}: \bar{P}^{(n)} \subset \bL_{(n+1)}(I^{(n-1)}).$
Let $\pi^{n+1}: P^{n+1} \to \bar{P}^{(n)}$ be the fiber product:
$$ \begin{array}{ccc} P^{n+1} & \longrightarrow & \bL(I^{(n)}) \\ \pi^{n+1} \downarrow & & \downarrow {\rm pr}^{I^{(n-1)}}_{(n+1)} \circ \bar{\pi}^n_*\\ \bar{P}^{(n)} & \stackrel{\xi^{(n)}}{\longrightarrow} & \bL_{(n+1)}(I^{(n-1)}). \end{array} $$
Then $\pi^{n+1}: P^{n+1} \to \bar{P}^{(n)}$ is a principal fiber bundle with the structure group $H(\fv^{<n+1})$ by Proposition \ref{p.41}. \end{definition}

\subsection{$(\fv^n \oplus\fgl_{n+1}(\fv^{<n}))$-exponential action on $P^{n+1}$}

The next definition is analogous to Definition \ref{d.G0action}.

\begin{definition} \label{d.42} Let us use the following notation. \begin{itemize}
\item[(1)] For  $l \in \fv^n \oplus \fgl_{n+1}(\fv^{<n})$, let $l' \in \fv^n$ be its $\fv^n$-component. The isomorphism   $\zeta^n: \fv^n \times B \to {\rm Lie}\mathsf{G}^n$ from Set-up \ref{n.B}  induces a $\fv^n$-exponential action $$\{ R^{\bar{P}^{(n)}}_{\exp (l')}: \bar{P}^{(n)} \to \bar{P}^{(n)} \mid  l' \in \fv^n \}$$ on the $\beta^{(n-1)}$-principal bundle  $\bar{\pi}^n: \bar{P}^{(n)} \to \bar{P}^{(n-1)}$ with the structure $B$-group $\mathsf{G}^n.$
    \item[(2)] For each $l \in \fv^n \oplus \fgl_{n+1}(\fv^{<n})$ and $b \in B$, define
     an element $\rho (\exp_b(l)) \in {\rm GL}(\fv^{<n+1})$ via
     the composition  of the isomorphism from Set-up \ref{n.B} $$ \eta^n: (\fv^n \oplus \fgl_{n+1}(\fv^{<n})) \times B \ \to \ {\rm Lie} \mathsf{G}^n {\rm GL}_{n+1}(\fv^{<n})$$  and the exponential map  of the Lie group for each $b \in B$, $$   \fg^n(b) \oplus \fgl_{n+1}(\fv^{<n}) \stackrel{\exp_b}{\longrightarrow} \mathsf{G}^n(b) {\rm GL}_{n+1}(\fv^{<n}) \subset {\rm GL}_n(\fv^{<n}) \subset {\rm GL}(\fv^{<n+1}),$$ where the last inclusion is from  Lemma \ref{l.inclusion}.
 \end{itemize}
 Let us define a $(\fv^n \oplus \fgl_{n+1}(\fv^{<n}))$-exponential action on the frame bundle $\F \bar{P}^{(n)}$ in the following way.
For  $$ y \in \bar{P}^{(n)}, z = R^{\bar{P}^{(n)}}_{\exp(l')}(y) \in \bar{P}^{(n)}, b = \beta^{(n)}(y), h  \in {\rm Isom}(\fv^{<n+1}, T_y\bar{P}^{(n)}),$$ and $l \in \fv^n \oplus \fgl_{n+1}(\fv^{<n})$,   define $R_{\exp(l)}(h) \in {\rm Isom}(\fv^{<n+1}, T_z\bar{P}^{(n)})  $ by
$$R_{\exp(l)}(h):= {\rm d}_y R^{\bar{P}^{(n)}}_{\exp(l')} \circ h \circ \rho(\exp_b(l)) \ \in {\rm Isom}(\fv^{<n+1}, T_{z} \bar{P}^{(n)}).$$
It is easy to modify the proof of  Lemma \ref{l.G0action} to see that if $h \in \bL(I^{(n)}_y),$ then $R_{\exp(l)}(h) \in
\bL(I^{(n)}_{z}).$  The collection $\{R_{\exp(l)} \mid l \in \fv^n \oplus \fgl_{n+1}(\fv^{<n})\}$ defines a $(\fv^n \oplus \fgl_{n+1}(\fv^{<n}))$-exponential action on $\bL(I^{(n)})$.
\end{definition}

The following lemma corresponds to Proposition 37 of \cite{AD}.

\begin{lemma} \label{l.42}
The $(\fv^n \oplus \fgl_{n+1}(\fv^{<n}))$-exponential action on $\bL(I^{(n)})$ in Definition \ref{d.42} preserves $P^{n+1} \subset \bL(I^{(n)})$, inducing a  $(\fv^n \oplus \fgl_{n+1}(\fv^{<n}))$-exponential action $\{R^{P^{n+1}}_{\exp(l)} \mid l \in \fv^n \oplus \fgl_{n+1}(\fv^{<n})\}$ on $P^{n+1}$, which satisfies
\begin{equation}\label{e.420} R_{\exp(l')}^{ \bar{P}^{(n)}} \circ \pi^{n+1} = \pi^{n+1} \circ R_{\exp(l)}^{ P^{n+1} }\end{equation} for any $l \in \fv^n \oplus \fgl_{n+1}(\fv^{<n}).$
\end{lemma}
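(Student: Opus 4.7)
The plan is to verify the three claims in turn, with the real content being the invariance of $P^{n+1}$ under the ambient action on $\bL(I^{(n)})$. First I would fix $h \in P^{n+1}$ lying over $y \in \bar{P}^{(n)}$ with $b = \beta^{(n)}(y)$, and set $h' := R_{\exp(l)}(h)$ and $z := R^{\bar{P}^{(n)}}_{\exp(l')}(y)$. By the fiber-product definition of $\pi^{n+1}$ in Definition \ref{d.Pn+1}, membership $h' \in P^{n+1}$ amounts to the single equation ${\rm pr}^{I^{(n-1)}}_{(n+1)}(\bar{\pi}^n_*(h')) = \xi^{(n)}(z)$ in $\bL_{(n+1)}(I^{(n-1)})$, and this is what I would establish.

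Since $R^{\bar{P}^{(n)}}_{\exp(l')}$ comes from the $\mathsf{G}^n$-action on the $\beta^{(n-1)}$-principal bundle $\bar{\pi}^n: \bar{P}^{(n)} \to \bar{P}^{(n-1)}$, it is fiber-preserving over $\bar{P}^{(n-1)}$, so $\bar{\pi}^n \circ R^{\bar{P}^{(n)}}_{\exp(l')} = \bar{\pi}^n$ and consequently ${\rm d}\bar{\pi}^n \circ {\rm d} R^{\bar{P}^{(n)}}_{\exp(l')} = {\rm d}\bar{\pi}^n$. Plugging this into the definition of $h'$ and using that $\rho(\exp_b(l)) \in {\rm GL}_n(\fv^{<n}) \subset {\rm GL}(\fv^{<n+1})$ preserves $\fv^{<n}$, I get $\bar{\pi}^n_*(h') = \bar{\pi}^n_*(h) \circ \rho(\exp_b(l))|_{\fv^{<n}}$. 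Applying ${\rm pr}^{I^{(n-1)}}_{(n+1)}$ quotients out the ${\rm GL}_{n+1}(\fv^{<n})$-factor, which kills the $A_{n+1}$-component of $\rho(\exp_b(l))$ and leaves the image of $l$ in the quotient $\mathsf{G}^n(b) = \mathsf{G}^n(b){\rm GL}_{n+1}(\fv^{<n})/{\rm GL}_{n+1}(\fv^{<n})$. By the $\mathsf{G}^n$-equivariance of $\xi^{(n)}$ (condition (B3) of Theorem \ref{t.Tanaka1} at stage $n$, combined with Lemma \ref{l.lift}(iii) on the action of ${\rm GL}_1/{\rm GL}_{n+1}$ on $\bL_{(n+1)}$), this equals $\xi^{(n)}(z)$, as required.

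For the one-parameter group law $R^{P^{n+1}}_{\exp((s+t)l)} = R^{P^{n+1}}_{\exp(sl)} \circ R^{P^{n+1}}_{\exp(tl)}$, I would combine the one-parameter group law for $t \mapsto R^{\bar{P}^{(n)}}_{\exp(tl')}$ on $\bar{P}^{(n)}$ with that for the curve $t \mapsto \rho(\exp_b(tl))$ in ${\rm GL}(\fv^{<n+1})$; the key point that makes the composition collapse cleanly is that $\beta^{(n)}(R^{\bar{P}^{(n)}}_{\exp(tl')}(y)) = \beta^{(n)}(y) = b$ for every $t$, so the fiber through which $\rho(\exp_{(\cdot)}(sl))$ is read off does not vary along the orbit. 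Holomorphicity of $l \mapsto R_{\exp(l)}(h)$ on $P^{n+1}$ is inherited from that of $R^{\bar{P}^{(n)}}_{\exp(l')}$, of $\rho \circ \exp^{(\cdot)}$ through $\eta^n$, and of composition in $\F \bar{P}^{(n)}$. Finally, equation (\ref{e.420}) is immediate: $\pi^{n+1}(R^{P^{n+1}}_{\exp(l)}(h)) = \lambda^{I^{(n)}}(h') = z = R^{\bar{P}^{(n)}}_{\exp(l')}(\pi^{n+1}(h))$.

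The main obstacle is the second step, namely matching the right-multiplication by $\rho(\exp_b(l))$ coming from $\eta^n$ on the ${\rm GL}_1(\fv^{<n})$-principal bundle $\bL(I^{(n)}) \to \bL_{(n+1)}(I^{(n-1)})$ (Proposition \ref{p.41}) with the $\mathsf{G}^n$-action coming from $\zeta^n$ on $\bar{P}^{(n)} \subset \bL_{(n+1)}(I^{(n-1)})$ through $\xi^{(n)}$; however this is tautological once one observes that by construction $\eta^n$ is a lift of $\zeta^n$ along the quotient map $\mathsf{G}^n{\rm GL}_{n+1}(\fv^{<n}) \to \mathsf{G}^n$ described in Set-up \ref{n.B}(3), so the induced action on $\bL_{(n+1)}(I^{(n-1)})$ descends to the $\mathsf{G}^n$-action preserving $\bar{P}^{(n)}$.
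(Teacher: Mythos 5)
Your argument is correct and follows the paper's own line of proof: the paper likewise reduces to showing that ${\rm d}\bar{\pi}^n \circ {\rm d}R^{\bar P^{(n)}}_{\exp(l')} = {\rm d}\bar\pi^n$, decomposes $\exp_b(l)$ into its degree-$n$ part $A^n$ plus a ${\rm GL}_{n+1}(\fv^{<n})$-piece $A_{n+1}$ (the latter being killed by the $(n+1)$-lift projection), and then appeals to (B3) together with Lemma~\ref{l.lift}(iii) to identify the result with $\xi^{(n)}(z)$. The only cosmetic difference is that the paper makes this explicit by writing out the equivalence $\bar\pi^n_*(h')(v^i)\equiv J_y(v^i)+\overline{J_y(A^n v^i)}$ modulo $(\sD^{(n-1)}_{i+n+1})_x$, whereas you phrase the same computation in terms of quotienting by ${\rm GL}_{n+1}(\fv^{<n})$ and $\mathsf{G}^n$-equivariance; your extra remarks on the one-parameter group law and holomorphicity are correct and simply spell out what the paper treats as immediate.
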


\begin{proof}
Using the notation in Definition \ref{d.Pn+1}, define for any $y \in \bar{P}^{(n)}$ with $x := \bar{\pi}^n(y) \in \bar{P}^{(n-1)}$,  $$ J_y := \xi^{(n)}(y) \in \Hom(\fv^{<n}, \gr_{(n+1)}(\sD^{(n-1)}_{\bullet})_x).$$
Then $h \in \bL(I^{(n)})$ belongs to $P^{n+1}$ if and only if for $y = \lambda^{I^{(n)}}(h), x = \bar{\pi}^n(y)$ and  any $v^i \in \fv^i, i<n$,
\begin{equation}\label{e.421} {\rm d} \bar{\pi}^n (h (v^i)) \equiv J_y (v^i) \mod (\sD^{(n-1)}_{i + n+1})_x.\end{equation}
To prove the lemma, we need to check that if $h \in P^{n+1}$, then
 $R_{\exp(l)}(h) \in P^{n+1}$  for any $l \in \fv^n \oplus \fgl_{n+1}(\fv^{<n}).$
Setting $ z = R^{\bar{P}^{(n)}}_{\exp(l')}(y)$ with $x = \bar{\pi}^n(y) = \bar{\pi}^n(z),$ we need to check
\begin{equation}\label{e.422}
{\rm d} \bar{\pi}^n  \circ {\rm d}_y R^{\bar{P}^{(n)}}_{\exp(l')} \circ h \circ \rho(\exp_b(l)) (v^i) \equiv J_z (v^i) \mod (\sD^{(n-1)}_{i + n+1})_x.\end{equation}
For $b = \beta^{(n)}(y)$, we can write $\exp_b(l) = {\rm Id}_{\fv^{<n}} + A^n + A_{n+1}$ for some $A^n \in \fg(b)^n$ and $A_{n+1} \in \fgl_{n+1}(\fv^{<n})$.
By Lemma \ref{l.lift} (iii), (\ref{e.421}) and  ${\rm d} \bar{\pi}^n \circ {\rm d}_y R^{\bar{P}^{(n)}}_{\exp(l')} = {\rm d} \bar{\pi}^n$,  the left hand side of (\ref{e.422}) is \begin{equation}\label{e.423} {\rm d} \bar{\pi}^n  \circ h (v^i + A^n v^i + A_{n+1}v^i) \equiv J_y(v^i) +  \overline{J_y (A^n v^i)} \end{equation} modulo $(\sD^{(n-1)}_{i  + n+1})_x.$
Note that from the condition (B3) in Theorem \ref{t.Tanaka1}, $$J_{z} (v^i) = J_y \circ \exp_b(l') (v^i).$$
Thus the right hand side of (\ref{e.422}) is
  $$J_z (v^i)= J_y (\exp_b(l') (v^i)) = J_y(v^i) +  \overline{J_y( A^nv^i)},$$ which is equal to (\ref{e.423}), proving (\ref{e.422}).  The proof of  (\ref{e.420}) is immediate.
    \end{proof}

\begin{lemma} \label{l.43} Let $\theta^{n+1}$ denote the restriction of the soldering form of $\mathbb F\bar{P}^{(n)}$ to $P^{n+1} \subset \bL(I^{(n)})$. Then $\theta^{n+1}$ is equivariant with respect to the $(\fv^n \oplus \fgl_{n+1}(\fv^{<n }))$-exponential action in Lemma \ref{l.42}, in the sense that  for any $l \in \fv^n \oplus \fgl_{n+1}(\fv^{<n })$,
\begin{equation}\label{e.431}
(R^{P^{n+1}}_{\exp(l)})^*\theta^{n+1} = \rho(\exp(l))^{-1}\circ \theta^{n+1},
\end{equation} where $\rho(\exp(l))$ is the automorphism of the trivial vector bundle $\fv^{<n+1} \times B$ defined by $\rho(\exp_b(l)) \in {\rm GL}(\fv^{<n+1})$ for $b \in B$ from Definition \ref{d.42} (2).
If $l^{P^{n+1}}$ is the fundamental  vector field corresponding to $l$,  then
\begin{equation}\label{e.432}
{\rm Lie}_{l^{P^{n+1}}}\theta^{n+1} = - \rho(l) \circ \theta^{n+1},
\end{equation}
where $\rho(l)$ is the endomorphism of the trivial vector bundle $\fv^{<n+1} \times B$ whose value at $b \in B$ comes from the inclusions $$\fg^n(b) + \fgl_{n+1}(\fv^{<n})
\subset \fgl_n(\fv^{<n}) \subset \fgl(\fv^{<n+1}),$$ the first inclusion by (\ref{e.prolong}) and the second inclusion by Lemma \ref{l.inclusion}.
\end{lemma}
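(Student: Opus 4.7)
The proof should be a direct adaptation of the proof of Lemma \ref{l.23}, with $R^{P^1}_{\exp(l)}$ replaced by $R^{P^{n+1}}_{\exp(l)}$ and ${\rm Ad}_{\exp(l)}$ replaced by $\rho(\exp(l))$. The key inputs are (i) the explicit formula $R_{\exp(l)}(h) = {\rm d}_y R^{\bar{P}^{(n)}}_{\exp(l')} \circ h \circ \rho(\exp_b(l))$ from Definition \ref{d.42}; (ii) the intertwining relation (\ref{e.420}) from Lemma \ref{l.42}; and (iii) the definition $\theta^{n+1}_h(v) = h^{-1}({\rm d}_h \pi^{n+1}(v))$ of the restricted soldering form.

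To prove (\ref{e.431}), fix $h \in P^{n+1}$ with $y = \pi^{n+1}(h)$, $b = \beta^{(n)}(y)$, and $v \in T_h P^{n+1}$; set $z := R^{\bar{P}^{(n)}}_{\exp(l')}(y)$ and $h' := R^{P^{n+1}}_{\exp(l)}(h) = {\rm d}_y R^{\bar{P}^{(n)}}_{\exp(l')} \circ h \circ \rho(\exp_b(l))$. Then
\begin{align*}
\theta^{n+1}({\rm d} R^{P^{n+1}}_{\exp(l)}(v))
&= (h')^{-1} \bigl({\rm d}_{h'} \pi^{n+1} \circ {\rm d}_h R^{P^{n+1}}_{\exp(l)}(v)\bigr) \\
&= \rho(\exp_b(l))^{-1} \circ h^{-1} \circ {\rm d}_z R^{\bar{P}^{(n)}}_{\exp(-l')} \circ {\rm d}_{h'}\pi^{n+1} \circ {\rm d}_h R^{P^{n+1}}_{\exp(l)}(v).
\end{align*}
Now (\ref{e.420}) gives ${\rm d}\pi^{n+1} \circ {\rm d} R^{P^{n+1}}_{\exp(l)} = {\rm d} R^{\bar{P}^{(n)}}_{\exp(l')} \circ {\rm d}\pi^{n+1}$, so the middle factors collapse to ${\rm d}_h \pi^{n+1}(v)$, yielding $\rho(\exp_b(l))^{-1}(\theta^{n+1}(v))$. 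This proves (\ref{e.431}).

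For (\ref{e.432}), apply $\frac{{\rm d}}{{\rm d}t}|_{t=0}$ to the identity $(R^{P^{n+1}}_{\exp(tl)})^* \theta^{n+1} = \rho(\exp(tl))^{-1} \circ \theta^{n+1}$. The left hand side yields $\mathrm{Lie}_{l^{P^{n+1}}} \theta^{n+1}$ by definition of the fundamental vector field. For the right hand side, fiberwise at $b \in B$, one has $\rho(\exp_b(tl)) = {\rm Id}_{\fv^{<n+1}} + t \rho(l)_b + O(t^2)$ where $\rho(l)_b \in \fgl(\fv^{<n+1})$ is the image of $l$ under the inclusions $\fg^n(b) + \fgl_{n+1}(\fv^{<n}) \subset \fgl_n(\fv^{<n}) \subset \fgl(\fv^{<n+1})$ --- this is just the statement that $\rho \circ \exp$ is the exponential map for the Lie subgroup $\mathsf{G}^n\mathrm{GL}_{n+1}(\fv^{<n}) \subset \mathrm{GL}(\fv^{<n+1})$ arising from the trivialization $\eta^n$. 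Hence $\frac{{\rm d}}{{\rm d}t}|_{t=0} \rho(\exp(tl))^{-1} = -\rho(l)$ as endomorphisms of $\fv^{<n+1} \times B$, proving (\ref{e.432}).

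The argument is essentially mechanical given the preceding infrastructure; the only point requiring care is the identification of the derivative of $\rho \circ \exp$, which boils down to the fact that $\eta^n$ was constructed precisely as the Lie algebra trivialization of the $B$-group $\mathsf{G}^n\mathrm{GL}_{n+1}(\fv^{<n})$ in Set-up \ref{n.B}(2). I do not anticipate any real obstacle beyond bookkeeping --- this lemma plays the same role for general $n$ that Lemma \ref{l.23} plays for $n=0$, and the proof structure transfers without modification.
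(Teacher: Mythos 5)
Your proof is correct and matches the paper's argument essentially line for line: both unfold the definition $\theta^{n+1}_h(v) = h^{-1}({\rm d}\pi^{n+1}(v))$, substitute the explicit formula for $R^{P^{n+1}}_{\exp(l)}(h)$ from Definition 6.4, cancel the factors via the intertwining relation (\ref{e.420}), and then differentiate at $t=0$ to get (\ref{e.432}). Your elaboration of why $\frac{\rm d}{{\rm d}t}|_{t=0}\,\rho(\exp(tl))^{-1} = -\rho(l)$ (namely that $\eta^n$ is precisely the Lie algebra trivialization of $\mathsf{G}^n\mathrm{GL}_{n+1}(\fv^{<n})$, so $\rho \circ \exp$ is the group exponential) spells out a step the paper compresses into ``By taking derivative,'' but the route is the same.
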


\begin{proof}
Regarding a point $h \in P^{n+1}$ as an element of ${\rm Isom}(\fv^{<n+1}, T_y(\bar{P}^{(n)}))$ with $ y = \pi^{n+1}(h),$  we have $\theta^{n+1}(v) = h^{-1}({\rm d} \pi^{n+1}(v))$ for any $ v \in T_{h}P^{n+1}.  $
Thus
\begin{eqnarray*}
\lefteqn{(R^{P^{n+1}}_{\exp(l)})^*\theta^{n+1}(v) } \\ &=& \theta^{n+1}({\rm d} R^{P^{n+1}}_{\exp(l)}(v)) \\
&=& (R^{P^{n+1}}_{\exp(l)}(h))^{-1}({\rm d}\pi^{n+1} \circ {\rm d} R^{P^{n+1}}_{\exp(l)} (v)) \\
&=& (\rho(\exp(l)))^{-1} \circ h^{-1} \circ ({\rm d}_y R^{\bar{P}^{(n)}}_{\exp(l')})^{-1} \circ {\rm d}\pi^{n+1} \circ {\rm d}R^{P^{n+1}}_{\exp(l)} (v)  \\
&=& (\rho(\exp(l)))^{-1} \circ h^{-1} \circ {\rm d} \pi^{n+1}(v) \text{ by (\ref{e.420})} \\
&=& (\rho(\exp(l)))^{-1} \circ \theta^{n+1}(v).
\end{eqnarray*} This proves (\ref{e.431}). By taking derivative, we obtain (\ref{e.432}).
\end{proof}

\subsection{Auxiliary vector fields on $\bar{P}^{(n)}$}

From the inductive assumption (B), we have $\widetilde{\pi}^{n}: \widetilde{P}^{n} \rightarrow  \bar{P}^{(n-1)},$ a  $\beta^{(n-1)}$-principal subbundle of $\bL(I^{(n-1)}) \subset \mathbb F \bar{P}^{(n-1)} \rightarrow \bar{P}^{(n-1)}$ with the structure $B$-group $\mathsf{G}^n{\rm GL}_{n+1}(\fv^{<n})$ such that  the $\beta^{(n-1)}$-principal bundle $\bar{P}^{(n)} \rightarrow \bar{P}^{(n-1)}$ with the structure $B$-group $\mathsf{G}^n$  is obtained from $\widetilde{\pi}^n$ by taking the quotient by the normal subgroup ${\rm GL}_{n+1}(\fv^{<n})$.

\begin{definition}\label{d.hatu}
Assume that we have a section $\w{\Sigma} \subset \widetilde{P}^n$ of $\widetilde{\pi}^n$. From Definition \ref{d.connection}, the isomorphism $\eta^n: (\fv^n \oplus \fgl_{n+1}(\fv^{<n}))\times B \to {\rm Lie}\mathsf{G}^n {\rm GL}_{n+1}(\fv^{<n})$ and the section $\w{\Sigma}$ determine a $\widetilde{\pi}^n$-connection  $\widetilde{\sH} \subset T \widetilde{U}$ in a neighborhood $\widetilde{U} \subset \widetilde{P}^n$ of $\w{\Sigma}$: $$ \begin{array}{cccc} \widetilde{U}\subset & \widetilde{P}^n & \subset & \F \bar{P}^{(n-1)} \\ &  \widetilde{\pi}^n \downarrow & & \downarrow \\ & \bar{P}^{(n-1)} & = & \bar{P}^{(n-1)}.\end{array} $$
Moreover, we have the $\widetilde{\sH}$-horizontal vector field $w^{\widetilde{\sH}}$ on $\widetilde{U}$ corresponding to each $w \in \fv^{<n}$.
For $w \in \oplus_{j=-1}^{n-1} \fv^{j}$ and $A \in {\rm GL}_{n+1}(\fv^{<n})$,  we have $A(w) =w$. Thus by Lemma \ref{l.Sternberg} (iii), the $\widetilde{\sH}$-horizontal vector field $w^{\widetilde{\sH} }$ on $\widetilde{U}$ is ${\rm GL}_{n+1}(\fv^{<n})$-invariant and descends to a vector field on $\bar{P}^{(n)}$, which we denote by $\widehat{w}$. \end{definition}

The following lemma corresponds to Lemma 41 (i) and (ii) of \cite{AD}.

\begin{lemma} \label{l.56} As $\bar{\pi}^{(n)}: \bar{P}^{(n)} \to \bar{P}^{(n-1)}$ is  a $\beta^{(n-1)}$-principal bundle with the structure $B$-group $\mathsf{G}^n$,  the isomorphism  $\zeta^n: \fv^n \times B \cong {\rm Lie} \mathsf{G}^n$ determines  the fundamental vector field $A^{\bar{P}^{(n)}}$ on $\bar{P}^{(n)}$ for  $A \in \fv^n$.  Let $u \in \fv^{-1}$ and $v \in \fv^{i}$ with $0 \leq i \leq n-1$. Since $A(u) \in \fv^{n-1}$, we have vector fields $\widehat{u}, \widehat{v}$ and $\widehat{A(u)}$ on $\bar{P}^{(n)}$ from Definition \ref{d.hatu}. Then \begin{equation} \label{e.54}
    [A^{\bar{P}^{(n)}}, \widehat{u}] =\widehat{A(u)}, \qquad [A^{\bar{P}^{(n)}}, \widehat{v}] =0
    \end{equation}
    and
\begin{equation} \label{e.53}
[\widehat{u}, \widehat{v}] = \widehat{[u,v]} \mod \sD_i^{(n)}.
\end{equation}
\end{lemma}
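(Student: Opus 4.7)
The plan is to lift the computation to $\widetilde{P}^n$, where both the fundamental vector fields of the $(\fv^n\oplus\fgl_{n+1}(\fv^{<n}))$-exponential action from $\eta^n$ and the $\widetilde{\sH}$-horizontal lifts $w^{\widetilde{\sH}}$ are available, apply Lemma~\ref{l.Sternberg} there, and then push everything down via the quotient map $\chi^n:\widetilde{P}^n\to\bar{P}^{(n)}$ by the normal $B$-subgroup ${\rm GL}_{n+1}(\fv^{<n})$. The first thing to check is that $A^{\widetilde{P}^n}$ is $\chi^n$-related to $A^{\bar{P}^{(n)}}$: the $B$-group quotient $\mathsf{G}^n{\rm GL}_{n+1}(\fv^{<n})\twoheadrightarrow\mathsf{G}^n$ of Set-up~\ref{n.B} sends the one-parameter subgroup $\exp(t\,\eta^n(A))$ to $\exp(t\,\zeta^n(A))$, so the flows on $\widetilde{P}^n$ and on $\bar{P}^{(n)}$ are intertwined by $\chi^n$; the horizontal vector fields $w^{\widetilde{\sH}}$ descend to $\widehat{w}$ by Definition~\ref{d.hatu}.

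For (\ref{e.54}), Lemma~\ref{l.Sternberg}(iii) applied on $\widetilde{P}^n$ yields
\[
[A^{\widetilde{P}^n},w^{\widetilde{\sH}}] \;=\; (\eta^n(A)(w))^{\widetilde{\sH}},
\]
where $A\in\fv^n=\fg(b)^n$ acts on $w\in\fv^{<n}$ via the inclusion $\fg(b)^n\subset\fgl^n(\fv^{<n})$ of Definition~\ref{d.prolongLie}. For $w=u\in\fv^{-1}$ this produces $(A(u))^{\widetilde{\sH}}$, which descends to $\widehat{A(u)}$. For $w=v\in\fv^i$ with $0\le i\le n-1$, the element $A(v)$ would have degree $i+n\ge n$, which lies outside the range of the truncated space $\fv^{<n}$; hence $A$, as an endomorphism of $\fv^{<n}$ shifting degree by $n$, is forced to send $v$ to zero, and the bracket vanishes.

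For (\ref{e.53}), lift $\widehat{u},\widehat{v}$ to $u^{\widetilde{\sH}},v^{\widetilde{\sH}}$ and apply Lemma~\ref{l.Sternberg}(iv) to the soldering form $\theta$ of $\widetilde{P}^n\subset\F\bar{P}^{(n-1)}$:
\[
{\rm d}\widetilde{\pi}^n\bigl([u^{\widetilde{\sH}},v^{\widetilde{\sH}}]_z\bigr) \;=\; -\,z\bigl(\tau^{\widetilde{\sH}_z}(u,v)\bigr),
\]
viewing $z\in\bL(I^{(n-1)})$ as an isomorphism $\fv^{<n}\to T_{\widetilde{\pi}^n(z)}\bar{P}^{(n-1)}$. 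The inductive hypothesis~(C) gives $\tau^{\widetilde{\sH}_z}(u,v)\in\bigoplus_{j=i-1}^{n-1}\fv^j$ with degree-$(i-1)$ component equal to $-[u,v]$, and since $z(\fv^j)\subset\sD_j^{(n-1)}$, this produces
\[
{\rm d}\widetilde{\pi}^n\bigl([u^{\widetilde{\sH}},v^{\widetilde{\sH}}]_z\bigr) \;\equiv\; z([u,v]) \pmod{\sD_i^{(n-1)}}.
\]
The same formula applied to $[u,v]\in\fv^{i-1}$ gives ${\rm d}\widetilde{\pi}^n([u,v]^{\widetilde{\sH}}_z)=z([u,v])$. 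Combining this with ${\rm d}\bar{\pi}^n\circ{\rm d}\chi^n={\rm d}\widetilde{\pi}^n$ and the definition $\sD_i^{(n)}=({\rm d}\bar{\pi}^n)^{-1}(\sD_i^{(n-1)})$ for $0\le i\le n-1$, we conclude $[\widehat{u},\widehat{v}]-\widehat{[u,v]}\in\sD_i^{(n)}$. The only delicate points in this plan are keeping straight the two kinds of grading information — the truncation of $\fv^{<n}$ that forces $A(v)=0$ in (\ref{e.54}), and the leading-order extraction of $-[u,v]$ from the torsion in (\ref{e.53}) — both supplied directly by (C); the rest is a bookkeeping exercise with Lemma~\ref{l.Sternberg}.
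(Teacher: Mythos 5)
Your proposal is correct and follows essentially the same route as the paper: lift to $\widetilde{P}^n$, apply Lemma~\ref{l.Sternberg}~(iii) and~(iv) there (exploiting that the $\widetilde{\sH}$-horizontal lifts of elements of $\oplus_{j=-1}^{n-1}\fv^j$ are ${\rm GL}_{n+1}(\fv^{<n})$-invariant and descend to the $\widehat{\,\cdot\,}$-fields), invoke the inductive torsion bound (C), and push down along $\chi^n$. The only addition you make over the paper's terse proof is to spell out explicitly that $A(v)=0$ for $v\in\fv^i$, $0\le i\le n-1$, by degree truncation, and to explain the $\chi^n$-relatedness of $A^{\widetilde{P}^n}$ and $A^{\bar{P}^{(n)}}$ — both of which the paper leaves implicit.
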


\begin{proof}
By the $(\fv^n \oplus \fgl_{n+1}(\fv^{<n}))$-exponential action on $\widetilde{P}^n$ associated with the isomorphism $$\eta^n: (\fv^n \oplus \fgl_{n+1}(\fv^{<n})) \times B \ \longrightarrow \ {\rm Lie}\mathsf{G}^n {\rm GL}_{n+1}(\fv^{<n}),$$ we have the fundamental vector field $A^{\widetilde{P}^n}$ on $\widetilde{P}^n$ for each $ A \in \fv^n$ such that for $w \in \oplus_{j=-1}^{n-1} \fv^{j},$
  $$ [A^{\widetilde{P}^n}, w^{\widetilde{\sH}}] = (A(w))^{\widetilde{\sH}}$$   by Lemma \ref{l.Sternberg} (iii). By projecting this equality to  $\bar{P}^{(n)},$ we obtain  $[A^{\bar{P}^{(n)}}, \widehat{w} ] = \widehat{A(w) }, $ proving (\ref{e.54}).

  By the inductive assumption (C) in Theorem \ref{t.Tanaka1}, for $h \in \widetilde{U} \subset \widetilde{P}^n$,
     $$\tau^{\widetilde{\sH}_{h}}(u ,v) \in \oplus_{j= i-1}^{n-1} \fv^j \ \text{ and } \
     (\tau^{\widetilde{\sH}_{h}})^0(u,v) = -[u,v]$$
    in terms of the  Lie algebra $\mathfrak g(b)$ with $b=\beta^{(n-1)} \circ \widetilde{\pi}^{n }(z).$
 Thus $$-\tau^{\widetilde{\sH}_{h}}(u ,v) -[u,v]\in \oplus_{j=i}^{n-1}\fv^{j }.$$
On the other hand, regarding $h \in \widetilde{P}^n \subset \bL(I^{(n-1)})$ as an element of ${\rm Isom}(\fv^{<n}, T_x \bar{P}^{(n-1)})$ for $x = \widetilde{\pi}^n(h)$, we have by Lemma \ref{l.Sternberg} (iv),
\begin{eqnarray*}
-\tau^{\widetilde{\sH}_{z}}(u ,v) -[u,v]
&=& h^{-1} \circ  {\rm d}\widetilde{\pi}^n ([u^{\widetilde{\sH}}, v^{\widetilde {\sH}}] - [u,v]^{\widetilde{\sH}})\\
&=& h^{-1} \circ {\rm d}\bar{\pi}^n ([\widehat{u}  , \widehat{v} ] - \widehat{[u,v]} ).
\end{eqnarray*}
Therefore, $[\widehat{u}  , \widehat{v} ] - \widehat{[u,v]}$ is contained in $$   ( {\rm d} \bar{\pi}^{(n)})^{-1} \circ h  (\oplus_{j=i}^{n-1} \fv^j) =( {\rm d} \bar{\pi}^{(n)})^{-1}(\sD_i^{(n-1)}) = \sD_i^{(n)},$$ which proves (\ref{e.53}).
\end{proof}

The following lemma corresponds to Lemma 41 (iii) of \cite{AD}.

\begin{lemma} \label{l.57} Let $u \in \oplus_{j=-1}^{n-1}\fv^{j}$ and $\widehat{u}$ be as in Definition \ref{d.hatu}.  Then for any $h\in P^{n+1} \subset \bL(I^{(n)})$ and $y = \pi^{n+1}(h) \in \bar{P}^{(n)}$, regarding $h$ as an element of ${\rm Isom}(\fv^{<n+1}, T_y \bar{P}^{(n)}),$
\begin{equation} \label{e.55}
{\rm d} \bar{\pi}^{(n)} (h(u)) ={\rm d} \bar{\pi}^{(n)} (\widehat{u}_y).
\end{equation}
\end{lemma}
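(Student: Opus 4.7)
The plan is to translate both sides of the claimed identity into comparable elements of $\bL(I^{(n-1)}_x)$ (where $x := \bar{\pi}^{(n)}(y)$), and show that these elements agree on the relevant degree range of $\fv$. On the left, the map $\bar{\pi}^n_*$ from Definition \ref{d.pistar} gives directly ${\rm d}\bar{\pi}^{(n)}(h(u)) = \bar{\pi}^n_*(h)(u)$. On the right, the descent construction in Definition \ref{d.hatu} identifies $\widehat{u}_y$ with a tangent vector at some $\widetilde{y}\in\widetilde{P}^n$ lifting $y$, and the soldering form on $\widetilde{P}^n\subset\F\bar{P}^{(n-1)}$ turns that into the frame value $\widetilde{y}(u)$.

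Concretely, I will first pick any $\widetilde{y}\in\widetilde{U}\subset\widetilde{P}^n$ with $\chi^n(\widetilde{y})=y$. By Definition \ref{d.hatu}, $\widehat{u}_y = {\rm d}\chi^n(u^{\widetilde{\sH}}_{\widetilde{y}})$, and using $\bar{\pi}^{(n)}\circ\chi^n = \widetilde{\pi}^n$ together with the soldering identity $\theta^n(u^{\widetilde{\sH}}_{\widetilde{y}})=u$ (equivalently, ${\rm d}\widetilde{\pi}^n(u^{\widetilde{\sH}}_{\widetilde{y}}) = \widetilde{y}(u)$) I obtain ${\rm d}\bar{\pi}^{(n)}(\widehat{u}_y) = \widetilde{y}(u)$. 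The lemma therefore reduces to showing $\widetilde{y}(u) = \bar{\pi}^n_*(h)(u)$ for $u\in\oplus_{j=-1}^{n-1}\fv^j$, as elements of $T_x\bar{P}^{(n-1)}$.

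Next I will compare $\widetilde{y}$ and $\bar{\pi}^n_*(h)$ in $\bL(I^{(n-1)}_x)$. Both project to the \emph{same} element $\xi^{(n)}(y)\in\bL_{(n+1)}(I^{(n-1)}_x)$ under ${\rm pr}^{I^{(n-1)}}_{(n+1)}$: for $\widetilde{y}$, this is the defining commutativity $\xi^{(n)}\circ\chi^n = {\rm pr}^{I^{(n-1)}}_{(n+1)}|_{\widetilde{P}^n}$ built into condition (B3); for $\bar{\pi}^n_*(h)$, this is precisely the fiber-product condition defining $h\in P^{n+1}$ with $\pi^{n+1}(h)=y$ in Definition \ref{d.Pn+1}. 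By Lemma \ref{l.lift}(ii), ${\rm pr}^{I^{(n-1)}}_{(n+1)}$ is the quotient by ${\rm GL}_{n+1}(\fv^{<n})$, so $\widetilde{y} = \bar{\pi}^n_*(h)\circ({\rm Id}_{\fv^{<n}}+A)$ for some $A\in\fgl_{n+1}(\fv^{<n})$.

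A routine grading check then closes the argument: for $u\in\fv^j$ with $-1\leq j\leq n-1$ and any homogeneous component $A^i$ of $A$ with $i\geq n+1$, the image $A^i(u)$ would lie in $\fv^{j+i}$, but $j+i\geq n$ exceeds the top degree $n-1$ of $\fv^{<n}$, forcing $A^i(u)=0$. Hence $A(u)=0$ and $\widetilde{y}(u)=\bar{\pi}^n_*(h)(u)$, completing the proof. The only real subtlety is the comparison step above, namely recognizing via (B3) and the fiber-product definition of $P^{n+1}$ that $\widetilde{y}$ and $\bar{\pi}^n_*(h)$ share the same $(n+1)$-jet $\xi^{(n)}(y)$; once this is in place, the grading argument is immediate, and it is precisely this grading cutoff that forces the hypothesis $u\in\oplus_{j=-1}^{n-1}\fv^j$ rather than a higher range.
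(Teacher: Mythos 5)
Your proof is correct and uses essentially the same ideas as the paper, organized slightly differently. You reduce to comparing $\widetilde{y}$ and $\bar{\pi}^n_*(h)$ as lifts in $\bL(I^{(n-1)}_x)$ with common $(n+1)$-jet $\xi^{(n)}(y)$, then invoke the simply transitive ${\rm GL}_{n+1}(\fv^{<n})$-action on the fibers of ${\rm pr}^{I^{(n-1)}}_{(n+1)}$ (Lemma~\ref{l.lift}(ii)) and observe that any $A\in\fgl_{n+1}(\fv^{<n})$ annihilates $\oplus_{j=-1}^{n-1}\fv^j$ by degree reasons. The paper phrases the same degree cutoff slightly more directly, by noting that for $-1\leq i\leq n-1$ the projection ${\rm pr}^i_{(n+1)}:\sD_i^{(n-1)}\to\gr_{(n+1)}^i(\sD_{\bullet}^{(n-1)})$ is already an isomorphism (since $\sD_{i+n+1}^{(n-1)}=0$), so equality of projections forces equality; both arguments express the same fact, and yours is equally valid.
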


\begin{eqnarray*}
\xymatrix{
& \qquad P^{n+1} \ar[dd]^{\pi^{n+1}}  \ni h \\
f \in \widetilde{P}^n  \qquad \qquad \ar[dd]_{\widetilde{\pi}^{n}} \ar[rd]^{{\rm GL}_{n+1}(\fv^{<n})} & \\
& \qquad  \bar{P}^{(n)} \ni y  \ar[ld]_{\bar{\pi}^{(n)}}^{\mathsf{G}^n} \\
 \bar{P}^{(n-1)} &
}
\end{eqnarray*}

\begin{proof}
Fix $i, -1 \leq i \leq n-1.$
From Definition \ref{d.Pn+1},  \begin{equation}\label{e.571} {\rm pr}^i_{(n+1)} \circ {\rm d} \bar{\pi}^{(n)} \circ h|_{\fv^i} = \xi^{(n)}(y)|_{\fv^i}. \end{equation}
From the commutative diagram in (B3) of Theorem \ref{t.Tanaka1},
for any $f \in \widetilde{P}^n \subset \bL(I^{(n-1)})$ which projects to $y \in \bar{P}^{(n)}$,
\begin{equation}\label{e.572} {\rm pr}^i_{(n+1)} \circ f|_{\fv^i} = \xi^{(n)}(y)|_{\fv^i}.\end{equation}
But ${\rm pr}^i_{(n+1)}: \sD_i  ^{(n-1)}  \to \gr^i_{(n+1)}(\sD_{\bullet}^{(n-1)})$ is an isomorphism because  $\sD_{i+n+1}^{(n-1)}=0$ for $-1 \leq i \leq n-1$. Thus (\ref{e.571}) and (\ref{e.572}) imply
${\rm d} \bar{\pi}^{(n)} (h(u)) =f (u)$  for any $ u \in \fv^i.$ Since $${\rm d} \bar{\pi}^{(n)} (\widehat{u}_y) = {\rm d} \widetilde{\pi}^n (u^{\widetilde{\sH}}_f) =f(u)$$ by the definition of $u^{\widetilde{\sH}}$,  we obtain (\ref{e.55}).
\end{proof}

\subsection{Torsion of $\pi^{n+1}: P^{n+1} \rightarrow \bar{P}^{(n)}$}

The following is a modification of  Theorem 39 of \cite{AD}.

\begin{proposition} \label{p.44}  For a point $z \in P^{n+1} \subset \mathbb F\bar{P}^{(n)}$, let $\sH_z \subset T_zP^{n+1}$ be a $\pi^{n+1}$-horizontal subspace.  Then the torsion $\tau^{\sH_z} \in \Hom(\wedge^2\fv^{<n+1}, \fv^{<n+1})$ and its  degree-zero component $(\tau^{\sH_z})^0$ satisfy the following.
\begin{itemize}
\item [(i)] If $u \in \fv^n$ and $v \in \fv^i, -k \leq i \leq n$,
$$\tau^{\sH_z}(u, v) \in \left\{ \begin{array}{ll} \oplus_{j=i+n}^n \fv^j & \text{ if } i \leq 0 \\  \fv^n & \text{ if } i > 0. \end{array} \right. $$  \item[(ii)]  If $u \in \fv^n$ and $v \in \fv^i, -k \leq i \leq -1$, $$(\tau^{\sH_z})^0(u,v) = -[u,v].$$
\item[(iii)] If $u \in \fv^{-1}$ and $v \in \fv^i, -k \leq i \leq n$,
$$ \tau^{\sH_z}(\fv^{-1} \wedge \fv^i) \subset \oplus_{j= i-1}^n \fv^{j} \ \mbox{  and } \ (\tau^{\sH_z})^0(u,v) = -[u,v].$$
\end{itemize}
Here, the Lie bracket is that of $\fg(b)$ for $b = \beta^{(n)} \circ \pi^{n+1}(z).$
\end{proposition}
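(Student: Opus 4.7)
The plan is to extend the section-and-connection argument of Proposition \ref{p.24} to the inductive step, with three new tools covering the roles that principal connections and symbols played at the bottom: the $(\fv^n \oplus \fgl_{n+1}(\fv^{<n}))$-exponential action on $P^{n+1}$ and the equivariance of the soldering form $\theta^{n+1}$ (Lemmas \ref{l.42}--\ref{l.43}), and the auxiliary vector fields $\widehat{u}$ on $\bar{P}^{(n)}$ together with the bracket and matching identities of Lemmas \ref{l.56}--\ref{l.57}. After shrinking $\bar{P}^{(n)}$ around $\pi^{n+1}(z)$, I would pick a local holomorphic section $\Sigma \subset P^{n+1}$ of $\pi^{n+1}$ through $z$ with $T_z\Sigma = \sH_z$, translate it by the right $H(\fv^{<n+1})$-action to obtain a principal connection $\sH \subset TP^{n+1}$ whose fiber at $z$ is $\sH_z$, and work with the $\sH$-horizontal lifts $w^{\sH}$ for $w \in \fv^{<n+1}$, which satisfy $\theta^{n+1}(w^{\sH}) \equiv w$, $[A^{P^{n+1}}, w^{\sH}] = (A(w))^{\sH}$ for $A \in \fh(\fv^{<n+1})$ by Lemma \ref{l.Sternberg}(iii), and $\tau^{\sH_z}(w,w') = -\theta^{n+1}_z([w^{\sH}, w'^{\sH}])$ by Lemma \ref{l.Sternberg}(iv).

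For parts (i) and (ii), with $u \in \fv^n$, I would decompose $u^{\sH} = u^{P^{n+1}} + (u^{\sH} - u^{P^{n+1}})$, where $u^{P^{n+1}}$ is the fundamental field of the $\fv^n$-exponential action of Lemma \ref{l.42}. Since $h(u) = I^{(n)}_y(u) = u^{\bar{P}^{(n)}}_y$ for every $h \in \bL(I^{(n)}_y)$ (because $\fv^n$ is the top degree of $\fv^{<n+1}$) and ${\rm d}\pi^{n+1}(u^{P^{n+1}}) = u^{\bar{P}^{(n)}}$ by Lemma \ref{l.42}, the difference is $\pi^{n+1}$-vertical and expands against a basis of $\fh(\fv^{<n+1})$ as $\sum_s f_s A_s^{P^{n+1}}$. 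The Cartan-formula computation of Proposition \ref{p.24}(iii) combined with Lemma \ref{l.43} gives ${\rm d}\theta^{n+1}(u^{P^{n+1}}, v^{\sH})_z = -\rho(u)(v)$, which equals $-[u,v]$ for $v \in \fv^-$ and vanishes for $v \in \fv^{\geq 0}$ by the extension-by-zero convention of Lemma \ref{l.inclusion}. The vertical correction $-\sum_s f_s(z) A_s(v)$ is then tracked degree by degree: $\fgl^m(\fv^{<n+1})$-pieces with $m \geq n+1$ contribute to $\fv^{i+m}$ only when $i+m \leq n$, hence to $\oplus_{j=i+n+1}^{n} \fv^j$, while $\Hom(\fv^j, \fv^n)$-pieces with $0 \leq j \leq n-1$ contribute in $\fv^n$ only if $j=i$. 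This produces exactly the range asserted in (i); in (ii) all corrections avoid the $\fv^{n+i}$-component, leaving $-[u,v]$ there.

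For part (iii), the subcase $v \in \fv^n$ reduces to (i) and (ii) by antisymmetry $\tau^{\sH_z}(u,v) = -\tau^{\sH_z}(v,u)$. For the subcase $v \in \fv^i$ with $-k \leq i \leq -1$, the filtration argument of Proposition \ref{p.24}(i) and (iii) applies without change: setting $\sD^{P^{n+1}}_{\bullet} := (\pi^{n+1})^{-1}\sD^{(n)}_{\bullet}$, both $u^{\sH}$ and $v^{\sH}$ are sections of $\sD^{P^{n+1}}_{-1}$ and $\sD^{P^{n+1}}_i$ respectively, so the Tanaka bracket property (Lemma \ref{l.vertical}(ii)) forces $[u^{\sH}, v^{\sH}] \in \sD^{P^{n+1}}_{i-1}$ (giving the range), and the class of this bracket modulo $\sD^{P^{n+1}}_i$ matches $[I^{(n)}_y(u), I^{(n)}_y(v)] = I^{(n)}_y([u,v])$ in the symbol algebra since $I^{(n)}_y|_{\fv^-}$ is a graded Lie algebra isomorphism, yielding $(\tau^{\sH_z})^0(u,v) = -[u,v]$.

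The main obstacle is the remaining subcase of (iii), $v \in \fv^i$ with $0 \leq i \leq n-1$, where the symbol-algebra computation is unavailable (since $\fv^i$ for $i \geq 0$ is not in the symbol), and the Tanaka bracket property no longer controls $[\sD^{P^{n+1}}_{-1}, \sD^{P^{n+1}}_i]$ when $i \geq 1$. Here I would transport the computation to $\bar{P}^{(n)}$ by choosing local vector fields $\widetilde{u}, \widetilde{v}$ on a neighborhood of $z$ in $P^{n+1}$ that are $\pi^{n+1}$-related to $\widehat{u}, \widehat{v}$ respectively, and decomposing $u^{\sH} = \widetilde{u} + X_u$, $v^{\sH} = \widetilde{v} + X_v$ with ${\rm d}\pi^{n+1}(X_u)_z$ and ${\rm d}\pi^{n+1}(X_v)_z$ taking values in $(\sD^{(n)}_n)_y = \ker({\rm d}_y \bar{\pi}^{(n)})$ by Lemma \ref{l.57}. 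Careful bookkeeping of the bracket expansion, using the involutivity of $\sD^{(n)}_n$ and the projectability of $\widetilde{u}, \widetilde{v}$, shows that the three terms other than $[\widetilde{u}, \widetilde{v}]$ push down by ${\rm d}\pi^{n+1}$ into $(\sD^{(n)}_{n-1})_y \subset (\sD^{(n)}_i)_y$, whence ${\rm d}\pi^{n+1}([u^{\sH}, v^{\sH}]_z) \equiv [\widehat{u}, \widehat{v}]_y \pmod{(\sD^{(n)}_i)_y}$. Lemma \ref{l.56} identifies this with $\widehat{[u,v]}_y$ modulo $\sD^{(n)}_i$, and Lemma \ref{l.57} applied to $[u,v] \in \fv^{i-1}$ gives $\widehat{[u,v]}_y \equiv z([u,v]) \pmod{(\sD^{(n)}_n)_y}$, which lies inside $(\sD^{(n)}_i)_y$ since $i \leq n-1 < n$. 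Combining, ${\rm d}\pi^{n+1}([u^{\sH}, v^{\sH}]_z) \equiv z([u,v]) \pmod{(\sD^{(n)}_i)_y}$; applying $-z^{-1}$ and using $z^{-1}((\sD^{(n)}_j)_y) = \oplus_{l \geq j} \fv^l$ then yields both the filtration range in (iii) and the $\fv^{i-1}$-component equal to $-[u,v]$. The careful filtration bookkeeping of the correction terms is the principal technical point of the whole inductive step.
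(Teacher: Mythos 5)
For parts (i) and (ii), your argument coincides with the paper's: decompose $u^{\sH} = u^{P^{n+1}} + (u^{\sH}-u^{P^{n+1}})$, apply the Cartan formula with the equivariance of Lemma \ref{l.43} to the fundamental part, expand the vertical remainder against a basis of $\fh(\fv^{<n+1})$, and read off degrees. Your reduction of the $v\in\fv^n$ subcase of (iii) to (i)--(ii) via $\tau^{\sH_z}(u,v)=-\tau^{\sH_z}(v,u)$ is a clean way to cover a subcase the paper's written proof does not carry out explicitly (it announces $0 \leq i \leq n$ but only computes $0\leq i\leq n-1$). The $-k\leq i\leq -1$ subcase is, as you say, the same symbol-algebra argument as in Proposition \ref{p.24}.

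For the central subcase $u\in\fv^{-1}$, $v\in\fv^i$ with $0\leq i\leq n-1$, your decomposition differs from the paper's. You keep the $\sH$-horizontal lifts $u^{\sH}, v^{\sH}$ of the \emph{constants} $u,v$, for which Lemma \ref{l.Sternberg}(iv) gives $\tau^{\sH_z}(u,v)=-\theta^{n+1}_z([u^{\sH},v^{\sH}])$ with no ``mod $\fv^n$'' correction, and then write $u^{\sH}=\widetilde{u}+X_u$ with $\widetilde{u}$ a $\pi^{n+1}$-related lift of $\widehat{u}$. The paper instead replaces $u^{\sH}$, \emph{as a vector field}, by $\widehat{u}^{\sH}+(a^{\bar{P}^{(n)}})^{\sH}$, the $\sH$-horizontal lift of a vector field on $\bar{P}^{(n)}$ agreeing with $u^{\sH}$ only at $z$ but projectable everywhere; the price is that the soldering form on these lifts is no longer constant, handled by the observation $\theta^{n+1}(\widehat{v}^{\sH}+(b^{\bar{P}^{(n)}})^{\sH})\equiv v\pmod{\fv^n}$. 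Your route is cleaner on the ${\rm d}\theta$ side but pushes all the difficulty into the bracket bookkeeping.

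That is where there is a gap. You claim that ``the involutivity of $\sD^{(n)}_n$ and the projectability of $\widetilde{u},\widetilde{v}$'' suffice to push the three correction brackets into $(\sD^{(n)}_{n-1})_y$. They do not: the delicate term is $[\widetilde{u},X_v]$. Writing $X_v$ locally as a combination of $\pi^{n+1}$-related lifts $Y_j$ of sections $\bar{Y}_j$ of $\sD^{(n)}_n$ plus a $\pi^{n+1}$-vertical field, projectability of $\widetilde{u}$ gives ${\rm d}\pi^{n+1}([\widetilde{u},X_v]_z) = \sum_j g_j(z)[\widehat{u},\bar{Y}_j]_y$ modulo $(\sD^{(n)}_n)_y$, and one now needs $[\widehat{u},\bar{Y}_j]\in\sD^{(n)}_{n-1}$. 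Since $\widehat{u}$ is \emph{not} $\bar{\pi}^{(n)}$-projectable, this does not follow from involutivity of $\sD^{(n)}_n$; it follows precisely from (\ref{e.54}) of Lemma \ref{l.56}, which gives $[\widehat{u}, A^{\bar{P}^{(n)}}] = -\widehat{A(u)}\in\sD^{(n)}_{n-1}$ for $A\in\fv^n$, whence $[\widehat{u},\sD^{(n)}_n]\subset\sD^{(n)}_{n-1}$. You invoke Lemma \ref{l.56} only afterwards, for the main term $[\widehat{u},\widehat{v}]$; it is also indispensable at this earlier step. With that one additional input your bookkeeping closes and the rest of the argument (Lemma \ref{l.57} for $\widehat{[u,v]}$, applying $-h_z^{-1}$, reading off the filtration range and the degree-$(i-1)$ component) is correct.
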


\begin{proof} We may replace $P^{n+1}$ by a neighborhood of $z$ and
use Definition \ref{d.connection} to fix  a $\pi^{n+1}$-connection $\sH$ in a neighborhood $z \in U \subset P^{n+1}$ such that it is locally invariant under the right action of $H(\fv^{<n+1})$ on $P^{n+1}$ and its value at $z$ is the given subspace $\sH_z$. Then for each $w \in \fv^{<n+1}$, we have the $\sH$-horizontal vector field $w^{\sH}$ on $U$.

For  $u \in \fv^n$, we have   the fundamental vector field $u^{P^{n+1}}$ on $P^{n+1}$ from Lemma \ref{l.42}. By the same argument for  (\ref{e.3.4b}), we see that \begin{equation}\label{e.4.12b} {\rm d} \pi^{n+1}(u^{\sH} - u^{P^{n+1}}) =0 \mbox{ on } U \subset P^{n+1}.\end{equation}  Since $\pi^{n+1}: P^{n+1} \to \bar{P}^{(n)}$ is a principal bundle with the structure group $H(\fv^{<n+1})$, for a basis $\{ A_s \mid 1 \leq s \leq d:= \dim \fh(\fv^{<n+1})\}$ of $\fh(\fv^{<n+1})$, we can write, after shrinking the neighborhood $U \subset P^{n+1}$ of $z$ if necessary,
$$u^{\sH} - u^{P^{n+1}} = \sum_{s=1}^d f_s A_s^{P^{n+1}},$$ where $f_s$ is a holomorphic function on $U$ and $A_s^{P^{n+1}}$ is the fundamental vector field on $P^{n+1}$ corresponding to $A_s$. Then for any $v \in \fv^i, -k \leq i \leq n,$
\begin{eqnarray*} \theta^{n+1}([u^{\sH} - u^{P^{n+1}}, v^{\sH}]_z) &=& \sum_{s=1}^d f_s(z) \theta^{n+1}([A^{P^{n+1}}_s, v^{\sH}]_z)  \\
& = & \sum_{s=1}^d f_s(z) \theta^{n+1}(A_s(v))^{\sH_z} \\ & = & \sum_{s=1}^d f_s(z) A_s(v),
\end{eqnarray*} where the second equality is from Lemma \ref{l.Sternberg} (iii). Thus we have
 $A \in \fh(\fv^{<n+1})$ such that
 \begin{equation}\label{e.Av}  \theta^{n+1}([u^{\sH} - u^{P^{n+1}}, v^{\sH}]_z) = A(v). \end{equation}
Since $\theta^{n+1}(u^{P^{n+1}}) = \theta^{n+1}(u^{\sH}) = u$ by (\ref{e.4.12b}), we have $${\rm Lie}_{u^{P^{n+1}}} \theta^{n+1} = {\rm d}(\theta^{n+1}(u^{P^{n+1}})) + {\rm d}  \theta^{n+1}(u^{P^{n+1}}, \cdot) = {\rm d} \theta^{n+1} (u^{P^{n+1}}, \cdot).$$ Consequently,
$$ ({\rm Lie}_{u^{P^{n+1}}}\theta^{n+1})_z(v^{\sH_z}) = {\rm d} \theta^{n+1}(u^{P^{n+1}}_z, v^{\sH_z}) =   -\theta^{n+1}([u^{P^{n+1}}, v^{\sH}]_z).$$ Thus Lemma \ref{l.43} implies \begin{equation}\label{e.4.12c}- \theta^{n+1}([u^{P^{n+1}}, v^{\sH}]_z) = -{\rm ad}_u(v) = -[u, v]. \end{equation}
It follows that
\begin{eqnarray*} \tau^{\sH_z}(u, v) & = & {\rm d} \theta^{n+1} (u^{\sH_z}, v^{\sH_z}) \\
& =& - \theta^{n+1}([u^{\sH}, v^{\sH}]_z) \\ & = & - \theta^{n+1}([u^{P^{n+1}}, v^{\sH}]_z) - \theta^{n+1}([u^{\sH} - u^{P^{n+1}}, v^{\sH}]_z) \\ & = & - [u, v] - A(v),\end{eqnarray*}
where the last equality  uses (\ref{e.Av}) and (\ref{e.4.12c}).
If $v \in \fv^i$ for $i<0$, then $[u, v] \in \fv^{i+n}$ and $A(v) \in \oplus_{j=i+n+1}^n\fv^{j}$. Thus $\tau^{\sH_z}(u,v) \in \oplus_{j=i+n}^n \fv^{j}$ and $(\tau^{\sH_z})^0(u,v) = -[u,v]$.
If $v \in \fv^i$ for $0 \leq i \leq n$, then $[u, v]=0$ and $A(v) \in \fv^n$.
This proves (i) and (ii).

For (iii), we skip the proof when $-k \leq i \leq -1,$ for which the arguments in the proof of  Proposition \ref{p.24} (i) and (ii)  can be applied with obvious changes of indices.
We prove the case $0 \leq i \leq n$.
Let us use the following notation.
For a vector field $\alpha$ in a neighborhood of $\pi^{n+1}(U)$ in $\bar{P}^{(n)}$, denote by $\alpha^{\sH}$ the unique vector field on $U$ tangent to $\sH$ such that ${\rm d}_{z'} \pi^{n+1} (\alpha^{\sH_{z'}}) = \alpha_{\pi^{n+1}(z')}$ for $z' \in U$. Also, denote by $\alpha^{\sH_{z'}} \in T_{z'} U$ its value at $z' \in U$. We can use the vector fields on $\bar{P}^{(n)}$ defined in Definition \ref{d.hatu} by choosing a section $\w{\Sigma}$ locally.

For $u, v \in \oplus_{j=-1}^{n-1} \fv^j$, there are $a, b \in \fv^n$ from Lemma \ref{l.57} satisfying $$u^{\sH_z} = \widehat{u}^{\sH_z} + (a^{\bar{P}^{(n)}})^{\sH_z} \mbox{ and } v^{\sH_z} = \widehat{v}^{\sH_z} + (b^{\bar{P}^{(n)}})^{\sH_z},$$
where $a^{\bar{P}^{(n)}}$ (resp. $b^{\bar{P}^{(n)}}$) is the fundamental vector field on $\bar{P}^{(n)}$ corresponding to $a \in \fv^n$ (resp. $b \in \fv^n$) for the $\mathsf{G}^n$-principal bundle $\bar{\pi}^{(n)}$. Then \begin{eqnarray*}
\tau^{\sH_z}(u,v) & = & {\rm d} \theta^{n+1}( u^{\sH_z}, v^{\sH_z}) \\ & = & {\rm d} \theta^{n+1}(\widehat{u}^{\sH_z} + (a^{\bar{P}^{(n)}})^{\sH_z}, \widehat{v}^{\sH_z} + (b^{\bar{P}^{(n)}})^{\sH_z}), \end{eqnarray*} which is equal to the value at the point $z \in U$ of the function on $U$ \begin{equation}\label{e.dtheta} {\rm d} \theta^{n+1}(\widehat{u}^{\sH} + (a^{\bar{P}^{(n)}})^{\sH}, \widehat{v}^{\sH} + (b^{\bar{P}^{(n)}})^{\sH} ).\end{equation} For any point $z' \in U \subset P^{n+1}$, denote by $$h_{z'} \in {\rm Isom}(\fv^{<n+1}, T_{y'} \bar{P}^{(n)}), \  y':= \pi^{n+1}(z')$$  the element of $\bL(I^{(n)})$ representing $z' \in P^{n+1} \subset \bL(I^{(n)}).$ Then $$\theta^{n+1}_{z'}(\widehat{v}^{\sH} + (b^{\bar{P}^{(n)}})^{\sH}) = h_{z'}^{-1}({\rm d} \pi^{n+1}(\widehat{v}^{\sH} +(b^{\bar{P}^{(n)}})^{\sH})) = h_{z'}^{-1}(\widehat{v} + b^{\bar{P}^{(n)}}).$$
Since $h_{z'}^{-1}(\widehat{v}) \equiv v \mod \fv^n$ by  Lemma \ref{l.57}, we see that the function $\theta^{n+1}(\widehat{v}^{\sH} + (b^{\bar{P}^{(n)}})^{\sH})$ on $U$ has the constant value $v$ modulo $\fv^n$.
It follows that the function on $U$ $$(\widehat{u}^{\sH} + (a^{\bar{P}^{(n)}})^{\sH}) \theta^{n+1}(\widehat{v}^{\sH} + (b^{\bar{P}^{(n)}})^{\sH}),$$ namely, the derivative of the function  $\theta^{n+1}(\widehat{v}^{\sH} + (b^{\bar{P}^{(n)}})^{\sH})$ by the vector field
$\widehat{u}^{\sH} + (a^{\bar{P}^{(n)}})^{\sH}$,
 has values in $\fv^n.$
By the same argument, so does the function on $U$ $$ (\widehat{v}^{\sH} + (b^{\bar{P}^{(n)}})^{\sH}) \theta^{n+1}(\widehat{u}^{\sH} + (a^{\bar{P}^{(n)}})^{\sH}).$$ Thus  the $\fv^{<n+1}$-valued function (\ref{e.dtheta})  is equal to $$ - \theta^{n+1}( [ \widehat{u}^{\sH} + (a^{\bar{P}^{(n)}})^{\sH}, \widehat{v}^{\sH} + (b^{\bar{P}^{(n)}})^{\sH}])$$ modulo $\fv^n$. By  (\ref{e.54}), the value of this function at $z$ is equal to $$-h_z^{-1}([\widehat{u} + a^{\bar{P}^{(n)}}, \widehat{v} + b^{\bar{P}^{(n)}}]_y) = - h_z^{-1}([\widehat{u}, \widehat{v}]_y + \widehat{a(v)}_y - \widehat{b(u)}_y)$$ modulo $\fv^n$.
If $u \in \fv^{-1}$ and $v \in \fv^i$ with $0 \leq i \leq n-1$, we obtain $$\tau^{\sH_z}(u, v)  \equiv -h_z^{-1}(\widehat{[u,v]} - \widehat{b(u)}) \mod \fv^n$$ by (\ref{e.53}). This proves (iii) when $0 \leq i \leq n-1$. \end{proof}

\subsection{Variation of the torsion of $\pi^{n+1}$}\label{ss.4.6}

\begin{proposition}  \label{p.45} For a point $z \in P^{n+1} \subset \F \bar{P}^{(n)}$,
 let $\sH_z$ and $\sH_z'$ be two horizontal subspaces of $T_zP^{n+1}$.
 \begin{itemize}
 \item[(i)] For $u \in \fv^{-1}$ and $v \in \fv^i$ with $i<0$, the components of degree $m \leq n+i$    of $\tau^{\sH_z}(u,v)$ and $\tau^{\sH'_z}(u,v)$ coincide:
      $$\tau^{\sH_z}(u,v)^{m} = \tau^{\sH_z'}(u,v)^{m} \mbox{ for any } m \leq n+i.$$
  \item[(ii)]
 For $u \in \fv^{-1}$ and $v \in \fv^i$ with $0 \leq i \leq n-1$, the components of degree $m \leq n-1$    of $\tau^{\sH_z}(u,v)$ and $\tau^{\sH'_z}(u,v)$ coincide:
     $$\tau^{\sH_z}(u,v)^m = \tau^{\sH_z'}(u,v)^m \mbox{ for any } m \leq n-1.$$

 \end{itemize}
\end{proposition}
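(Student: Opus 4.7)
The plan is to adapt the proof of Proposition \ref{p.26} (the analogous statement at the bottom step $n=1$) to the inductive setting, where the structure group of the principal bundle $\pi^{n+1}: P^{n+1} \to \bar{P}^{(n)}$ is $H(\fv^{<n+1})$ rather than the full ${\rm GL}_1(\fv^{<1})$. The main tool is Lemma \ref{l.Sternberg}(ii): since $\sH_z$ and $\sH'_z$ are two $\pi^{n+1}$-horizontal subspaces of $T_z P^{n+1}$, for every pair $u, v \in \fv^{<n+1}$ there exist (unique) elements $a, b \in \fh(\fv^{<n+1})$, depending only on $u, v$ and the connection data, such that
\begin{equation}\label{e.plan.45}
\tau^{\sH'_z}(u,v) - \tau^{\sH_z}(u,v) \;=\; -a(v) + b(u).
\end{equation}
Thus the whole question reduces to analyzing the grading of $a(v)$ and $b(u)$ for the specific $u, v$ in parts (i) and (ii).

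The key input is the degree structure of $\fh(\fv^{<n+1})$ provided by Definition \ref{d.barG} and Lemma \ref{l.barG}(i). Explicitly, if $c = \sum_{m \geq 1} c^m \in \fh(\fv^{<n+1})$, then for $1 \leq m \leq n$ the component $c^m$ belongs to $\Hom(\fv^{n-m}, \fv^n)$ (i.e.\ it vanishes on every $\fv^j$ with $j \neq n-m$ and lands in $\fv^n$), whereas for $m \geq n+1$ the component $c^m$ is an arbitrary element of $\fgl^m(\fv^{<n+1})$. I would then apply this to the elements $a$ and $b$ appearing in \eqref{e.plan.45}.

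For part (i), with $u \in \fv^{-1}$ and $v \in \fv^i$, $i < 0$: the low-degree pieces $a^m$, $1 \leq m \leq n$, kill $v$ because $v \notin \fv^{n-m}$ (as $n-m \geq 0 > i$), while for $m \geq n+1$ we have $a^m(v) \in \fv^{i+m} \subset \oplus_{j \geq i+n+1} \fv^j$. A symmetric analysis shows $b(u) \in \oplus_{j \geq n}\fv^j$. Hence $-a(v) + b(u)$ carries no components of degree $m \leq n+i$ (note $n + i \leq n - 1 < n$). For part (ii), with $u \in \fv^{-1}$ and $v \in \fv^i$, $0 \leq i \leq n-1$: the pieces $a^m$ with $1 \leq m \leq n$ annihilate $v$ except when $m = n-i$, giving $a^{n-i}(v) \in \fv^n$, and the pieces $a^m$ with $m \geq n+1$ send $v$ into $\fv^{i+m}$ which lies outside $\fv^{<n+1}$. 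The same argument gives $b(u) \in \fv^n$. So $-a(v) + b(u) \in \fv^n$ has only a degree-$n$ component, whence the components of degree $m \leq n-1$ agree on both sides of \eqref{e.plan.45}.

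I do not expect a genuine obstacle here: both statements are degree-bookkeeping consequences of Lemma \ref{l.Sternberg}(ii) combined with the prescribed shape of $\fh(\fv^{<n+1})$. The only point requiring a little care is the asymmetric role of degrees $\leq n$ versus $\geq n+1$ in $\fh(\fv^{<n+1})$, which is exactly why the bounds in (i) and (ii) differ (the difference $-a(v)+b(u)$ can ``hide'' contributions in $\fv^n$ but nowhere below it when $u \in \fv^{-1}$).
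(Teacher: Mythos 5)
Your proof is correct and follows essentially the same route as the paper: both apply Lemma \ref{l.Sternberg}(ii) to the $H(\fv^{<n+1})$-principal bundle $\pi^{n+1}$ to obtain $a,b\in\fh(\fv^{<n+1})$ with $\tau^{\sH'_z}(u,v)-\tau^{\sH_z}(u,v)=-a(v)+b(u)$, and then read off the degree constraints on $a(v)$ and $b(u)$ from the graded shape of $\fh(\fv^{<n+1})$. Your write-up simply makes the bookkeeping via Lemma \ref{l.barG}(i) more explicit than the paper does.
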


\begin{proof}
Applying  Lemma \ref{l.Sternberg} (ii) to the principal bundle $\pi^{n+1}: P^{n+1} \to \bar{P}^{(n)}$ with the structure group $H(\fv^{<n+1})$, we obtain $$a,b \in \fh(\fv^{<n+1}) = \fgl_{n+1}(\fv^{<n+1}) + \Hom(\oplus_{i=0}^{n-1}\mathfrak \fv^i, \mathfrak \fv^n)$$ such that $\tau^{\sH'_z}(u,v) -  \tau^{\sH_z}(u,v) = -a(v) + b(u)$ for any $u, v \in \fv^{<n+1}$.

If $u \in \fv^{-1}$ and $v \in \fv^i$ for $i<0$, then $a(v) \in \oplus_{j= n+1+i}^n \fv^{j}$   and $b(u) \in \fv^n$. Thus $\tau^{\sH_z}(u,v)^{m} = \tau^{\sH_z'}(u,v)^{m}$ if $m \leq n+i$, proving (i).

If $u \in \fv^{-1}$ and $v \in \fv^i$ for $0 \leq i \leq n-1$, then both $a(v)$ and $b(u)$ are contained in $\fv^n$, and thus $\tau^{\sH_z}(u,v)^m = \tau^{\sH_z'}(u,v)^m \mbox{ for any } m \leq n-1,$ proving (ii).
\end{proof}

\begin{definition}\label{d.46}
Recall from Definition \ref{d.partial},
$${\rm Tor}^{n+1}(\fv)=\Hom^{n+1}(\fv^{-1}\wedge \fv^{-}, \fv^{<n+1}) \oplus \Hom(\oplus_{i=0}^{n-1}(\fv^{-1} \wedge \fv ^i), \fv^{n-1}) .$$
For $z \in P^{n+1}$, define $\tau^{n+ 1}(z) \in {\rm Tor}^{n+1}(\fv)$ by
    \begin{eqnarray*}
    \tau^{n+1}(z)(u,v):=\left\{
    \begin{array}{l}
    \tau^{\sH_z}(u,v)^{n+i}  \text{ for  } u \in \fv^{-1} \text{ and } v \in \fv^i \text{ with } i<0 \\[5pt]
    \tau^{\sH_z}(u,v)^{n-1} \text{ for } u \in \fv^{-1} \text{ and } v \in  \fv^i  \text{ with }  0 \leq i \leq n-1,
    \end{array}
    \right.
    \end{eqnarray*}
    where $\sH_z$ is any $\pi^{n+1}$-horizontal subspace. This is
 is independent of the choice of $\sH_z$ by Proposition \ref{p.45}. Then $\tau^{n+1}$ is a ${\rm Tor}^{n+1}(\fv)$-valued holomorphic function on $P^{n+1}$.
\end{definition}

\begin{proposition} \label{p.48}  For  $A \in \fh(\fv^{<n+1}) = \mathfrak{gl}_{n+1}(\fv^{<n+1}) + \Hom(\oplus_{i=0}^{n-1} \fv^i, \fv^n)$ and $a:={\rm Id} + A \in H(\fv^{<n+1})$,
let $R_a: P^{n+1} \to P^{n+1}$ be the right action of $a$ on the principal  bundle $\pi^{n+1}: P^{n+1}  \to \bar{P}^{(n)}$ with the structure group $H(\fv^{<n+1}).$
Then  the function $\tau^{n+1}$ on $P^{n+1}$ defined in Definition \ref{d.46}  satisfies
$$\tau^{n+1}_{R_a z} = \tau^{n+1}_z + \partial^{n+1} (A \times b)$$ for any $z \in P^{n+1}$ and $b = \beta^{(n)} \circ \pi^{n+1}(z)$,  where $\p^{n+1}$ is the vector bundle homomorphism in Set-up \ref{n.B} (6).
\end{proposition}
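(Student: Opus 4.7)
The plan is to imitate the proof of Proposition \ref{p.29} (the base case), now with more delicate graded bookkeeping. The fundamental input is Lemma \ref{l.Sternberg}(v), which gives
\[
\tau^{\sH_{R_a z}}(u,v) \;=\; a^{-1}\,\tau^{\sH_z}(au,\,av)
\]
at any point $z\in P^{n+1}$ and for any $\pi^{n+1}$-horizontal $\sH_z$. Writing $a={\rm Id}+A$ and $a^{-1}={\rm Id}+\check A$, Lemma \ref{l.barG}(ii) gives $\check A^m = -A^m$ for $1\le m\le n+1$. I would then expand the right-hand side bilinearly in $A$, extract the relevant graded piece of $\fv^{<n+1}$ according to the two cases in Definition \ref{d.46}, and match the residue against $\partial^{n+1}_b A(u,v)$ from Definition \ref{d.partial}.

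A preparatory observation controls which degree components of $A\in\fh(\fv^{<n+1})$ are available. For $u\in\fv^{-1}$, only the degree-$(n+1)$ piece of the $\fgl_{n+1}(\fv^{<n+1})$-part can land in $\fv^{<n+1}$ starting from $\fv^{-1}$, so $A(u)=A^{n+1}(u)\in\fv^n$. For $v\in\fv^i$ with $i<0$, one has $A(v)\in\oplus_{j=n+1+i}^{n}\fv^j$. For $v\in\fv^i$ with $0\le i\le n-1$, the $\fgl_{n+1}$-part of $A$ annihilates $v$ inside $\fv^{<n+1}$, so $A(v)=A^{n-i}(v)\in\fv^n$ where $A^{n-i}$ is the $\Hom(\fv^i,\fv^n)$ component of $A$.

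\emph{Case 1} ($u\in\fv^{-1}$, $v\in\fv^i$, $i<0$). I would extract the $\fv^{n+i}$-component of $T:=\tau^{\sH_z}(au,av)$ and of $\check A(T)$. Using the degree bounds in Proposition \ref{p.44}, only the cross-terms $\tau^{\sH_z}(u,A^{n+1}(v))$ and $\tau^{\sH_z}(A^{n+1}(u),v)$ contribute to $\fv^{n+i}$, via their degree-zero parts $-[u,A^{n+1}(v)]$ and $-[A^{n+1}(u),v]$; the purely quadratic term $\tau^{\sH_z}(A(u),A(v))$ sits strictly above $\fv^{n+i}$. For $\check A(T)_{\fv^{n+i}}$, the components $\check A^m$ with $1\le m\le n$ land in $\fv^n\ne\fv^{n+i}$ and contribute nothing, while among the $\fgl_{n+1}$-components only $\check A^{n+1}$ can reach $\fv^{n+i}$, acting on $T_{\fv^{i-1}}=\tau^{\sH_z}(u,v)_{\fv^{i-1}}=-[u,v]$ (degree-zero clause of Proposition \ref{p.44}(iii)), producing $A^{n+1}([u,v])$ via $\check A^{n+1}=-A^{n+1}$. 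Collecting,
\[
\tau^{n+1}_{R_a z}(u,v)-\tau^{n+1}_z(u,v)\;=\;A^{n+1}([u,v])-[A^{n+1}(u),v]-[u,A^{n+1}(v)],
\]
which is exactly $\partial^{n+1}_b A(u,v)$.

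\emph{Case 2} ($u\in\fv^{-1}$, $0\le i\le n-1$). Here in $T$ the terms $\tau^{\sH_z}(A(u),\cdot)$ lie in $\fv^n$ by Proposition \ref{p.44}(i) and so contribute nothing to $\fv^{n-1}$; the term $\tau^{\sH_z}(u,A^{n-i}(v))$ contributes $-[u,A^{n-i}(v)]\in\fv^{n-1}$ by Proposition \ref{p.44}(iii); and $\check A(T)_{\fv^{n-1}}$ vanishes because $\check A^m$ with $1\le m\le n$ maps into $\fv^n$, while $m\ge n+1$ would require $T_{\fv^{n-1-m}}\ne 0$ with index $\le-2$, contradicting the nonnegative-homogeneous-degree bound of Proposition \ref{p.44}(iii). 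This matches $-[u,A^{n-i}(v)]=\partial^{n+1}_b A(u,v)$. The main obstacle throughout is precisely this bookkeeping: keeping track, in each tensor slot, of which graded components of $A$ and $\check A$ can possibly contribute, and invoking the right clause of Proposition \ref{p.44} to discard the remainders. Once the degree bounds are applied systematically, the identity drops out of the expansion $a^{-1}\tau^{\sH_z}(au,av)$ with no extra work.
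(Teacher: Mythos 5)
Your proposal is correct and follows essentially the same route as the paper: apply Lemma \ref{l.Sternberg}(v) to write $\tau^{\sH_{R_az}}(u,v)=a^{-1}\tau^{\sH_z}(au,av)$, expand bilinearly in $A$ and $\check A$ with $\check A^m=-A^m$ ($m\le n+1$) from Lemma \ref{l.barG}, and use the degree bounds and degree-zero identities of Proposition \ref{p.44} to isolate the $\fv^{n+i}$-component (Case 1) or $\fv^{n-1}$-component (Case 2). The bookkeeping in both cases matches the paper's computation, including the key observation that in $\check A(T)$ only $\check A^{n+1}$ acting on the degree-zero part $-[u,v]$ of $\tau^{\sH_z}(u,v)$ can reach $\fv^{n+i}$ in Case 1, and that $\check A(T)$ contributes nothing to $\fv^{n-1}$ in Case 2 because $T$ has no components below $\fv^{i-1}$ with $i\ge 0$.
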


\begin{proof} To simplify the notation, let us write $h(u) = hu$ for $h \in \End(\fv^{<n+1})$ and $u \in \fv^{<n+1}$ in this proof.  Let ${\rm Id} + \check{A} \in H(\fv^{<n+1})$ be the inverse of $a= {\rm Id} + A$ with $\check{A} \in \fh(\fv^{<n+1})$.
Write $A = \sum_{m\geq 1} A^m$ and $\check{A}= \sum_{m\geq 1} \check{A}^m$ with $\check{A}^m=-A^m$ for $m \leq n+1$ from Lemma \ref{l.barG}.
By Lemma \ref{l.Sternberg} (v),
we have $\tau^{\sH_{R_a z}}(u,v) = a^{-1} \tau^{\sH_z}( au, av).$ Thus
\begin{eqnarray}\label{e.48}
\lefteqn{\tau^{\sH_{R_a z}}(u,v) =} \\  \nonumber & &  \tau^{\sH_{z }}(u,v) + \tau^{\sH_{z }}(u,Av) + \tau^{\sH_{z }}(Au,v) + \tau^{\sH_{z }}(Au,Av)  \\ \nonumber
& & + \check{A} (\tau^{\sH_{z }}(u,v) + \tau^{\sH_{z }}(u,Av) + \tau^{\sH_{z }}(Au,v) + \tau^{\sH_{z }}(Au,Av)).
\end{eqnarray}

First assume that $u \in \fv^{-1}$ and $v \in \fv^i$ with $i<0$.  Then  $$Au = A^{n+1}u \in \fv^n \mbox{ and } Av \in \oplus_{j= i+n+1}^n \fv^j  \mbox{ with } (Av)^{i + n +1} = A^{n+1} v.$$  By Proposition \ref{p.44}, we have
 $$\tau^{\sH_{z }}(Au, v) \in \oplus_{j = i+n}^n \fv^{j} \mbox{ with } \tau^{\sH_z}(Au, v)^{i+n} = (\tau^{\sH_z})^0(A^{n+1} u, v),$$
$$  \tau^{\sH_{z }}(u, Av) \in \oplus_{j= i+n}^n \fv^{j} \mbox{ with } \tau^{\sH}_z(u, Av)^{i+n} = (\tau^{\sH_z})^0(u, A^{n+1} v), \mbox{ and }$$
$$ \tau^{\sH_{z }} (Au, Av)\in \oplus_{j = \min\{n, 2n+i+1\}}^n \fv^j.$$
 Putting these into (\ref{e.48}), we see that the $\fv^{i+n}$-component of $ \tau^{\sH_{R_a z}} (u,v)$ should satisfy
\begin{eqnarray*}
\lefteqn{\tau^{\sH_{R_a z}}(u,v)^{i+n} =}    \\ & & \tau^{\sH_{z }}(u,v)^{i+n} + (\tau^{\sH_{z }})^{0}(u,A^{n+1}v) + (\tau^{\sH_{z }})^{0}(A^{n+1}u,v)  \\
&&+ \text{ the } \fv^{i+n}\text{-component of } \check{A} (\tau^{\sH_{z }} (u,v)).
\end{eqnarray*}
Since $\tau^{\sH_{z }} (u,v) \in \oplus_{j=i-1}^n \fv^{j}$ from Proposition \ref{p.44}   and $$\check{A} (\oplus_{j \geq i} \fv^j) \subset \oplus_{m \geq i+n+1}\fv^m,$$   the $\fv^{i+n}$-component of $\check{A} (\tau^{\sH_{z }} (u,v))$ should be $$\check{A}^{n+1}( (\tau^{\sH_z})^0(u,v))=  -A^{n+1}((\tau^{\sH_z})^0(u,v)).$$ Combining this with  Proposition \ref{p.44} (ii), we obtain
$$\tau^{\sH_{R_a z}}(u,v)^{i+n} = \tau^{\sH_{z }}(u,v)^{i+n} -[ u,A^{n+1}v] -[A^{n+1}u,v] + A^{n+1}[u, v].$$
This proves $\tau^{n+1}_{R_a z}(u,v) = \tau^{n+1}_z(u,v) + \partial^{n+1}_{\fg(b)} A (u,v)$.

Next assume that $u \in \fv^{-1}$ and $v \in \fv^i$ with $0 \leq i \leq n-1$.
Then $$Au = A^{n+1}u \in \fv^n \mbox{ and } Av = A^{n-i}v \in \fv^n.$$ Thus by Proposition \ref{p.44} (i),
$$\tau^{\sH_z}(Au, v) \in \fv^n, \ \tau^{\sH_z}(u, Av) \in \fv^{n-1} + \fv^n,$$
$$\tau^{\sH_z}(Au, Av) \in \fv^n, \ \check{A}(\tau^{\sH_z}(u, v)) \subset  \check{A} (\oplus_{j=i-1}^n \fv^j) \subset \fv^n.$$ Putting these into (\ref{e.48}), we see that the $\fv^{n-1}$-component of $\tau^{\sH_{R_a z}}(u,v)$ should be
\begin{eqnarray*}
\tau^{\sH_{R_a z}}(u,v)^{n-1} & = &\tau^{\sH_{z }}(u,v)^{n-1} + (\tau^{\sH_{z }})^{0}(u,A^{n-i}v) \\ &=& \tau^{\sH_{z }}(u,v)^{n-1} - [u, A^{n-i}v],
\end{eqnarray*} where the second equality uses Proposition \ref{p.44} (iii).
This proves $\tau^{n+1}_{R_a z}(u,v) = \tau^{n+1}_z(u,v) + \partial^{n+1}_{\fg(b)} A (u,v)$.
\end{proof}

\begin{proposition} \label{p.49}
Let us regard $\tau^{n+1}$ as a holomorphic map between fiber bundles $$\begin{array}{ccc}
P^{n+1} & \stackrel{\tau^{n+1}}{\longrightarrow} & {\rm Tor}^{n+1}(\fv) \times B \\
\pi^{n+1} \downarrow & & \downarrow \\\bar{P}^{(n)} & \stackrel{\beta^{(n)}}{\longrightarrow} & B. \end{array} $$
Let  $\sW^{n+1} \subset {\rm Tor}^{n+1}(\fv) \times B$ be the vector subbundle from Set-up \ref{n.B} (7) satisfying ${\rm Tor}^{n+1}(\fv) \times B = {\rm Im}(\p^{n+1}) \oplus \sW^{n+1}$. Define $\widetilde{P}^{n+1} := (\tau^{n+1})^{-1}(\sW)$ and the natural projection $$\widetilde{\pi}^{n+1}:= \pi^{n+1}|_{ \widetilde{P}^{n+1}}  : \widetilde{P}^{n+1}  \to \bar{P}^{(n)}.$$ Then \begin{itemize} \item[(i)] $\widetilde{\pi}^{n+1}$  is a $\beta^{(n)}$-principal subbundle of $\pi^{n+1}: P^{n+1} \to \bar{P}^{(n)}$ with the structure $B$-group $\mathsf{G}^{n+1} {\rm GL}_{n+2}(\fv^{<n+1}) \subset H(\fv^{<n+1})$; and \item[(ii)]  for any $\widetilde{\pi}^{n+1}$-horizontal subspace $\sH_z \subset T_z \widetilde{P}^{n+1}$ at $z \in \widetilde{P}^{n+1}$, the restriction
$\tau^{\sH_z}|_{\fv^{-1} \wedge \fv^{<n+1}}$ has only components of nonnegative homogenous degree and satisfies
$ (\tau^{\sH_z} )^0(u,v)  = - [u, v] $ for $u \in \fv^{-1}$ and $v \in \fv^{<n+1}.$ \end{itemize} \end{proposition}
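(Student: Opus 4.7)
The plan is to mimic the proof of Proposition~\ref{p.30} essentially verbatim, now that the level-$(n+1)$ analogues of all the ingredients used there are in place: Proposition~\ref{p.44} (degree and degree-zero control for torsions of $\pi^{n+1}$-horizontal subspaces), Proposition~\ref{p.48} (affine equivariance of $\tau^{n+1}$ under the structure group $H(\fv^{<n+1})$), Lemma~\ref{l.partial} (identification of $\ker(\p^{n+1}_{\fg(b)})$), and the direct-sum splitting ${\rm Tor}^{n+1}(\fv)\times B={\rm Im}(\p^{n+1})\oplus\sW^{n+1}$ from Set-up~\ref{n.B}(7).

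For (i), I would fix $y\in\bar{P}^{(n)}$ with $b=\beta^{(n)}(y)$ and examine the fiber $P^{n+1}_y:=(\pi^{n+1})^{-1}(y)$, on which the structure group $H(\fv^{<n+1})$ acts simply transitively. By Proposition~\ref{p.48}, the restriction of $\tau^{n+1}$ to $P^{n+1}_y$ is affine with linear part $A\mapsto\p^{n+1}_{\fg(b)}(A)$ on $\fh(\fv^{<n+1})$, so its image is an affine translate of ${\rm Im}(\p^{n+1}_{\fg(b)})\subset{\rm Tor}^{n+1}(\fv)$. The splitting ${\rm Tor}^{n+1}(\fv)={\rm Im}(\p^{n+1}_{\fg(b)})\oplus\sW^{n+1}_b$ then forces $\tau^{n+1}(P^{n+1}_y)\cap\sW^{n+1}_b$ to consist of a single point, whose preimage is exactly $\widetilde{P}^{n+1}_y$. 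Lemma~\ref{l.partial} identifies $\ker(\p^{n+1}_{\fg(b)})$ with $\fg(b)^{n+1}+\fgl_{n+2}(\fv^{<n+1})$, which by Set-up~\ref{n.B}(2) is the Lie algebra of the subgroup $\mathsf{G}^{n+1}(b){\rm GL}_{n+2}(\fv^{<n+1})\subset H(\fv^{<n+1})$. Combining Proposition~\ref{p.48} with this identification shows that this subgroup acts simply transitively on the fiber $\widetilde{P}^{n+1}_y$, which gives $\widetilde{\pi}^{n+1}$ the claimed $\beta^{(n)}$-principal subbundle structure.

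For (ii), observe that both $\widetilde{\pi}^{n+1}$ and $\pi^{n+1}$ submerge onto $\bar{P}^{(n)}$ and $\widetilde{P}^{n+1}\subset P^{n+1}$, so any $\widetilde{\pi}^{n+1}$-horizontal subspace $\sH_z\subset T_z\widetilde{P}^{n+1}$ has dimension $\dim\bar{P}^{(n)}$ and projects isomorphically onto $T_{\pi^{n+1}(z)}\bar{P}^{(n)}$ under ${\rm d}\pi^{n+1}$; hence it is also $\pi^{n+1}$-horizontal when viewed in $T_zP^{n+1}\subset T_z\F\bar{P}^{(n)}$. Since the soldering form on $P^{n+1}$ restricts to that on $\widetilde{P}^{n+1}$, the torsion $\tau^{\sH_z}$ is the same in either interpretation, and Proposition~\ref{p.44}(iii) applies directly to yield both the inclusion $\tau^{\sH_z}(\fv^{-1}\wedge\fv^i)\subset\oplus_{j\geq i-1}\fv^j$ for $-k\leq i\leq n$ and the degree-zero identity $(\tau^{\sH_z})^0(u,v)=-[u,v]$ in $\fg(b)$.

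I do not anticipate a genuine obstacle: the technical labor has already been done in Propositions~\ref{p.44} and \ref{p.48} and in Lemma~\ref{l.partial}, and what remains is a direct transcription of the $n=1$ argument with the index shift $1\to n+1$. The only point requiring mild care is bookkeeping — confirming via Definition~\ref{d.barG} and Set-up~\ref{n.B}(2) that the stabilizer identified by the kernel of $\p^{n+1}_{\fg(b)}|_{\fh(\fv^{<n+1})}$ really is the prescribed $B$-subgroup $\mathsf{G}^{n+1}(b){\rm GL}_{n+2}(\fv^{<n+1})$ of $H(\fv^{<n+1})$, and that the holomorphic dependence on $b\in B$ makes $\widetilde{P}^{n+1}\subset P^{n+1}$ a smooth subbundle fiberwise cut out by the transverse linear equation defining $\sW^{n+1}$.
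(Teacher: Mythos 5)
Your proposal is correct and follows the paper's own proof essentially verbatim: part (i) combines Proposition~\ref{p.48} with the splitting from Set-up~\ref{n.B}(7) and Lemma~\ref{l.partial} to identify the stabilizer of a fiber of $\widetilde{\pi}^{n+1}$ as $\mathsf{G}^{n+1}(b){\rm GL}_{n+2}(\fv^{<n+1})$, and part (ii) views $\sH_z$ as a $\pi^{n+1}$-horizontal subspace and invokes Proposition~\ref{p.44}. The only difference is that you spell out more of the bookkeeping (the affine structure, the horizontality transfer), which the paper leaves implicit.
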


\begin{proof}
For the fiber  $P^{n+1}_y:=(\pi^{n+1})^{-1}(y) $ over a point  $y \in \bar{P}^{(n)}$ with $b= \beta^{(n)}(y)$, Proposition \ref{p.48} says that the $\tau^{n+1}(P^{n+1}_y)$  is an affine subspace of ${\rm Tor}^{n+1}(\fv),$  a translate of the vector subspace $${\rm Im}(\p^{n+1}_{\fg(b)}) \ \subset \ {\rm Tor}^{n+1}(\fv) = {\rm Im}(\p^{n+1}_{\fg(b)}) \oplus \sW^{n+1}_b.$$ Thus the intersection $\tau^{n+1}(P^{n+1}_y) \cap \sW^{n+1}_b$ is a single point and $$\widetilde{P}^{n+1}_y := (\widetilde{\pi}^{n+1})^{-1}(y) \ = \ (\tau^{n+1})^{-1}(\tau^{n+1}(P^{n+1}_y) \cap \sW^{n+1}_b).$$
By Lemma \ref{l.partial} and Proposition \ref{p.48}, the subgroup of $H(\fv^{<n+1})$ with Lie algebra $\fg(b)^{n+1} + \fgl_{n+2}(\fv^{<n+1})$ acts simply  transitively on the fiber $\widetilde{P}^{n+1}_y$.  This proves (i).

Regarding $\sH_z$ as a $\pi^{n+1}$-horizontal subspace of $T_z P^{n+1}$ and applying  Proposition \ref{p.44}, we obtain (ii).
 \end{proof}

\begin{proof}
[Proof of  Theorem \ref{t.Tanaka1}: From $\bar{P}^{(n)}$ to $\bar{P}^{(n+1)}$] The base $\bar{P}^{(n)}$ satisfies condition (A) as checked in Section \ref{ss.Pn}.  The $\beta^{(n)}$-principal bundle $\widetilde{P}^{n+1} \rightarrow \bar{P}^{(n)}$ and its quotient $\bar{P}^{(n+1)}:=\widetilde{P}^{n+1}/{\rm GL}_{n+2}(\fv^{<{n+1}}) \rightarrow \bar{P}^{(n)}$ equipped with a natural embedding $\zeta^{(n+1)}:  \bar{P}^{(n+1)} \subset \bL_{(n+2)}(I^{(n)})$ induced from $\widetilde{P}^{n+1} \subset P^{n+1} \subset \bL(I^{(n)})$ satisfy conditions in (B) and (C) with  $n $ replaced by $n+1$ by Proposition \ref{p.49}.
\end{proof}

\section{Pseudo-product structures and Levi-nondegeneracy}\label{s.PP}

\begin{definition}\label{d.distribution}
A {\em distribution} on a complex manifold $M$ means a vector subbundle $D \subset TM$ of the tangent bundle. \begin{itemize} \item[(1)]
The Lie brackets of local vector fields define a vector bundle homomorphism $${\rm Levi}^D: D \otimes D \to TM/D,$$ called the {\em Levi tensor} of $D$. The distribution $D$ is {\em integrable} if ${\rm Levi}^D$ is identically zero, which is equivalent to saying that $D$ is tangent spaces of leaves of a holomorphic foliation on $M$. Let $\chi: D \to \Hom(D, TM/D)$ be the vector bundle homomorphism defined by  $$ \chi(u) (v) := {\rm Levi}^D(u,v) \in T_x M/D_x \mbox{ for } u, v \in D_x, x \in M.$$  Let $M_{\chi} \subset M$ be the nonempty Zariski-open subset where the homomorphism $\chi$ has constant rank and let ${\rm Ch}(D) \subset T M_{\chi}$ be the distribution defined by ${\rm Ker}(\chi)$, called the {\em Cauchy characteristic} of $D$. This means that for  any local holomorphic sections $u$ of ${\rm Ch}(D)$ and $v$ of $D$, the Lie bracket $[u, v]$ is a local holomorphic section of $D$. It is easy to check that the Cauchy characteristic is integrable. We say that the distribution $D$ is {\em Levi-nondegenerate} if ${\rm Ch}(D) =0$.
\item[(2)]
Set $D_{-1} := D,  M_{-1}:= M$ and  $\delta_{-2}:= {\rm Levi}^D$.  Let $M_{-2} \subset M_{-1}$ be the nonempty Zariski-open subset where $\delta_{-2}$ has constant rank and let $D_{-2} \subset TM_{-2}$ be the distribution on $M_{-2}$ satisfying $D_{-2}/D_{-1} = {\rm Im}(\delta_{-2})$ on $M_{-2}$. Inductively define a sequence of Zariski-open subsets $$M_{-k} \subset M_{-k+1} \subset \cdots \subset M_{-2} \subset M_{-1} = M$$
and a distribution $D_{-i} \subset T M_{-i}$ for each $1 \leq i \leq k$ as follows. \begin{itemize} \item[(i)] For $2 \leq i \leq k$, define
the vector bundle homomorphism $$\delta_{-i}: D_{-1} \otimes D_{-i+1} \to TM_{-i+1}/D_{-i+1}$$ by Lie brackets of local vector fields given by local holomorphic sections of $D_{-1}$ and $D_{-i+1}.$ Let $M_{-i} \subset M_{-i+1}$ be the Zariski-open subset where $\delta_{-i}$ has constant rank and let $D_{-i} \subset T M_{-i}$ be the distribution satisfying $D_{-i}/D_{-i+1} = {\rm Im}(\delta_{-i})$ on $M_{-i}$.
\item[(ii)] The integer $k$ is the smallest positive integer such that $\delta_{-k-1} =0$.
\end{itemize}  We call $k$ the {\em depth} of the distribution $D$. We say that the distribution $D$ is {\em bracket-generating} if $D_{-k} = T M_{-k}$.
\item[(3)] Suppose that $D$ is bracket-generating. The filtration $D_{-1} \subset D_{-2} \subset \cdots \subset D_{-k} =  TM_{-k}$ is called the {\em weak derived system} of $D$.  It is a Tanaka filtration in the sense of Definition \ref{d.filtered}. The symbol algebra ${\rm symb}_x(D_{\bullet})$ of this filtration at $x \in M_{-k}$ is called the {\em symbol algebra} of the distribution $D$ at $x \in M_{-k}$, to be denoted by $\fg_x^-= \oplus_{i=-k}^{-1} \fg_x^{i}.$  It is a graded nilpotent Lie algebra generated by $\fg_x^{-1}$.  \item[(4)] A point $x$ in the intersection $M_{\chi} \cap M_{-k}$ is called a {\em regular point } of the distribution $D$. \end{itemize} \end{definition}

We recall the following notion from Section 1.5 of \cite{Ta85}.

\begin{definition}\label{d.PP}
A {\em pseudo-product structure} on a complex manifold $M$ is a pair $(E,F)$ of vector subbundles of $TM$ of positive rank such that  $E \cap F =0$ and both $E$ and $F$ are integrable. We say that $(E, F)$ is {\em bracket-generating} if the  distribution  $D:= E+ F \subset TM$ is bracket-generating.
Fix a bracket-generating pseudo-product structure $(E,F)$ on $M$.
\begin{itemize}
\item[(i)] At a regular point $x \in M$ of the distribution $D,$ its symbol algebra $ \fg_x^- = \oplus_{i=-k}^{-1} \fg_x^{i}$ contains two distinguished abelian subalgebras $\fe_x, \ff_x \subset \fg_x^{-1}$ corresponding to the integrable distributions $E, F \subset D$.
Define $\fg_x^0 \subset \fgl(\fg_x^-)$ as the Lie subalgebra consisting of gradation-preserving endomorphisms  $h \in \Hom^0(\fg_x^-, \fg_x^-) \subset \End(\fg_x^-)$ satisfying  $$h(\fe_x) \subset \fe_x \mbox{ and } h(\ff_x) \subset \ff_x.$$
Then $\fg_x^- \oplus \fg_x^0$ is a fundamental graded Lie algebra in the sense of Definition \ref{d.prolongLie}.
\item[(ii)] Let $\fg_x^{\bullet} = \oplus_{j = -k}^{\infty} \fg^j_x$ be the universal prolongation of $\fg_x^- \oplus \fg_x^0$ in the sense of Definition \ref{d.prolongLie}.
    We say that the pseudo-product structure is {\em  of finite height} at $x$ if $\fg_x^{\ell +1} =0$ for some $\ell \geq 0$ and it  has {\em height $\ell$} at $x$ if $\fg_x^{\ell +1} =0$ and $\fg_x^{\ell} \neq 0$.
In this case, the graded Lie algebra $$\fg_x^{\bullet} := \bigoplus_{i=-k}^{\ell} \fg_x^i$$ is called the {\em prolongation of the symbol algebra} at $x \in M$ of the pseudo-product structure $(E, F)$.
\item[(iii)] The pseudo-product structure  {\em is of finite height} if it is so at a general point $x \in M$. In this case, we have an integer $\ell \geq 0$ and a Zariski-open subset $M_* \subset M$ consisting of regular points of $D$ such that the pseudo-product structure has height $\ell$ at any point of  $M_*$ and  $\dim \fg_x^i,$  for each $-k \leq i \leq \ell,$ is constant for all $x \in M_*$.  \end{itemize}
\end{definition}

We recall the following  fundamental result due to Tanaka, which was stated as  Lemma 1.14 of \cite{Ta85}, without an explicit proof. It is stated as Lemma 8.1 in \cite{Yat} with a proof, an  algebraic argument reducing it to the deep result, Corollary 1 to Theorem 11.1 in \cite{Ta70}.

\begin{theorem}\label{t.TanakaPP}
Let $(E,F)$ be a bracket-generating pseudo-product structure on a complex manifold $M$ such that the distribution $D = E+ F \subset TM$ is Levi-nondegenerate. Then the prolongation of the symbol algebra of $(E,F)$ at any regular point $x \in M$ of the distribution $D$ is finite-dimensional, namely, the pseudo-product structure is of finite height at $x$. \end{theorem}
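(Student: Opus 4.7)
\medskip
\noindent\emph{Proof plan.} The statement is purely algebraic: passing to a regular point $x \in M_*$, we must show that the universal prolongation $\fg^\bullet := \oplus_{i \geq -k} \fg^i$ of the fundamental graded Lie algebra $\fg^- \oplus \fg^0$ at $x$ is finite-dimensional, given the special structure that $\fg^{-1} = \fe \oplus \ff$ with $\fe, \ff$ abelian, that $\fg^0 \subset \fgl(\fg^-)$ preserves both $\fe$ and $\ff$, and that the geometric assumption ${\rm Ch}(D)_x = 0$ holds. The plan is to translate Levi-nondegeneracy into an explicit nondegeneracy condition on the structure constants, and then invoke Tanaka's classical finiteness criterion (Corollary 1 to Theorem 11.1 of \cite{Ta70}), following the route taken in Lemma 8.1 of \cite{Yat}.

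First I will translate Levi-nondegeneracy. For $u \in \fg^{-1}$, the Cauchy characteristic condition says $\mathrm{ad}_u : \fg^{-1} \to \fg^{-2}$ is nonzero whenever $u \neq 0$. Since $\fe$ and $\ff$ are abelian, if $0 \neq u \in \fe$ then $\mathrm{ad}_u|_\ff \neq 0$, and symmetrically for $\ff$. Writing a general $u = u_\fe + u_\ff$, the condition refines into a bilateral nondegeneracy of the bracket pairing $\mu : \fe \otimes \ff \to \fg^{-2}$, namely that the induced maps $\fe \to \Hom(\ff, \fg^{-2})$ and $\ff \to \Hom(\fe, \fg^{-2})$ are both injective.

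Next I will analyze the prolongation. Any $A \in \fg^i$ with $i \geq 1$ is a degree-$i$ derivation of $\fg^-$ into $\fg^{<i}$, hence is determined by its restrictions $A|_\fe$ and $A|_\ff$ since $\fg^-$ is generated by $\fg^{-1} = \fe \oplus \ff$. Because elements of $\fg^0$ preserve both $\fe$ and $\ff$, an inductive argument shows $A$ maps $\fe$ into the ``$\fe$-generated'' part and $\ff$ into the ``$\ff$-generated'' part of $\fg^{<i}$, modulo contributions controlled by $\mu$. The derivation identity $A(\mu(e,f)) = [A(e), f] + [e, A(f)]$ for $e \in \fe, f \in \ff$ then becomes a system of linear equations in $A|_\fe$ and $A|_\ff$ whose coefficients are the structure constants of $\mu$. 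The nondegeneracy of $\mu$ established in the previous paragraph forces these equations to be strongly constraining, so that the situation fits the hypothesis of Tanaka's criterion: the subalgebra $\fg^0 \subset \fgl(\fg^{-1})$ is of ``finite type relative to $\fg^-$'' in the sense of \cite{Ta70}.

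Finally, applying Corollary 1 to Theorem 11.1 of \cite{Ta70} yields the existence of an integer $\ell$ such that $\fg^i = 0$ for all $i > \ell$, proving the theorem. The main obstacle lies in the middle step: making the bookkeeping precise enough to verify that Tanaka's abstract criterion applies, in particular quantifying how the nondegeneracy of $\mu$ propagates to higher graded pieces $\fg^{-j}$ (for $2 \leq j \leq k$) so that the derivation condition on $A$ rigidifies it at every level, not just at level $-1$. This propagation uses that $\fg^-$ is generated by $\fg^{-1}$ and that every element of $\fg^{-j}$ is an iterated bracket of elements of $\fe \cup \ff$, combined with the abelianness of $\fe$ and $\ff$ which forces any such iterated bracket to alternate between $\fe$ and $\ff$ at the first step.
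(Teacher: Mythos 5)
The paper does not actually prove Theorem~\ref{t.TanakaPP}; it is credited to Tanaka (Lemma 1.14 of \cite{Ta85}, stated without proof there) with the proof appearing in Lemma 8.1 of \cite{Yat}, which in turn reduces the assertion to Corollary 1 to Theorem 11.1 of \cite{Ta70}. Your plan cites exactly the same results and sketches the same reduction (the Levi-nondegeneracy-to-bilateral-nondegeneracy translation followed by appeal to Tanaka's finiteness criterion), so it takes essentially the same approach the paper takes by citation; the ``main obstacle'' you flag in the middle step is precisely the algebraic bookkeeping that Yatsui's Lemma 8.1 supplies.
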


Because of Theorem \ref{t.TanakaPP}, it is important to study the Levi-nondegeneracy of $D = E+F$. For this, we need to consider the following analogue of Definition \ref{d.tau}.

\begin{definition}\label{d.sharp}
In Definition \ref{d.PP}, let  $r$ (resp. $m$) be the rank of $E$ (resp. $F$).
Choose a connected open subset $\sU \subset M$ equipped with a submersion $\mu: \sU \to X$ (resp. $\rho: \sU \to \sK$) to a complex manifold $X$ (resp. $\sK$) such that fibers of $\mu$ (resp. $\rho$) are leaves of $E|_{\sU}$ (resp. $F|_{\sU}$).
  For each $y \in \sU$, set $x := \mu(y)$ (resp. $z := \rho(y)$) and  let $\mu^{\sharp} (y) \in {\rm Gr}(m; T_x X)$ (resp.   $\rho^{\sharp} (y) \in {\rm Gr}(r; T_z \sK)$) be the point in the Grassmannian of $m$-dimensional (resp. $r$-dimensional)  subspace in $T_x X$ (resp. $T_z \sK$) corresponding to ${\rm d} \mu ({\rm Ker}({\rm d}_y \rho)) \subset T_x X$ (resp. ${\rm d} \rho ({\rm Ker}({\rm d}_y \mu)) \subset T_z \sK$). This defines a holomorphic map $\mu^{\sharp}: \sU \to {\rm Gr}(m; TX)$ (resp. $\rho^{\sharp}: \sU' \to {\rm Gr}(r; T\sK)$) to the Grassmannian bundle over $X$ (resp. $\sK$):
$$ \begin{array}{ccccc}
{\rm Gr}(r; T\sK) & \stackrel{\rho^{\sharp}}{\longleftarrow} & \sU & \stackrel{\mu^{\sharp}}{\longrightarrow} & {\rm Gr}(m; TX) \\
\downarrow & & \| & & \downarrow \\ \sK & \stackrel{ \rho}{\longleftarrow} & \sU & \stackrel{ \mu }{\longrightarrow} & X. \end{array} $$
\end{definition}

The main result of this section is the following.

\begin{theorem}\label{t.Cauchy}
In Definition \ref{d.sharp}, assume that both $\rho^{\sharp}$ and $\mu^{\sharp}$ are generically immersive. Then $D = E+ F$ is Levi-nondegenerate. \end{theorem}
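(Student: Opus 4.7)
The plan is to show ${\rm Ch}(D)_y = 0$ at any point $y \in \sU$ where both $\mu^{\sharp}$ and $\rho^{\sharp}$ are immersive. Since $E \cap F = 0$ and $D = E+F$, any $u \in D_y$ decomposes uniquely as $u = e + f$ with $e \in E_y$ and $f \in F_y$. Given $u \in {\rm Ch}(D)_y$, I will show $e = 0$ using the immersivity of $\mu^{\sharp}$ at $y$; the symmetric argument with $\rho^{\sharp}$ will give $f = 0$, hence $u = 0$. Once this is established at a general $y$, the homomorphism $\chi: D \to \Hom(D, TM/D)$ is fiberwise injective on a Zariski-dense open set, so it has maximum rank $\dim D$ also on the constant-rank locus $M_{\chi}$, yielding ${\rm Ch}(D) = \Ker(\chi) = 0$ on $M_{\chi}$, i.e., Levi-nondegeneracy of $D$.

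The condition $u \in {\rm Ch}(D)_y$ demands $[u, w](y) \in D_y$ for every local section $w$ of $D$. Taking $w = f'$ a local section of $F$, integrability of $F$ gives $[f, f'](y) \in F_y \subset D_y$, so the condition reduces to $[e, f'](y) \in D_y$. To interpret this via $\mu$, extend $e$ to a local section of $E = \Ker({\rm d} \mu)$ near $y$. Since $e$ is $\mu$-vertical, its flow $\phi_t^e$ preserves the $\mu$-fibers, and a direct computation from the definition of the Lie bracket gives
$$ {\rm d}_y \mu \bigl([e, f'](y)\bigr) = \left.\tfrac{{\rm d}}{{\rm d}t}\right|_{t=0} {\rm d}_{\phi_t^e(y)} \mu \bigl(f'(\phi_t^e(y))\bigr) \in T_{\mu(y)} X. $$
Since ${\rm d}\mu(E_y) = 0$ and ${\rm d}\mu(F_y) = \mu^{\sharp}(y)$, the condition $[e, f'](y) \in D_y$ is exactly that this derivative belongs to $\mu^{\sharp}(y)$.

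Expanding $f' = \alpha^j f_j$ in a local frame $(f_j)$ of $F$ near $y$, the contribution from differentiating the $\alpha^j$ is a linear combination of $\mathrm{d}\mu(f_j(y))$ and automatically lies in $\mu^{\sharp}(y)$, so (since the values $\alpha^j(y)$ are arbitrary) the condition reduces to the vanishing, modulo $\mu^{\sharp}(y)$, of $\tfrac{{\rm d}}{{\rm d}t}|_{t=0} {\rm d}\mu(f_j(\phi_t^e(y)))$ for each $j$. By the very definition of $\mu^{\sharp}$ together with the identification of the vertical tangent space of ${\rm Gr}(m; TX) \to X$ at $\mu^{\sharp}(y)$ with $\Hom(\mu^{\sharp}(y), T_{\mu(y)}X/\mu^{\sharp}(y))$, this class is precisely $({\rm d}_y \mu^{\sharp})(e)\bigl({\rm d}\mu(f_j(y))\bigr)$. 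As $\{{\rm d}\mu(f_j(y))\}$ spans $\mu^{\sharp}(y)$, we obtain $({\rm d}_y \mu^{\sharp})(e) = 0$, and the immersivity of $\mu^{\sharp}$ at $y$ then forces $e = 0$. Interchanging the roles of $(E,\mu)$ and $(F,\rho)$ gives $f = 0$. The one substantive step is the identification of ${\rm d}_y \mu([e, f'](y)) \bmod \mu^{\sharp}(y)$ with the vertical differential $({\rm d}_y \mu^{\sharp})(e)$ applied to ${\rm d}\mu(f_j(y))$; I expect this to follow from a careful unwinding of how $\mu^{\sharp}$ and the vertical tangent bundle of the Grassmannian bundle are defined, using the $\mu$-verticality of $e$, with a quick verification that the answer is independent of the chosen extension of $f_j$.
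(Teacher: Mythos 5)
Your proof is correct, and it takes a genuinely different route from the paper's. The paper first establishes the decomposition ${\rm Ch}(D)_y = ({\rm Ch}(D)_y \cap E_y) + ({\rm Ch}(D)_y \cap F_y)$ via the symbol algebra (Equation (\ref{e.Cauchy1})), and then disposes of each summand by identifying $D|_{U_y}$ with the pullback $({\rm d}\mu^{\sharp})^{-1}J^{\sC}$ of the tautological distribution on the image $\sC = \mu^{\sharp}(U_y) \subset {\rm Gr}(m; TX)$ and invoking Proposition \ref{p.sJsC}, which is proved in jet coordinates $(z^i, w^k, p^k_i)$ on the Grassmannian bundle. You bypass both the algebraic decomposition and the tautological-distribution detour: starting from an arbitrary $u = e + f \in {\rm Ch}(D)_y$, you show directly that the class ${\rm d}_y\mu([e, f'](y)) \bmod \widehat{\mu^\sharp(y)}$, which must vanish for all local sections $f'$ of $F$, is exactly $({\rm d}_y\mu^\sharp)(e)$ applied to ${\rm d}\mu(f'(y))$, using the flow of $e$ along the $\mu$-fiber and the standard identification $T_{V}{\rm Gr}(m;W) \cong \Hom(V, W/V)$ applied to the fiber ${\rm Gr}(m; T_{\mu(y)}X)$. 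This is a coordinate-free Lie-derivative computation whose substance is the same as Proposition \ref{p.sJsC} but packaged differently. Your version is arguably more economical for this specific theorem and makes the role of the immersivity of $\mu^\sharp$ transparent (it literally kills $({\rm d}_y\mu^\sharp)(e) = 0 \Rightarrow e = 0$); the paper's version isolates a reusable statement about Cauchy characteristics of tautological distributions (Proposition \ref{p.sJsC}), and the identification $D = ({\rm d}\mu^\sharp)^{-1}J^{\sC}$ is a conceptually clean bridge worth having. One small remark on presentation: your pointwise decomposition $u = e + f$ requires choosing extensions of $e$ and $f$ to local sections of $E$ and $F$ respectively in order to form $[e, f'](y)$, and you should note explicitly that the class $[e, f'](y) \bmod D_y$ is independent of the chosen extension because the Levi form is tensorial — this is implicitly what licenses dropping $[f, f']$ and passing from $u$ to $e$. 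With that caveat flagged, the argument is complete.
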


The following is an immediate corollary of Theorem \ref{t.TanakaPP} and Theorem \ref{t.Cauchy}.

\begin{corollary}\label{c.Cauchy}
Let $(E,F)$ be a bracket-generating pseudo-product structure on a complex manifold.
Assume that the maps $\rho^{\sharp}$ and $\mu^{\sharp}$ in Definition \ref{d.sharp} are generically immersive. Then $(E,F)$ is of finite height. \end{corollary}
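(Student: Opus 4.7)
The proof is essentially a two-step chaining of the two theorems that precede the statement, and I would present it as such. First I would observe that the hypotheses of Theorem~\ref{t.Cauchy} are literally the hypotheses of the corollary: we have a pseudo-product structure $(E,F)$ together with the generic immersivity of both $\rho^{\sharp}$ and $\mu^{\sharp}$ as defined in Definition~\ref{d.sharp}. Applying Theorem~\ref{t.Cauchy} directly yields that the combined distribution $D := E + F$ is Levi-nondegenerate in the sense of Definition~\ref{d.distribution}(1), i.e., ${\rm Ch}(D) = 0$ on the Zariski-open subset $M_{\chi}$ where the Cauchy characteristic has locally constant rank.

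Next I would feed this into Theorem~\ref{t.TanakaPP}. Since $(E,F)$ is assumed bracket-generating and we have just shown $D = E + F$ is Levi-nondegenerate, the hypotheses of that theorem are met. Its conclusion is that at any regular point $x \in M$ of $D$ (in the sense of Definition~\ref{d.distribution}(4)), the prolongation $\fg_x^{\bullet}$ of the symbol algebra of $(E,F)$ is finite-dimensional, meaning $\fg_x^{\ell+1} = 0$ for some $\ell \geq 0$. By Definition~\ref{d.PP}(ii), this is exactly the statement that $(E,F)$ has finite height at $x$.

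To conclude finite height in the global sense of Definition~\ref{d.PP}(iii), I would note that regular points of $D$ form a nonempty Zariski-open subset of $M$ by construction, so finite height holds at a general point. The last bookkeeping item is the rank-constancy condition in Definition~\ref{d.PP}(iii): on the Zariski-open locus $M_{-k} \cap M_{\chi}$ the dimensions $\dim \fg_x^i$ for $-k \leq i \leq -1$ are constant by the definition of the weak derived system, and the data $(\fe_x, \ff_x \subset \fg_x^{-1})$ distinguishing $\fg_x^0$ is transported along the local trivializations of $E$ and $F$; the higher prolongation dimensions $\dim \fg_x^i$ for $i \geq 0$ are determined recursively from $\fg_x^- \oplus \fg_x^0$ by the formula~(\ref{e.prolong}), so they too are locally constant on a suitable further Zariski-open refinement.

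There is essentially no obstacle to overcome; the corollary is entirely a packaging statement, and the real mathematical work sits inside Theorem~\ref{t.TanakaPP} (whose proof is algebraic, via \cite{Ta70}) and Theorem~\ref{t.Cauchy} (the geometric heart of this section). The only care needed in writing the proof is to ensure the open subset on which one invokes Theorem~\ref{t.TanakaPP}, the open subset supplied by Theorem~\ref{t.Cauchy}, and the open subset where rank-constancy holds are all simultaneously nonempty---which is automatic since each is Zariski-open and dense.
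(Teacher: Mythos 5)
Your proof is correct and matches the paper's approach exactly: the paper introduces Corollary~\ref{c.Cauchy} as ``an immediate corollary of Theorem~\ref{t.TanakaPP} and Theorem~\ref{t.Cauchy},'' i.e., precisely the two-step chaining you describe. Your additional bookkeeping about Zariski-open rank-constancy is harmless extra detail but is already absorbed into Definition~\ref{d.PP}(iii), which only requires finite height at a general point.
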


For the proof of Theorem \ref{t.Cauchy}, we need some preparation.

\begin{definition}\label{d.sJ}
Let $X$ be a complex manifold. Fix a positive integer $m < \dim X.$ \begin{itemize} \item[(i)] The {\em tautological distribution} $J$ on the Grassmannian bundle $\pi: {\rm Gr}(m; TX) \to X$ is a vector subbundle $J \subset T {\rm Gr}(m; TX)$ defined as follows.
For each point $\alpha \in {\rm Gr}(m; TX)$ and $x = \pi (\alpha),$ let  $\widehat{\alpha} \subset T_x X$ be the $m$-dimensional subspace corresponding to $\alpha$ and let
 $${\rm d}_{\alpha} \pi : T_{\alpha} {\rm Gr}(m; TX) \ \longrightarrow \ T_x X$$ be the differential of the projection. Then the fiber of $J$  at $\alpha$ is $$J_{\alpha} := ({\rm d}_{\alpha} \pi)^{-1} (\widehat{\alpha}) \ \subset T_{\alpha} {\rm Gr}(m; TX).$$
\item[(ii)] Let $\sC \subset {\rm Gr}(m; TX)$ be a locally closed submanifold such that the image $\pi(\sC)$ is open subset in $X$ and the  projection   $\pi^{\sC}= \pi|_{\sC} :  \sC \to \pi(\sC)$ is  submersive.  The  subbundle $J^{\sC} :=J|_{\sC} \cap T\sC$ is called the {\em tautological distribution} on $\sC$. For $\alpha \in \sC, x = \pi^{\sC}(\alpha)$ and the differential of the projection
 $${\rm d}_{\alpha} \pi^{\sC}: T_{\alpha} \sC \ \longrightarrow \ T_x X,$$ the fiber $J^{\sC}$  at $\alpha \in \sC$ is $J^{\sC}_{\alpha} = ({\rm d}_{\alpha} \pi^{\sC})^{-1} (\widehat{\alpha})$, which has dimension  $\dim \sC - \dim X +m.$ \end{itemize} \end{definition}

 \begin{proposition}\label{p.sJsC}
 The Cauchy characteristic of the tautological distribution $J^{\sC}$ in Definition \ref{d.sJ} satisfies ${\rm Ch}(J^{\sC})_{\alpha} \cap {\rm Ker}({\rm d}_{\alpha} \pi^{\sC}) =0$ for a general point $\alpha \in \sC$.  \end{proposition}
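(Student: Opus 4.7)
The plan is to compute the Levi tensor of the tautological distribution $J$ on the ambient Grassmannian bundle explicitly using the canonical identification of the vertical tangent space, and then transfer the computation to the subbundle $J^{\sC}$ via the injection $T_\alpha\sC/J^\sC_\alpha \hookrightarrow T_\alpha{\rm Gr}(m;TX)/J_\alpha$ induced by the inclusion $T_\alpha\sC \subset T_\alpha{\rm Gr}(m;TX)$ together with the equality $J^\sC_\alpha = J_\alpha \cap T_\alpha\sC$.

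First I would fix $\alpha \in {\rm Gr}(m; TX)$ with $x = \pi(\alpha)$ and use the canonical identification ${\rm Ker}({\rm d}_{\alpha}\pi) \cong \Hom(\widehat{\alpha}, T_xX/\widehat{\alpha})$ of the vertical tangent space. Working in Grassmannian coordinates $(x_1,\ldots,x_n,\phi^a_i)$ around $\alpha$, the canonical horizontal lift of a basis vector $\partial_j \in \widehat{\alpha}$ is $E_j = \partial_{x_j}+\sum_a \phi^a_j\,\partial_{x_a}$, and a short direct computation shows $[E_i,E_j]=0$, $[\partial/\partial\phi^a_i,\partial/\partial\phi^b_j]=0$, and $[\partial/\partial\phi^a_i,E_j] = \delta_{ij}\,\partial_{x_a}$. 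Consequently, the Levi tensor of $J$ at $\alpha$, viewed as a map $J_\alpha \otimes J_\alpha \to T_\alpha{\rm Gr}(m;TX)/J_\alpha \cong T_xX/\widehat{\alpha}$, is the skew-symmetric pairing sending $(\vec{v}_1+\vec{w}_1,\vec{v}_2+\vec{w}_2)$ to $\vec{v}_1(w_2)-\vec{v}_2(w_1)$, where $\vec{v}_i \in \Hom(\widehat{\alpha},T_xX/\widehat{\alpha})$ are the vertical components and $w_i := {\rm d}\pi(\vec{w}_i) \in \widehat{\alpha}$ are the projections of the horizontal components.

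Next I would transfer this to $J^{\sC}$. The inclusion $J^\sC_\alpha = J_\alpha \cap T_\alpha\sC$ makes ${\rm Levi}^{J^\sC}$ the restriction of ${\rm Levi}^J$ to $J^{\sC}_{\alpha}\otimes J^{\sC}_{\alpha}$, read in the injective subquotient $T_{\alpha}\sC/J^{\sC}_{\alpha} \hookrightarrow T_\alpha{\rm Gr}(m;TX)/J_\alpha$. Since $\pi^{\sC}$ is submersive, the pullback of the tautological bundle to $\sC$, whose fiber at $\beta \in \sC$ is $\widehat{\beta} \subset T_{\pi^\sC(\beta)}X$, is a genuine rank-$m$ vector bundle $\sT \to \sC$, and one obtains a locally-split short exact sequence $0 \to {\rm Ker}({\rm d}\pi^{\sC}) \to J^{\sC} \to \sT \to 0$ of vector bundles on $\sC$. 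In particular, for any $w \in \widehat{\alpha}$ one can produce a local section $X$ of $J^{\sC}$ near $\alpha$ with ${\rm d}_\alpha\pi^{\sC}(X(\alpha)) = w$, by extending $w$ arbitrarily to a local section of $\sT$ and lifting through a local splitting.

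To conclude, suppose $\vec{v} \in {\rm Ch}(J^{\sC})_{\alpha} \cap {\rm Ker}({\rm d}_{\alpha}\pi^{\sC})$ at a general point $\alpha$. Then $\vec{v}$ is vertical in the Grassmannian bundle, hence defines an element of $\Hom(\widehat{\alpha},T_xX/\widehat{\alpha})$, and by the Cauchy characteristic condition one has ${\rm Levi}^{J^{\sC}}(\vec{v}, X(\alpha))=0$ for every local section $X$ of $J^{\sC}$. The formula from the first step, together with the injectivity of $T_\alpha\sC/J^\sC_\alpha \hookrightarrow T_xX/\widehat{\alpha}$, forces $\vec{v}(w)=0$ in $T_xX/\widehat{\alpha}$ for every $w = {\rm d}_\alpha\pi^{\sC}(X(\alpha))$, and since every $w \in \widehat{\alpha}$ arises this way, $\vec{v}$ is the zero homomorphism, i.e. $\vec{v}=0$. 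The only genuinely delicate point in this plan is the local coordinate verification of the formula for ${\rm Levi}^J$; once that is in hand, the rest of the argument is purely formal and reduces to the tautology that a linear map to $T_xX/\widehat{\alpha}$ vanishing on all of $\widehat{\alpha}$ is zero.
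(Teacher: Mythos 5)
Your proposal is correct and is essentially the paper's argument, repackaged intrinsically: the paper carries out the same Lie-bracket computation in the $(z^i,w^k,p^k_i)$ coordinates of Lemma \ref{l.sJ} and reads off the condition $b^i v^k_i=0$, which, together with the submersivity of $\pi^{\sC}$ (letting $(b^1,\dots,b^m)$ range over all of $\widehat{\alpha}$), forces the vertical vector to vanish. Your intrinsic reformulation of the vertical space as $\Hom(\widehat{\alpha},T_xX/\widehat{\alpha})$ and of ${\rm Levi}^J$ as the contraction pairing $\vec{v}_1(w_2)-\vec{v}_2(w_1)$, followed by restriction through the injection $T_\alpha\sC/J^{\sC}_\alpha\hookrightarrow T_\alpha{\rm Gr}(m;TX)/J_\alpha$, is the same computation stated coordinate-freely.
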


For the proof, we use  the following well-known fact (for example, see page 122 of \cite{Ya82}).

 \begin{lemma}\label{l.sJ} In the setting of Definition \ref{d.sJ},
in a neighborhood of any point $\alpha \in {\rm Gr}(m; TX)$,  we have a holomorphic coordinate system $$(z^i, w^k, p^k_i; 1 \leq i \leq m, 1 \leq k \leq c), \ m + c = \dim X,$$ such that \begin{itemize} \item[(i)] $(z^i, w^k; 1 \leq i \leq m, 1 \leq k \leq c)$ are the $\pi$-pullback of holomorphic coordinates  on a neighborhood of $x = \pi(\alpha)$ in $X$; and
\item[(ii)] the distribution $J \subset T {\rm Gr}(m;TX)$ is given by the vanishing of the collection of 1-forms
$${\rm d} w^{k} - \sum_{i=1}^m
p^k_i {\rm d} z^i, \ 1 \leq k \leq c.$$\end{itemize} \end{lemma}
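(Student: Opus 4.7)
The plan is to build the coordinates in two stages: first adapt coordinates on $X$ to the marked subspace $\widehat{\alpha} \subset T_x X$, then realize nearby points of ${\rm Gr}(m; TX)$ as graphs of linear maps to obtain the fiber coordinates $p^k_i$. Specifically, I would choose a complement $C \subset T_x X$ to $\widehat{\alpha}$ with $\dim C = c$, and then pick holomorphic coordinates $(z^1,\dots,z^m, w^1,\dots,w^c)$ on a neighborhood of $x$ in $X$ such that at the point $x$ one has $\widehat{\alpha} = \mathrm{span}\{\partial_{z^i}|_x\}$ and $C = \mathrm{span}\{\partial_{w^k}|_x\}$. This is just a linear change of coordinates once the splitting is fixed.

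Next, in the open neighborhood where these coordinate vector fields trivialize $TX$, I would describe the open subset of ${\rm Gr}(m; TX)$ consisting of subspaces $V \subset T_y X$ that remain transverse to $\mathrm{span}\{\partial_{w^k}|_y\}$. Any such $V$ is the graph of a unique linear map $\mathrm{span}\{\partial_{z^i}|_y\} \to \mathrm{span}\{\partial_{w^k}|_y\}$, encoded as a matrix $(p^k_i)$; equivalently $V$ is spanned by the vectors $e_i := \partial_{z^i} + \sum_k p^k_i \,\partial_{w^k}$ for $i=1,\dots,m$. The assignment $(y, V) \mapsto (z(y), w(y), p^k_i(V))$ gives a holomorphic chart on ${\rm Gr}(m; TX)$ near $\alpha$, and by construction conditions (i) hold: the $z^i$ and $w^k$ are pullbacks from $X$.

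Finally, I would compute $J$ in these coordinates. Since $d\pi$ annihilates $\partial_{p^k_i}$ and sends $\partial_{z^i}, \partial_{w^k}$ to the corresponding vectors on $X$, a tangent vector
\[
\xi = \sum_i a^i \,\partial_{z^i} + \sum_k b^k\, \partial_{w^k} + \sum_{i,k} c^k_i\,\partial_{p^k_i}
\]
lies in $J$ at the point with coordinates $(z,w,p)$ if and only if $d\pi(\xi) = \sum_i a^i \,\partial_{z^i} + \sum_k b^k\, \partial_{w^k}$ belongs to $\mathrm{span}\{e_i\}$. Writing $d\pi(\xi)$ as a combination of the $e_i$ forces the coefficient of each $\partial_{w^k}$ to equal $\sum_i p^k_i a^i$, i.e.\ $b^k = \sum_i p^k_i a^i$, which is exactly the vanishing of the 1-form $dw^k - \sum_i p^k_i \,dz^i$ on $\xi$. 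Since this must hold for each $k = 1,\dots, c$, the distribution $J$ is cut out by precisely these $c$ forms, giving (ii). A rank check ($m + mc$ on both sides, out of the ambient $m + c + mc$) confirms no further relations are needed.

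The only genuinely delicate step is keeping track of what $d\pi$ does to the chart vector fields and verifying that transversality to $C$ is an open condition containing $\alpha$; once that is set up, the identification is a direct linear algebra computation, so I do not anticipate a real obstacle beyond bookkeeping.
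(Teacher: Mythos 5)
Your proof is correct, and the argument you give (adapted coordinates on $X$ splitting $T_xX = \widehat{\alpha}\oplus C$, graph coordinates $p^k_i$ on the Grassmannian fibers, and the direct computation that $d\pi(\xi)\in\mathrm{span}\{e_i\}$ is exactly the vanishing of $dw^k - \sum_i p^k_i\,dz^i$) is the standard one. The paper itself does not prove this lemma — it explicitly defers to the reference \cite{Ya82} — so there is no internal argument to compare against; your proof fills in precisely what the citation is standing in for, and the rank count you include confirms there are no hidden relations beyond the $c$ annihilating forms.
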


 \begin{proof}[Proof of Proposition \ref{p.sJsC}]
 Let us use the coordinates in a neighborhood of $\alpha$ in ${\rm Gr}(m; TX)$ given in Lemma \ref{l.sJ} and use Einstein summation convention on indices $1 \leq i, j \leq m$ and $1 \leq k, l \leq c$.  A holomorphic section of $J^{\sC}$ in a sufficiently small neighborhood of $\alpha$ in $\sC$ can be extended to a holomorphic section of $J$ in a neighborhood of $\alpha$ in ${\rm Gr}(m;TX)$, which can be written as \begin{equation}\label{e.J}  b^i(\frac{\p}{\p z^i} + p^k_i \frac{\p}{\p w^k}) + f^k_i \frac{\p}{\p p^k_i} \end{equation} for suitable local holomorphic functions $b^i$ and $f^k_i$ in a neighborhood of $\alpha$ in ${\rm Gr}(m; TX)$.  A holomorphic section of ${\rm Ker}({\rm d}\pi^{\sC})$ can be extended locally to a holomorphic section of ${\rm Ker}({\rm d} \pi)$ and can be written as $ v^l_j \frac{\p}{\p p^l_j}$. For this to be in ${\rm Ch}(J^{\sC})$, its bracket with (\ref{e.J}) modulo ${\rm Ker}({\rm d} \pi)$, \begin{eqnarray*} \lefteqn{ [ v^l_j \frac{\p}{\p p^l_j},  b^i(\frac{\p}{\p z^i} + p^k_i  \frac{\p }{\p w^k})]= } \\ & &  v^l_j \frac{\p b^i}{\p p^l_j} ( \frac{\p}{\p z^i} +p^k_i \frac{\p}{\p w^k}) + v^k_i  b^i\frac{\p}{\p w^k} - b^i(\frac{\p v^l_j}{\p z^i} + p^k_i \frac{\p v^l_j}{\p w^k}) \frac{\p}{\p p^l_j},   \end{eqnarray*}   should belong to $J^{\sC}$. By Lemma \ref{l.sJ}, it has to be annihilated by ${\rm d} w^k- p^k_i {\rm d} z^i$ for all $1 \leq k \leq c$.  It follows that $b^i v^k_i =0$ for all $1 \leq k \leq c$. Since $(b^1, \ldots, b^m)$ can be chosen to represent $m$ independent vectors, we conclude $v^k_i =0$ for all $1 \leq i \leq m$ and $1 \leq k \leq c$, proving Proposition \ref{p.sJsC}.
 \end{proof}

\begin{proof}[Proof of Theorem \ref{t.Cauchy}]
For a general point $y \in \sU$, we claim  \begin{equation}\label{e.Cauchy1} {\rm Ch}(D)_y = ({\rm Ch}(D)_y \cap E_y) + ({\rm Ch}(D)_y \cap F_y). \end{equation}
To see this, let $\fg_y^- = \oplus_{i=-k}^{-1} \fg_y^{i}$ be the symbol algebra of $D$ at $y$ and identify $D_y$ with $\fg_y^{-1}$.
    For $w \in  \fg_y^{-1} = E_y \oplus F_y$, write $w= w_{\fe} + w_{\ff}$ with $w_{\fe} \in E_y$ and $w_{\ff} \in F_y$. If $w \in {\rm Ch}(D)_y$, then  $[w, \fg_y^{-1}] =0$ in the Lie algebra $\fg_y^-$. From  $[F_y, F_y] =0$ and $$ [w_{\ff}, e] = [w_{\fe} +w_{\ff}, e] = 0 \mbox{ for all } e \in E_y,$$
    we have $w_{\ff} \in {\rm Ch}(D)_y \cap F_y$. By the same argument, we have $w_{\fe} \in {\rm Ch}(D)_y \cap E_y.$ This verifies (\ref{e.Cauchy1}).

    By (\ref{e.Cauchy1}), it suffices to prove ${\rm Ch}(D)_y \cap E_y = 0 = {\rm Ch}(D)_y \cap F_y$. Let us prove ${\rm Ch}(D)_y \cap E_y = 0$. The proof of  ${\rm Ch}(D)_y \cap F_y$
    follows from the same argument by replacing $E$ with $F$ (also $X$ with $\sK$, etc.).

 By the general choice of $y$, we can choose a neighborhood $U_y \subset \sU$ of $y$ such that  the image  $\sC := \mu^{\sharp}(U_y) \subset {\rm Gr}(m; TX)$   is    a locally closed complex submanifold in ${\rm Gr}(m; TX)$ and  $\mu^{\sharp}|_{U_y}: U_y \to \sC$ is a  submersion.

We claim  \begin{equation}\label{e.Cauchy2} D|_{U_y} = ({\rm d}\mu^{\sharp})^{-1} J^{\sC}, \end{equation} where  $J^{\sC}$ is the tautological distribution on $\sC$ from Definition \ref{d.sJ}.
It suffices to check $ ({\rm d}_u \mu^{\sharp})^{-1}(J^{\sC}_{\alpha}) = D_u$ for $u \in U_y$ and $\alpha = \mu^{\sharp}(u)$. Note that $\dim D_u = \dim  ({\rm d}_u \mu^{\sharp})^{-1}(J^{\sC}_{\alpha})$ because \begin{eqnarray*} \dim ({\rm d}_u \mu^{\sharp})^{-1}(J^{\sC}_{\alpha}) - \dim {\rm Ker}({\rm d}_u \mu^{\sharp}) & = & \dim J^{\sC}_{\alpha} \\  & = & \dim \sC - \dim X + m \\ & = & \dim \sC - (\dim \sU -r) + m \\ &=& m + r - \dim {\rm Ker} ({\rm d}_u \mu^{\sharp}). \end{eqnarray*}   But $D_u \subset ({\rm d}_u \mu^{\sharp})^{-1}(J^{\sC}_{\alpha})$ because
    ${\rm d}_u \mu^{\sharp} (E_u) \subset J^{\sC}_{\alpha}$  from $ {\rm d}_u \mu (E_u) = 0 $ and $ {\rm d}_u \mu^{\sharp} (F_u) \subset J^{\sC}_{\alpha} $ from  $    {\rm d}_u \mu (F_u) = \widehat{\alpha}.$   This proves (\ref{e.Cauchy2}).

  Since $\mu^{\sharp}$ is generically immersive, we may identify the neighborhood $U_y$ with  $\sC$ and $D|_{U_y}$ with $J^{\sC}$ by (\ref{e.Cauchy2}). Then  Proposition \ref{p.sJsC} implies  ${\rm Ch}(D)_y \cap E_y = 0.$
    \end{proof}

\section{$\beta$-Tanaka structures arising from pseudo-product structures}\label{s.PPsK}

We skip the proof of the next lemma, which is just a direct consequence of a classical result of Rosenlicht (for example, see Theorem 6.2 of \cite{Do}).

\begin{lemma}\label{l.Rosenlicht}
Let $G$ be a connected  linear algebraic group acting on an irreducible quasi-projective algebraic variety $Z$.
Then there is a decomposition into a disjoint union $$ Z = Z^1 \cup Z^2 \cup \cdots \cup Z^d$$ for some positive integer $d$ such that for each $ 1 \leq i \leq d,$ \begin{itemize} \item[(i)] $Z^i$  is a nonsingular quasi-projective variety preserved by the $G$-action; and \item[(ii)] there is a submersion $Z^i \to Q^i$ to a nonsingular quasi-projective algebraic variety $Q^i$ whose fibers are exactly the $G$-orbits in $Z^i$. \end{itemize} \end{lemma}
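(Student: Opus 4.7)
The plan is a routine Noetherian induction on $\dim Z$ combined with repeated application of Rosenlicht's theorem. Rosenlicht's theorem (Theorem 6.2 of \cite{Do}) says: given an algebraic group $G$ acting on an irreducible variety $X$, there exists a nonempty $G$-invariant Zariski-open subset $U \subset X$ admitting a geometric quotient $U \to U/G$, i.e. a surjective morphism whose fibers are precisely the $G$-orbits on $U$.

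First, apply this to $Z$ itself to obtain a dense $G$-invariant open subset $V \subset Z$ with a geometric quotient $\pi: V \to V/G$. Replace $V$ by the intersection of its smooth locus $V_{{\rm sm}}$ with $\pi^{-1}((V/G)_{{\rm sm}})$; both loci are open and dense, and $V_{{\rm sm}}$ is $G$-invariant because $G$ acts on $V$ by automorphisms, so this intersection is still a $G$-invariant open dense subset. Shrinking further (by the generic flatness/smoothness theorem applied to the geometric quotient morphism between smooth varieties), we may assume that the resulting morphism, call it $Z^1 \to Q^1$, is a submersion between nonsingular quasi-projective varieties whose fibers are exactly the $G$-orbits. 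This gives the first piece.

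Next, consider the complement $Z \setminus Z^1$, a proper closed subset of $Z$ that is $G$-invariant. Since $G$ is connected, $G$ preserves each irreducible component of $Z \setminus Z^1$. Each such component has strictly smaller dimension than $Z$, so by induction on dimension (or, more formally, Noetherian induction on $G$-invariant closed subsets), each irreducible component admits a finite stratification of the required type. Collecting the strata $Z^1$ together with all the strata coming from the irreducible components of $Z \setminus Z^1$ produces the desired finite decomposition $Z = Z^1 \cup Z^2 \cup \cdots \cup Z^d$.

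There is no substantive obstacle: the only point worth checking is that the smooth-locus passage does not ruin the geometric-quotient property, which holds because restricting a geometric quotient to a saturated open subset (here, the preimage of $(V/G)_{{\rm sm}} \cap \pi(V_{{\rm sm}})$, which is saturated since $V_{{\rm sm}}$ is $G$-invariant) yields a geometric quotient of that subset. The induction terminates in finitely many steps by Noetherianness of $Z$.
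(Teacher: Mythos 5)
The overall strategy---Rosenlicht's generic geometric quotient, restriction to smooth loci, generic smoothness to get a submersion, then Noetherian induction on the complement---is correct and is exactly what the paper intends: the authors skip the proof, citing only Rosenlicht's theorem via Theorem 6.2 of \cite{Do}. Your observation that a connected group preserves each irreducible component of a $G$-invariant closed set, and that restricting a geometric quotient to a $G$-saturated open subset remains a geometric quotient, are both correct and are precisely the points that need checking.

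There is, however, a small gap in the last paragraph. After removing $Z^1$, the complement $Z \setminus Z^1$ may have several irreducible components $W_1, \ldots, W_r$ which intersect one another. You stratify each $W_j$ separately by induction and then ``collect the strata,'' but the result need not be a disjoint union: a stratum inside $W_1$ may overlap a stratum inside $W_2$ along $W_1 \cap W_2$. The repair is easy and is already hinted at by your parenthetical ``Noetherian induction on $G$-invariant closed subsets'': run the induction on $G$-invariant closed subsets $W$ of $Z$ directly, not component by component. At each step, pick one irreducible component $W_1$ of $W$ (which is $G$-invariant since $G$ is connected), delete from $W_1$ the proper $G$-invariant closed set where it meets the other components of $W$, apply Rosenlicht and your shrinking argument to the resulting dense $G$-invariant open subset of $W_1$ to obtain a new stratum that is disjoint from the rest of $W$, and then recurse on the strictly smaller $G$-invariant closed set obtained by removing that stratum. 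With this adjustment the decomposition you produce is genuinely disjoint, and the argument is complete.
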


\begin{definition}\label{d.GLA}
  Let $\fv$ be a vector space with a gradation $\fv^{\bullet} = \oplus_{i = -k}^{\ell} \fv^i$.
Consider  a graded Lie algebra structure $\fg^{\bullet} = \oplus_{i=-k}^{\ell} \fg^i$  on $\fv^{\bullet}$ such that $\fg^{<1}$ is a fundamental graded Lie algebra and $\fg^{\bullet}$ is its universal prolongation. It is clear that the set of all such graded Lie algebra structures on $\fv^{\bullet}$ is a quasi-projective subvariety in $\Hom(\wedge^2 \fv, \fv)$, which we denote by $${\rm Tanaka}(\fv^{\bullet}) \ \subset \Hom(\wedge^2 \fv, \fv).$$
Let $\Aut(\fv^{\bullet}) \subset {\rm GL}(\fv)$ be  the group of  the graded vector space automorphisms of $\fv^{\bullet}$. The natural action of ${\rm GL}(\fv)$ on $\Hom(\wedge^2 \fv, \fv)$
induces a natural action of $\Aut(\fv^{\bullet})$ on ${\rm Tanaka}(\fv^{\bullet})$. By Lemma \ref{l.Rosenlicht} applied to each irreducible component of ${\rm Tanaka}(\fv^{\bullet})$, we have a decomposition into a disjoint union of $\Aut(\fv^{\bullet})$-stable nonsingular quasi-projective algebraic subvarieties equipped with submersions $$
 \begin{array}{ccccccc} {\rm Tanaka}(\fv^{\bullet}) & = & {\rm Tanaka}(\fv^{\bullet})^1 & \cup & \cdots &\cup &  {\rm Tanaka}(\fv^{\bullet})^d \\ & &  \downarrow \varsigma^1 & & & &  \downarrow \varsigma^d \\
  & & \sB^1 & & & & \sB^d \end{array} $$ for some positive integer $d$ such that for each $ 1 \leq i \leq d$, \begin{itemize} \item[(i)]  $\sB^i$ is a  nonsingular quasi-projective algebraic variety; and \item[(ii)]  the orbits of the action of $\Aut(\fv^{\bullet})$ on ${\rm Tanaka}(\fv^{\bullet})^i$ are fibers of  the submersion $ \varsigma^i:  {\rm Tanaka}(\fv^{\bullet})^i \to \sB^i$. \end{itemize}
   Note that such a decomposition is not unique: we may stratify $\sB^i$ further into a finite union of nonsingular quasi-projective subvarieties without violating the conditions (i) and (ii). Let us choose such a decomposition once and for all. Then
   the nonsingular varieties $\sB^1, \ldots, \sB^d$ depend only on the sequence of integers $(\dim \fv^{-k}, \ldots, \dim \fv^{\ell})$, not on the choice of the graded vector space $\fv^{\bullet}$.  In other words, they are determined by the isomorphism type of the graded vector space $\fv^{\bullet}$. \end{definition}

 \begin{lemma}\label{l.nB}
 In Definition \ref{d.GLA},
 each point  $a \in \sB^i, 1 \leq i \leq d,$ admits a  neighborhood $\sB \subset \sB^i$ equipped with
a  vector bundle homomorphism $$\Lambda: (\wedge^2 \fv) \times \sB \longrightarrow  \fv \times \sB$$
 satisfying the properties (i) and (ii)  in Set-up \ref{n.B} (replacing $B$ by $\sB$) such that the graded Lie algebra structure of $\fg(b)$ for each $b \in \sB$ is isomorphic to the one in $(\varsigma^{i})^{-1}(b) \subset {\rm Tanaka}(\fv^{\bullet})^i$. We can also equip $\sB$ with all the additional data (1)-(7) in Set-up \ref{n.B}. \end{lemma}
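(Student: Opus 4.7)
The plan is to transport all Lie-algebraic data from the Tanaka moduli to $\sB$ via a holomorphic local section of the submersion $\varsigma^i$. Since $\varsigma^i$ is submersive, in a sufficiently small neighborhood $\sB \subset \sB^i$ of the given point $a$ we may choose a holomorphic section $\sigma: \sB \to {\rm Tanaka}(\fv^{\bullet})^i$ and set $\Lambda_b := \sigma(b)$ for $b \in \sB$; this defines a holomorphic vector bundle homomorphism $\Lambda: (\wedge^2\fv) \times \sB \to \fv \times \sB$. By the definition of ${\rm Tanaka}(\fv^{\bullet})$, each $\Lambda_b$ endows $\fv^{\bullet}$ with a graded Lie algebra structure $\fg(b)$ whose nonpositive part is fundamental and whose whole is its universal prolongation, which is precisely conditions (i) and (ii) of Set-up \ref{n.B}. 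Since the fiber $(\varsigma^i)^{-1}(b)$ is by construction the $\Aut(\fv^{\bullet})$-orbit of $\sigma(b)$, and elements of a common orbit determine isomorphic graded Lie algebra structures on $\fv^{\bullet}$, the structure $\fg(b)$ is graded-isomorphic to any bracket parametrized by $(\varsigma^i)^{-1}(b)$, as required.

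The auxiliary data (1)--(7) of Set-up \ref{n.B} are then produced routinely, anchored on the crucial observation that all naturally defined fiber dimensions are constant on $\sB$: since every fiber of $\varsigma^i$ over $\sB$ is a single $\Aut(\fv^{\bullet})$-orbit of mutually isomorphic brackets, the integers $\dim \fg(b)^i$ and $\dim \mathsf{S}^{<n+1}(b)$ do not depend on $b \in \sB$. Consequently, the subspaces $\fg(b)^0 \subset \fgl(\fv^-)$, realized via the injective adjoint representation, form a holomorphic vector subbundle of constant rank in $\fgl(\fv^-) \times \sB$. Integrating this family of matrix Lie subalgebras fiberwise via the matrix exponential yields the connected Lie subgroup $\mathsf{G}^0(b) \subset {\rm GL}(\fv^-)$ for each $b$, and their union glues into a holomorphic fiber subbundle $\mathsf{G}^0 \subset {\rm GL}(\fv^-) \times \sB$ with the tautological trivialization $\zeta^0: \fv^0 \times \sB \to {\rm Lie}\mathsf{G}^0$. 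The same integration procedure applied to the appropriate constant-rank holomorphic subbundles of $\fgl(\fv^{<n}) \times \sB$ and $\fgl(\fv^{<n+1}) \times \sB$ produces the $\sB$-group $\mathsf{G}^n{\rm GL}_{n+1}(\fv^{<n})$ of (2) together with $\eta^n$, its quotient $\mathsf{G}^n$ of (3) together with $\zeta^n$, and $\mathsf{S}^{<n+1}$ of (4).

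Items (5) and (6) are immediate: ${\rm Tor}^{n+1}(\fv)$ is intrinsic to $\fv^{\bullet}$ and supplies a trivial bundle, while the operator $\partial^{n+1}_b := \partial^{n+1}_{\fg(b)}$ assembles into a holomorphic homomorphism of trivial bundles $\fh(\fv^{<n+1}) \times \sB \to {\rm Tor}^{n+1}(\fv) \times \sB$, because its defining formula involves only the bracket $\Lambda_b$. For (7), Lemma \ref{l.partial} identifies $\ker \partial^{n+1}_b = \fg(b)^{n+1} + \fgl_{n+2}(\fv^{<n+1})$, whose dimension is again constant in $b$ by the orbit-type argument; hence $\partial^{n+1}$ has constant rank and its image is a holomorphic subbundle of ${\rm Tor}^{n+1}(\fv) \times \sB$. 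Shrinking $\sB$ to a polydisc around $a$ so that every holomorphic vector bundle on $\sB$ is trivial, a holomorphic complementary subbundle $\sW^{n+1}$ then exists for every $n \geq 0$.

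The only step that is not essentially bookkeeping is the integration of a constant-rank holomorphic family of matrix Lie subalgebras into a holomorphic family of connected matrix Lie subgroups forming a $\sB$-subgroup. This is handled by a relative Frobenius argument: the subalgebra subbundle lifts to a left-invariant, involutive holomorphic distribution on ${\rm GL}(\fv^{<n+1}) \times \sB$ relative to the projection to $\sB$, and the leaf through the identity section is the desired $\sB$-subgroup. This is the main technical, though standard, point in the proof.
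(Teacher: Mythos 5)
Your argument follows exactly the paper's route: choose a holomorphic section of the submersion $\varsigma^i$ over a small neighborhood of $a$ to produce $\Lambda$, note that (1)--(6) in Set-up~\ref{n.B} are then naturally determined, and shrink $\sB$ to secure a complement $\sW^{n+1}$ in (7). The paper's proof is only a few sentences and treats the $\sB$-subgroup integration and constant-rank checks as immediate, whereas you spell these out in detail (with a small over-statement: $\dim\fg(b)^i=\dim\fv^i$ is constant by definition of ${\rm Tanaka}(\fv^{\bullet})$ rather than by an orbit argument, while constancy of $\dim\mathsf{S}^{<n+1}(b)$ across fibers rests on the submersion property of $\varsigma^i$), but the substance is the same.
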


\begin{proof}
Since $\varsigma^i$ in Definition \ref{d.GLA} is a submersion, we can choose a section of $\varsigma^i$ over a neighborhood $\sB$ of $a$ to obtain the vector bundle homomorphism $\Lambda$ satisfying the properties in the first sentence of the lemma.  The data (1)- (6) in Set-up \ref{n.B} are naturally determined by $\Lambda$. Finally, by shrinking $\sB$ if necessary, we can choose the vector bundle $\sW^{n+1}$ for $ n \geq 0$ in (7) of Set-up \ref{n.B}.
\end{proof}

\begin{remark}\label{r.nB}
By using more algebraic geometry, one can choose the neighborhood $\sB$ in Lemma \ref{l.nB} as an \'etale neighborhood, namely, an unramified cover of a Zariski-open subset in $\sB^i$. This would enable us to extend the neighborhood $O_x$ in Theorem \ref{t.AP} to an \'etale neighborhood.
Since we do not have a specific application of such a strengthened version of Theorem \ref{t.AP} and  the arguments are somewhat involved,  we do not include this extension here. \end{remark}

\begin{definition}\label{d.choice}
In Lemma \ref{l.nB},  the  choice  of the data  on $\sB$ satisfying Set-up \ref{n.B} is not unique. We fix a choice of such  data  on a  neighborhood  of each point of $\sB^i, 1 \leq i \leq d,$ once and for all. We call it {\em our choice of Set-up} \ref{n.B} on $\sB^i, 1 \leq i \leq d.$
\end{definition}

\begin{definition}\label{d.regulartype}
Let $(E, F)$ be a bracket-generating pseudo-product structure  on a complex manifold $M$ of finite height $\ell \geq 0$ and let $M_* \subset M$ be the Zariski-open subset from Definition \ref{d.PP} (iii). \begin{itemize} \item[(i)]
For each $x \in M_*$,  denote by $\sV^{\bullet}_x = \oplus_{i=-k}^{\ell} \sV^i_x$ the underlying graded vector space of $\fg_x^{\bullet}$ from Definition \ref{d.PP} (ii). Let $\sV^{\bullet} = \oplus_{i=-k}^{\ell} \sV^i$ be the graded vector bundle on $M_*$ whose fiber at $x \in M_*$ is $\sV^{\bullet}_x = \oplus_{i=-k}^{\ell} \sV^i_x.$
\item[(ii)] By Definition \ref{d.GLA}, we have a natural decomposition with submersions
$$
 \begin{array}{ccccccc}  {\rm Tanaka}(\sV_x^{\bullet}) & = &  {\rm Tanaka}(\sV^{\bullet}_x)^1 & \cup & \cdots &\cup & {\rm Tanaka}(\sV^{\bullet}_x)^d \\ & &  \downarrow \beta^1_x & & & &  \downarrow \beta^d_x\\
  & & \sB^1 & & & & \sB^d \end{array} $$ for each $x \in M_*$. Putting them together over all $x \in M_*$, we obtain a natural decomposition into fiber bundles over $M_*$ equipped with submersions $$
 \begin{array}{ccccccc}  {\rm Tanaka}(\sV^{\bullet}) & = &  {\rm Tanaka}(\sV^{\bullet})^1 & \cup & \cdots &\cup &  {\rm Tanaka}(\sV^{\bullet})^d \\ & &  \downarrow \beta^1 & & & &  \downarrow \beta^d\\
  & & \sB^1 & & & & \sB^d. \end{array} $$
\item[(iii)] The family of Lie algebras $\fg_x^{\bullet}$ parametrized by $x \in M_*$ determine a holomorphic section $\lambda: M_* \to  {\rm Tanaka}(\sV^{\bullet}) $ of the surjective holomorphic map ${\rm Tanaka}(\sV^{\bullet}) \to M_*$. A point $x \in M_*$ is {\em of moduli type} $\sB^i$ if there exists a neighborhood $O\subset M_*$ of $x$ such that   $\lambda(y)  \in  {\rm Tanaka}(\sV^{\bullet}_y)^i$ for all $y \in O$. There must be an index $1 \leq i \leq d$ such that
 all general points of $ M_*$ are of moduli type $\sB^i$. In this case, we say that the pseudo-product structure $(E,F)$ is {\em of moduli type} $\sB^i.$
\item[(iv)] Assume that $(E,F)$ is of moduli type $\sB^i$. A point $x \in M_*$ of moduli type $\sB^i$ is said to be  a {\em moduli-regular} point of the pseudo-product structure, equivalently, the pseudo-product structure  is {\em moduli-regular} at $x$,  if
  $\beta^i \circ \lambda$ is a smooth morphism in a neighborhood of $x$. Denote by $M_{\flat} \subset M$ the Zariski-open subset consisting of moduli-regular points.  For $x \in M_{\flat}$,  the composition $\beta^i \circ \lambda$ defines  a submersion $\beta_x: O_x \to B_x$ from  a neighborhood $O_x \subset M_{\flat}$ of $x$ onto a locally closed submanifold $B_x \subset \sB^i$. \end{itemize}
\end{definition}

The following proposition is straightforward.

\begin{proposition}\label{p.PPtoTanaka}
In Definition \ref{d.regulartype}, assume that $(E,F)$ is of moduli type $\sB^i$. Let $x \in M_{\flat}$ be a moduli-regular point with the submersion $\beta_x: O_x \to B_x$ on a neighborhood $O_x \subset M$ of $x$.  Let $\fv^{\bullet}$ be the graded vector space underlying $\fg_x^{\bullet}$ and let $\fe \subset \fv^{-1}$ (resp. $\ff \subset \fv^{-1}$) be the subspace corresponding to $E_x$ (resp. $F_x$).
\begin{itemize}
\item[(1)] By shrinking $O_x$ if necessary, we can assume that $B := B_x$ is  a locally closed submanifold in  the neighborhood $\sB \subset \sB^i$ of $a := \beta_x(x) \in \sB^i$ in Lemma  \ref{l.nB}. Thus $B$ is equipped with all the data in Set-up \ref{n.B} inheriting those on $\sB$ from Lemma \ref{l.nB}.
\item[(2)]
    The filtration   $D_{\bullet} := D_{-k} \supset \cdots \supset D_{-1} = E+ F$
restricted to $O_x$   is a $\beta_x$-Tanaka filtration  in the sense of Definition \ref{d.betaframe}.
   \item[(3)]
Consider the fiber subbundle   $\sP \subset \I^{<0} O_x$ whose fiber $\sP_y $ at $y \in O_x$ consists of graded vector space isomorphisms $\fv^- \to {\rm symb}_y(D_{\bullet})$   that sends $\fe$ to $E_y$ and $\ff$ to $F_y$. Then $\sP \subset \I^{<0}O_x$  is a  $\beta_x$-principal subbundle  with the structure $B$-group $\mathsf{G}^0.$
 \end{itemize}  Consequently, we have a $\beta_x$-Tanaka structure $(D_{\bullet}, \sP\subset \I^{<0} O_x),$ which is canonically determined by the pseudo-product structure $(E,F)$ on $O_x$, under our choice of Set-up \ref{n.B} on $\sB^i$ in Definition \ref{d.choice}. \end{proposition}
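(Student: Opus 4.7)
The plan is to unpack the three items in order, relying on the fact that Lemma \ref{l.nB} has already installed every piece of Set-up \ref{n.B} on the neighborhood $\sB \subset \sB^i$ of $a$; what remains is bookkeeping that transports these structures to $B = B_x$ and identifies them with the ones coming from the pseudo-product structure $(E,F)$.

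For (1), moduli-regularity at $x$ means that $\beta^i \circ \lambda$ is a smooth morphism on some neighborhood of $x$, so after shrinking $O_x$ we may assume that $\beta_x := (\beta^i \circ \lambda)|_{O_x}$ factors as a submersion $O_x \to B_x$ onto a locally closed nonsingular submanifold $B_x \subset \sB^i$. Shrinking $O_x$ once more so that $B_x \subset \sB$ and setting $B := B_x$, all the data of Set-up \ref{n.B} on $\sB$ (the bracket $\Lambda$, the trivializations $\zeta^n, \eta^n$, the complements $\sW^{n+1}$, and the induced $B$-groups $\mathsf{G}^0$, $\mathsf{G}^n{\rm GL}_{n+1}(\fv^{<n})$, etc.) restrict to the corresponding data on $B$.

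For (2), the weak derived system $D_{\bullet}$ is, by its very construction in Definition \ref{d.distribution}, a Tanaka filtration in the sense of Definition \ref{d.filtered}, since the inclusion $[\sD_i,\sD_j]\subset \sD_{i+j}$ is built in. The rank condition ${\rm rank}(D_i/D_{i+1}) = \dim \fv^i$ on $O_x \subset M_*$ holds by Definition \ref{d.regulartype}(i). Finally, at each $y \in O_x$ the symbol algebra ${\rm symb}_y(D_{\bullet}) = \fg_y^-$ is, by definition of $\lambda$ and $\beta^i$, a representative of the $\Aut(\fv^\bullet)$-orbit $\beta_x(y) \in \sB^i$; but $\fg(b)^-$ for $b=\beta_x(y)$ is, by Lemma \ref{l.nB}, another representative of the same orbit. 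Hence the two are isomorphic as graded Lie algebras, which is the condition required by Definition \ref{d.betaframe}(i), so $D_{\bullet}|_{O_x}$ is a $\beta_x$-Tanaka filtration.

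For (3), combining Definition \ref{d.betaframe}(ii) with the extra $\fe$-, $\ff$-compatibility constraints, an element of $\sP_y$ is precisely a graded Lie algebra isomorphism $h\colon \fg(b)^- \to \fg_y^-$ (with $b = \beta_x(y)$) sending $\fe$ onto $E_y$ and $\ff$ onto $F_y$. Precomposition makes $\sP_y$ a torsor under the group of graded Lie algebra automorphisms of $\fg(b)^-$ preserving $\fe$ and $\ff$; by Definition \ref{d.PP}(i) this group has Lie algebra $\fg(b)^0$, so its identity component is exactly $\mathsf{G}^0(b)$. Replacing $\sP$ by a single connected component (harmless after a final shrinking of $O_x$) gives a $\beta_x$-principal subbundle of $\I^{<0} O_x$ with structure $B$-group $\mathsf{G}^0$; holomorphic dependence on $y$ and the $B$-group axioms of Definition \ref{d.Bbundle} follow from standard local trivializations of $D_{\bullet}$, $E$, and $F$. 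The one point requiring a moment's thought is this last connectedness issue — a priori $\sP_y$ is a torsor under a possibly disconnected group, and one must consistently single out a component; everything else is a direct unwrapping of definitions, and the canonicity claim follows from the canonicity of $\lambda$, $\beta_x$, and of our choice of Set-up \ref{n.B} on $\sB^i$ fixed in Definition \ref{d.choice}.
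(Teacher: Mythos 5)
The paper does not actually supply a proof of this proposition; it labels it ``straightforward'' and moves on. So there is no written argument in the paper to compare against, and I evaluate your proposal on its own terms.

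Your handling of (1) and (2) is fine. For (2), the one point worth stating explicitly (which you do) is that the condition of Definition~\ref{d.betaframe}(i) only asks for an abstract graded Lie algebra \emph{isomorphism} between ${\rm symb}_y(D_\bullet)$ and $\fg(b)^-$ for $b=\beta_x(y)$, and this holds because $\lambda(y)$ and $\fg(b)$ lie in the same $\Aut(\fv^\bullet)$-orbit, i.e., the same fiber of $\varsigma^i$. Your observation that the automorphism group may be disconnected, so that one may need to pass to a connected component of $\sP$ to actually obtain a $\mathsf{G}^0$-torsor, is a real and worthwhile point that the statement leaves implicit; since $\mathsf{G}^0(b)$ is defined in Set-up~\ref{n.B}(1) to be the \emph{connected} group with Lie algebra $\fg(b)^0$, the fiber $\sP_y$ as literally defined is a torsor under the full automorphism group $\Aut(\fg(b)^-,\fe,\ff)$, and without either a connectedness hypothesis or a choice of component the assertion ``structure $B$-group $\mathsf{G}^0$'' is strictly stronger than what falls out of the definitions.

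The place where your argument passes too quickly is the torsor claim itself. You assert that $\sP_y$ is a torsor under $\Aut(\fg(b)^-,\fe,\ff)$ and that ``by Definition~\ref{d.PP}(i) this group has Lie algebra $\fg(b)^0$.'' Two things are being used silently. First, \emph{nonemptiness}: for $\sP_y$ to be a torsor at all, there must exist some Lie algebra isomorphism $\fg(b)^- \to {\rm symb}_y(D_\bullet)$ carrying $\fe$ to $E_y$ and $\ff$ to $F_y$; this is a stronger requirement than the bare isomorphism $\fg(b)^- \cong \fg_y^-$ guaranteed by the moduli description, and it is not automatic from the definitions. Second, and related, the identification of the Lie algebra of $\Aut(\fg(b)^-,\fe,\ff)$ with $\fg(b)^0$: Definition~\ref{d.PP}(i) defines $\fg_y^0 = \Der^0(\fg_y^-, E_y, F_y)$ intrinsically at each $y$, but $\fg(b)^0$ is the degree-zero piece of the Lie algebra coming from the fixed section of $\varsigma^i$ chosen in Lemma~\ref{l.nB}, and a priori that section produces $\fg(b)^0 = \Der^0(\fg(b)^-, \fe', \ff')$ for some pair $(\fe',\ff')$ that need not coincide with the $(\fe,\ff)$ fixed from $E_x,F_x$. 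To close the argument one must (i) normalize the identification $\sV_x^\bullet \cong \fv^\bullet$ so that $\lambda(x)$ is mapped to $\fg(a)$, which makes everything match at $y=x$ and gives $\sP_x\neq\emptyset$, and (ii) use moduli-regularity (constancy of all the relevant dimensions) and a continuity/constant-rank argument to propagate nonemptiness and the torsor structure to $y$ in a shrunken $O_x$. As it stands your proof states these conclusions without the argument, which is the one non-tautological point in the proposition.
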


We have the following consequence.

\begin{theorem}\label{t.PPAP}
Let $(E,F)$ be a bracket-generating pseudo-product structure on a complex manifold $M$ of finite height. Assume that it is of moduli type $\sB^i$. Then each moduli-regular point $x \in M_{\flat}$ admits a neighborhood $O_x \subset M_{\flat}$ such that we can find
\begin{itemize} \item a  complex manifold $\overline{O}_x$ with a submersion $\overline{O}_x \to O_x$; and \item an absolute parallelism on $\overline{O}_x$, \end{itemize}
which are canonically determined under our choice of Set-up \ref{n.B} on $\sB^i$ in Definition \ref{d.choice}. \end{theorem}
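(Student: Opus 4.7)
The plan is to deduce Theorem~\ref{t.PPAP} as an essentially immediate consequence of two earlier results: Proposition~\ref{p.PPtoTanaka}, which converts a pseudo-product structure into a $\beta$-Tanaka structure near a moduli-regular point, and Corollary~\ref{c.Tanaka2}, which produces a natural absolute parallelism out of a $\beta$-Tanaka structure once the height of the prolongation is finite. Since $(E,F)$ is of finite height, Definition~\ref{d.PP}(iii) supplies an integer $\ell \geq 0$, and the finiteness is exactly what makes the Tanaka tower terminate.

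First I would fix a moduli-regular point $x \in M_{\flat}$. By Proposition~\ref{p.PPtoTanaka}, after shrinking to a suitable neighborhood $O_x \subset M_{\flat}$, we obtain a submersion $\beta_x : O_x \to B$ onto a locally closed submanifold $B \subset \sB^i$ that inherits all the data of Set-up~\ref{n.B} from our fixed choice in Definition~\ref{d.choice}, together with a canonical $\beta_x$-Tanaka structure $(D_{\bullet},\, \sP \subset \I^{<0}O_x)$ whose symbol algebras realize the prolongations $\fg_y^{\bullet}$ for $y \in O_x$. Because the pseudo-product structure has finite height $\ell$ and moduli-regularity ensures constancy of the relevant dimensions along fibers of $\beta_x$, the graded vector bundle $\fv$ and the height bound $\ell$ required in Set-up~\ref{n.B} are simultaneously available.

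Next I would feed this $\beta_x$-Tanaka structure into Theorem~\ref{t.Tanaka1} and iterate the prolongation steps $\bar{\pi}^n : \bar{P}^{(n)} \to \bar{P}^{(n-1)}$ for $n = 1, 2, \ldots, \ell + k$. Because $\fg^n(b) = 0$ for $n \geq \ell + 1$ and every $b \in B$, the structure $B$-groups $\mathsf{G}^n$ of these bundles become trivial for $n > \ell$, so the maps $\bar\pi^{(n)}$ are biholomorphisms in that range. Applying Corollary~\ref{c.Tanaka2} then produces a canonical holomorphic section of the frame bundle $\F\,\bar{P}^{(\ell)}$, i.e.\ an absolute parallelism on $\bar{P}^{(\ell)}$. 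Define $\overline{O}_x := \bar{P}^{(\ell)}$ and take the submersion $\overline{O}_x \to O_x$ to be the composition
\[
\bar{P}^{(\ell)} \xrightarrow{\bar\pi^{\ell}} \bar{P}^{(\ell-1)} \xrightarrow{\bar\pi^{\ell-1}} \cdots \xrightarrow{\bar\pi^{1}} \bar{P}^{(0)} = \sP \xrightarrow{\pi^{\sP}} O_x,
\]
which is a submersion as a composition of submersions.

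The main point requiring care is the canonicity claim: one must verify that neither the $\beta_x$-Tanaka structure produced in Proposition~\ref{p.PPtoTanaka} nor the tower built by Theorem~\ref{t.Tanaka1} depends on auxiliary choices beyond our fixed choice of Set-up~\ref{n.B} on $\sB^i$ from Definition~\ref{d.choice}. For the former, the graded vector bundle $\sV^{\bullet}$, the filtration $D_{\bullet}$, and the reduction $\sP$ are each defined intrinsically from $(E,F)$ and the fixed trivialization of $\fv \times B$; for the latter, Remark~\ref{r.natural} explicitly records that all of the objects in Theorem~\ref{t.Tanaka1} are determined by the holomorphic map $\beta_x : O_x \to B$ together with the $B$-Lie algebra structure, both of which are fixed by our choices. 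Hence the obstacle is not a technical one but a bookkeeping matter, and once noted the theorem follows by stringing together Proposition~\ref{p.PPtoTanaka}, Theorem~\ref{t.Tanaka1}, and Corollary~\ref{c.Tanaka2} in the order above.
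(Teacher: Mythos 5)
Your proposal is correct and follows essentially the same route as the paper, which deduces Theorem~\ref{t.PPAP} directly by combining Proposition~\ref{p.PPtoTanaka} (to produce the $\beta_x$-Tanaka structure on a neighborhood $O_x$) with Corollary~\ref{c.Tanaka2} (to obtain the canonical absolute parallelism on $\overline{O}_x = \bar{P}^{(\ell)}$). The extra discussion you give of the tower terminating, the explicit composition of submersions, and the canonicity via Remark~\ref{r.natural} is accurate elaboration but not a different argument.
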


\begin{proof}
By Proposition \ref{p.PPtoTanaka},
each  point $x \in M_{\flat}$ admits a neighborhood $O_x$ equipped with a submersion $\beta_x: O_x \to B_x \subset \sB$ and a natural $\beta_x$-Tanaka structure.
By Corollary \ref{c.Tanaka2}, we have a canonically associated complex manifold $\overline{O_x}$ with a submersion $\overline{O_x} \to O_x$ and a natural absolute parallelism on $\overline{O_x}$. \end{proof}

Theorem \ref{t.PPAP} implies Theorem \ref{t.AP}. To see this, note that
the family of submanifolds in Set-up \ref{set.sK} gives rise to a pseudo-product structure in the following way.

\begin{definition}\label{d.sK}
Let us work under Set-up \ref{set.sK}.
  For a general point $y \in \sU$, the morphism $\mu$ sends the fiber $\rho^{-1}(\rho(y))$ isomorphically to the $m$-dimensional compact submanifold $A_z := \mu(\rho^{-1}(\rho(y)) \subset X$ corresponding to the point $z:= \rho(y)$ of the Douady space of $X$. This implies that $${\rm Ker}({\rm d}_y \mu)  \cap {\rm Ker}({\rm d}_y \rho) \ = \ 0.$$ Thus we have a nonempty Zariski-open subset  $\sU^o \subset \sU'$ such that \begin{itemize} \item[(U1)] $\sU^o$ and $\sK^o:= \rho(\sU^o) \subset \sK$ are complex manifolds; \item[(U2)] both $\rho|_{\sU^o}$ and $\mu|_{\sU^o}$ are smooth morphisms;
\item[(U3)] the vector bundles $E:= {\rm Ker}({\rm d} \mu|_{\sU^o})$ of rank $r$  and $F:= {\rm Ker}({\rm d} \rho|_{\sU^o})$ of rank $m$ define a pseudo-product structure on $\sU^o$; \item[(U4)] any point of $\sU^o$ is a regular point of the distribution $D = E + F$. \end{itemize}
    We say that $(E,F)$ on $\sU^o$ is the {\em pseudo-product structure arising from the family } $\sK$. \end{definition}

\begin{lemma}\label{l.BG}
In Definition \ref{d.sK}, the distribution $D = E+F$ is bracket-generating (in the sense of Definition \ref{d.distribution}) if and only if the family $\sK$ is bracket-generating (in the sense of Definition \ref{d.bracketgen}). \end{lemma}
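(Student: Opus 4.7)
The plan is to prove the two implications by contrapositive, using the following key observation: any integrable distribution on $\sU^o$ containing both $E$ and $F$ must contain $D_{-k}$ (the terminal step of the weak derived system of $D$), and conversely the integrability of $D_{-k}$ together with the inclusion $E \subset D_{-k}$ will let us push $D_{-k}$ down to an integrable distribution on $X$.

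For the direction ``$D$ bracket-generating $\Rightarrow$ $\sK$ bracket-generating'', suppose the family $\sK$ fails the condition of Definition \ref{d.bracketgen} at a general $y \in \sU$, so that we have  $W \ni \rho(y)$, $O \ni \mu(y)$, and a submersion $\zeta:O \to R$ with $\dim R > 0$ making $\mu(\rho^{-1}(z)) \cap O$ lie in a fiber of $\zeta$ for all $z \in W$. Then $\widetilde{K} := \ker \,{\rm d}(\zeta\circ\mu)$ is an integrable distribution on $\mu^{-1}(O)\subset \sU^o$ of rank strictly less than $\dim \sU^o$. It contains $E$ tautologically, and it contains $F$ because the leaves of $F$ (fibers of $\rho$) are mapped by $\mu$ into fibers of $\zeta$. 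Hence $D \subset \widetilde{K}$, and since $\widetilde{K}$ is integrable, every step of the weak derived system is trapped in $\widetilde{K}$; in particular $D_{-k} \subsetneq T\sU^o$ near $y$, contradicting bracket-generation.

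For the converse direction, assume $D$ is not bracket-generating, so that on a Zariski-open subset of $\sU^o$ the weak derived system terminates at a proper integrable subbundle $D_{-k}$ of $T\sU^o$, containing $E$. The main technical point is to produce from $D_{-k}$ a well-defined integrable distribution $D_{-k}^X$ on an open set of $X$ via $(D_{-k}^X)_{\mu(y)} := {\rm d}_y\mu\,((D_{-k})_y)$; this is independent of the choice of $y \in \mu^{-1}(x)$ because the flow of any local section of $E$ preserves $D_{-k}$ (as $E \subset D_{-k}$ and $D_{-k}$ is integrable, so $[E, D_{-k}] \subset D_{-k}$), hence translates $(D_{-k})_y$ along $\mu$-fibers without changing its image under ${\rm d}\mu$. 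Integrability of $D_{-k}^X$ then follows by lifting local sections to $D_{-k}$ and using integrability of the latter. Taking $\zeta:O\to R$ to be a local leaf-space of $D_{-k}^X$ on a small neighborhood $O$ of $\mu(y)$, we have $\dim R > 0$, and for $z \in W$ close to $\rho(y)$ the submanifold $\mu(\rho^{-1}(z))$ is tangent to $D_{-k}^X$ (because $F \subset D_{-k}$), hence is contained in a fiber of $\zeta$. This exhibits the failure of the bracket-generating condition for $\sK$.

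The main obstacle will be verifying that $D_{-k}$ descends to a well-defined integrable distribution on $X$: the argument needs both the integrability of $D_{-k}$ and the inclusion $E \subset D_{-k}$ in an essential way, and one must be careful that all constructions are performed on the Zariski-open loci where the relevant ranks are constant and $\mu$, $\rho$ are submersive, so that the ``general point'' quantifiers in Definitions \ref{d.bracketgen} and \ref{d.distribution} line up.
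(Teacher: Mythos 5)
Your proposal is correct and follows essentially the same route as the paper: the "only if" direction is the contrapositive the paper writes out (producing an integrable $\widetilde{K} = \ker\,{\rm d}(\zeta\circ\mu)$ containing $E$ and $F$), and your "if" direction is the converse the paper dismisses as "similar." The only presentational difference is that the paper constructs the leaf space $\xi: U \to R$ directly on $\sU^o$ and observes it factors through $\mu$, whereas you push $D_{-k}$ forward to $X$ first; these are the same argument, and your flow-invariance justification of the descent ($[E, D_{-k}]\subset D_{-k}$ since $E\subset D_{-k}$ and $D_{-k}$ is integrable) usefully fills in a step the paper leaves implicit.
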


\begin{proof}
Suppose that $D$ is not bracket-generating. Then a general point $y \in \sU^o$ has a neighborhood $U \subset \sU^o$ with a nonconstant submersion $\xi: U \to R$ to a positive-dimensional complex manifold $R$  such that $ {\rm d}_u \xi ( D_u ) =0 $ for all $u \in U$.  Since fibers of $\mu|_U$ and $\rho|_U$ are contained in the fibers of $\xi,$ we can choose neighborhoods $\rho(y) \in W \subset \sK$ and $\mu(y) \in O \subset X$ equipped with a submersion $\zeta: O \to R$ such that for any $z \in W$, the intersection $\mu(\rho^{-1}(z)) \cap O$ is contained in a fiber of $\zeta$. This implies that  $\sK$ is not bracket-generating.  The proof of the converse is similar.  \end{proof}

\begin{proposition}\label{p.immersive}
Under the assumptions of Theorem \ref{t.FPC} (or equivalently, Theorem \ref{t.AP}),
the  pseudo-product structure $(E,F)$ in Definition \ref{d.sK} is bracket-generating and of finite height. \end{proposition}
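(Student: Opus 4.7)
The plan is to deduce this proposition directly from results already established in Sections \ref{s.PP} and \ref{s.PPsK}, essentially by matching definitions; no new geometric input is needed, only a careful identification of the various Zariski-open loci on which the relevant maps are defined.

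First, I would dispose of the bracket-generating claim by simply invoking Lemma \ref{l.BG}: the hypothesis in Theorem \ref{t.FPC} that $\sK$ is a bracket-generating family (in the sense of Definition \ref{d.bracketgen}) is exactly equivalent to the pseudo-product distribution $D = E+F$ on $\sU^o$ being bracket-generating (in the sense of Definition \ref{d.distribution}). This is immediate from Lemma \ref{l.BG} and requires nothing else.

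Next, I would verify that the maps $\mu^{\sharp}$ and $\rho^{\sharp}$ in Definition \ref{d.tau}, formulated for the family $\sK$, agree with the maps of the same name in Definition \ref{d.sharp}, formulated for the pseudo-product structure $(E,F)$, on the common open locus $\sU^o \subset \sU'$. Both maps send $y \in \sU^o$ to the Grassmannian point represented by ${\rm d}_y\mu({\rm Ker}({\rm d}_y\rho))$ (respectively ${\rm d}_y\rho({\rm Ker}({\rm d}_y\mu))$), and by the construction in Definition \ref{d.sK} these kernels are precisely $F_y$ and $E_y$. Since generic immersivity is insensitive to restriction to a dense Zariski-open subset, the immersivity hypothesis on $\rho^{\sharp}$ and $\mu^{\sharp}$ assumed in Theorem \ref{t.AP} transfers verbatim to the pseudo-product structure on $\sU^o$.

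Finally, I would apply Corollary \ref{c.Cauchy}, which states that any bracket-generating pseudo-product structure whose $\rho^{\sharp}$ and $\mu^{\sharp}$ are generically immersive is of finite height. This corollary itself packages Theorem \ref{t.Cauchy} (generic immersivity of $\rho^{\sharp}, \mu^{\sharp}$ forces Levi-nondegeneracy of $D$) together with Tanaka's Theorem \ref{t.TanakaPP} (Levi-nondegeneracy of the bracket-generating pseudo-product promotes to finite-dimensional universal prolongation). Taken with the bracket-generating conclusion from the first paragraph, this yields both claims of the proposition. The only subtle point in the argument is the matching of open subsets, which is routine; there is no genuine obstacle, as all substantive work has already been carried out in Sections \ref{s.PP} and \ref{s.PPsK}.
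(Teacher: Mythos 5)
Your proposal is correct and follows essentially the same route as the paper's own proof: invoke Lemma \ref{l.BG} for the bracket-generating claim, observe that the maps $\mu^{\sharp}$ and $\rho^{\sharp}$ of Definition \ref{d.sharp} are restrictions of those in Definition \ref{d.tau}, and conclude finite height via Corollary \ref{c.Cauchy}. The extra detail you supply in matching open loci is a spelled-out version of the same observation the paper makes in one sentence.
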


\begin{proof} Lemma \ref{l.BG}  implies that $(E,F)$ is bracket-generating.
 The maps $\mu^{\sharp}$ and $\rho^{\sharp}$ in Definition \ref{d.sharp} are restrictions of those in Definition \ref{d.tau}. Thus the assumption in Theorem \ref{t.AP} implies
by Corollary \ref{c.Cauchy} that $(E,F)$ is of finite height.  \end{proof}

 By Proposition \ref{p.immersive}, Theorem \ref{t.AP} follows from Theorem \ref{t.PPAP}.

\section{Convergence of formal equivalences}\label{s.converge}

As mentioned in Section \ref{s.introduction}, our strategy to prove Theorem \ref{t.FPC} is to deduce it from Theorem \ref{t.AP}. It is more natural to  explain this deduction  from  a general perspective as follows.

\begin{theorem}\label{t.converge} Let  $M$ (resp. $\w{M}$)   be a complex manifold
with a submersion $\beta: M \to B$ (resp. $\w{\beta}: \w{M} \to B$) and let $(\sD_{\bullet}, \sP \subset \I^{<0}M)$ (resp. $(\w{\sD}_{\bullet}, \w{\sP} \subset \I^{<0}\w{M})$ be a $\beta$-Tanaka structure on $M$ (resp. $\w{\beta}$-Tanaka structure on $\w{M}$. Let $\varphi: (x/M)_{\infty} \to (\w{x}/\w{M})_{\infty}$ be a formal isomorphism between formal neighborhoods of points $x \in M$ and $\w{x} \in \w{M}$ such that \begin{itemize} \item  $\w{\beta} \circ \varphi = \beta|_{(x/M)_{\infty}}$ and \item $\varphi$ sends the restriction of $(\sD_{\bullet}, \sP \subset \I^{<0}M)$ to $(x/M)_{\infty}$ to the restriction of $(\w{\sD}_{\bullet}, \w{\sP} \subset \I^{<0}\w{M})$ to $(\w{x}/\w{M})_{\infty}$. \end{itemize} Then $\varphi$ is convergent. \end{theorem}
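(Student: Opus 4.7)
My plan is to realise $\varphi$ as the shadow of a formal isomorphism between manifolds carrying canonical absolute parallelisms, and then invoke Theorem~\ref{t.KN}. Since $\varphi$ preserves $\beta$ and sends $(\sD_\bullet,\sP)$ to $(\w\sD_\bullet,\w\sP)$ in the formal sense, both $\beta$-Tanaka structures give rise, via Theorem~\ref{t.Tanaka1} and Corollary~\ref{c.Tanaka2}, to towers of submersions
$$\bar P^{(\ell+k)}\longrightarrow\cdots\longrightarrow \bar P^{(0)}=\sP\longrightarrow M,\qquad \bar{\w P}^{(\ell+k)}\longrightarrow\cdots\longrightarrow \bar{\w P}^{(0)}=\w\sP\longrightarrow \w M,$$
whose top levels carry natural absolute parallelisms.

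The second step is to lift $\varphi$ inductively to a formal isomorphism $\bar\varphi^{(n)}$ between formal neighborhoods of chosen points $\bar x^{(n)}\in \bar P^{(n)}$ and $\bar{\w x}^{(n)}\in\bar{\w P}^{(n)}$ that is compatible with $\beta^{(n)}$ and preserves $\sD^{(n)}_\bullet$ and $I^{(n)}$. For $n=0$, I pick any $x_0\in\sP_x$ and set $\w x_0:=\varphi_*(x_0)\in\w\sP_{\w x}$; the compatibility $\sP\mapsto\w\sP$ supplies $\bar\varphi^{(0)}$, and $\sD^{(0)}_\bullet$, $I^{(0)}$ are preserved automatically because they are assembled canonically from $(\sD_\bullet,\sP)$ as in Subsection~\ref{ss.P0}. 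For the inductive step, the construction of $\bar P^{(n)}\to \bar P^{(n-1)}$ in Sections~\ref{s.n=1}--\ref{s.ProofInduction} --- the frame bundle of lifts $\bL(I^{(n-1)})$, the fiber product $P^{n+1}$ of Definition~\ref{d.Pn+1}, the torsion map $\tau^{n+1}$, and its normalization by the chosen complement $\sW^{n+1}$ as in Proposition~\ref{p.49} --- consists of natural bundle-theoretic operations on the $\beta$-Tanaka parallelism (see Remark~\ref{r.natural}). A formal isomorphism preserving the stage-$(n-1)$ data therefore induces one preserving the stage-$n$ data; choosing compatible lifted basepoints yields $\bar\varphi^{(n)}$.

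After $n=\ell+k$ steps, $\bar\varphi^{(\ell+k)}$ intertwines the canonical absolute parallelisms on $\bar P^{(\ell+k)}$ and $\bar{\w P}^{(\ell+k)}$. By Theorem~\ref{t.KN}, $\bar\varphi^{(\ell+k)}$ converges to a biholomorphism $\bar\Phi$ between Euclidean neighborhoods. Since the composition $\bar P^{(\ell+k)}\to M$ is a holomorphic submersion and $\bar\Phi$ covers $\varphi$ in the formal sense, $\bar\Phi$ descends to a holomorphic map $\Phi$ between Euclidean neighborhoods of $x$ and $\w x$ whose formal germ at $x$ equals $\varphi$. Applying the same lifting-and-descent argument to $\varphi^{-1}$ produces a local holomorphic inverse, so $\Phi$ is the desired biholomorphic convergence of $\varphi$.

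The main obstacle is the bookkeeping in the inductive lifting: at each of the $\ell+k$ stages one must check that cutting out $\widetilde P^{n+1}\subset P^{n+1}$ by the equation $\tau^{n+1}\in\sW^{n+1}$, and then passing to the quotient by ${\rm GL}_{n+2}(\fv^{<n+1})$, is sufficiently natural to transport a formal isomorphism. Because $\tau^{n+1}$ and $\sW^{n+1}$ are built from finite jets of the $\beta$-Tanaka parallelism by algebraic operations, this should go through, but the cleanest presentation is probably to organise the argument as a functor from formal $\beta$-Tanaka structures to formal manifolds with absolute parallelism, applying one layer of Theorem~\ref{t.Tanaka1} at each step. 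A secondary, more cosmetic difficulty is verifying that the descent of $\bar\Phi$ to a genuine biholomorphism on $M$ (not merely a holomorphic map) works as expected, which uses the submersivity of $\bar P^{(\ell+k)}\to M$ together with the compatibility of the construction with the structure $B$-groups.
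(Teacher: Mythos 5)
Your proposal is correct and follows essentially the same strategy as the paper: lift $\varphi$ inductively through the Tanaka prolongation tower using the canonicity of each step (in particular (B3) of Theorem~\ref{t.Tanaka1}), apply Theorem~\ref{t.KN} at the top to obtain convergence of the lift, and then push the convergence back down to $M$. The only place where your write-up diverges from the paper is the final descent: the paper isolates this step as Lemma~\ref{l.Stoll}, proved by a Hartogs-type restriction to smooth curve germs through $x$, whereas you assert directly that $\bar\Phi$ descends because it formally covers $\varphi$; your version is salvageable (the holomorphic map $\w\psi\circ\bar\Phi$ has a Taylor series at the lifted base point that involves only the base coordinates, since it agrees formally with $\varphi\circ\psi$, hence factors through $\psi$), but it is precisely the kind of point the paper chooses to spell out as a separate lemma rather than leave implicit.
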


The proof of Theorem \ref{t.converge} uses the following result, which is  a direct consequence of Theorem 7.2 in Chapter VI of \cite{KN} (see also the proof of Theorem 5.1 in \cite{Hw24}). This is

\begin{theorem}\label{t.KN}
Let $\nabla$ (resp. $ \widetilde{\nabla}$)  be a holomorphic affine connection (for example, an absolute parallelism) on a complex manifold $R$ (resp. $\w{R}$).  Let  $\xi: (y/R)_{\infty} \to (\widetilde{y}/\widetilde{R})_{\infty}$ be a formal isomorphism for some points $y \in R$ and $\widetilde{y} \in \widetilde{R}$  such that $\xi$ sends the restriction  $\nabla|_{(y/R)_{\infty}}$ to the restriction $\widetilde{\nabla}|_{(\widetilde{y}/\widetilde{R})_{\infty}}.$ Then $\xi$ is convergent. \end{theorem}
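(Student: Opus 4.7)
The plan is to apply the generalized Tanaka prolongation of Theorem \ref{t.Tanaka1} (and its corollary, Corollary \ref{c.Tanaka2}) to both Tanaka structures, thereby reducing the problem to the classical convergence result Theorem \ref{t.KN} on formal isomorphisms between absolute parallelisms.

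First, by Corollary \ref{c.Tanaka2}, the $\beta$-Tanaka structure $(\sD_\bullet, \sP)$ produces a canonical tower
$$\bar{P}^{(\ell)} \to \bar{P}^{(\ell-1)} \to \cdots \to \bar{P}^{(0)} = \sP \to M$$
equipped with a natural absolute parallelism on $\bar{P}^{(\ell)}$, where $\ell$ is the prolongation height over $B$. Similarly, the $\w{\beta}$-Tanaka structure on $\w{M}$ produces a tower $\w{\bar{P}}^{(\ell)} \to \cdots \to \w{\sP} \to \w{M}$ with its own natural absolute parallelism. By Remark \ref{r.natural}, at every stage the construction is intrinsic to the $\beta$-Tanaka data, once the choice of Set-up \ref{n.B} on $B$ has been fixed once for all.

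The second step is to lift $\varphi$ inductively to a formal isomorphism between formal neighborhoods at matched points of the two towers. Choose any $y^{(0)} \in \sP$ over $x$; since $\sD_\bullet$ and the principal subbundle $\sP \subset \I^{<0} M$ are built canonically from the Tanaka structure, the hypothesis on $\varphi$ supplies a formal isomorphism $\bar{\varphi}^{(0)} : (y^{(0)}/\sP)_\infty \to (\w{y}^{(0)}/\w{\sP})_\infty$ for some $\w{y}^{(0)} \in \w{\sP}$ over $\w{x}$, covering $\beta^{(0)} = \w{\beta}^{(0)} \circ \bar{\varphi}^{(0)}$ and preserving the parallelism $I^{(0)}$. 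Assume inductively that $\bar{\varphi}^{(n-1)}$ has been constructed, preserving $\beta^{(n-1)}$, the filtration $\sD_\bullet^{(n-1)}$, and the parallelism $I^{(n-1)}$. Then $\bar{\varphi}^{(n-1)}$ lifts naturally to the frame bundle $\F \bar{P}^{(n-1)}$ in the formal category, restricts to $\bL(I^{(n-1)})$ because $I^{(n-1)}$ is preserved, and restricts further to the auxiliary bundle $P^n$ of Definition \ref{d.Pn+1}, which is the canonical fiber product along $\xi^{(n)}$. Because the torsion function $\tau^n$ is defined intrinsically from the soldering form while the complementary subbundle $\sW^n \subset {\rm Tor}^n(\fv) \times B$ is \emph{the same} fixed choice on both sides, the induced map preserves the normalization $\widetilde{P}^n = (\tau^n)^{-1}(\sW^n)$, and passing to the quotient by ${\rm GL}_{n+1}(\fv^{<n})$ yields $\bar{\varphi}^{(n)}$. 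Iterating up to $n = \ell$ produces a formal isomorphism $\bar{\varphi} := \bar{\varphi}^{(\ell)}$ between formal neighborhoods in $\bar{P}^{(\ell)}$ and $\w{\bar{P}}^{(\ell)}$ that sends the natural absolute parallelism on one to the natural absolute parallelism on the other.

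Finally, Theorem \ref{t.KN}, applied with $\bar{\varphi}$ and the absolute parallelisms viewed as holomorphic affine connections, yields the convergence of $\bar{\varphi}$. Since the submersion $\bar{P}^{(\ell)} \to M$ intertwines $\bar{\varphi}$ with $\varphi$, the convergence of $\varphi$ follows immediately. The main technical obstacle will be the inductive step: one must verify carefully that a formal isomorphism of $\beta^{(n-1)}$-Tanaka parallelisms lifts functorially to the torsion-normalized bundle $\widetilde{P}^n$ and descends through the quotient to $\bar{P}^{(n)}$. This hinges on the fact that the decomposition ${\rm Tor}^n(\fv) \times B = {\rm Im}(\p^n) \oplus \sW^n$ lives over a common base $B$, so $M$ and $\w{M}$ share the same $\sW^n$, together with the canonical-up-to-Set-up \ref{n.B} nature asserted in Remark \ref{r.natural}. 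The base-point choices $y^{(n)}$ above $y^{(n-1)}$ are arbitrary and do not affect functoriality.
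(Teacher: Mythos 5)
Your proposal does not prove Theorem \ref{t.KN}; it proves Theorem \ref{t.converge}. Notice that your very first sentence says you will reduce the problem to ``the classical convergence result Theorem \ref{t.KN} on formal isomorphisms between absolute parallelisms,'' and the final paragraph then invokes Theorem \ref{t.KN} outright. If the goal is to prove Theorem \ref{t.KN}, this is circular. In fact, what you have written is a faithful rendition of the paper's own argument for Theorem \ref{t.converge} (lift $\varphi$ through the Tanaka tower using (B3) of Theorem \ref{t.Tanaka1}, land on absolute parallelisms, then apply Theorem \ref{t.KN} and Lemma \ref{l.Stoll}), so as a proof of \emph{that} theorem it would be essentially correct; but it does not touch the statement you were asked to prove.

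Theorem \ref{t.KN} itself concerns only a formal map between two complex manifolds carrying holomorphic affine connections, with no Tanaka structure in sight. In the paper it is not proved at all but cited as ``a direct consequence of Theorem 7.2 in Chapter VI of \cite{KN}.'' A genuine proof would argue roughly as follows: a holomorphic affine connection has a holomorphic (hence locally convergent) exponential map, which gives normal coordinates around $y$ and $\widetilde{y}$; a connection-preserving formal map must send formal geodesics through $y$ to formal geodesics through $\widetilde{y}$, so in normal coordinates it is forced to agree with the linear map ${\rm d}_y\xi$ conjugated by the two exponentials, which is convergent. None of the Tanaka prolongation machinery, the choice of $\sW^n$, or the torsion normalization is relevant to this statement, and your extended discussion of those points does not bear on it.
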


We also need the following lemma.

\begin{lemma}\label{l.Stoll}
Let $\psi: R \to M$ (resp. $\w{\psi}: \w{R} \to \w{M}$) be a submersion between complex manifolds and let $y \in R, \w{y} \in \w{R}, x \in M, \w{x} \in \w{M}$ be points satisfying $x= \psi(y)$ and $\w{x} = \w{\psi}(\w{y})$.   Let $$\xi: (y/R)_{\infty} \to (\w{y}/\w{R})_{\infty} \mbox{ and } \varphi: (x/M)_{\infty} \to (\w{x}/\w{M})_{\infty}$$ be  formal isomorphisms satisfying $\w{\psi} \circ \xi = \varphi \circ \psi$.
If $\xi$ is convergent, then $\varphi$ is convergent. \end{lemma}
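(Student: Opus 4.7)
The plan is to lift convergence from $\xi$ to $\varphi$ by using a local holomorphic section of the submersion $\psi$. Since $\xi$ is convergent by hypothesis, there is a biholomorphism $\Xi: U \to \w U$ between Euclidean neighborhoods $U \subset R$ of $y$ and $\w U \subset \w R$ of $\w y$ such that $\Xi|_{(y/R)_\infty} = \xi$.

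Next, since $\psi: R \to M$ is a submersion and $x = \psi(y)$, the implicit function theorem furnishes a holomorphic section $\sigma: V \to R$ of $\psi$ on an open neighborhood $V \subset M$ of $x$ with $\sigma(x) = y$. After shrinking $V$, I may assume $\sigma(V) \subset U$, so the composition
$$\Phi := \w\psi \circ \Xi \circ \sigma : V \to \w M$$
is a well-defined holomorphic map. I claim $\Phi$ converges to $\varphi$, i.e.\ $\Phi|_{(x/M)_\infty} = \varphi$. Indeed, the formal restriction $\sigma|_{(x/M)_\infty}$ takes values in $(y/R)_\infty$ and satisfies $\psi \circ \sigma|_{(x/M)_\infty} = \mathrm{id}_{(x/M)_\infty}$, so
$$\Phi|_{(x/M)_\infty} \;=\; \w\psi \circ \xi \circ \sigma|_{(x/M)_\infty} \;=\; \varphi \circ \psi \circ \sigma|_{(x/M)_\infty} \;=\; \varphi,$$
where the middle equality uses the hypothesis $\w\psi \circ \xi = \varphi \circ \psi$ on $(y/R)_\infty$.

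This establishes the convergence of $\varphi$. If the notion of convergence adopted requires $\Phi$ to be a biholomorphism onto a neighborhood of $\w x$, one further notes that the differential ${\rm d}_x \Phi$ agrees with the linear part of the formal isomorphism $\varphi$, hence is invertible, so shrinking $V$ once more makes $\Phi$ a biholomorphism onto an open neighborhood of $\w x$. The only input beyond routine manipulation is the existence of the holomorphic section $\sigma$, which is the standard local triviality of submersions; there is no real obstacle.
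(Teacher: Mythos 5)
Your proof is correct, but it takes a genuinely different route from the paper's. The paper invokes a classical theorem of Hartogs (cited from Stoll), which says that a formal power series map converges if and only if its restriction to every smooth curve germ through the base point converges; the paper then lifts each curve germ $x \in C \subset M$ to a curve germ $y \in C^{\sharp} \subset R$ along which $\psi$ restricts to a biholomorphism, writes $\varphi|_{(x/C)_\infty} = \w{\psi} \circ \xi \circ (\psi|_{(y/C^{\sharp})_\infty})^{-1}$, and concludes convergence along $C$ from the convergence of $\xi$. You instead use a single local holomorphic section $\sigma$ of the submersion $\psi$ through $y$ (available by the implicit function theorem) and directly exhibit the holomorphic extension $\Phi = \w{\psi} \circ \Xi \circ \sigma$ of $\varphi$, verifying $\Phi|_{(x/M)_\infty} = \varphi$ by a one-line formal computation using $\psi \circ \sigma = \mathrm{id}$. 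Your approach is arguably more elementary: it avoids the nontrivial analytic input of Hartogs' theorem and produces the holomorphic extension of $\varphi$ in one stroke rather than curve by curve. The paper's curve-lifting argument is a natural companion to Lemma~\ref{l.maxprinciple}, which also reasons along arcs, and the Hartogs criterion is in any case a standard tool in formal-principle arguments, which likely explains the authors' choice; but both proofs are correct and of comparable length.
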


\begin{proof}
A classical result of F. Hartogs (Theorem of Hartogs in \cite{St}, also cited in the proof of Lemma 3.5 in \cite{Hir}) says that  $\varphi$ converges if its restriction to any smooth curve germ $x \in C \subset M$ converges. Since $\psi$ is a submersion,  for any smooth curve germ $x \in C$,  we can find a smooth curve germ $y \in C^{\sharp} \subset R$ such that $\psi|_{C^{\sharp}}: C^{\sharp} \to C$ is a biholomorphism, admitting an inverse $(\psi |_{(y/C^{\sharp})_{\infty}})^{-1}: (x/C)_{\infty} \to (y/C^{\sharp})_{\infty}$.   Then $$\varphi|_{(x/C)_{\infty}} = \w{\psi} \circ \xi \circ (\psi |_{(y/C^{\sharp})_{\infty}})^{-1}.$$ Thus the convergence of $\xi$ implies the convergence of $\varphi$ at $x$. \end{proof}

\begin{proof}[Proof of Theorem \ref{t.converge}]
By Corollary \ref{c.Tanaka2}, we have a natural fiber bundle $\psi: R:= \bar{P}^{\ell} \to M$ (resp. $\w{\psi}: \w{R}:= \w{\bar{P}^{\ell}} \to \w{M}$) equipped with a natural absolute parallelism $\nabla$ (resp. $\w{\nabla}$). We claim that the formal equivalence $\varphi$ can be lifted to a formal equivalence $\xi: (y/R)_{\infty} \to (\widetilde{y}/\widetilde{R})_{\infty}$ for some point $y \in R$ over $x \in M$ (resp.  $\w{y} \in \w{R}$ over $\w{x} \in \w{M}$) satisfying $\w{\psi} \circ \xi = \varphi \circ \psi$ such that  $\xi$ sends $\nabla|_{(y/R)_{\infty}}$ to $\w{\nabla}|_{(\w{y}/\w{R})_{\infty}}.$ The claim combined with Theorem \ref{t.KN}
implies that $\xi$ converges. Thus $\varphi$ converges by Lemma \ref{l.Stoll}.

To check the claim, it is enough to note that $\varphi$ can be lifted successively in the inductive constructions in Theorem \ref{t.Tanaka1} to a formal equivalence
$$\varphi^{(n)}: (x^{(n)}/\bar{P}^{(n)})_{\infty} \to (\w{x}^{(n)}/\w{\bar{P}^{(n)}})_{\infty}$$
 that sends the $\beta^{(n)}$-Tanaka parallelism $I^{(n)}|_{(x^{(n)}/\bar{P}^{(n)})_{\infty}}$ to the corresponding $\widetilde{\beta}^{(n)}$-Tanaka parallelism $\widetilde{I^{(n)}}|_{(\w{x}^{(n)}/\w{\bar{P}^{(n)}})_{\infty}}$ for each $n \geq 1$, where  $x^{(n)}$ is some point over $x$ and $\w{x}^{(n)}$ is some point over $\w{x}$. In fact, such $\varphi^{(n)}$ can be lifted  to the next step $\varphi^{(n+1)}$ from the inductive construction in (B3) of Theorem \ref{t.Tanaka1}.
\end{proof}

To apply Theorem \ref{t.converge} to Theorem \ref{t.FPC}, we first consider the more general setting of pseudo-product structures as follows.

\begin{definition}\label{d.equivalence}
Let $(E, F) $ (resp. $(\widetilde{E}, \widetilde{F})$) be a pseudo-product structure on a complex manifold $M$ (resp. $\widetilde{M}$).
For $x \in M$ and $\widetilde{x} \in \widetilde{M},$
a formal isomorphism $\varphi: (x/M)_{\infty} \to (\widetilde{x}/\widetilde{M})_{\infty}$  is a {\em formal equivalence of the pseudo-product structures},  if ${\rm d} \varphi (E|_{(x/M)_{\infty}}) = \widetilde{E}|_{(\widetilde{x}/\widetilde{M})_{\infty}}$ and ${\rm d} \varphi (F|_{(x/M)_{\infty} }) = \widetilde{F}|_{(\widetilde{x}/\widetilde{M})_{\infty}}.$
\end{definition}

\begin{proposition}\label{p.constrank}
 Let $\varphi: (x/M)_{\infty} \to (\widetilde{x}/\widetilde{M})_{\infty}$ be a formal equivalence of the pseudo-product structures in Definition \ref{d.equivalence}. Suppose that  the distribution $D= E+F$ is bracket-generating of depth $k$; the height of $(E, F)$ is $\ell < + \infty$; and $x \in M_*$ in Definition \ref{d.PP} (iv).
 Then the following holds.  \begin{itemize}
 \item[(i)] The distribution $\w{D} = \widetilde{E} + \widetilde{F}$ is bracket-generating of depth  $k$; the height of $(\w{E}, \w{F})$ is $\ell$; and the point $\w{x}$ is contained in $ \widetilde{M}_{*} \subset \widetilde{M}.$
         \item[(ii)]  The prolongation of the symbol algebra of $(\w{E}, \w{F})$ at $\w{x}$ is isomorphic to that of $(E, F)$ at $x$  as graded Lie algebras. \end{itemize} \end{proposition}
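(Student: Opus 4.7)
The strategy is to transport the inductive construction of the weak derived system, of the symbol algebra, and of its universal prolongation through $\varphi$ layer by layer, and then to convert constancy of ranks on the formal neighborhood of $\w{x}$ into constancy on an honest Euclidean neighborhood. Since $\varphi$ is a formal isomorphism, its action on formal vector fields preserves Lie brackets, and combined with ${\rm d}\varphi(E|_{(x/M)_{\infty}}) = \w{E}|_{(\w{x}/\w{M})_{\infty}}$ and the analogous statement for $F$, an induction on $i$ yields ${\rm d}\varphi(D_{-i}|_{(x/M)_{\infty}}) = \w{D}_{-i}|_{(\w{x}/\w{M})_{\infty}}$ for every $1 \leq i \leq k$. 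In particular $\dim \w{D}_{-i,\w{x}} = \dim D_{-i,x}$, so $\w{D}$ is bracket-generating at $\w{x}$ of depth exactly $k$.

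To promote this to a Euclidean statement, observe that $x \in M_{-k}$ means the maps $\delta_{-i}$ have locally constant rank on a Euclidean neighborhood of $x$, i.e., the appropriate minors of $\delta_{-i}$ vanish identically near $x$. Pulling back via $\varphi$, the corresponding minors of $\w{\delta}_{-i}$ vanish to infinite order at $\w{x}$ and therefore vanish on a Euclidean neighborhood of $\w{x}$ (a holomorphic function vanishing to infinite order at a point vanishes in a neighborhood of that point); combined with the lower semi-continuity of rank, this gives constant rank for each $\w{\delta}_{-i}$ near $\w{x}$, so $\w{x}$ is a regular point of $\w{D}$.

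For the symbol algebra, the graded pieces of ${\rm d}\varphi$ assemble into a graded Lie algebra isomorphism $\fg_x^- \to \w{\fg}_{\w{x}}^-$, since brackets on symbol algebras come from brackets of formal sections of the filtration, which are preserved by $\varphi$. Because $E_x$ maps to $\w{E}_{\w{x}}$ and $F_x$ to $\w{F}_{\w{x}}$, the distinguished abelian subalgebras satisfy $\fe_x \mapsto \w{\fe}_{\w{x}}$ and $\ff_x \mapsto \w{\ff}_{\w{x}}$; hence the definition of $\fg^0$ in Definition \ref{d.PP}(i) yields a compatible isomorphism $\fg_x^0 \to \w{\fg}_{\w{x}}^0$. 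Since the universal prolongation is algebraically determined by the fundamental graded Lie algebra via the recursion (\ref{e.prolong}), we obtain a graded Lie algebra isomorphism $\fg_x^\bullet \cong \w{\fg}_{\w{x}}^\bullet$, which proves (ii); in particular the height at $\w{x}$ equals $\ell$.

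To conclude $\w{x} \in \w{M}_*$ we still need constancy of $\dim \w{\fg}^i_{\w{y}}$ for $\w{y}$ in a Euclidean neighborhood of $\w{x}$. For $i \leq 0$ this follows from the second paragraph above. For $i \geq 1$, $\fg^i_y$ is the kernel of a linear map between vector spaces of fixed dimension built from the structure constants of $\fg^{<i}_y$, so its dimension is upper semi-continuous; the ``minors vanish to infinite order'' argument above applied to these kernels transfers constancy of $\dim \fg^i_y$ near $x$ (guaranteed by $x \in M_*$) to constancy of $\dim \w{\fg}^i_{\w{y}}$ near $\w{x}$. The only nontrivial point in the whole argument is this formal-to-Euclidean transfer of constancy, and I expect it to be the main, but routine, obstacle; once expressed as the vanishing of an appropriate determinantal condition on a bundle map it is entirely standard.
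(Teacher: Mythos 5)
Your proposal is correct and follows essentially the same strategy as the paper: reduce all the conditions in (i) to constant-rank conditions for vector bundle homomorphisms, transport the symbol algebra and its prolongation through $\varphi$ using finite jets for (ii), and then argue that constant rank near a point is a formal-neighborhood invariant. The only difference is in how that last step is packaged: the paper encapsulates it in Lemma~\ref{l.constrank}, characterizing constant rank by the existence, for every $v\in\Ker(h_x)$ and every $N$, of a local section $v^N$ with $v^N_x=v$ and $h(v^N)$ vanishing to order $\geq N$; you instead argue directly with minors, the fact that a holomorphic function vanishing to infinite order at a point vanishes near it, and lower semi-continuity of rank. The two are equivalent reformulations of the same observation, so this is a presentational rather than a substantive divergence. (One small ordering remark: in your first paragraph you write ${\rm d}\varphi(D_{-i}|_{(x/M)_\infty})=\w D_{-i}|_{(\w x/\w M)_\infty}$ and deduce depth $k$ before establishing that $\w x$ lies in $\w M_{-i}$, which is what makes $\w D_{-i}$ a genuine distribution near $\w x$; the induction on $i$ should interleave the formal identification of $\w D_{-i}$ with the constant-rank argument that defines it on a neighborhood, but this is easily fixed and all the ingredients are present.)
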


To prove Proposition \ref{p.constrank}, we use the following easy lemma.

\begin{lemma}\label{l.constrank}
Let $h: \sV \to \sW$ be a vector bundle homomorphism between two vector bundles $\sV$ and $\sW$ on a complex manifold $R$. For a point $x \in R$,
whether the homomorphism $h$ has constant rank in a neighborhood of $x \in R $ is determined by the restriction of $h$ to the formal neighborhood $(x/R)_{\infty}$.
\end{lemma}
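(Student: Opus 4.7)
The plan is to reduce the question to the identity theorem for holomorphic functions applied to minors of a local matrix representation of $h$. First I would trivialize $\sV$ and $\sW$ in a small Euclidean neighborhood $U$ of $x$, so that $h|_U$ is represented by a matrix $H(y) = (h_{ij}(y))$ whose entries are holomorphic functions on $U$. In these trivializations, the value $H(x)$ at $x$ is determined by $h \bmod \mathfrak{m}_x$, so the integer $r_0 := \mathrm{rank}(h_x)$ is read off from $h|_{(x/R)_\infty}$.

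Next I would use the well-known fact that rank is lower semicontinuous: $\mathrm{rank}(h_y) \geq r_0$ on an open neighborhood of $x$. Consequently, $h$ has constant rank in a neighborhood of $x$ if and only if $\mathrm{rank}(h_y) \leq r_0$ on some neighborhood, which is in turn equivalent to the simultaneous vanishing of all $(r_0+1)\times(r_0+1)$ minors of $H(y)$ on some neighborhood of $x$. Since each such minor $m_\alpha$ is a holomorphic function near $x$, by the identity theorem it vanishes on a neighborhood of $x$ if and only if its formal Taylor series at $x$ is zero, i.e., if and only if $m_\alpha \in \bigcap_{k\geq 0} \mathfrak{m}_x^k$.

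Finally I would observe that each minor $m_\alpha$ is a polynomial in the entries $h_{ij}$, so its formal Taylor expansion at $x$ is determined by the formal expansions of the $h_{ij}$, which is exactly the data $h|_{(x/R)_\infty}$. Combining this with the previous step shows that $h|_{(x/R)_\infty}$ determines both the integer $r_0$ and whether all the $(r_0+1)$-minors vanish formally at $x$, and hence whether $h$ has constant rank in a neighborhood of $x$. The argument is essentially a formal one: there is no real obstacle, the only point requiring care is to confirm independence from the choice of trivializations, which is immediate because the rank of $h$ at each point is intrinsic and the property of having locally constant rank does not depend on the frames chosen.
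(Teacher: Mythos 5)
Your proof is correct, and it takes a genuinely different route from the paper's. The paper characterizes ``$h$ has constant rank near $x$'' intrinsically: for every $v\in\Ker(h_x)$ and every $N>0$ there is a local section $v^N$ of $\sV$ with $v^N_x=v$ such that $h(v^N)$ vanishes to order $\geq N$ at $x$; this condition manifestly depends only on $h|_{(x/R)_\infty}$, and the paper asserts (without further detail) that it is equivalent to constant rank near $x$. You instead choose local trivializations, represent $h$ by a matrix $H$, use lower semicontinuity of the pointwise rank to reduce ``constant rank near $x$'' to ``all $(r_0+1)\times(r_0+1)$ minors of $H$ vanish on a neighborhood of $x$,'' and then invoke the identity theorem to say that vanishing of a holomorphic function on a neighborhood of $x$ is equivalent to vanishing of its formal Taylor series at $x$, which is clearly data read off from $h|_{(x/R)_\infty}$. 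Your approach is more concrete and actually spells out the equivalence that the paper labels ``obvious''; the price is a small digression (correctly handled) to note that the conclusion is independent of the trivialization. The paper's version is coordinate-free and shorter, but implicitly relies on the same underlying fact (that formal vanishing implies actual vanishing). Both are valid proofs.
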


\begin{proof} Observe  that the following two statements are obviously equivalent. \begin{itemize} \item  The homomorphism $h$ has constant rank in   a neighborhood of $x.$
\item  For any $v \in {\rm Ker}(h_x)$ and any positive integer $N >0$,  there is a holomorphic section $v^N$ of $\sV$ in a neighborhood of $x$ such that $ v = v^N_x$ and the local holomorphic section $h(v^N)$ of $\sW$ vanishes to order at least $N$ at $x$.
     \end{itemize}
     The latter statement is certainly determined by the restriction of $h$ to the formal neighborhood. \end{proof}

             \begin{proof}[Proof of Proposition \ref{p.constrank}]  Each of the properties of $D= E+F$ and $\w{D} = \w{E} + \w{F}$ involved in (i)   can be formulated as the constant rank condition for a suitable vector bundle homomorphism. Thus it is unchanged under the formal equivalence by Lemma \ref{l.constrank}. This proves (i).

             The Lie algebra structures of  the prolongation of the symbol algebra at $x$ (resp. at $\w{x}$) of $(E,F)$ (resp. $(\w{E}, \w{F})$) is determined by finite jets at $x$ (resp. at $\w{x}$) of vector fields arising from local sections of the distribution $D$  (resp. $\w{D}$). Thus it is unchanged under formal equivalences, proving (ii).  \end{proof}

\begin{proposition}\label{p.moduliregular}   Let $(E,F)$ (resp. $(\w{E}, \w{F})$) be a bracket-generating pseudo-product structure of finite height (for example, a bracket-generating and Levi-nondegenerate pseudo-product structure by Theorem \ref{t.TanakaPP}) on a complex manifold $M$ (resp. $\w{M}$).  Let $\varphi: (x/M)_{\infty} \to (\widetilde{x}/\widetilde{M})_{\infty}$ be a formal equivalence of the pseudo-product structures at points $x \in M$ and $\w{x} \in \w{M}$. Assume that  $(E,F)$ is of moduli type $\sB^i$ and  moduli-regular  at $x$. \begin{itemize} \item[(i)] Then  $(\widetilde{E}, \widetilde{F})$ is of moduli type $\sB^i$ and  moduli-regular  at $\widetilde{x}$. \item[(ii)] If $\beta_x: O_x \to B_x \subset \sB^i$ (resp. $\beta_{\w{x}}: O_{\w{x}} \to B_{\w{x}} \subset \sB^i$) is the submersion  in a neighborhood $x \in O_x \subset M$ (resp. $\w{x} \in O_{\w{x}} \subset \w{M}$) given in Definition \ref{d.regulartype} (iv), then $\beta_x(x) = \beta_{\w{x}}(\w{x})$ and  the germ of $B_x \subset \sB^i$ at $\beta(x)$ coincides with the germ of $B_{\w{x}} \subset \sB^i$ at $\beta_{\w{x}}(\w{x})$. \end{itemize}  \end{proposition}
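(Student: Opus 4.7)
My plan is to combine Proposition \ref{p.constrank}(ii) with Lemma \ref{l.constrank} and the functoriality of the constructions that go into $\lambda$. First, Proposition \ref{p.constrank}(ii) provides a graded Lie algebra isomorphism $\widetilde{\fg}^\bullet_{\widetilde{x}} \cong \fg^\bullet_x$. Picking compatible graded vector space identifications $\sV^\bullet_x \cong \fv^\bullet \cong \widetilde{\sV}^\bullet_{\widetilde{x}}$, I obtain points $\lambda(x), \widetilde{\lambda}(\widetilde{x}) \in {\rm Tanaka}(\fv^\bullet)$ lying in a single $\Aut(\fv^\bullet)$-orbit, hence in the same stratum ${\rm Tanaka}(\fv^\bullet)^i$, and mapping to a common point $a := \beta^i(\lambda(x)) = \beta^i(\widetilde{\lambda}(\widetilde{x})) \in \sB^i$.

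Next, I observe that $\lambda$ is built from Lie brackets of sections of $E$ and $F$ via universal vector-bundle operations on jet bundles, and is therefore functorial in $(E,F)$. Since $\varphi$ sends $E|_{(x/M)_\infty}$ to $\widetilde{E}|_{(\widetilde{x}/\widetilde{M})_\infty}$ and $F|_{(x/M)_\infty}$ to $\widetilde{F}|_{(\widetilde{x}/\widetilde{M})_\infty}$, it extends to an isomorphism of formal graded vector bundles $\sV^\bullet|_{(x/M)_\infty} \cong \widetilde{\sV}^\bullet|_{(\widetilde{x}/\widetilde{M})_\infty}$ intertwining $\lambda$ with $\widetilde{\lambda}$. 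Composition with $\beta^i$ eliminates the trivialization ambiguity, giving an identification $\beta^i \circ \lambda = (\beta^i \circ \widetilde{\lambda}) \circ \varphi$ of formal maps into $\sB^i$ on the formal neighborhoods.

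For (i), the condition that $x$ has moduli type $\sB^i$ decomposes into (a) the analytic set $\lambda^{-1}(\overline{{\rm Tanaka}(\sV^\bullet)^i})$ contains a classical neighborhood of $x$, and (b) $\lambda(x) \notin \overline{{\rm Tanaka}(\sV^\bullet)^i} \setminus {\rm Tanaka}(\sV^\bullet)^i$. An analytic subset contains a classical neighborhood of a point iff it contains the formal neighborhood (its defining functions vanishing to infinite order must vanish in a neighborhood), so (a) transfers to $\widetilde{x}$ via the formal identification above, and (b) transfers by the first paragraph. Moduli regularity at $x$ means $\beta^i \circ \lambda$ has constant rank equal to $\dim B_x$ near $x$; this is a constant-rank condition on a vector bundle homomorphism, detected by the formal neighborhood via Lemma \ref{l.constrank}, and so transfers to $\widetilde{x}$. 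For (ii), $\beta_x(x) = a = \beta_{\widetilde{x}}(\widetilde{x})$ is immediate from the first paragraph; the analytic image germ $B_x \subset \sB^i$ at $a$ is determined by the formal image of the submersion $\beta^i \circ \lambda$ near $x$, and similarly for $B_{\widetilde{x}}$, so the formal identification forces $B_x = B_{\widetilde{x}}$ as germs at $a$.

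The principal obstacle will be making rigorous the naturality claim in the second paragraph: that the symbol bracket, the universal prolongation procedure, and the decomposition of ${\rm Tanaka}(\sV^\bullet)$ into the strata ${\rm Tanaka}(\sV^\bullet)^i$ are all transported by $\varphi$ on the formal level. This should reduce to the fact that each piece of the construction is a universal vector-bundle operation on finite-order jet bundles of $E$ and $F$, which automatically commutes with any formal isomorphism preserving these distributions; the constant-rank and analytic-germ arguments for (i) and (ii) are then routine.
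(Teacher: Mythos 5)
Your proof is correct and reaches the same conclusion by a route that parallels the paper's but bypasses its key preparatory tool. The paper first proves Lemma \ref{l.regulartype}, which recasts both \emph{moduli type $\sB^i$} and \emph{moduli-regularity} as conditions about arbitrarily high-order contact with auxiliary sections $\lambda^N$, i.e.\ as finite-jet conditions; the proof of Proposition \ref{p.moduliregular} is then a three-line application of that lemma plus Proposition \ref{p.constrank}(ii). You instead argue directly: you decompose moduli type into (a) the analytic-set containment $\lambda^{-1}(\overline{{\rm Tanaka}(\sV^\bullet)^i}) \supset$ (neighborhood of $x$), detected on the formal neighborhood because a holomorphic function vanishing to infinite order at a point vanishes on a neighborhood, and (b) the pointwise open condition $\lambda(x) \in {\rm Tanaka}(\sV^\bullet_x)^i$, detected by Proposition \ref{p.constrank}(ii); and you handle moduli-regularity via Lemma \ref{l.constrank} applied to ${\rm d}(\beta^i\circ\lambda)$. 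This is essentially an inlined, self-contained version of what the paper proves in Lemma \ref{l.regulartype}, and it is arguably a bit cleaner since it avoids the $\lambda^N$-apparatus in favor of the analytic-vs-formal germ equivalence. Both approaches hinge on the same underlying fact: the section $\lambda$, the bundle $\sV^\bullet$, and the stratification are transported by any formal isomorphism preserving $E$ and $F$, because they are constructed naturally from the distributions $D_\bullet$ and the fiberwise Rosenlicht decomposition (fixed once and for all from the graded vector space type). Your closing worry about rigorously establishing this naturality is the same implicit work the paper does with equal brevity; it is not a genuine gap, since each ingredient (symbol bracket, prolongation of constant dimension on $M_*$, the canonical strata) is indeed an algebraic operation on finite-order jets of $E, F$ that automatically commutes with a distribution-preserving formal isomorphism. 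One small caution: you should make explicit that the differential of $\beta^i\circ\lambda$ is a homomorphism into the pulled-back bundle $(\beta^i\circ\lambda)^*T\sB^i$ over $M_*$, and that this pullback bundle over the formal neighborhood is itself determined by the formal restriction of $\beta^i\circ\lambda$, so that Lemma \ref{l.constrank} applies as stated.
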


To prove Proposition \ref{p.moduliregular}, we use the following lemma. Its nature is similar to the argument in the proof of Lemma \ref{l.constrank}. It may look like an unnecessarily complicated formulation of a simple fact, but this is one way to translate the formal condition in Proposition \ref{p.moduliregular} into a holomorphic condition.

\begin{lemma}\label{l.regulartype}
Let $(E, F)$ be a bracket-generating pseudo-product structure  of finite height $\ell$ on a complex manifold $M$.  From Definition \ref{d.regulartype},  we have a section $\lambda: M_* \to {\rm Tanaka}(\sV^{\bullet})$ and a   locally closed complex submanifold $$\lambda(M_*) \ \subset \ {\rm Tanaka}(\sV^{\bullet}) \ \subset \ \Hom(\wedge^2 \sV, \sV)$$ in the vector bundle $\Hom(\wedge^2 \sV, \sV)$ over $M_*$.
Then a point $x \in M_*$ is of moduli type $\sB^i$  if and only if \begin{itemize} \item[(1)] $\lambda(x) \in  {\rm Tanaka}(\sV^{\bullet}_x)^i$; and \item[(2)] for any positive integer $N >0$, there exist  a neighborhood $x \in O^N \subset M_*$ and a holomorphic section $\lambda^N: O^N \to  {\rm Tanaka}(\sV^{\bullet})|_{O^N}$ of the surjective holomorphic map ${\rm Tanaka}(\sV^{\bullet}) \to M_*$ restricted to $O^N$,  such that  $\lambda^N(x) = \lambda(x)$ and the submanifold $\lambda^N(O^N) $ of $\Hom(\wedge^2 \sV, \sV)$  has order of contact bigger than $N$ at $\lambda(x)$ with each of the two submanifolds $\lambda(O^N)$ and ${\rm Tanaka}(\sV^{\bullet})^i$ in $\Hom(\wedge^2 \sV, \sV)$. \end{itemize}
Furthermore, a point  $x \in M_*$  of moduli type $\sL^i$  is a moduli-regular point of the pseudo-product structure if and only if the submanifold $\lambda^N(O^N) $ in (2) satisfies
\begin{equation}\label{e.ON} \dim (T_{\lambda(x)} (\lambda^N(O^N)) \cap {\rm Ker}( {\rm d}_{\lambda(x)} \beta^i)) = \dim  M - \dim (\beta^i \circ \lambda (O^N)). \end{equation}
\end{lemma}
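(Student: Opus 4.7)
The plan is to reduce both equivalences to the holomorphic identity theorem, applied to well-chosen local defining equations of ${\rm Tanaka}(\sV^\bullet)^i$. The key initial translation is the following: ``contact of order greater than $N$ at $\lambda(x)$'' between the two parametrized submanifolds $\lambda^N(O^N)$ and $\lambda(O^N)$ (both of dimension $\dim M_*$, passing through the common point $\lambda(x)$, and each arising as the image of a holomorphic section $O^N \to \Hom(\wedge^2 \sV, \sV)$) is equivalent to saying that the two sections $\lambda^N$ and $\lambda$ agree to order $N$ at $x$; whereas contact of order greater than $N$ between $\lambda^N(O^N)$ and ${\rm Tanaka}(\sV^\bullet)^i$ at $\lambda(x)$ means that for any holomorphic function $g$ defined near $\lambda(x)$ in the total space of $\Hom(\wedge^2\sV,\sV)$ and vanishing on ${\rm Tanaka}(\sV^\bullet)^i$, the composition $g \circ \lambda^N$ vanishes to order greater than $N$ at $x$.

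For the first equivalence, the ``only if'' direction is then immediate: if $x$ is of moduli type $\sB^i$, pick a neighborhood $O^N \subset M_*$ on which $\lambda$ already lands in ${\rm Tanaka}(\sV^\bullet)^i$ and set $\lambda^N := \lambda|_{O^N}$; both contact conditions hold trivially with infinite order. For the ``if'' direction, condition (1) places $\lambda(x)$ on ${\rm Tanaka}(\sV^\bullet)^i$, and since the latter is a nonsingular locally closed subvariety of the total space of $\Hom(\wedge^2\sV,\sV)$, there exist an open neighborhood $W$ of $\lambda(x)$ and holomorphic functions $g_1,\ldots,g_s$ on $W$ whose common zero locus coincides with $W \cap {\rm Tanaka}(\sV^\bullet)^i$. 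The two contact hypotheses imply that $g_j \circ \lambda^N$ vanishes to order greater than $N$ at $x$, and that $g_j \circ \lambda^N$ and $g_j \circ \lambda$ have the same $N$-jet at $x$; so $g_j \circ \lambda$ vanishes to order greater than $N$ at $x$ for every positive integer $N$, hence to infinite order. By the identity principle, $g_j \circ \lambda \equiv 0$ in a neighborhood of $x$ for each $j$, so $\lambda$ takes values in ${\rm Tanaka}(\sV^\bullet)^i$ near $x$, proving that $x$ is of moduli type $\sB^i$.

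Once the first equivalence is in hand, the moduli-regular characterization reduces to a routine differential-geometric computation at $x$. Being of moduli type $\sB^i$ makes $\beta^i \circ \lambda$ a well-defined holomorphic map from a neighborhood of $x$ into $\sB^i$, and smoothness at $x$ is equivalent to surjectivity of ${\rm d}_x(\beta^i \circ \lambda)$, that is, to ${\rm Ker}({\rm d}_x(\beta^i \circ \lambda))$ having dimension $\dim M - \dim(\beta^i \circ \lambda(O^N))$ for a sufficiently small $O^N$. Since $\lambda$ is a section of the submersion ${\rm Tanaka}(\sV^\bullet) \to M_*$, the differential ${\rm d}_x\lambda$ is injective with image $T_{\lambda(x)}\lambda(O^N)$, and this kernel identifies with $T_{\lambda(x)}\lambda(O^N) \cap {\rm Ker}({\rm d}_{\lambda(x)}\beta^i)$. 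For any $\lambda^N$ as in (2) with $N \geq 1$, its $1$-jet at $x$ coincides with that of $\lambda$, so $T_{\lambda(x)}\lambda^N(O^N) = T_{\lambda(x)}\lambda(O^N)$, and condition (\ref{e.ON}) is precisely the smoothness criterion. The main obstacle I anticipate is verifying carefully that the ``order of contact'' phrasing really does amount to the jet-agreement statements used above; granting that, the two halves of the lemma come out cleanly from the identity theorem and a single dimension count, respectively.
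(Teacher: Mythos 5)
Your argument is correct and takes essentially the same route the paper takes; the paper's own proof is extremely terse (``it is clear that\dots'') and you have simply spelled out the intended details. The worry you raise at the end is not a real obstacle: contact of order greater than $N$ between the two section-images $\lambda^N(O^N)$ and $\lambda(O^N)$ is by definition equivalent to the two sections agreeing to order $N$ at $x$, and contact of order greater than $N$ between $\lambda^N(O^N)$ and the smooth locally closed subvariety ${\rm Tanaka}(\sV^\bullet)^i$ is by definition the vanishing to order greater than $N$ of $g\circ\lambda^N$ for local defining functions $g$, which is exactly how you use it. Your reduction of moduli-regularity to the rank--nullity count at $x$ is also the paper's intent (rank of ${\rm d}_x(\beta^i\circ\lambda)$ maximal $\Longleftrightarrow$ submersion onto its image near $x$).
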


\begin{proof}
Since ${\rm Tanaka}^i(\sV^{\bullet})$ is a locally closed submanifold in $\Hom(\wedge^2 \sV, \sV)$, it is clear that $x \in M_*$ is of moduli type $\sL^i$ if and only if (1) and (2) hold.
When $x \in M_*$ is of moduli type $\sL^i$ and (\ref{e.ON}) holds, then the morphism $\beta^i \circ \lambda$ restricted to a neighborhood of $x$ is a submersion over its image. This implies that $x$ is a moduli-regular point. The converse is straightforward. \end{proof}

\begin{proof}[Proof of Proposition \ref{p.moduliregular}]
The finite-order jets of the formal isomorphism $\varphi$ sends a holomorphic map $\lambda^N$ defined in a neighborhood of $x \in M$ in Lemma \ref{l.regulartype} to a holomorphic map $\w{\lambda}^N$ defined in a neighborhood of $\w{x} \in \w{M}_*$ with corresponding properties.  This implies (i) by Lemma \ref{l.regulartype}.

We have $\beta_x(x) = \w{\beta}_{\w{x}}(\w{x})$ from Proposition \ref{p.constrank} (ii).
By Lemma \ref{l.regulartype},  the two submanifolds $B_x$ and $B_{\w{x}}$ in $\sB^i$ have arbitrarily high order contact at the point $\beta_x(x) = \beta_{\w{x}}(\w{x})$. Thus they must have identical germs, proving (ii).
\end{proof}

\begin{theorem}\label{t.formalEF}
 Let $(E,F)$ (resp. $(\w{E}, \w{F})$) be a bracket-generating pseudo-product structure of finite height  on a complex manifold $M$ (resp. $\w{M}$).  Let $\varphi: (x/M)_{\infty} \to (\widetilde{x}/\widetilde{M})_{\infty}$ be a formal equivalence of the pseudo-product structures at points $x \in M$ and $\w{x} \in \w{M}$. Assume that the pseudo-product structure $(E,F)$ is moduli-regular at $x \in M$. Then $\varphi$ is convergent. \end{theorem}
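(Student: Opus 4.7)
The plan is to reduce Theorem \ref{t.formalEF} to Theorem \ref{t.converge} via the natural $\beta$-Tanaka structures canonically attached to the two pseudo-product structures by Proposition \ref{p.PPtoTanaka}.

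First, I would apply Proposition \ref{p.moduliregular} to the formal equivalence $\varphi$. Since $(E,F)$ is of moduli type $\sB^i$ and moduli-regular at $x$, part (i) of that proposition yields that $(\w{E}, \w{F})$ is also of moduli type $\sB^i$ and moduli-regular at $\w{x}$. Part (ii) then ensures that $\beta_x(x) = \beta_{\w{x}}(\w{x}) =: a \in \sB^i$ and that the germs of the locally closed submanifolds $B_x \subset \sB^i$ and $B_{\w{x}} \subset \sB^i$ coincide at $a$. Thus, after shrinking the neighborhoods $O_x \subset M$ and $O_{\w{x}} \subset \w{M}$ from Definition \ref{d.regulartype} (iv), we may take $B_x = B_{\w{x}} =: B$ and use the same choice of Set-up \ref{n.B} on $B$ (coming from our fixed choice of Set-up \ref{n.B} on $\sB^i$, see Definition \ref{d.choice}).

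Next I would invoke Proposition \ref{p.PPtoTanaka}: the pseudo-product structure $(E,F)$ canonically determines a $\beta_x$-Tanaka structure $(\sD_{\bullet}, \sP \subset \I^{<0} O_x)$ on $O_x$, and similarly $(\w{E},\w{F})$ canonically determines a $\beta_{\w{x}}$-Tanaka structure $(\w{\sD}_{\bullet}, \w{\sP} \subset \I^{<0} O_{\w{x}})$ on $O_{\w{x}}$. Because these structures are built by constructions (taking weak derived systems, passing to the symbol algebra, identifying the subspaces corresponding to $E$ and $F$, and then composing with the $\sB^i$-classifying map) that involve only finite jets of vector fields tangent to $E$ and $F$, and since $\varphi$ sends $E|_{(x/M)_\infty}$ to $\w{E}|_{(\w{x}/\w{M})_\infty}$ and $F|_{(x/M)_\infty}$ to $\w{F}|_{(\w{x}/\w{M})_\infty}$, the formal isomorphism $\varphi$ must send $\sD_{\bullet}|_{(x/M)_\infty}$ to $\w{\sD}_{\bullet}|_{(\w{x}/\w{M})_\infty}$, $\sP|_{(x/M)_\infty}$ to $\w{\sP}|_{(\w{x}/\w{M})_\infty}$, and satisfy $\w{\beta}_{\w{x}} \circ \varphi = \beta_x|_{(x/M)_\infty}$ as maps into the common base $B$ (by Proposition \ref{p.moduliregular} (ii), since the classifying map is determined by finite jet data at each point). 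The verification of this compatibility step by step is the step requiring the most care: one must check in particular that the classifying submersion $\beta_x$ is preserved by $\varphi$ at every finite order, which follows from Lemma \ref{l.regulartype} applied along the formal equivalence.

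Having established that $\varphi$ is a formal equivalence of $\beta_x$-Tanaka structures in the precise sense required by the hypotheses of Theorem \ref{t.converge}, the conclusion that $\varphi$ is convergent follows immediately from that theorem. The main obstacle, as noted, is the bookkeeping needed to verify that the $\beta$-Tanaka structure attached to a pseudo-product structure by Proposition \ref{p.PPtoTanaka} is determined by arbitrarily large finite jets of $E$ and $F$, so that formal equivalence of pseudo-product structures upgrades to formal equivalence of $\beta$-Tanaka structures; all remaining ingredients are then off the shelf.
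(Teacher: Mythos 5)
Your proposal is correct and follows the paper's own proof essentially verbatim: Proposition \ref{p.moduliregular} to identify the bases, Proposition \ref{p.PPtoTanaka} to pass to the canonical $\beta$-Tanaka structures, and Theorem \ref{t.converge} to conclude convergence. The only difference is that you spell out more explicitly why $\varphi$ preserves the $\beta$-Tanaka structure (via finite-jet determination), a step the paper leaves implicit.
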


 \begin{proof}
 By Proposition \ref{p.moduliregular},
 the pseudo-product structure $(\w{E}, \w{F})$ is moduli-regular at $\w{x}$ and we can identify $B_x$ with $B_{\w{x}}$ by choosing $O_x$ and $O_{\w{x}}$ suitably.
 Replace $M$ by $ O_x$ (resp. $\w{M}$ by $ O_{\w{x}}$) and set $\beta := \beta_x$ (resp. $\w{\beta} := \beta_{\w{x}}$.) By Proposition \ref{p.PPtoTanaka}, the pseudo-product structure $(E,F)$ on $M$ (resp. $(\w{E}, \w{F})$ on $\w{M}$) determines  a $\beta$-Tanaka structure $(D_{\bullet}, \sP \subset \I^{<0}M)$ on $M$ (resp. a $\w{\beta}$-Tanaka structure  $(\w{D}_{\bullet}, \w{\sP}\subset \I^{<0} \w{M})$ on $\w{M}$).
   Then $\varphi$ is a formal isomorphism between the $\beta$-Tanaka structure and the $\w{\beta}$-Tanaka structure. Thus it is convergent by Theorem \ref{t.converge}.  \end{proof}

To prove Theorem \ref{t.FPC}, we need the following lemma, a consequence of the maximum principle.

   \begin{lemma}\label{l.maxprinciple}
Let $\varphi: (A/X)_{\infty} \to (\w{A}/\w{X})_{\infty}$ be a formal isomorphism between two compact complex submanifolds $A \subset X$ and $\widetilde{A} \subset \widetilde{X}.$
Suppose there exists a closed analytic subset $S \subset A, S \neq A,$ such that  the formal isomorphism $\varphi$ is convergent at any point of $A \setminus S$. Then $\varphi$ is convergent at every point of $A$. \end{lemma}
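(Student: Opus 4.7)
The plan is to reduce the statement to a local question about formal power series in coordinates, and then to apply the maximum modulus principle to propagate the Cauchy-type bounds available on $A\setminus S$ across the analytically thin set $S$.

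First, the set of $x \in A$ at which $\varphi$ is convergent is open in $A$, because convergence at $x$ means that $\varphi$ admits a holomorphic representative on an open neighborhood of $x$ in $X$. By uniqueness of holomorphic continuation, the local convergent representatives at the points of $A \setminus S$ glue to a single holomorphic map $\Phi : V \to \widetilde{X}$ on an open neighborhood $V \subset X$ of $A \setminus S$ whose induced morphism on the formal completion of $V$ along $V \cap A$ agrees with $\varphi$. It remains to prove convergence at each $x_0 \in S$.

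Fix such $x_0$, and choose local holomorphic coordinates $(z_1, \ldots, z_m, w_1, \ldots, w_r)$ centered at $x_0$ with $A = \{w = 0\}$ locally, together with local coordinates on $\widetilde{X}$ near $\varphi(x_0)$. In these coordinates $\varphi$ is represented by formal power series
\[
\widetilde{z}_i \, = \, \sum_{\alpha} F_{i, \alpha}(z)\, w^{\alpha}, \qquad \widetilde{w}_j \, = \, \sum_{\alpha} G_{j, \alpha}(z)\, w^{\alpha},
\]
with the coefficients $F_{i, \alpha}, G_{j, \alpha}$ holomorphic on a polydisk $\Delta_z \subset \C^m$; the analytic subset $S$ appears on this chart as $S_z \times \{0\}$ for some proper closed analytic subset $S_z \subset \Delta_z$. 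For each $z^* \in \Delta_z \setminus S_z$, the holomorphy of $\Phi$ on a polydisk around $(z^*, 0)$ combined with Cauchy's inequalities gives pointwise bounds $|F_{i, \alpha}(z)|, |G_{j, \alpha}(z)| \leq M/\delta^{|\alpha|}$ for $z$ in a small polydisk around $z^*$, with constants $M, \delta > 0$ that may be chosen uniformly on any compact subset of $\Delta_z \setminus S_z$.

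The maximum principle enters as follows. Since $S_z$ is a proper closed analytic subset of $\Delta_z$, no distinguished boundary can be contained in $S_z$ (a holomorphic function vanishing on $T_r = \{(z_1, \ldots, z_m) : |z_i| = r_i\}$ must vanish identically); moreover the locus of polyradii $r$ for which $T_r$ meets $S_z$ is a proper subanalytic subset of $(0, \infty)^m$. Hence arbitrarily close to the origin we can choose a polyradius $r$ so that the compact distinguished boundary $T_r$ lies entirely in $\Delta_z \setminus S_z$. For such $r$, the uniform Cauchy bounds above yield $\sup_{T_r}|F_{i, \alpha}|, \sup_{T_r}|G_{j, \alpha}| \leq M/\delta^{|\alpha|}$ with $M, \delta > 0$ independent of $\alpha$, and iterated application of the one-variable maximum modulus principle in each $z$-coordinate gives $\sup_{\bar{\Delta}_{z, r}}|F_{i, \alpha}| = \sup_{T_r}|F_{i, \alpha}|$, and similarly for $G_{j, \alpha}$. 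These bounds force the formal series displayed above to converge absolutely and uniformly on $\bar{\Delta}_{z, r} \times \{|w| < \delta\}$, a genuine open neighborhood of $x_0$ in $X$, which establishes convergence of $\varphi$ at $x_0$.

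The main obstacle I expect is the selection of a polyradius $r$ whose distinguished boundary $T_r$ misses $S_z$ entirely: in high ambient dimension the locus $\{r : T_r \cap S_z \neq \emptyset\}$ can have nonempty interior in $(0, \infty)^m$, so one cannot simply appeal to a dimension count but must instead exploit that this locus is subanalytic and not all of $(0, \infty)^m$ in order to produce good $r$ arbitrarily close to the origin. Once such $r$ is in hand, the rest reduces to a routine combination of Cauchy's inequalities and the iterated maximum modulus principle.
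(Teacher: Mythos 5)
Your proposal shares the paper's key idea: represent $\varphi$ locally by power series $\sum_\alpha F_{i,\alpha}(z)\,w^\alpha$ in the normal directions, observe that convergence on $A\setminus S$ yields Cauchy-type bounds on the coefficients there, and use the maximum modulus principle to propagate those bounds into $S$. The structural difference is that you run the (iterated) maximum principle on a full $m$-dimensional closed polydisc in the $z$-variable, reading boundary data off the distinguished boundary $T_r$, whereas the paper restricts everything to a one-complex-dimensional arc $\gamma:\Delta_2\to A\cap U$ through $x$ and applies the one-variable maximum principle to the partial sums $\sum_{|\nu|\leq s} g_{i,\nu}(\gamma(t))\,w^\nu$ as holomorphic functions of $t\in\Delta_1$. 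That dimensional reduction is not cosmetic: it is what makes the ``avoid $S$'' step elementary, since along a generic complex line through $x$ (not lying in $S$) the set $S$ is discrete, so a generic circle radius misses it.

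In your $m$-dimensional version you must produce arbitrarily small polyradii $r$ with $T_r\cap S_z=\emptyset$, and this is where a genuine gap sits. You correctly identify it as the main obstacle, but the two facts you invoke do not close it. That $T_r$ is never entirely \emph{contained} in $S_z$ says nothing about whether it merely \emph{meets} $S_z$; and that the bad locus $\{r : T_r\cap S_z\neq\emptyset\}$ is a proper subanalytic subset of $(0,\infty)^m$ is also insufficient, because a proper subanalytic subset of $(0,\infty)^m$ can contain an entire punctured box $(0,\delta)^m$ near the origin (e.g.\ the half-space $\{r_1<1/2\}$), which would rule out every admissible small polyradius. The existence statement you need is in fact true, but establishing it requires genuine additional work (for instance a Weierstrass--Puiseux analysis of how $S_z$ projects under $z\mapsto(|z_1|,\ldots,|z_m|)$), not an appeal to subanalyticity alone. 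The cleanest fix is exactly the paper's: replace the distinguished-boundary argument by the one-dimensional slicing, after which the rest of your computation --- uniform Cauchy estimates on the compact boundary circle, maximum principle, absolute convergence on a neighborhood of $x_0$ --- goes through with $\bar\Delta_{z,r}$ replaced by a disc in the arc parameter.
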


\begin{proof}
Let $m$ be the dimension of $A$ and let $n$ be the dimension of $X$.
For a point $x \in S$, we can choose   a neighborhood $U$ of $ x \in X$ biholomorphic to $(U \cap A) \times \Delta^{n-m}$ with coordinates $(z^1, \ldots z^m)$ on $U \cap A$ and $(w^1, \ldots, w^{n-m})$ on the polydisc $\Delta^{n-m}$ such that under a suitable  coordinate system in a neighborhood of $\varphi(x) \in \w{X}$, the formal isomorphism $\varphi$ is represented by an $n$-tuple of  formal power series  in $n-m$ variables $w= (w^1, \ldots, w^{n-m})$ (for example, see page 107 of \cite{CG}) $$ g_{i}(z,w) = \sum_{|\nu| = 1}^{\infty} g_{i, \nu}(z) \cdot  w^{\nu} \mbox{ for } 1 \leq i \leq n, $$ where the coefficient $g_{i,\nu}(z)$ is a holomorphic function in $z=(z^1, \ldots, z^m)$ and $\nu = (\nu^1, \ldots, \nu^{n-m})$ is the multi-index of nonnegative integers with $$|\nu| := \nu^1 + \cdots + \nu^{n-m} \mbox{ and } w^{\nu} := (w^1)^{\nu^1} \cdots (w^{n-m})^{\nu^{n-m}}.$$   This formal power series converges at a point $ (z, w=0) \in A\cap U$, if and only if the formal isomorphism $\varphi$ converges at that point.

Writing $$\Delta_{r}:= \{ t \in \C \mid |t| < r\} \mbox{ and } \p \Delta_r:= \{ t \in \C \mid |t|=r\},$$  choose an arc $\gamma: \Delta_{2} \to A\cap U$ such that  $\gamma(0) =x \in S$ and $\gamma (\p \Delta_1) \cap S = \emptyset$.
Then the formal power series $g_{i}(t, w) := g_i(\gamma(t), w)$  in $w$ converges when $ t \in \p \Delta_1$. Thus there is some $\epsilon >0$ such that for each fixed $w = (w^1, \ldots, w^{n-m})$ with $ |w^1|, \ldots, |w^{n-m}| < \epsilon$,  the infinite sequence $$\{  | \sum_{|\nu|=1}^{s} g_{i, \nu}(\gamma(t))  \cdot w^{\nu}|^2 \in \R \mid s \in \N \} $$  is bounded on $\gamma(\p \Delta_1)$.   By the maximum principle, they are bounded on the whole $\Delta_1$.  It follows that the formal power series converges at $x$.
\end{proof}

We are ready to prove Theorem \ref{t.FPC}.

   \begin{proof}[Proof of Theorem \ref{t.FPC}]
   By Proposition \ref{p.immersive}, we have a pseudo-product structure $(E,F)$ on $\sU^{\circ}$ satisfying the conditions in Definition \ref{d.regulartype}.   Let $y \in \sU_{\flat}$ be a moduli-regular point for $(E,F)$. We claim that the submanifold $A:= A_z \subset X$ corresponding to  $z = \rho(y)\in \mathcal K_{\flat}:=\rho(\mathcal U_{\flat})$ satisfies the formal principle with convergence.

   Let $\w{A} \subset \w{X}$ be a compact complex submanifold with a formal isomorphism $\phi: (A/X)_{\infty} \to (\w{A}/\w{X})_{\infty}$. Let $\w{\sK} \subset {\rm Douady}(\w{X})$ be the component containing the deformations of $\w{A}$ in $\w{X}$
    with the universal family $$\w{\sK} \ \stackrel{\w{\rho}}{\longleftarrow} \ \w{\sU} \ \stackrel{\w{\mu}}{\longrightarrow} \ \w{X}.$$ Let $\w{z} \in \w{\sK}$ be the point corresponding to $\w{A}$. Write $A^{\sharp} \subset \sU$ for the fiber $\rho^{-1}(z)$ and  $\w{A}^{\sharp} \subset \w{\sU}$ for the fiber $\w{\rho}^{-1}(\w{z})$.

    By the functorial property of Douady space (see p. 509 of \cite{Hir}, also Lemma 3.5 of \cite{Hw19}), the formal isomorphism $\phi$ induces a formal isomorphism $\phi^{\sharp}: (A^{\sharp}/\sU)_{\infty} \to (\w{A}^{\sharp}/\w{\sU})_{\infty}$ such that for  a general point $y \in A^{\sharp}$ and $\w{y} = \phi(y)$, the restriction
    $$\varphi:= \phi^{\sharp}|_{(y/\sU)_{\infty}} : (y/\sU)_{\infty} \to (\w{y}/\w{\sU})_{\infty}$$ is a formal equivalence of the pseudo-product structures. Since $y$ is a moduli-regular point, Theorem \ref{t.formalEF} implies that $\varphi$ converges. Hence $\phi$ converges at $x = \mu(y)$ by Lemma \ref{l.Stoll}.
    Since $x$ is a general point of $A$, we see that $\phi$ converges at every point of $A$ by Lemma \ref{l.maxprinciple} \end{proof}

\section{Examples of the family $\sK$ in Theorem \ref{t.FPC}}\label{s.examples}

Let us examine  the conditions for the family $\sK$ in Theorems \ref{t.FPC} and \ref{t.AP} in terms of the geometry of the corresponding submanifolds in $X$.  The generic immersiveness of the map $\mu^{\sharp}$ has a clear geometric meaning: general members have distinct tangent spaces at general points of the submanifolds.
The generic immersiveness of  $\rho^{\sharp}$ has the following interpretation, which is a general version of the condition (iv) in Theorem \ref{t.Hw}.

\begin{proposition}\label{p.rhosharp}
In Definition \ref{d.tau}, the map $\rho^{\sharp}$ is generically immersive if and only if for a general member $A \subset X$ of $\sK$, a general point $x \in A$ and any point $x' \neq x$ in a neighborhood of $x$ in $A$,
$$H^0(A, N_{A/X} \otimes \mathbf{m}_x) \neq H^0(A, N_{A/X} \otimes \mathbf{m}_{x'}).$$
\end{proposition}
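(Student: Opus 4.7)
The plan is to make $\rho^\sharp(y)$ explicit in normal-bundle cohomology, reduce its generic immersivity to a statement on a single fiber of $\rho$, and then invoke the standard equivalence between generic immersivity and generic local injectivity for holomorphic maps.

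First I would identify the data $\rho^\sharp(y)$. For $y \in \sU'$ write $z = \rho(y)$, $x = \mu(y)$, and $A = A_z$. By Kodaira's theorem on the smoothness of the Douady space one has the canonical identification $T_z\sK \cong H^0(A, N_{A/X})$. The fiber $\mu^{-1}(x) \subset \sU$ consists of pairs $(z', x')$ with $x' = x \in A_{z'}$, i.e.\ it parametrizes those nearby deformations of $A$ that still contain the fixed point $x \in X$; accordingly ${\rm Ker}({\rm d}_y \mu) = T_y \mu^{-1}(x)$ corresponds under ${\rm d}_y\rho$ to the subspace of first-order deformations of $A$ passing through $x$, which is precisely $H^0(A, N_{A/X} \otimes {\bf m}_x)$. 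Since $\mu|_{\rho^{-1}(z)}$ is a biholomorphism onto $A$, the restriction of ${\rm d}_y \rho$ to ${\rm Ker}({\rm d}_y \mu)$ is injective, and so
$$\rho^\sharp(y) \ = \ H^0\!\bigl(A_{\rho(y)},\, N_{A_{\rho(y)}/X} \otimes {\bf m}_{\mu(y)}\bigr) \ \subset \ H^0(A_{\rho(y)}, N_{A_{\rho(y)}/X}) \ = \ T_{\rho(y)}\sK.$$

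Next I would reduce from $\sU'$ to a single submanifold. The map $\rho^\sharp$ is a map of fiber bundles over $\sK$, in the sense that it commutes with the projection ${\rm Gr}(r;T\sK) \to \sK$ and lifts $\rho$. Since $\rho$ is a submersion, ${\rm Ker}({\rm d}_y \rho^\sharp) \subset {\rm Ker}({\rm d}_y \rho)$, so $\rho^\sharp$ is immersive at $y$ if and only if the restriction $\rho^\sharp|_{\rho^{-1}(z)}$ is immersive at $y$. Transporting along $\mu \colon \rho^{-1}(z) \xrightarrow{\sim} A$, this restricted map is the holomorphic map
$$\sigma_A : A \longrightarrow {\rm Gr}\bigl(r;\, H^0(A, N_{A/X})\bigr), \qquad x \longmapsto H^0(A, N_{A/X} \otimes {\bf m}_x).$$
Because $\rho$ is smooth and proper with connected fibers, genericity in $\sU'$ translates to genericity of $z \in \sK$ together with genericity of $x \in A_z$, so $\rho^\sharp$ is generically immersive iff $\sigma_{A_z}$ is generically immersive for $z$ in a nonempty Zariski-open subset of $\sK$.

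Finally I would invoke the standard fact that a holomorphic map $\sigma$ between complex manifolds is generically immersive if and only if it is locally injective at a general point of its domain. The forward direction is the inverse function theorem; the converse follows from the constant-rank theorem, which presents a non-generically-immersive $\sigma$ locally as a projection with positive-dimensional fibers, hence as a non-locally-injective map on every neighborhood of a general point. Applied to $\sigma_A$ this equivalence gives exactly the stated criterion: $\sigma_A$ is generically immersive iff for a general $x \in A$ and every $x' \neq x$ in a neighborhood of $x$ in $A$, one has $H^0(A, N_{A/X} \otimes {\bf m}_x) \neq H^0(A, N_{A/X} \otimes {\bf m}_{x'})$. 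The only nontrivial technical point is the deformation-theoretic identification in the first paragraph; everything else is a direct bookkeeping with ranks, and is in the same spirit as the universal-family computation already used in the proof of Theorem \ref{t.FPC}.
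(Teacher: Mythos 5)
Your proposal is correct and follows essentially the same route as the paper: identify $\rho^\sharp(y)$ with $H^0(A, N_{A/X}\otimes\mathbf{m}_{\mu(y)}) \subset H^0(A, N_{A/X}) = T_{\rho(y)}\sK$ via the standard deformation-theoretic identification, then observe that generic immersivity of $\rho^\sharp$ is equivalent to local injectivity of the map $x \mapsto H^0(A, N_{A/X}\otimes\mathbf{m}_x)$ at a general point (inverse-function theorem one way, constant-rank theorem the other). You spell out the fiberwise reduction and the constant-rank argument more explicitly than the paper does, but the structure and the two key ingredients are identical.
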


\begin{proof}
Let $z \in \sK$ be the point corresponding to $A \subset X.$
The basic deformation theory of compact submanifolds gives a natural identification $H^0(A, N_{A/X}) = T_z \sK$  such that the subspace $$H^0(A, N_{A/X} \otimes \mathbf{m}_x) \subset H^0(A, N_{A/X}) = T_z \sK $$ consisting of sections vanishing at $x \in A$ corresponds to ${\rm d} \rho ({\rm Ker}({\rm d}_y \mu) )\subset T_z \sK$ for the point $y \in \sU$ with $z = \rho(y)$ and $x = \mu(y)$. Thus the condition $H^0(A, N_{A/X} \otimes \mathbf{m}_x) \neq H^0(A, N_{A/X} \otimes \mathbf{m}_{x'})$ is equivalent to saying that $\rho^{\sharp}$ is injective in a neighborhood of $y$. This is equivalent to the generic immersiveness of $\rho^{\sharp}$.
\end{proof}

 For the bracket-generating condition, it is convenient to introduce the following.

   \begin{definition}\label{d.sKdistribution}
In Set-up \ref{set.sK}, assume that the fibers of $\mu$ are irreducible.
For a general point $y \in \sU$, consider a neighborhood $U_y \subset \sU$ of $y$. For $u \in U_y$ and  $x = \mu(u)$, let ${\rm Dist}^{\sK}_x \subset T_x X$ be the vector subspace spanned by the family of $m$-dimensional subspaces $\{{\rm d} \mu ({\rm Ker} ({\rm d}_u \rho)) \mid u \in U_y\}$. By the assumption that $\mu$ has irreducible fibers, this does not depend on the choice of the neighborhood $U_y$ and  determines a distribution ${\rm Dist}^{\sK} \subset T X^o$ on a nonempty open subset $X^o \subset X$, called the {\em distribution spanned by } $\sK$. \end{definition}

\begin{proposition}\label{p.sKdistribution}
The distribution ${\rm Dist}^{\sK}$ spanned by $\sK$ in Definition \ref{d.sKdistribution} is a bracket-generating distribution on $X^o$ if and only if $\sK$ is  bracket-generating. \end{proposition}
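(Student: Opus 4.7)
The plan is to establish the equivalence by translating the bracket-generating condition on $\sK$ from Definition \ref{d.bracketgen} into the integrability of the distribution ${\rm Dist}^{\sK}$, with Frobenius' theorem as the main tool in both directions.

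\emph{($\Leftarrow$)} Suppose $\sK$ is not bracket-generating; at a general $y \in \sU$ choose neighborhoods $W \ni \rho(y)$, $O \ni \mu(y)$ and a submersion $\zeta: O \to R$ with $R$ of positive dimension such that $\mu(\rho^{-1}(z)) \cap O$ lies in a single $\zeta$-fiber for all $z \in W$. For each $u \in \rho^{-1}(W) \cap \mu^{-1}(O)$, the subspace ${\rm d}\mu({\rm Ker}({\rm d}_u\rho))$ is tangent at $\mu(u)$ to the submanifold $\mu(\rho^{-1}(\rho(u))) \cap O$, hence contained in $\ker({\rm d}_{\mu(u)}\zeta)$. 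Varying $u$ in a neighborhood of $y$ and taking spans, one gets ${\rm Dist}^{\sK}|_{O'} \subset \ker({\rm d}\zeta)|_{O'}$ on some nonempty open $O' \subset O$; since $\ker({\rm d}\zeta)$ is Frobenius integrable of positive corank, the weak derived system of ${\rm Dist}^{\sK}$ stays inside $\ker({\rm d}\zeta)$, so ${\rm Dist}^{\sK}$ is not bracket-generating.

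\emph{($\Rightarrow$)} Conversely, assume ${\rm Dist}^{\sK}$ is not bracket-generating, and work on a Zariski-open subset of $X^o$ where all steps $D_{-1}:={\rm Dist}^{\sK} \subset \cdots \subset D_{-k}$ of the weak derived system have constant rank and $D_{-k}$ is a proper subbundle of the tangent bundle. First I would establish that $D_{-k}$ is Frobenius integrable. The weak derived system automatically satisfies $[D_{-i}, D_{-j}] \subset D_{-(i+j)}$ (it is spanned by iterated brackets of $D_{-1}$ of length $\leq i+j$), and the terminal condition $\delta_{-k-1}=0$ reads $[D_{-1}, D_{-k}] \subset D_{-k}$; a short induction on $i$ using the Jacobi identity together with $D_{-i} = D_{-i+1} + [D_{-1}, D_{-i+1}]$ then promotes this to $[D_{-i}, D_{-k}] \subset D_{-k}$ for $1 \leq i \leq k$, and in particular $[D_{-k}, D_{-k}] \subset D_{-k}$. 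By Frobenius, on a small foliation-chart neighborhood $O$ of $\mu(y)$ (for a general $y \in \sU$) there is a submersion $\zeta: O \to R$ with $R$ of positive dimension whose fibers are integral leaves of $D_{-k}$.

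To close the argument I need $\mu(\rho^{-1}(z)) \cap O$ to lie in a single $\zeta$-fiber for $z$ in a neighborhood $W$ of $\rho(y)$. I would choose a connected open neighborhood $V \subset \sU$ of $y$ on which $\mu$ is submersive and on which $\rho^{-1}(z) \cap V$ is connected for every $z$ near $\rho(y)$, which is possible because $\rho$ is a smooth morphism with connected fibers by Set-up \ref{set.sK}(ii); then shrink $O$ and $W$ so that $\rho^{-1}(W) \cap \mu^{-1}(O) \subset V$. The identity $\mu(\rho^{-1}(z)) \cap O = \mu(\rho^{-1}(z) \cap \mu^{-1}(O))$ then shows this set is connected, and since every tangent space ${\rm d}\mu({\rm Ker}({\rm d}_u\rho))$ lies in ${\rm Dist}^{\sK} \subset D_{-k} = \ker({\rm d}\zeta)$, this connected integral submanifold is forced into a single leaf of $\zeta$, contradicting the bracket-generating condition on $\sK$. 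The main obstacle I anticipate is precisely this last bookkeeping step: ensuring that the intersection is connected (not merely that each component lies in a fiber) requires simultaneously shrinking $V$, $O$, and $W$, using submersiveness of $\mu$ at the general $y$ together with connectedness of $\rho$-fibers. The Jacobi-identity induction for the integrability of $D_{-k}$ is conceptually routine but must be written down cleanly.
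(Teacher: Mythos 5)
Your proof is correct, but it takes a genuinely different route from the paper's. The paper first invokes Lemma \ref{l.BG} to translate the bracket-generation of $\sK$ into bracket-generation of the pseudo-product distribution $D=E+F$ on $\sU^o$, and then transfers between $\sU^o$ and $X$ by descending an integrable distribution $\sL\supset D$ along $\mu$ (using that general fibers of $\mu$ are irreducible). You instead work entirely on $X$: in one direction you show $\mathrm{Dist}^{\sK}$ sits inside the integrable distribution $\ker(\mathrm{d}\zeta)$, and in the other you observe that the terminal term $D_{-k}$ of the weak derived system is itself Frobenius integrable (your Jacobi-identity induction, or equivalently $[D_{-k},D_{-k}]\subset D_{-2k}=D_{-k}$ once the flag stabilizes) and then pull the resulting foliation back to contradict the bracket-generation of $\sK$. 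Your route is more elementary and self-contained since it sidesteps the pseudo-product structure on $\sU$; the paper's route is shorter in print only because it leans on the already-established Lemma \ref{l.BG}.

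One caveat on attribution of ingredients in your final "bookkeeping" step. The claim that one can shrink $O$ and $W$ so that $\rho^{-1}(W)\cap\mu^{-1}(O)\subset V$ is indeed true, but what actually makes it work is not submersiveness of $\mu$ at $y$. Rather: since $\mu$ restricted to $\rho^{-1}(z_0)$ is injective (it is an isomorphism onto the compact submanifold $A_{z_0}$), the compact set $\mu(\rho^{-1}(z_0)\setminus V)$ misses $x_0=\mu(y)$, so one may choose $O$ with $\overline{O}$ disjoint from it; then $N:=V\cup\mu^{-1}(\overline{O}^{\,c})$ is an open neighborhood of $\rho^{-1}(z_0)$, and properness of $\rho$ gives a $W$ with $\rho^{-1}(W)\subset N$, which forces $\rho^{-1}(W)\cap\mu^{-1}(O)\subset V$. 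So properness of $\rho$ and injectivity of $\mu$ on $\rho$-fibers are the load-bearing facts; you do mention properness when you quote Set-up \ref{set.sK}(ii), but the injectivity of $\mu$ on fibers (which comes from the Douady-space set-up, spelled out at the start of Definition \ref{d.sK}) should be cited in place of submersiveness. This does not affect the correctness of the argument, and it is worth noting that the paper's Lemma \ref{l.BG} passes over the identical point implicitly; you were right to flag it as the delicate spot.
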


\begin{proof}
By Lemma \ref{l.BG}, we may prove that  the distribution $D=E+F$ in Definition \ref{d.sK} is  bracket-generating if and only if ${\rm Dist}^{\sK}$ is  bracket-generating.  Suppose $D$ is not bracket-generating. Then there is a nonempty Zariski-open subset $\sU^* \subset \sU^o$ with an integrable distribution  $\sL \subset T\sU^*$ such that  $E|_{\sU^*} \subset \sL$ and $F|_{\sU^*} \subset \sL$. Since general fibers of $\mu$ are irreducible, this implies that $\sL$ descends to a distribution ${\rm d} \mu (\sL) \subset T X'$ on some nonempty open subset $X' \subset X$.
Since $F|_{\sU^*} \subset \sL$, we see that ${\rm Dist}^{\sK}|_{X'} \subset {\rm d} \mu(\sL)$, proving that ${\rm Dist}^{\sK}$ is not bracket-generating. The proof of the converse is similar.  \end{proof}

The following is well-known  (Theorem 1.2 of \cite{Hw01}, Section 1 of \cite{HwM}). We give a proof for the reader's convenience.

\begin{lemma}\label{l.unbendable}
  If a general member of $\sK$ in Definition \ref{d.tau} is an unbendable rational curve, namely, a rational curve with normal bundle isomorphic to $\sO (1)^{\oplus r} \oplus \sO^{\oplus (\dim X - r -1)}$, then both $\rho^{\sharp}$ and $\mu^{\sharp}$ are generically immersive. \end{lemma}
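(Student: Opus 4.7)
The plan is to treat the two maps separately, in each case reducing the question to a cohomological calculation on $\BP^1$ via the explicit form of the normal bundle $N := N_{A/X} \cong \sO(1)^{\oplus r} \oplus \sO^{\oplus (n-r-1)}$, where $n = \dim X$ and $r > 0$ (the positivity of $r$ is built into Definition \ref{d.tau}). The two ingredients that will do all the work are that $H^0(\BP^1, \sO(d) \otimes \mathbf{m}_x) = 0$ for $d \leq -1$ and that, within $H^0(\BP^1, \sO(1))$, the one-dimensional subspace of sections vanishing at $x$ varies nontrivially with $x$.

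For $\rho^{\sharp}$, I would invoke Proposition \ref{p.rhosharp} and show that the subspace $H^0(A, N \otimes \mathbf{m}_x) \subset H^0(A, N)$ depends on $x$. Using the splitting of $N$ together with the vanishing $H^0(\sO \otimes \mathbf{m}_x) = 0$, this subspace equals $H^0(\sO(1) \otimes \mathbf{m}_x)^{\oplus r}$ sitting inside $H^0(\sO(1))^{\oplus r}$. For $x \neq x'$ the one-dimensional factors $H^0(\sO(1) \otimes \mathbf{m}_x)$ and $H^0(\sO(1) \otimes \mathbf{m}_{x'})$ of $H^0(\sO(1))$ differ (a section of $\sO(1)$ has at most one zero), so their $r$-fold direct sums differ as well, using $r \geq 1$.

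For $\mu^{\sharp}$, the plan is a short diagram chase. The commutativity $\pi \circ \mu^{\sharp} = \mu$ with the bundle projection $\pi : {\rm Gr}(1; TX) = \BP(TX) \to X$ forces $\ker({\rm d}_y \mu^{\sharp}) \subset \ker({\rm d}_y \mu)$. Identifying $\ker({\rm d}_y \mu) \cong H^0(A, N \otimes \mathbf{m}_x)$ through deformation theory, the induced map into the vertical tangent space $T_\ell \BP(T_x X) \cong \Hom(T_x A, N_{A/X, x})$ at $\mu^{\sharp}(y) = (x, \ell)$ sends a section $s$ vanishing at $x$ to its first-order jet $({\rm d} s)_x$. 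Its kernel is therefore $H^0(A, N \otimes \mathbf{m}_x^2)$, which vanishes because $\sO(1) \otimes \mathbf{m}_x^2 \cong \sO(-1)$ and $\sO \otimes \mathbf{m}_x^2 \cong \sO(-2)$ have no global sections on $\BP^1$. The only step that needs a moment of care is the identification of the restricted differential with the $1$-jet map; this is routine once one uses the standard description of the vertical tangent space of a Grassmannian bundle. Beyond that, no substantial obstacle is expected.
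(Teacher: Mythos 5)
Your proposal is correct and follows essentially the same route as the paper: both arguments reduce the immersiveness of $\rho^{\sharp}$ to Proposition~\ref{p.rhosharp} together with the explicit splitting of $N_{A/X}$, and both reduce the immersiveness of $\mu^{\sharp}$ to the vanishing of $H^0(A, N_{A/X}\otimes \mathbf{m}_x^2)$ forced by the shape $\sO(1)^{\oplus r}\oplus\sO^{\oplus(\dim X-r-1)}$. The paper phrases the $\mu^{\sharp}$ step as a contradiction with a one-parameter family of curves sharing a tangent direction at $x$, while you linearize it directly via $\ker(\mathrm{d}_y\mu^{\sharp})\subset\ker(\mathrm{d}_y\mu)\cong H^0(A, N_{A/X}\otimes\mathbf{m}_x)$ and the $1$-jet map into the vertical tangent space of the Grassmannian bundle; this is the same content, just stated at the level of differentials.
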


   \begin{proof} Note that $r >0$ by our assumption in Definition \ref{d.tau}. Proposition \ref{p.rhosharp} implies that $\rho^{\sharp}$ is generically immersive.  Suppose that $\mu^{\sharp}$ is not generically immersive, then there must be a nontrivial deformation of an unbendable rational curves   $$\{ C_t \subset X \mid t \in \Delta\}$$ satisfying $T_x C_t = T_x C_0$ for all $t \in \Delta$ at some point $x \in C_0$. Then its infinitesimal deformation $\frac{{\rm d}}{{\rm d} t} C_t \in H^0(C_t, N_{C_t/X})$ must vanish at $x$ to second order. But a holomorphic section of the bundle $\sO(1)^{\oplus r} \oplus \sO^{\oplus (\dim X -r -1)}$ vanishing to second order at a point must vanish identically. A contradiction.
   \end{proof}

   A corollary of Proposition \ref{p.sKdistribution} and  Lemma \ref{l.unbendable} is the following.

 \begin{corollary}\label{c.Pic1}
    Let $X$ be a Fano manifold of Picard number 1 and let $\sK$ be as in Set-up \ref{set.sK} such that its members are unbendable rational curves with nontrivial normal bundles and the morphism $\mu$ has irreducible fibers. Then $\sK$ is bracket-generating and both $\rho^{\sharp}$ and $\mu^{\sharp}$ are generically immersive. \end{corollary}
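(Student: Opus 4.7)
The corollary bundles three assertions: generic immersiveness of $\rho^{\sharp}$, generic immersiveness of $\mu^{\sharp}$, and the bracket-generating property of $\sK$. The first two are immediate from Lemma \ref{l.unbendable}: by hypothesis general members have nontrivial normal bundle, which translates to $r \geq 1$, placing us in the regime of Definition \ref{d.tau}, and unbendability plus $r > 0$ gives both generic immersiveness statements directly. So the genuine content of the corollary lies in establishing that $\sK$ is bracket-generating.

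For the bracket-generating claim, the plan is to use Proposition \ref{p.sKdistribution}, which reduces the problem to showing that the distribution ${\rm Dist}^{\sK} \subset TX^{o}$ spanned by $\sK$ on a nonempty open subset $X^{o} \subset X$ is bracket-generating. Assume the contrary. Then, shrinking $X^{o}$ so that the ranks of all iterated brackets of ${\rm Dist}^{\sK}$ are locally constant, the involutive closure is a proper distribution $\mathcal{L} \subsetneq TX^{o}$ which is integrable by construction and hence tangent to the leaves of a holomorphic foliation $\mathcal{F}$ on $X^{o}$. Each general member $C \in \sK$ meets $X^{o}$ in an open subset tangent to $\mathcal{L}$, hence is contained in the closure of a single leaf of $\mathcal{F}$.

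The key step is then to promote $\mathcal{F}$ to an algebraic fibration and derive a contradiction from the Picard number 1 hypothesis. Since the closures of leaves of $\mathcal{F}$ are swept out by algebraic curves belonging to the algebraic family $\sK$, these closures form a bounded algebraic family of positive-codimensional subvarieties of $X$ (this can be rigorously extracted either via Campana's construction of the quotient of a covering family, or by taking the image of $\sU$ under the evaluation/graph morphism in a relative Chow or Douady scheme of $X$). This yields a dominant rational map $\pi: X \dashrightarrow Q$ onto a positive-dimensional normal projective variety whose general fibers are the closures of leaves. Pick any ample divisor class $H_{Q}$ on $Q$ and pull it back along a resolution $\tilde{X} \to X$ of $\pi$; pushing forward to $X$ produces a nonzero effective divisor $D$ on $X$ with the property that $D \cdot C = 0$ for a general $C \in \sK$, because such $C$ lies in a fiber of $\pi$ and is therefore contracted by the pullback of $H_{Q}$. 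However, since $X$ is Fano of Picard number $1$, every nonzero effective divisor on $X$ is a positive multiple of the ample generator of ${\rm Pic}(X)$, hence has strictly positive intersection with every irreducible curve. This contradiction forces ${\rm Dist}^{\sK}$ to be bracket-generating.

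The main obstacle is the algebraization step in the third paragraph: passing from the analytic foliation $\mathcal{F}$ (which a priori only exists on an open subset) to a genuine rational map $\pi: X \dashrightarrow Q$ of algebraic varieties. Carrying this out cleanly requires observing that each leaf of $\mathcal{F}$ has algebraic closure, because it is swept out by rational curves moving in the algebraic family $\sK$, and then invoking a standard boundedness/quotient result to organize these algebraic closures into an algebraic family parametrized by a projective base $Q$. Once this is in place the divisor-theoretic contradiction is essentially forced by the Picard number $1$ hypothesis, so the geometric content of the proof is concentrated in this algebraization step.
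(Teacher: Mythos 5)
Your overall structure matches the paper's: you invoke Lemma \ref{l.unbendable} for the generic immersiveness of both $\rho^{\sharp}$ and $\mu^{\sharp}$, and you reduce the bracket-generating claim via Proposition \ref{p.sKdistribution} to showing that ${\rm Dist}^{\sK}$ is bracket-generating. The difference is only in how the last step is handled: the paper disposes of it in one line by citing Proposition 6.8 of \cite{Hw12} (noting that the proof there, stated for minimal rational curves, works for unbendable curves), while you sketch a direct argument. Your sketch is essentially the standard proof underlying that citation: pass to the involutive closure $\mathcal{L}$ of ${\rm Dist}^{\sK}$, obtain an analytic foliation $\mathcal{F}$, algebraize it into a dominant rational map $X \dashrightarrow Q$ with $\dim Q > 0$, and contradict $\rho(X)=1$ by pulling back an ample divisor from $Q$. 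The argument is sound, but as you yourself flag, the algebraization step --- that the leaves of $\mathcal{F}$ have algebraic closures forming a bounded family, so that the foliation quotient is a rational fibration over a projective base --- is precisely the nontrivial content behind the cited proposition; it rests on the Campana/Koll\'ar--Miyaoka--Mori quotient by the covering family $\sK$, and without developing that machinery your proof is not self-contained in the way the rest of it is. Also, a small point of care in the divisor step: to get $D\cdot C = 0$ for general $C\in\sK$ via the projection formula you should choose $C$ so that its strict transform avoids the $p$-exceptional locus, and you need $\tilde\pi^*H_Q$ not to be entirely $p$-exceptional so that $D \neq 0$; both are fine for general choices, as you implicitly assume, but worth stating.
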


    \begin{proof}
    It is sufficient to prove that ${\rm Dist}^{\sK}$  in Definition \ref{d.sKdistribution} is bracket-generating.  But this is proved in Proposition 6.8 of \cite{Hw12} (where it was stated for a family of minimal rational curves, but the proof works under the assumption that  general members of $\sK$ are unbendable).  \end{proof}

Corollary  \ref{c.Pic1} provides many examples (see Section 1.4 in \cite{Hw01}) of unbendable rational curves ,  in particular,  minimal rational curves, satisfying the conditions of Theorem \ref{t.FPC}.
  Let us just list  a few well-known examples.

\begin{example}\label{ex.homo}
Let $X$ be a rational homogeneous space $G/P$ of Picard number 1. Then a general line on $X$  (under the minimal projective embedding) satisfies the assumption in Corollary \ref{c.Pic1}. Moreover, if $P$ is associated to a long simple root, then all lines on $G/P$ are equivalent under the action of $G$ (e.g. see Proposition 1 of  \cite{HM02}), hence  any line on $X$ satisfies the formal principle with convergence.\end{example}

\begin{example}\label{ex.hypersurface}
Let  $X \subset \BP^{n+1}$ be a smooth hypersurface of degree less than $n  \geq 4$. Then the family of lines on $X$ satisfies the assumption on Corollary \ref{c.Pic1}. If $X$ has degree $n $, a general fiber of $\mu$ is not irreducible and Corollary \ref{c.Pic1} cannot be applied. In this case, a general line has trivial normal bundle and admits a holomorphic tubular neighborhood, namely, a neighborhood biholomorphic to $\BP^1 \times \Delta^{n-1}$. The formal principle with convergence does not hold for the same reason as in Example \ref{ex.product}.
\end{example}

When $n = 4$, Example \ref{ex.hypersurface}  (Corollary \ref{c.hypersurface}) is  Corollary 1.14 of \cite{Hw24}, which is a direct consequence of  Theorem 1.13, of \cite{Hw24}. The latter is a special case of Theorem \ref{t.FPC},  where $\sK$ is a family of rational curves of Goursat type.

Among examples with $X$ of higher Picard numbers, the simplest one is when $X$ is the blow-up of $\BP^n$ along a submanifold $S$ spanning a hyperplane $\BP^{n-1} \subset \BP^n$  and $\sK$ is the family of proper transformations of lines on $\BP^n$ intersecting the submanifold $S$ (see Lemma 5.6 in \cite{Hw21}). A more involved example is  the following, which is a direct consequence of Theorem 1.1 of \cite{BF} and our Proposition \ref{p.sKdistribution}.

\begin{example}\label{ex.symmetric}
Let $X$ be a wonderful compactification of the adjoint group of a simple Lie algebra. Then the family of minimal rational curves on $X$ satisfies the conditions in Theorem \ref{t.FPC}. \end{example}

There are also many examples of    submanifolds of dimension bigger than one satisfying the conditions in Theorem \ref{t.FPC}. Generalizing Example \ref{ex.hypersurface}, one can check (using results from \cite{DM}) that general linear subspaces of low dimension in hypersurfaces of sufficiently low degree satisfy the conditions in Theorem \ref{t.FPC}. Another class of examples is the following.

\begin{definition} \label{d.Qcycle} Let $X$ be a rational homogeneous space $G/P$  and let $Q$ be a parabolic subgroup $Q$ of $G$ such that $P \cap Q$ is a parabolic subgroup. Then the $Q$-orbit $A$ of the base point of $G/P$ is again a rational homogeneous space $Q/Q\cap P \simeq L/L \cap P$, where $L$ is the semisimple part of the reductive part of $Q$. Let $\mathcal K$ be the connected component of ${\rm Douady}(X)$ containing the point $[A]$ corresponding to $A$.  A member of $\sK$ is called a {\it $Q$-cycle} on $G/P$.
\end{definition}

For example, lines on $G/P$ in Example \ref{ex.homo}
are  one-dimensional $Q$-cycles on   $G/P$.
Using Proposition 2.3 and Proposition 2.6 of \cite{HoN}, one can check the following.
Since the proof is somewhat lengthy involving the theory of semisimple Lie algebras, we just state the results.

 \begin{example}\label{ex.Q}
 In Definition \ref{d.Qcycle}, assume that $P$ is associated to a set of long simple roots and there is no proper parabolic subgroup containing both $P$ and $Q$. Then
 the family $\sK$ of $Q$-cycles on $X$ is $G$-homogenous and  satisfies the conditions in Theorem \ref{t.FPC}. Hence any $Q$-cycle in this case satisfies the formal principle with convergence.
 \end{example}

\bigskip
Jaehyun Hong (jhhong00@ibs.re.kr)

Jun-Muk Hwang (jmhwang@ibs.re.kr)

\smallskip

Center for Complex Geometry,
Institute for Basic Science (IBS),
Daejeon 34126, Republic of Korea

\bigskip

\end{document}